\DeclareFontFamily{U}{matha}{\hyphenchar\font45}
\DeclareFontShape{U}{matha}{m}{n}{
	<5> <6> <7> <8> <9> <10> gen * matha
	<10.95> matha10 <12> <14.4> <17.28> <20.74> <24.88> matha12
}{}
\DeclareSymbolFont{matha}{U}{matha}{m}{n}
\DeclareMathSymbol{\Lt}{3}{matha}{"CE}
\DeclareMathSymbol{\Gt}{3}{matha}{"CF} 
\DeclareSymbolFont{mathc}{OML}{txmi}{m}{it}
\DeclareMathSymbol{\txv}{\mathord}{mathc}{118}
\DeclareMathSymbol{\txw}{\mathord}{mathc}{119}
\DeclareSymbolFont{mathd}{OML}{ztmcm}{m}{it}
\DeclareMathSymbol{\varalpha}{\mathord}{mathd}{11} 
\DeclareMathSymbol{\vartau}{\mathord}{mathd}{28} 
\DeclareMathSymbol{\varlambda}{\mathord}{mathd}{21}
\def\valpha{\text{\scalebox{0.84}{$\varalpha$}}}
\def\vtau{\text{\scalebox{0.94}{$\vartau$}}}
\def\vlambda{\text{\scalebox{0.86}[0.92]{$\varlambda$}}} 
\def\vepsilon{\text{\scalebox{0.88}[1.04]{$\varepsilon$}}}
\newcommand{\BA}{{\mathbb {A}}} 
\newcommand{\BC}{{\mathbb {C}}} \newcommand{\BH}{{\mathbb {H}}}
\newcommand{\BQ}{{\mathbb {Q}}} \newcommand{\BR}{{\mathbb {R}}} \newcommand{\BZ}{{\mathbb {Z}}}
\newcommand{\RF}{{\mathrm {F}}}
\newcommand{\RM}{{\mathrm {M}}}
\def\EuH{\text{\usefont{U}{eus}{m}{n}H}}
\def\EuM{\text{\usefont{U}{eus}{m}{n}M}}
\def\CaloO {\text{\raisebox{- 2 \depth}{\scalebox{1.1}{$ \text{\usefont{U}{BOONDOX-calo}{m}{n}O}  $}}}} 
 \def\SC{\text{$\text{\usefont{U}{BOONDOX-calo}{m}{n}C}$}}
\def\ScrL{\text{$\text{\usefont{U}{BOONDOX-calo}{m}{n}L}$}}
\def\SE{\text{$\text{\usefont{U}{BOONDOX-calo}{m}{n}E}$}}
\def\SD{\text{$\text{\usefont{U}{BOONDOX-calo}{m}{n}D}$}}
\def\ScrO{\text{$\text{\usefont{U}{BOONDOX-calo}{m}{n}O}$}}
\def\ScrS{\text{$\text{\usefont{U}{BOONDOX-calo}{m}{n}S}$}}
\def\SM{\text{$\text{\usefont{U}{BOONDOX-calo}{m}{n}M}$}}
\def\SR{\text{$\text{\usefont{U}{BOONDOX-calo}{m}{n}R}$}}
\def\SP{\text{$\text{\usefont{U}{BOONDOX-calo}{m}{n}P}$}}
\def\SZ{\text{$\text{\usefont{U}{BOONDOX-calo}{m}{n}Z}$}}
\newcommand{\GL}{{\mathrm {GL}}}
\newcommand{\SL}{{\mathrm {SL}}}
\newcommand{\ra}{\rightarrow}
\def\lp {\left (}
\def\rp {\right )}
\def\Voronoi{Vorono\"{i}   } 
\def\boldJ {\boldsymbol{B}}
\def\boldF{\boldsymbol{H}} 
\renewcommand{\Im}{{\mathrm{Im} }}
\renewcommand{\Re}{{\mathrm{Re} }} 
\def\shskip{\hskip 0.5pt}
\def\vwedge{\hskip -1pt \wedge \hskip -1pt}
\def\viint{\int \hskip -5pt \int}
\newcommand{\delete}[1]{}
\theoremstyle{plain}
\newtheorem{thm}{Theorem}[section] \newtheorem{cor}[thm]{Corollary}
\newtheorem{lem}[thm]{Lemma}  \newtheorem{prop}[thm]{Proposition}
\newtheorem {conj}[thm]{Conjecture} \newtheorem{defn}[thm]{Definition}
\newtheorem {rem}[thm]{Remark}
\newtheorem*{acknowledgement}{Acknowledgements}
\numberwithin{equation}{section}
\begin{document}

	\title[On the Hankel Transform of Bessel Functions]{{On the Hankel Transform of Bessel Functions on Complex Numbers and Explicit Spectral Formulae over the Gaussian Field}}

\begin{abstract}
	
	In this paper, on the complex field $\mathbb{C}$, we prove two integral formulae for the Hankel--Mellin transform and the double Fourier--Mellin transform of Bessel functions, both resulting the hypergeometric function. As two applications, we use the former integral formula to make explicit the spectral formula of Bruggeman and Motohashi for the fourth moment of Dedekind zeta function over the Gaussian number field $\mathbb{Q}(i)$ and to establish a spectral formula for the Hecke-eigenvalue twisted second moment of central $L$-values for the Picard group $\mathrm{PGL}_2 (\mathbb{Z}[i])$. Moreover, we develop the theory of distributional Hankel transform on $\mathbb{C} \smallsetminus \{0\}$. 
	 
\end{abstract}

\author{Zhi Qi}
\address{School of Mathematical Sciences\\ Zhejiang University\\Hangzhou, 310027\\China}
\email{zhi.qi@zju.edu.cn}

	\thanks{The author was supported by National Key R\&D Program of China No. 2022YFA1005300 and National Natural Science Foundation of China No. 12071420.}

\dedicatory{\normalsize On the occasion of James W. Cogdell's 70th birthday}

\subjclass[2010]{11F12, 11F66, 33C05, 33C10, 42B10}
\keywords{Bessel functions, Hankel transform, hypergeometric function, $L$-functions}

\maketitle

 {\small \tableofcontents}

\section{Introduction}

The theory of  Bessel functions contains abundant elegant  integral formulae, a long list of which may be found in \cite[\S \S 6.5--6.7]{G-R} (95 pages). From the perspective of number theory and representation theory, certain classical integral formulae of Weber, Hardy, and Schafheitlin for the Fourier transform of Bessel functions may now be interpreted over $\BR$ as the local  Shimura--Waldspurger correspondence in the Waldspurger formula or the local ingredient for the Beyond Endoscopy for $\mathrm{Sym}^2$ on $\mathrm{PGL}_2$. The reader is referred to \cite{BaruchMao-Real}, \cite{BaruchMao-Global}, \cite[\S 6.7]{Venkatesh-BeyondEndoscopy}, and \cite[\S 1.1]{Qi-BE} for the details. 

For instance, the formulae of Weber and Hardy read: 
\begin{equation}\label{0eq: Weber's formula}
	\int_0^\infty \frac 1 {\sqrt x}  J_{\nu}   (4 \pi \sqrt x)  e  ( {\pm x y}  )   {d      x}  = \frac 1 {\sqrt {2 y} } e \bigg( {\mp  \bigg( \frac 1 {2 y} - \frac 1 8  \nu - \frac 1 8 \bigg)} \hskip -1pt \bigg) J_{\frac 1 2 \nu} \bigg( \frac {\pi} y \bigg),
\end{equation}
for $\Re (\nu) > -1$ (see \cite[6.725, 1, 2]{G-R} or \cite[1.13 (25), 2.13 (27)]{ET-I}), and 
\begin{equation}\label{0eq: Hardy's formula}
	\begin{split}
		\int_0^\infty \frac 1 {\sqrt x}  K_{\nu}   (4 \pi \sqrt x)  & e   ({\pm x y} )    {d      x}   =   - \frac {\pi} {2 \sin ( \pi \nu)}  \frac 1 {\sqrt {2 y} } e \bigg( {\pm  \bigg( \frac 1 {2 y} + \frac 1 8 \bigg)} \bigg) \\
		& \quad \cdot \bigg\{  e \bigg( {\pm \frac 1 8 \nu} \bigg) J_{\frac 1 2 \nu} \bigg( \frac {\pi} y \bigg) - 
		e \bigg( {\mp \frac 1 8 \nu} \bigg) J_{- \frac 1 2 \nu} \bigg( \frac {\pi} y \bigg)   \bigg\},
	\end{split}
\end{equation}
for  $|\Re (\nu) | < 1$ (see \cite[1.13 (28), 2.13 (30)]{ET-I}), while the formulae of  Weber and Schafheitlin read:
\begin{equation}\label{0eq: Weber-Sch for J}
	\begin{split}
		\int_0^\infty  J_{\nu} (4 \pi x)  e  (\pm x y)  & x^{\shskip \rho - 1 }  d      x = \frac {\Gamma (\rho+\nu) e^{\pm \frac 1 2 \pi i (\rho+\nu)}  } {(2\pi)^{ \rho} \Gamma (1+\nu) y^{\shskip \rho+\nu } } {_2F_1} \hskip -1pt \bigg( \frac {\rho + \nu} 2, \frac {1+\rho+\nu} 2  ; 1+ \nu ;  \frac 4 {y^2}  \bigg) ,
	\end{split}
\end{equation}
in the case $y> 2$, for $ - \Re (\nu) < \Re (\rho) < 3/2 $ (see \cite[6.699 1, 2]{G-R}), and 
\begin{equation}\label{0eq: Weber-Sch for K}
	\begin{split}
		\int_0^\infty  K_{\nu} (4 \pi x)  e  (\pm x y)  & x^{\shskip \rho - 1 }  d      x =    \\
		- \frac {\pi} {2 (2 \pi)^{  \rho} \sin (\pi \nu)}  \bigg\{ &  \frac {       \Gamma (\rho+\nu) e^{\pm \frac 1 2 \pi i (\rho+\nu)}  } {  \Gamma (1+\nu) y^{\shskip \rho + \nu}    }  {_2F_1} \hskip -1pt \bigg(  \frac {\rho + \nu} 2, \frac {1+\rho+\nu} 2  ; 1+ \nu ; - \frac 4 {y^2}  \bigg)  \\
		- & \frac {       \Gamma (\rho-\nu) e^{\pm \frac 1 2 \pi i (\rho-\nu)} } {  \Gamma (1-\nu) y^{\shskip \rho - \nu}    }  {_2F_1} \hskip -1pt  \bigg(  \frac {\rho-\nu} 2 , \frac {1+\rho - \nu} 2 ; 1-\nu  ; - \frac 4 {y^2}  \bigg)    \bigg\}, 
	\end{split}
\end{equation}
for $ |\Re (\nu)| < \Re (\rho)$ (see \cite[6.699 3, 4]{G-R} or \cite[(1.12)]{Qi-BE}).  As usual, $e (x) = e^{2\pi i x}$, $J_{\nu} (z)$ and $K_{\nu} (z)$ are the Bessel functions of order $\nu$ as in \cite{Watson}, and $ {_2F_1} (a, b; c; z)$ is the hypergeometric function of Gauss. 

The works \cite{BaruchMao-Global} and \cite{Venkatesh-BeyondEndoscopy} were restricted to {\it totally real} number fields, since, at that time,  the knowledge of Bessel functions for $\GL_2 (\BC)$ was very limited, not to speak of any Bessel integral formula  on $\BC$ (all the integrals in \cite[\S \S 6.5--6.7]{G-R} are essentially on $\BR_+$ or a sub-interval of $\BR_+$). 

Recently, by a rather indirect method that combines asymptotic analysis and differential equations, the formulae in  \eqref{0eq: Weber's formula}--\eqref{0eq: Weber-Sch for K} have been  extended successfully over $\BC$  in \cite{Qi-Sph,Qi-II-G,Qi-BE}, so that the Waldspurger formula in \cite{BaruchMao-Global} and the Beyond Endoscopy in \cite{Venkatesh-BeyondEndoscopy} are now valid over {\it arbitrary} number fields as in \cite{Chai-Qi-Wald} and \cite{Qi-BE}.\footnote{The Waldspurger formula requires all the local constituents of the relative trace formula of Jacquet \cite{Jacquet-RTF}  over $p$-adic fields, $\BR$, and $\BC$ established in \cite{BaruchMao-NA}, \cite{BaruchMao-Real}, and \cite{Chai-Qi-Bessel} respectively.}

\subsection{Hankel transform in the formulae of Motohashi and Kuznetsov} \label{sec: spectral}

The present paper is in a similar spirit---the motive is to extend other useful formulae from $\BQ$ to number fields. This time, however, we shall be modest and restrict to the Gaussian number field $\BQ(i)$. 
The formulae considered here are 
\begin{itemize}
	\item [(1)] the spectral formula of Motohashi for the fourth moment of the Riemann $\zeta$ function \cite{Motohashi-Riemann-4th,Motohashi-Riemann}, and 
	\item [(2)] the spectral formula of Kuznetsov and Motohashi for the twisted second moment of $\SL_2(\BZ)$ Maass-form $L$-functions \cite{Kuznetsov-Motohashi-formula,Motohashi-JNT-Mean,Motohashi-Riemann}. 
\end{itemize}
 
To describe our objectives, instead of stating these formulae in full, we shall only review and discuss their   ingredients that are related to the hypergeometric function or the Hankel transform of Bessel functions. This, however, would still occupy several pages, so the readers who are familiar with these materials over $\BQ$ may skip safely to \S \ref{sec: purpose}. 
 
 Also, there are no holomorphic forms over $\SL_2 (\BZ(i))$ as there is no discrete series on $\SL_2 (\BC)$.  As such, for the Motohashi formula, we shall not focus on the contribution from holomorphic forms. Note that, for the Kuznetsov--Motohashi formula,   there is also a counterpart in the holomorphic case due to Kuznetsov (see \cite[\S 4.2]{BF-Moments}). 

To start with, we introduce the space of test functions. 
\begin{defn}\label{1defn: H}
	Let $\mathscr{H} $ be the space of even entire functions 
	that  decays rapidly on any given  horizontal strip. 
\end{defn}

\subsubsection{The formula of Motohashi}

Recall that the formula of Motohashi \cite[Theorem 4.2]{Motohashi-Riemann} expresses the weighted fourth moment of the Riemann $\zeta (s)$ on the critical line 
\begin{align*}
\SZ_2 (  \varg)  = \int_{-\infty}^{\infty} \left|\zeta  \big(\tfrac 12+it \big)\right|^4 \varg (t) d      \shskip t , 
\end{align*}
in terms of the cubic moment of $ L \big(\frac 1 2, f \big)$ for $f$ in the full spectrum of $ \mathrm{PSL}_2 (\BZ) \backslash \BH $, including Hecke--Maass cusp forms (even or odd), Hecke holomorphic cusp forms, and the Eisenstein series. There arise weights in the form of the integral transform:
\begin{equation}\label{0eq: Lambda}
	\Lambda (r; \varg) = \int_0^{\infty}  \frac {{\varg}_c (\log (1+1/y)) } {\sqrt{y (1+y)}} \Xi (y; i r) d      y, 
\end{equation}
where   $r  = r_f $ is the spectral parameter of $f$, with $i r_f$ imaginary or a positive half-integer, 
\begin{align}
	\varg_c (x) = \int_{-\infty}^{\infty} \varg (t) \cos (t x) d      \shskip t 
\end{align}
is the cosine Fourier transform, and 
\begin{align}\label{0eq: Xi}
	\Xi (y; \nu) = \Re \bigg\{  \bigg(1 - \frac 1 {\sin (\pi\nu)} \bigg) \frac {\Gamma \big(\frac 1 2 +\nu \big)^2} {\Gamma (1+2\nu) y^{\frac 1 2+\nu}} {_2F_1}   \bigg(  \frac 1 2 + \nu, \frac 1 2 + \nu ; 1+ 2 \nu ; - \frac 1 y  \bigg)   \bigg\}. 
\end{align}
For instance, 
the Maass-form contribution reads: 
\begin{align*}
	2 \sum_{f  } \frac {L \big(\frac 1 2, f\big)^3} {L (1, \mathrm{Sym}^2 f)} \Lambda ( r_f; \varg);
\end{align*}
here the harmonic weight $ 2 /  L (1, \mathrm{Sym}^2 f)$ is from \cite[\S 2.2.2]{SH-Liu-Maass}.

The Motohashi formula was used in \cite{IM-4th-Moment,Motohashi-Riemann} to prove the currently best asymptotic formula for the fourth moment of the Riemann $\zeta (s)$: 
\begin{align*}
	\int_{0}^T \left| \zeta  \big(\tfrac 1 2 + it \big) \right|^{4} d      \shskip t = T P_{4} (\log T) + O \big(T^{  2/3} \log^8 T\big),
\end{align*}
where $P_4$ is an explicit polynomial of degree $4$. Note that $O (T^{2/3+\vepsilon})$ was obtained in \cite{Zavorotnyi-4th}. 

The Motohashi formula has been extended on  real quadratic fields and the Gaussian field $\BQ(i)$ by himself and Bruggeman \cite{B-Mo-Real-Quad,B-Mo}. However, in the complex setting, the $  \Xi$-function in \cite{B-Mo} is {\it not} expressed by the hypergeometric function---instead,  it is exhibited as the Hankel transform of Bessel functions over $\BC$ (see \cite[(15.5), (15.7)]{B-Mo}). Here, as our first objective, we would like to fill   the gap and make their formula entirely explicit.

This expression by Hankel transform is actually in tune with the ideas in their third joint work  \cite{B-Mo-New}, in which a new approach to the spectral formula of Motohashi is introduced without recourse to the Kloosterman-spectral formula of Kuznetsov \cite{Kuznetsov}. 
More explicitly, if we define 
\begin{equation}\label{0eq: Bessel, R}
	\begin{split}
	& 	j_{\nu} (x)  = \frac {\pi} {\sin (\pi  \nu) } \sqrt{x}  \big( J_{-2 \nu} (4 \pi \sqrt {x }) - J_{2 \nu} (4 \pi \sqrt {x }) \big), \\
	&j_{\nu} (-x )      = \frac {\pi} {\sin (\pi \nu) } \sqrt{x} \big( I_{-2  \nu} (4 \pi \sqrt {x }) - I_{2 \nu} (4 \pi \sqrt {x }) \big) 
	=   {4 \cos (\pi \nu)}   \sqrt{x} \shskip K_{2 \nu} (4 \pi \sqrt {x }), 
	\end{split}
\end{equation}
for $x \in \BR_+$, then, in the case $\Re (\nu) = 0$, 
we have 
\begin{align}\label{0eq: Hankel integral, R}
	\Xi ( y ; \nu) = \frac 1 2 \int_{\BR^{\times}} j_{\nu} (x/y)  j_{0} (-x ) \frac {d  x} { |x|^{3/2} }; 
\end{align}
see \cite[(2.33), (2.35)]{B-Mo-New}. The integral in \eqref{0eq: Hankel integral, R} may be viewed as the Hankel transform with kernel $j_0 (x)/ \sqrt{|x|}$ of the Bessel function $j_{\nu} (x) $. Note that this Hankel transform arises in the \Voronoi summation formula \cite{Voronoi} for the divisor function $\vtau (n)$ or the Eisenstein series $ \partial E(z; s) /\partial s |_{s= \frac 1 2}$.

The authors of   \cite{B-Mo-New} acknowledged the inspiration from the work of  Cogdell and Piatetski-Shapiro \cite{CPS}. It gives the first significant application of Bessel functions in representation theory---a direct approach  to the Kloosterman-spectral formula of Kuznetsov \cite{Kuznetsov}. A kernel formula is crucial  in \cite{CPS} to interpret the Weyl-element action on the Kirillov model as the Hankel integral transform. 

Finally, we remark that the reverse of variants of the Motohashi formula is very useful in the study of the cubic moment of $\GL_2$ or $\GL_2 \times \GL_1$ $L$-functions. This was initiated in the work of Conrey and Iwaniec \cite{CI-Cubic}, and has been implemented by many successors  \cite{Peng-Weight,Michel-Venkatesh-GL2,Petrow-Cubic,Young-Cubic,PY-Cubic,PY-Weyl2,PY-Weyl3,Frolenkov-Cubic,Nelson-Eisenstein,Wu-Motohashi,B-F-Wu-Weyl,Qi-Ivic}. 

\vskip 8pt

\subsubsection{The formula of Kuznetsov and Motohashi} 
In retrospect, this formula and its reverse were first discovered by Kuznetsov in \cite{Kuznetsov-Motohashi-formula} in 1983, and rigorously proven by Motohashi \cite{Motohashi-JNT-Mean,Motohashi-Divisor}  in 1992 and 1994. It transforms the spectral mean of $\vlambda_f (m) L \big(\frac 1 2, f\big)^2$, for $f$ Maass or Eisenstein,  to sums of the type of additive divisor problems.

Let $\Pi_{c} $ denote an orthonormal basis consisting of (even or odd) Hecke--Maass cusp forms for $ \SL_2 (\BZ)  $. 
For $f \in \Pi_{c} $, let $\vlambda_f (n)$ be its Hecke eigenvalues, and $\frac 1 4+r_f^2$ be its Laplace eigenvalue. For $n \in \BZ \smallsetminus \{0\}$ and $\nu \in \BC$, define 
\begin{align*}
\vtau_{\nu} (n) = \sum_{ ab = |n|} (a/b)^\nu . 
\end{align*}  
One may consider $ \vtau_{ir} (n)$   as the Fourier coefficients (Hecke eigenvalues) of the Eisenstein series $E \big(z; \frac 1 2 + i r \big)$. Moreover, let $\vtau (n) = \vtau_{0} (n) $. 

Let $m \in \BZ_+$.  Let $h \in \mathscr{H}$ 
be such that  
\begin{align*}
	h  (\pm  i/ 2  ) = 0 .
\end{align*}
and 
\begin{align*}
	h (r) \Lt \exp (-c |r|^2) , \qquad \text{($c > 0$)}, 
\end{align*}
in any fixed horizontal strip.  
Consider
\begin{align*}
	  \ScrL (m; h) = 2 \sum_{f \in \Pi_{c}} \frac { L \big(\frac 1 2, f\big)^2} {L(1, \mathrm{Sym}^2 f )} \vlambda_f (m) h (r_f) + \frac 1 {\pi} \int_{-\infty}^{\infty} \frac { \left|\zeta \big(\frac 12 + i r\big)\right|^4 } {|\zeta (1+2ir)|^2}  \vtau_{ir} (m) h (r) d      r. 
\end{align*}  
Then $ \ScrL (m; h) $ is equal to the sum of some diagonal terms, an Eisenstein residual term, and the  shifted divisor sum: 
\begin{align*}
\frac 2 { \pi^2} 	\sum_{n \neq 0, \shskip m} \frac {\vtau (n) \vtau (n-m)} {\sqrt{|n|}} \Phi (n/m; h) ,
\end{align*}
where $\Phi (\pm y; h)$ ($y \in \BR_+$) is defined by 
\begin{align}
	&\Phi (y; h) = \int_{(\beta)} \Gamma \lp \frac 1 2 - s \rp^2 \frac {\hat{h} (s)} {\cos (\pi s)} y^s d      s,  \\
	&\Phi (-y; h)  = \int_{(\beta)} \Gamma \lp  \frac 1 2 - s \rp^2 \tan (\pi s)  \hat{h} (s) y^s d      s,  
\end{align}
with $- 3 / 2 < \beta <   1 / 2$, and 
\begin{align}
	\hat{h} (s) = \frac 1 {2\pi} \int_{-\infty}^{\infty} \frac {\Gamma (s+i r)} {\Gamma (1-s+i r)} h(r) r \shskip d      r. 
\end{align}

	 The expression of $\ScrL (m; h)$ above with  $L(1, \mathrm{Sym}^2 f)$ is again from {\rm\cite[Lemma 6]{SH-Liu-Maass}}. 
	 The divisor sum is divided into {\it three} parts, in slightly different notation, in \cite[Lemma]{Motohashi-JNT-Mean} or {\rm\cite[Lemma 3.8]{Motohashi-Riemann}}. 

By contour shift, it is proven in \cite[\S 3.3]{Motohashi-Riemann} that 
\begin{equation}
\begin{split}
	\Phi (\pm y; h) =    \int_{-\infty}^{\infty}    \Theta^{\pm} (y; ir)  h(r ) r \tanh (\pi r) \shskip d      r & , 
\end{split}
\end{equation}
for $y > 1$, with 
\begin{align}\label{0eq: Theta+}
	\Theta^+ (y; \nu) =  \frac {i} {\sin (\pi \nu)} \frac{ \Gamma (\frac 1 2 + \nu)^2  } {  \Gamma (1+2\nu) y^{   \nu}    }  {_2F_1}    \bigg(  \frac {1} 2 + \nu , \frac {1} 2 + \nu ; 1+2\nu  ;  \frac 1 {y}  \bigg)   , 
\end{align}  
\begin{align}\label{0eq: Theta-}
\Theta^- (y; \nu) =   \frac{ \Gamma (\frac 1 2 + \nu)^2  } {  \Gamma (1+2\nu) y^{   \nu}    }  {_2F_1}    \bigg(  \frac {1} 2 + \nu , \frac {1} 2 + \nu ; 1+2\nu  ; - \frac 1 {y}  \bigg)  .
\end{align}

This explicit formula of Kuznetsov and Motohashi was applied to study the cubic and fourth moments of $ L \big(\frac 12 , f \big) $,  to establish a type of large sieve inequality for $\vlambda_ f(n)$ in \cite{Motohashi-JNT-Mean,Motohashi-Riemann,Ivic-t-aspect,Ivic-Moment-1,Jutila-4th-Moment}, and more recently, to prove a positive proportion of non-vanishing $ L \big(\frac 12 , f \big) $ in \cite{SH-Liu-Maass,BHS-Maass}.

One may see the clear  similarity between \eqref{0eq: Xi} and (\ref{0eq: Theta+}, \ref{0eq: Theta-}). Actually, in more general forms (similar to the right-hand sides of \eqref{0eq: Weber-Sch for J, Hankel}--\eqref{0eq: Weber-Sch for K, Hankel, 2}), both $ \Xi (y; \nu)$ and  $\Theta^{\pm} (y; \nu)$ were visible in   Kuznetsov's paper  and arose from the Hankel transform of Bessel functions over $\BR$ (see \cite[\S \S  3.1, 3.2, 3.4]{Kuznetsov-Motohashi-formula}).  However, Motohashi did not use this perspective as his proof is quite different. A crucial observation of Motohashi is the identity:
\begin{align*}
	\sum_{f  }   \frac { L \big(\frac 1 2, f\big)^2} {L(1, \mathrm{Sym}^2 f )} \vlambda_f (m) h (r_f) = \sum_{f  }  \epsilonup_f  \frac { L \big(\frac 1 2, f\big)^2} {L(1, \mathrm{Sym}^2 f )} \vlambda_f (m) h (r_f), 
\end{align*}
where $\epsilonup_f = \vlambda_f (-1) = \pm 1$ is the root number of $f$, as
$L \big(\frac 1 2, f\big) = 0$ trivially if $ \epsilonup_f = - 1 $, so that he may apply the Kuznetsov trace formula with $\epsilonup_f$ and   $K_{2ir} (x)$  (\cite[(2.5)]{Motohashi-JNT-Mean}), the Mellin--Barnes integral representation of $K_{2ir} (x)$,  and subsequently  the functional equation for the Estermann function (\cite[(2.21)]{Motohashi-JNT-Mean}).  

Recently in \cite{Qi-Liu-Moments}, by the {\it even} Kuznetsov trace formula, containing both $K_{2ir} (x)$ and $J_{2ir} (x)$ (see \cite[(3.17)]{CI-Cubic} or \cite[Lemma 5]{SH-Liu-Maass}), the Vorono\"i summation formula, and analysis for certain Fourier type integrals, the non-vanishing result in \cite{BHS-Maass} has been recovered, and, in a similar manner, extended to imaginary quadratic fields. 

The idea here is that, instead of an asymptotic formula, the Kuznetsov--Vorono\"i approach in \cite{Qi-Liu-Moments} should 
also yield the explicit formula, not only on $\BQ$ but also on $\BQ (i)$. After reading the original paper of Kuznetsov \cite{Kuznetsov-Motohashi-formula}  during the  writing process, 
the author realized that the `Kuznetsov--Vorono\"i approach' is  indeed the Kuznetsov approach!    
 However, in the technical aspect, neither Kuznetsov's version of Vorono\"i--Oppenheim summation formula   \cite[Theorem 2.3]{Kuznetsov-Motohashi-formula}  nor  Motohashi's complex analytic approach seems to work well  over $\BQ (i)$, as, for example, the Bessel function on $\BC$ is expressed as an {\it infinite} sum of  Mellin--Barnes integrals (see \cite[\S 3]{Qi-Bessel}). 

\subsection{Hankel--Mellin transform of Bessel functions}
Again, the following Bessel integral formulae are due to Weber and Schafheitlin\footnote{The Weber--Schafheitlin integral in \eqref{0eq: Weber-Sch for J, Hankel} is  {\it discontinuous} in the sense of different expressions the cases $y > 1$ and $y < 1$ and consequently a discontinuity in the formula  when $y = 1$. For simplicity, we only record here the formula  in the case $y > 1$.}: 
\begin{equation}\label{0eq: Weber-Sch for J, Hankel}
	\begin{split}
		\int_0^\infty  J_{\nu} (4 \pi x)  J_{\mu} (4 \pi x y)    x^{\shskip \rho - 1 }  d      x  & =   \frac {\Gamma  (  (\rho+\nu+\mu) /2 )   } {2 (2\pi)^{ \rho} \Gamma  (1 -   (\rho+\nu-\mu) /2)  \Gamma (1+\nu) y^{\shskip \rho+\nu } } \\
		& \qquad    \cdot  {_2F_1} \hskip -1pt \bigg( \frac {\rho + \nu +\mu} 2, \frac {\rho+\nu - \mu} 2  ; 1+ \nu ;  \frac 1 {y^2}  \bigg) ,
	\end{split}
\end{equation}
in the case $y > 1$, for $ - \Re(\nu+\mu)  <  \Re (\rho) < 2$ (see \cite[6.574]{G-R} or \cite[13.41 (3)--(5)]{Watson}), 
\begin{align}\label{0eq: Weber-Sch for KK, Hankel}
	\begin{aligned}
		& \int_0^\infty  K_{\nu} (4 \pi x)  K_{\mu} (4 \pi x y)    x^{\shskip \rho - 1 }  d      x    =  -  \frac {  \pi  } { 8 (2\pi)^{ \rho} \sin (\pi\nu)  } \\
		&	\cdot  \hskip -2pt  \bigg\{    \frac {    \Gamma  (  (\rho+\nu+\mu) /2 ) \Gamma  (  (\rho+\nu-\mu) /2 )} {       \Gamma (1+\nu) y^{\shskip \rho+\nu}  }  {_2F_1} \hskip -1pt \bigg( \frac {\rho + \nu +\mu} 2, \frac {\rho+\nu - \mu} 2  ; 1+ \nu ;    \frac 1 {y^2}  \bigg)   \\
		& \hskip -4pt - \frac {    \Gamma  (  (\rho-\nu+\mu) /2 )  \Gamma  (   (\rho-\nu-\mu) /2 )} {     \Gamma (1-\nu) y^{\shskip \rho-\nu}  }  {_2F_1} \hskip -1pt \bigg( \frac {\rho - \nu +\mu} 2, \frac {\rho-\nu - \mu} 2  ; 1- \nu ;   \frac 1 {y^2}  \bigg)   \bigg\}   ,
	\end{aligned}
\end{align}
in the case $y > 1$, for $   \Re (\rho) > |\Re (\nu)| + |\Re (\mu)| $ (see \cite[6.576 4]{G-R}),\footnote{Actually, it follows from  \cite[6.576 4]{G-R} that 
	\begin{align*}
		\int_0^{\infty}    K_{\nu}   (4\pi x)   K_{\mu} & (4\pi xy)  x^{\shskip \rho-1}  d      x  = \frac {1  } {8 (2\pi)^{\rho}\Gamma (\rho) y^{\shskip \rho+\nu} } \Gamma \bigg( \frac {\rho+\nu+\mu} 2 \bigg) \Gamma \bigg( \frac {\rho+\nu-\mu } 2 \bigg)   \\
	&	\cdot  \Gamma \bigg( \frac {\rho-\nu+\mu} 2 \bigg) \Gamma \bigg( \frac {\rho-\nu-\mu} 2 \bigg) {_2F_1} \bigg( \frac {\rho+\nu+\mu } 2, \frac {\rho+\nu-\mu } 2; \rho ; 1 - \frac 1 {y^2} \bigg) ,
\end{align*}
valid for $y > 0$ and  $\Re (\rho) > |\Re (\mu)| + |\Re (\nu)|$. To see \eqref{0eq: Weber-Sch for KK, Hankel},  in the case $y > 1$, we use  the transformation formula for ${_2F_1} (a,b;c; z)$ with respect to $z \ra 1 - z$ (see \cite[\S 2.4.1]{MO-Formulas}) and the reflection  formula for $\Gamma (s)$.
}
\begin{equation}\label{0eq: Weber-Sch for K, Hankel}
	\begin{split}
		\int_0^\infty  J_{\nu} (4 \pi x)  K_{\mu} (4 \pi x y)    x^{\shskip \rho - 1 }  d      x  & =   \frac {\Gamma  (  (\rho+\nu+\mu) /2 ) \Gamma  (  (\rho+\nu-\mu) /2 )    } {4 (2\pi)^{ \rho}    \Gamma (1+\nu) y^{\shskip \rho+\nu }  } \\
		& \quad    \cdot  {_2F_1} \hskip -1pt \bigg( \frac {\rho + \nu +\mu} 2, \frac {\rho+\nu - \mu} 2  ; 1+ \nu ;  -  \frac 1 {y^2}  \bigg) ,
	\end{split}
\end{equation}
for $\Re (\rho) > |\Re (\mu)| - \Re (\nu)$, and
\begin{equation}\label{0eq: Weber-Sch for K, Hankel, 2}
	\begin{split}
		& \int_0^\infty  K_{\nu} (4 \pi x)  J_{\mu} (4 \pi x y)    x^{\shskip \rho - 1 }  d      x    = - \frac {  \pi  } { 4 (2\pi)^{ \rho} \sin (\pi\nu)  } \\
	&	\cdot  \hskip -2pt  \bigg\{   \frac {    \Gamma  (  (\rho+\nu+\mu) /2 )} {     \Gamma  (1 -  (\rho+\nu-\mu) /2 ) \Gamma (1+\nu) y^{\shskip \rho+\nu}  }  {_2F_1} \hskip -1pt \bigg( \frac {\rho + \nu +\mu} 2, \frac {\rho+\nu - \mu} 2  ; 1+ \nu ;  -   \frac 1 {y^2}  \bigg)    \\
		 & \hskip -6pt -  \frac {    \Gamma (  (\rho-\nu+\mu) /2 )} {     \Gamma  (1 -   (\rho-\nu-\mu) /2 ) \Gamma (1-\nu) y^{\shskip \rho-\nu}  }  {_2F_1} \hskip -1pt \bigg( \frac {\rho - \nu +\mu} 2, \frac {\rho-\nu - \mu} 2  ; 1- \nu ;  -   \frac 1 {y^2}  \bigg)  \bigg\}   ,
	\end{split}
\end{equation}
for $\Re (\rho) > |\Re (\nu)| - \Re (\mu)$ (see \cite[6.576 3]{G-R} or \cite[13.45 (1)]{Watson}).\footnote{Note here that \eqref{0eq: Weber-Sch for K, Hankel, 2} is deducible from \eqref{0eq: Weber-Sch for K, Hankel}, and vice versa,  by the changes $y \ra 1/y$ and $\nu \leftrightarrow \mu$, the transformation formula for ${_2F_1} (a,b;c; z)$ with respect to $z \ra 1/z$ (see \eqref{2eq: transformation z - 1/z} or \cite[\S 2.4.1]{MO-Formulas}), along with the reflection  formula for $\Gamma (s)$. }

Now set  $\Re (\nu) = 0$,  $ \mu = 0 $, and $\rho = 1$. Then, in view of \eqref{0eq: Xi} and \eqref{0eq: Bessel, R}, one may deduce  \eqref{0eq: Hankel integral, R} directly from \eqref{0eq: Weber-Sch for K, Hankel} and \eqref{0eq: Weber-Sch for K, Hankel, 2}. See also \cite[(2.39)--(2.42)]{B-Mo-New}. Similarly, if one were to derive the  Kuznetsov--Motohashi formula  via the Kuznetsov  approach, then \eqref{0eq: Theta+} and \eqref{0eq: Theta-} would also follow from \eqref{0eq: Weber-Sch for J, Hankel}--\eqref{0eq: Weber-Sch for K, Hankel, 2}. See \cite[(46)--(48), (69)--(70)]{Kuznetsov-Motohashi-formula}.

Moreover, it is of interest to observe that \eqref{0eq: Weber-Sch for J} and \eqref{0eq: Weber-Sch for K} may be regarded as a special case of \eqref{0eq: Weber-Sch for J, Hankel} and \eqref{0eq: Weber-Sch for K, Hankel, 2} since 
\begin{align}\label{0eq: J1/2}
	J_{\frac 1 2} (z) = \lp \frac {2} {\pi z}\rp^{\frac 1 2} \sin z, \qquad J_{-\frac 1 2} (z) = \lp \frac {2 } {\pi z}\rp^{\frac 1 2} \cos z,  
\end{align}
as in \cite[3.4 (3), (6)]{Watson}.

\subsection{Double Fourier--Mellin transform of Bessel functions}\label{sec: double integral} Next, the following double Bessel integral formulae seem to be new and will be proven in Appendix \ref{sec: Fourier-Mellin, R}. 

For real $y,  \varw  $ with  $|y \varw| > 1$,  we have  
\begin{equation}\label{1eq: double Fourier, 1}
	\begin{split}
		\int_0^{\infty} \hskip -2pt	\int_0^{\infty} J_{\nu} (4\pi & \hskip -1pt \sqrt{\varv x})   e ( - x y  -  \varv \varw) x^{ \shskip  \beta  -1} \varv^{     \gamma - 1} d      x d      \varv  =   \\
	  &	 \frac { \Gamma (\beta + \nu/2) \Gamma (\gamma+\nu/2)} {(2\pi)^{\beta+\gamma} \Gamma (1+\nu) (iy)^{\beta + \frac 12 \nu} (i \varw)^{\gamma +   \frac 12 \nu} }	\displaystyle  {_2F_1} \bigg(  \beta + \frac \nu 2, \gamma + \frac {\nu} 2 ; 1+\nu; \frac 1 {y \varw} \bigg),
	\end{split}
\end{equation}
if  
\begin{align}\label{1eq: condition for J}
	-   \Re ( \nu/2) <   \Re (  \beta) < \frac 5 4, \quad  -   \Re ( \nu/2) <   \Re (  \gamma) <  \min \big\{ 1+ \Re (  \beta), 2- \Re (  \beta) \big\}  , 
\end{align}
and 
\begin{equation}\label{1eq: double Fourier, 2}
\begin{split}
	\int_0^{\infty} \hskip -2pt	\int_0^{\infty} K_{\nu} (4\pi & \hskip -1pt \sqrt{\varv x})   e ( - x y  -  \varv \varw) x^{  \shskip \beta  -1} \varv^{     \gamma - 1} d      x d      \varv   = - \frac {\pi} {2 (2\pi)^{\beta + \gamma } \sin (\pi \nu)} \\
	\cdot \bigg\{	&  \frac { \Gamma (\beta + \nu/2) \Gamma (\gamma+\nu/2) } { \Gamma (1+\nu) (iy)^{\beta + \frac 12 \nu} (i \varw)^{\gamma +   \frac 12 \nu} } {_2F_1} \bigg(\beta + \frac \nu 2, \gamma + \frac {\nu} 2 ; 1+\nu ; - \frac 1 {y \varw} \bigg) \\
	- & \frac { \Gamma (\beta + \nu/2) \Gamma (\gamma+\nu/2)    } { \Gamma (1-\nu) (iy)^{\beta - \frac 12 \nu} (i \varw)^{\gamma -   \frac 12 \nu} } {_2F_1} \bigg( \beta - \frac \nu 2, \gamma - \frac {\nu} 2; 1-\nu; - \frac 1  {y \varw}\bigg) \bigg\} ,
\end{split}
\end{equation}
if 
\begin{align}\label{1eq: condition for K}
 |  \Re (\nu/2) |  < 	 \Re ( \beta) , \quad  |  \Re (\nu/2) | < \Re  ( \gamma)  < \Re (\beta) + 1,
\end{align} 
where  the left sides must be regarded as iterated integrals. Note that these double integrals are {\it not} absolutely convergent, so it is probably most suitable to  interpret them in the sense of distributions. 

On setting $\beta  = (\rho - \mu)/2$, $\gamma = (\rho+\mu)/2$, and $y = \varw  $, the hypergeometric functions on the right of \eqref{1eq: double Fourier, 1} and \eqref{1eq: double Fourier, 2} are the same as those of  \eqref{0eq: Weber-Sch for J, Hankel}--\eqref{0eq: Weber-Sch for K, Hankel, 2}.  This is {\it not} a coincidence! The integrals in \eqref{1eq: double Fourier, 1} and \eqref{1eq: double Fourier, 2} were derived by the Bessel distributions  from a certain Hermitian form for unitary irreducible representations  of $\GL_2 (\BR)$ arising in the study of H. Wu  \cite{Wu-Motohashi}  of the Motohashi formula via the Godement--Jacquet theory  \cite{Godement-Jacquet}.   The author feels that there is also an approach to the Motohashi formula via Bessel distributions  \cite{Baruch-GL2,BaruchMao-NA,BaruchMao-Real,Chai-Qi-Bessel}. This is left to future investigations, for which the Bessel integral formulae \eqref{1eq: double Fourier, 1} and \eqref{1eq: double Fourier, 2} will  probably be very helpful.

\subsection{Purpose of the paper}\label{sec: purpose}

This paper is divided into  the analytic part and the arithmetic part. 

In the first part, we would like to 
establish  connections  over $\BC$ between the Bessel functions and hypergeometric functions via the Hankel--Mellin transform and the double Fourier--Mellin transform---more explicitly, to prove the analogues  over $\BC$ of the Bessel integral formulae \eqref{0eq: Weber-Sch for J, Hankel}--\eqref{0eq: Weber-Sch for K, Hankel, 2} and \eqref{1eq: double Fourier, 1}--\eqref{1eq: condition for K}.  

In the second part, we would like to 
establish the explicit spectral formulae over $\BQ (i)$ of Bruggeman--Motohashi and Kuznetsov--Motohashi, as have been discussed over $\BQ$ in \S \ref{sec: spectral}, by application of the  Bessel integral formula of Hankel--Mellin type. For the latter, we shall mainly use the original ideas of Kuznetsov  \cite{Kuznetsov-Motohashi-formula}, although it is indicated by Motohashi \cite{Motohashi-JNT-Mean,Motohashi-Riemann} that his proof is not quite rigorous.   Our new input is the regularization method introduced in \cite{Qi-BE}.   Of course, our argument may be used to rectify the proof of Kuznetsov as it also works over $\BQ$ (see \S \ref{sec: remarks Kuznetsov}).  

Furthermore, as suggested in \S \ref{sec: double integral}, the Bessel integrals of double Fourier type (over $\BR$ or $\BC$) might also be helpful for the study of the Motohashi formula from the perspective of Bessel distributions.

\begin{acknowledgement}
	The author thanks Professor J. W. Cogdell for  his patience, encouragement, and a great deal of mathematics learned from him at OSU. 
\end{acknowledgement}

{\large \part*{I. Analytic Part}  }

\section{Main Results I: Complex Bessel Integral Formulae} \label{sec: main, analytic}

In this section, we provide the basic definitions (the notation over $\BC$ is unavoidably more complicated) and   state our main analytic results. As the theorems in \S \ref{sec: double Fourier}  will not be applied later in Part \hyperref[part: arithm]{II}, after   \S \ref{sec: Hankel-Mellin} the reader may safely jump to \S \ref{sec: prelim} or even Part \hyperref[part: arithm]{II}  at a first reading.

\subsection{Bessel functions  over \protect \scalebox{1.06}{$\BC$}}\label{sec: Bessel C}
We start by introducing the definition of Bessel functions over $\BC$ (see \cite[\S 15.3]{Qi-Bessel}, \cite[(6.21), (7.21)]{B-Mo} or \cite[(4.58), (9.26)]{B-Mo2}).

For $\nu \in \BC$, $  p \in \BZ  $, and $z \in \BC \smallsetminus \{0\}$,  define\footnotemark
\begin{equation}\label{0def: J mu m (z)}
	J_{\nu ,\shskip  p} (z) = J_{\nu + p }    (z)   J_{\nu -  p  }    ({\widebar z} ) ,
\end{equation}
and
\begin{equation}\label{0eq: defn of Bessel}
	\boldJ_{ \nu,\shskip  p} (z)  =   \frac {1} {\sin (\pi \nu)} \lp J_{-\nu,\shskip  -p} (4 \pi   z) - J_{\nu,\shskip  p} (4 \pi   z)  \rp.    
\end{equation}
The Bessel functions $J_{\nu ,\shskip  p} (z)$ and  $\boldJ_{ \nu,\shskip  p} (z)$ are even and real analytic in $z$, and (complex) analytic in $\nu$.  It is understood that $\boldJ_{ \nu,\shskip  p} (z)$ is defined by the limit in the non-generic case when  $ \nu $ is integral,  as $J_{-n} (z) = (-1)^n J_{n} (z) $ ($n =  1, 2, 3, ...$). 
 
\footnotetext{The Bessel functions for $\GL_2 (\BC)$ or $\SL_2 (\BC)$ are derived by different means in \cite{Qi-Bessel} and \cite{B-Mo,B-Mo2}.  
	It should be remarked that it is unnecessary and inconvenient to  modify   $$J_{\nu} (z) = \sum_{n=0}^{\infty} \frac {(-1)^n (z/2)^{\nu +2n}} {n! \Gamma (\nu + n + 1)} $$ by the factor $(z/2)^{-\nu}$ (to make it analytic on $\BC$) as in \cite[(6.21)]{B-Mo}. This would destroy the Bessel equation!  }


It follows from \eqref{0eq: J1/2} that
\begin{align}\label{1eq: J1/2 (z)}
	 \boldJ_{\frac 1 2, 0} (z) = \frac 1 {2\pi^2 |z|} \cos (8\pi \mathrm{Re} (z)). 
\end{align} 

Moreover, if $p$ is a half integer ($2p$ odd), the (odd) Bessel function is defined by 
\begin{align}
 \boldJ_{\nu ,\shskip  p} (z) =	 \frac {i} {\cos (\pi \nu)} \lp J_{-\nu,\shskip   - p} (4 \pi   z) + J_{\nu,\shskip  p} (4 \pi   z)  \rp. 
\end{align}
However, the spectral formulae  on $\BQ (i)$  concern only even Bessel functions. Actually, we need $p$  even (not just $2p$ even) for the root number to be $+1$. 



\subsection{Hypergeometric functions over \protect \scalebox{1.08}{$\BC$}} \label{sec: hyper}
The definition of hypergeometric functions over $\BC$ is  more subtle and complicated.    

Let $  \nu, \mu, \rho \in \BC$ and $ d,   p \in \BZ$.  
Let $z \in \BC \smallsetminus \{0\}$.
\delete{For 
\begin{align}
a = 	\frac {\rho + \nu +\mu} 2 , \qquad b = \frac {\rho + \nu -\mu} 2, \qquad c = 1+\nu, 
\end{align}
define 
\begin{equation} 
	{_{\phantom{\nu}}^{\rho} \hskip -1pt \boldF_{\nu}^{\mu}} (z)   = 
	\frac{\Gamma (a) \Gamma (b)} {\Gamma (c)} \cdot z^{\frac 1 2 (a+b)}	{_2F_1}  ( a, b  ; c ;  z    ) ,
\end{equation}
or, more explicitly,} 

First, we introduce   
\begin{equation} \label{1eq: hypergoem, C, 1.0}
	{_{\phantom{\nu}}^{\rho} \hskip -1.5pt f_{\nu}^{\mu}} (z) =   z^{\frac 1 2  \nu }	{_2F_1} \hskip -1pt \bigg(   \frac {\rho + \nu +\mu} 2, \frac {\rho+\nu - \mu} 2  ; 1+ \nu ;  z    \bigg) , 
\end{equation} 
and
\begin{equation} \label{1eq: hypergoem, C, 1}
	{_{\phantom{\nu}}^{\rho} \hskip -1pt  {F}_{\nu}^{\mu}} (z) \hskip -0.5pt = \hskip -0.5pt	\frac {\Gamma  (   ({\rho + \nu +\mu}) / 2    ) \Gamma  (     {(\rho+\nu - \mu)}/ 2     )} {\Gamma (1+\nu)} \cdot  {_{\phantom{\nu}}^{\rho} \hskip -1.5pt f_{\nu}^{\mu}} (z)  .
	\end{equation} 
Note that ${_{\phantom{\nu}}^{\rho} \hskip -1pt  {F}_{\nu}^{- \mu}} (z) = {_{\phantom{\nu}}^{\rho} \hskip -1pt  {F}_{\nu}^{\mu}} (z)$. 
Define
  \begin{equation}\label{1eq: hypergoem, C, 2}
  	{_{\phantom{\nu}}^{\rho} \hskip -1.5pt {F}_{\nu, \shskip p}^{\mu, \shskip  d}} (z)  =  \big(   \cos (\pi(\rho+\nu))   -   	(-1)^{d+p} \cos (\pi\mu) \big) \cdot {_{\phantom{\nu}}^{\rho}    {F}_{\nu+p}^{\mu+d}} (z) \, {_{\phantom{\nu}}^{\rho}    {F}_{\nu-p}^{\mu-d}} (\widebar{z}),  
  \end{equation}
and
\begin{equation}\label{1eq: hypergoemetric}
	{_{\phantom{\nu}}^{\rho} \hskip -1pt \boldF_{\nu, \shskip p}^{\mu, \shskip  d}} (z) = 
	 	\frac 1 {\sin (\pi \nu)} \big(  {_{\phantom{\nu}}^{\rho} \hskip -1.5pt {F}_{\nu, \shskip p}^{\mu, \shskip  d}} (z)  - {_{\phantom{\nu}}^{\rho} \hskip -1.5pt {F}_{-\nu, \shskip -p}^{\mu, \shskip  d}}  (z)  \big). 
\end{equation} 
Let ${_{\phantom{\nu}}^{\rho} \hskip -1.5pt {C}_{\nu, \shskip p}^{\mu, \shskip  d}}$ be defined such that 
\begin{align}\label{1eq: hypergeometric, 3}
	{_{\phantom{\nu}}^{\rho} \hskip -1pt \boldF_{\nu, \shskip p}^{\mu, \shskip  d}} (z) = {_{\phantom{\nu}}^{\rho} \hskip -1.5pt {C}_{\nu, \shskip p}^{\mu, \shskip  d}} \cdot {_{\phantom{\nu}}^{\rho} \hskip -2pt f_{\nu+p}^{\mu+d}} (z) \, {_{\phantom{\nu}}^{\rho} \hskip -2pt f_{\nu-p}^{\mu-d}} (\widebar z)  + {_{\phantom{\nu}}^{\rho} \hskip -1.5pt {C}_{-\nu, \shskip -p}^{\mu, \shskip  d}} \cdot {_{\phantom{\nu}}^{\rho} \hskip -2pt f_{-\nu-p}^{\mu+d}} (z) \, {_{\phantom{\nu}}^{\rho} \hskip -2pt f_{-\nu+p}^{\mu-d}} (\widebar z) .
\end{align}

 The function ${_{\phantom{\nu}}^{\rho} \hskip -1.5pt {F}_{\nu, \shskip p}^{\mu, \shskip  d}} (z)$ is real analytic in $z$ for $|\arg (1-z)| < \pi$ and (complex) analytic in $\nu$, $\mu$, and $\rho$, although there could be (simple) poles when $ \rho +\nu \pm \mu $ is a non-positive integer (to see this, examine the poles of the gamma factors in \eqref{1eq: hypergoem, C, 1} and the zeros of the trigonometric factor in \eqref{1eq: hypergoem, C, 2}). The function ${_{\phantom{\nu}}^{\rho} \hskip -1pt \boldF_{\nu, \shskip p}^{\mu, \shskip  d}} (z)$  is real analytic for all $z \neq 0$ except for $z = 1$. This is due to the transformation formula for ${_2F_1} (a,b;c; z)$ with respect to $z \ra 1/z$ (see \eqref{2eq: transformation z - 1/z} or Lemma \ref{lem: reciprocity}).  Moreover, when $ \mathrm{Re} (\rho) < 1 $, one may define  ${_{\phantom{\nu}}^{\rho} \hskip -1pt \boldF_{\nu, \shskip p}^{\mu, \shskip  d}} (1) $   by continuity (the Gauss   formula). 
Again, the definition of ${_{\phantom{\nu}}^{\rho} \hskip -1pt \boldF_{\nu, \shskip p}^{\mu, \shskip  d}} (z)$ must be in the limit form for integral $ \nu  $. To see this, use the limit formula (see \cite[\S 2.1]{MO-Formulas}):
\begin{align}
	\lim_{c \ra 1- n} \frac 1 {\Gamma (c)} {_2F_1} (a, b; c; z) = \frac {(a)_{n} (b)_{n}} {n !} \cdot z^{n} {_2F_1} (a+n, b+n; n+1; z), 
\end{align}
for $n  =  1, 2, 3, ... 
$, where $(a)_n = \Gamma (a+n) / \Gamma (a)$.

\subsection*{Convention} Subsequently, for brevity, the $0$ in the superscript  or subscript  will be suppressed. For example, we shall let $ \boldJ_{ \nu } (z) = \boldJ_{ \nu,\shskip  0} (z)  $, ${_{\phantom{\nu}}^{\rho} \hskip -1pt \boldF_{\nu, \shskip p}^{\mu }} (z) = {_{\phantom{\nu}}^{\rho} \hskip -1pt \boldF_{\nu, \shskip p}^{\mu, \shskip  0}} (z)$, and ${_{\phantom{\nu}}^{\rho} \hskip -1pt \boldF_{\nu}^{\mu }} (z) = {_{\phantom{\nu}}^{\rho} \hskip -1pt \boldF_{\nu, \shskip 0}^{\mu, \shskip  0}} (z)$.

\subsection{Hankel--Mellin transform of Bessel functions over \protect \scalebox{1.08}{$\BC$}}  \label{sec: Hankel-Mellin}

Our first theorem (in particular \eqref{1eq: main, 2}) is the complex analogue of the Weber--Schafheitlin integral   formulae in \eqref{0eq: Weber-Sch for J, Hankel}--\eqref{0eq: Weber-Sch for K, Hankel, 2}.

\begin{thm}\label{thm: Hankel}
	 Let $  \nu, \mu, \rho \in \BC$ and $ d,   p \in \BZ$.   Let $u \in \BC \smallsetminus \{0 \}$. 
	 We have\footnote{As we shall do calculus using  $\partial / \partial z$ and $ \partial / \partial \widebar{z} $,  it will be more natural to use the volume (differential) form $ i d      z \vwedge d      \widebar{z} $ for the integration on $\BC^{\times}$---the reader may as well consider it as twice the standard Lebesgue measure since $  i d      z \vwedge d      \widebar{z}  = 2 d x \vwedge d y$ for $z = x+iy$. }
	 \begin{align}\label{1eq: main, 1}
	 	\viint_{\BC^{\times}} \boldJ_{ \nu,\shskip  p}  (uz)  \boldJ_{ \mu,\shskip  d} (z)     |z|^{2 \rho   - 2}     i d      z \vwedge d      \widebar{z} =   \frac {2} {(2\pi)^{2\rho+2}} {_{\phantom{\nu}}^{\rho} \hskip -1pt \boldF_{\nu, \shskip p}^{\mu, \shskip  d}} (u^2) ,  
	 \end{align}
 or equivalently
    \begin{align}\label{1eq: main, 2}
    	\viint_{\BC^{\times}}  \boldJ_{ \nu,\shskip  p} (z)  \boldJ_{ \mu,\shskip  d}  (uz)   |z|^{2 \rho   - 2}     i d      z \vwedge d      \widebar{z} =   \frac {2} {(2\pi)^{2\rho+2} |u|^{2\rho}} {_{\phantom{\nu}}^{\rho} \hskip -1pt \boldF_{\nu, \shskip p}^{\mu, \shskip  d}} \lp \frac 1 {u^2} \rp ,  
    \end{align}
 for $  |\Re (\nu)| + |\Re (\mu)| < \Re (\rho ) < 1$. Moreover, the validity of the formulae may be extended to $|\Re (\nu) | + |\Re (\mu)| <   \Re (\rho) <   3 / 2$ if $|u| \neq 1$. 
\end{thm}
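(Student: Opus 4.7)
My approach reduces the two-dimensional integral on $\BC^{\times}$ to one-dimensional Weber--Schafheitlin integrals by polar coordinates together with an angular Fourier expansion. Writing $z = r e^{i\theta}$ and $u = s e^{i\varphi}$, so that $i\, dz \wedge d\bar z = 2 r\, dr\, d\theta$, the power series for $J_{\nu\pm p}$ shows that $J_{\nu+p}(re^{i\theta}) J_{\nu-p}(re^{-i\theta})$ only contains angular modes of the form $e^{2ik\theta}$ with $k \in \BZ$ (the non-integral $e^{i\nu\theta}$ contributions from the two factors cancel), and therefore
\begin{equation*}
\boldJ_{\nu, \shskip p}(re^{i\theta}) \;=\; \sum_{k \in \BZ} \phi_k(r;\, \nu,\, p)\, e^{2ik\theta},
\end{equation*}
with explicit radial coefficients $\phi_k$ built from two $J$-Bessel series on $\BR_+$.

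After substituting into the left side of \eqref{1eq: main, 1}, the $\theta$-integration by orthogonality forces the pairing of mode $k$ on the $uz$-side with mode $-k$ on the $z$-side, leaving
\begin{equation*}
4\pi \sum_{k\in\BZ} e^{2ik\varphi}\, \int_0^\infty \phi_k(sr;\nu,p)\, \phi_{-k}(r;\mu,d)\, r^{2\rho-1}\, dr.
\end{equation*}
Each inner radial integral reduces, after expanding $\phi_{\pm k}$, to a classical Weber--Schafheitlin integral of the form $\int_0^\infty J_\alpha(4\pi sr) J_\beta(4\pi r) r^{2\rho-1}\, dr$, to which \eqref{0eq: Weber-Sch for J, Hankel} applies and produces a gamma factor times a ${_2F_1}$-value. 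Summing over $k$, the double angular series in $e^{2ik\varphi}$ and its conjugate should factor into a product of two complex power series in $u^2$ and in $\bar u^2$, recombining into the bilinear hypergeometric product ${_{\phantom{\nu}}^{\rho} F_{\nu+p}^{\mu+d}}(u^2)\, {_{\phantom{\nu}}^{\rho} F_{\nu-p}^{\mu-d}}(\bar u^{2})$ of \eqref{1eq: hypergoem, C, 2}. The trigonometric prefactor $\cos(\pi(\rho+\nu)) - (-1)^{d+p}\cos(\pi\mu)$ of \eqref{1eq: hypergoem, C, 2} and the antisymmetrization in $(\nu,p) \mapsto (-\nu,-p)$ of \eqref{1eq: hypergoemetric} should emerge from expanding $\boldJ_{\nu,\shskip p}\boldJ_{\mu,\shskip d}$ as a signed sum of four products $J_{\pm\nu,\pm p}\, J_{\pm\mu,\pm d}$ via \eqref{0eq: defn of Bessel}, and then collecting the reflection factors $\Gamma(s)\Gamma(1-s) = \pi/\sin(\pi s)$ that link the $(+\nu,+\mu)$ and $(-\nu,-\mu)$ terms. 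The equivalent formulation \eqref{1eq: main, 2} follows from \eqref{1eq: main, 1} by the substitution $z \mapsto z/u$.

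The main obstacle I anticipate is convergence and its analytic continuation. Since $\boldJ_{\nu,\shskip p}(z)$ decays only like $|z|^{-1}$ with oscillation at infinity, absolute integrability of the integrand with volume element $r\,dr\,d\theta$ restricts us a priori to $\Re(\rho) < 1$, matching the primary range in the statement. The extension to $\Re(\rho) < 3/2$ under $|u| \neq 1$ must be justified by exploiting cancellation in the oscillatory tails---exactly the mechanism responsible for the discontinuous Weber--Schafheitlin phenomenon over $\BR$; the value $|u|=1$ is precisely where this cancellation fails, reflecting the singularity of ${_{\phantom{\nu}}^{\rho} \hskip -1pt \boldF_{\nu,\shskip p}^{\mu,\shskip d}}$ at $u^{2} = 1$ noted after \eqref{1eq: hypergeometric, 3}. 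A secondary challenge is the bookkeeping of the $\pm\nu, \pm\mu, \pm p, \pm d$ combinatorics across the four bold-Bessel terms; as a sanity check I would verify the scalar case $p = d = 0$ and the formal specialization $\mu \to 0$, $\rho \to 1$, which should recover the real analogues \eqref{0eq: Weber-Sch for J, Hankel}--\eqref{0eq: Weber-Sch for K, Hankel, 2} on the diagonal.
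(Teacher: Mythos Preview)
Your approach is genuinely different from the paper's, and it contains a real gap at the step where you claim the radial integrals are Weber--Schafheitlin integrals.

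The angular Fourier coefficients $\phi_k(r;\nu,p)$ of $\boldJ_{\nu,\shskip p}(re^{i\theta})$ are \emph{not} single Bessel functions of $r$. Expanding the defining power series shows that $\phi_k$ is a one-variable series of generalized hypergeometric type (essentially a ${_0F_3}$), not of the form $J_\alpha(cr)$. Consequently the radial integral $\int_0^\infty \phi_k(sr;\nu,p)\,\phi_{-k}(r;\mu,d)\,r^{2\rho-1}\,dr$ is not an instance of \eqref{0eq: Weber-Sch for J, Hankel}. If you further expand $\phi_{\pm k}$ termwise, each radial integral becomes an infinite sum of beta-type integrals, not Weber--Schafheitlin integrals, and you are left with a triple sum (over $k$ and the two internal series indices) that you must reorganize into the factored form ${_{\phantom{\nu}}^{\rho} f_{\nu+p}^{\mu+d}}(u^2)\,{_{\phantom{\nu}}^{\rho} f_{\nu-p}^{\mu-d}}(\bar u^2)$ together with the trigonometric prefactor. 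This resummation is the entire content of the theorem, and your proposal gives no mechanism for it beyond ``should factor'' and ``should emerge''. The four-fold $(\pm\nu,\pm\mu)$ combinatorics you flag as a sanity check is in fact secondary; the primary difficulty is this resummation, which you have not addressed.

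For contrast, the paper avoids direct computation altogether. It shows (i) that the integral ${_{\phantom{\nu}}^{\rho}\hskip -1.5pt\boldsymbol{\varPhi}_{\nu,\shskip p}^{\mu,\shskip d}}(\sqrt{u})$ satisfies the two hypergeometric differential equations \eqref{2eq: differential equations} in $u$ and $\bar u$, by manipulating the Hankel transform via the distributional identities of Appendix~\ref{append: Hankel}; (ii) that its asymptotic as $u\to 0$ matches $\frac{2}{(2\pi)^{2\rho+2}}\,{_{\phantom{\nu}}^{\rho}\hskip -1pt\boldsymbol{P}_{\nu,\shskip p}^{\mu,\shskip d}}(u)$, using the Mellin formula of Lemma~\ref{lem: Mellin}; and then (iii) invokes the uniqueness Lemma~\ref{lem: hypergeometric equation} (which rules out the cross-terms $f_{\nu+p}(u)f_{-\nu+p}(\bar u)$ by single-valuedness) to identify the integral with ${_{\phantom{\nu}}^{\rho}\hskip -1pt\boldF_{\nu,\shskip p}^{\mu,\shskip d}}(u)$. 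Analytic continuation in $\rho$ gives the extended range. Your observations about convergence and the role of $|u|=1$ are correct and align with the paper's Lemma~\ref{lem: convergence of F and G}, but they concern only the easy part of the argument.
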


The equivalence between \eqref{1eq: main, 1} and \eqref{1eq: main, 2} is clear by the changes of variables $z \ra  z/ u$ and $u \ra 1/u$. Alternatively, it is readily seen from the reciprocity formula in Lemma \ref{lem: reciprocity}. 

Note that  \eqref{0eq: general W-S, C} is indeed a special case of \eqref{1eq: main, 2} in view of \eqref{1eq: J1/2 (z)} and \eqref{1eq: special hypergeometric}.

For later applications, it is easier to rephrase Theorem \ref{thm: Hankel}  in the following form. 


\begin{cor}\label{cor: Hankel}
	Let $  \nu, \mu, \rho \in \BC$ and $ d,   p \in \BZ$.   Let $\varv, \varw \in \BC \smallsetminus \{0 \}$. We have
	 \begin{equation}\label{1eq: main, 3}
	 	\viint_{\BC^{\times}} \boldJ_{ \nu,\shskip  p}  (\hskip -1pt \sqrt{\varv z}) \boldJ_{ \mu,\shskip  d} (\hskip -1pt \sqrt{\varw z})     |z|^{  \rho   - 2}     i d      z \vwedge d      \widebar{z} =   \frac {  1 } {\pi^2 (2\pi)^{2\rho} | \varw|^{\rho} } {_{\phantom{\nu}}^{\rho} \hskip -1pt \boldF_{\nu, \shskip p}^{\mu, \shskip  d}} \lp \frac {\varv} {\varw} \rp ,  
	 \end{equation}
  for $  |\Re (\nu)| + |\Re (\mu)| < \Re (\rho ) < 1$, and for $|\Re (\nu) | + |\Re (\mu)| <   \Re (\rho) <  3 / 2$ if $|\varv| \neq |\varw|$. 
\end{cor}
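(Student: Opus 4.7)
The plan is to derive Corollary \ref{cor: Hankel} directly from Theorem \ref{thm: Hankel} by a single change of variables. Since $\boldJ_{\nu, p}$ and $\boldJ_{\mu, d}$ are even in their arguments for $p, d \in \BZ$ (as noted in \S \ref{sec: Bessel C}), the expressions $\boldJ_{\nu, p}(\sqrt{\varv z})$ and $\boldJ_{\mu, d}(\sqrt{\varw z})$ are single-valued functions of $z \in \BC^{\times}$, independent of the choice of branch of the square roots.

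First, I would fix either branch of $u = \sqrt{\varv / \varw}$ and apply the substitution $z = w^{2} / \varw$, which is a two-to-one holomorphic covering $\BC^{\times}_{w} \to \BC^{\times}_{z}$ and therefore carries a compensating factor of $\tfrac{1}{2}$. The pullback of the volume form is $i\, dz \vwedge d\widebar{z} = 4 (|w|^{2} / |\varw|^{2})\, i\, dw \vwedge d\widebar{w}$, and the weight transforms as $|z|^{\rho - 2} = |w|^{2\rho - 4} / |\varw|^{\rho - 2}$. By the evenness noted above, $\boldJ_{\nu, p}(\sqrt{\varv z}) = \boldJ_{\nu, p}(uw)$ and $\boldJ_{\mu, d}(\sqrt{\varw z}) = \boldJ_{\mu, d}(w)$. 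Collecting these pieces yields
\[
\viint_{\BC^{\times}} \boldJ_{\nu, p}(\sqrt{\varv z})\, \boldJ_{\mu, d}(\sqrt{\varw z})\, |z|^{\rho - 2}\, i\, dz \vwedge d\widebar{z} = \frac{2}{|\varw|^{\rho}} \viint_{\BC^{\times}} \boldJ_{\nu, p}(uw)\, \boldJ_{\mu, d}(w)\, |w|^{2\rho - 2}\, i\, dw \vwedge d\widebar{w}.
\]

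Next, I would invoke \eqref{1eq: main, 1} of Theorem \ref{thm: Hankel} (with the roles of the variables being $u$ and $w$), which evaluates the last integral as $\tfrac{2}{(2\pi)^{2\rho + 2}}\, {_{\phantom{\nu}}^{\rho} \hskip -1pt \boldF_{\nu, p}^{\mu, d}}(u^{2})$. Substituting $u^{2} = \varv / \varw$ and using the identity $(2\pi)^{2\rho + 2} = 4 \pi^{2} (2\pi)^{2\rho}$, the numerical constants collapse to $\tfrac{1}{\pi^{2} (2\pi)^{2\rho} |\varw|^{\rho}}$, matching the claimed formula. The convergence ranges are inherited from Theorem \ref{thm: Hankel}: the condition $|u| \ne 1$ there becomes $|\varv| \ne |\varw|$ in the extended range $|\Re(\nu)| + |\Re(\mu)| < \Re(\rho) < 3/2$, since $|u|^{2} = |\varv / \varw|$.

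The hard part is not so much analytic as bookkeeping: the $\tfrac{1}{2}$ from the two-to-one cover must be tracked carefully and shown to cancel cleanly against the factor $4$ from the Jacobian, producing the tidy prefactor $\tfrac{2}{|\varw|^{\rho}}$. Apart from this, no new analytic input beyond Theorem \ref{thm: Hankel} is needed; this corollary is essentially a repackaging of that theorem in variables $(\varv, \varw)$ adapted to the number-theoretic applications of Part \hyperref[part: arithm]{II}.
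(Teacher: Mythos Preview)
Your proof is correct and is essentially the same approach the paper takes: the corollary is stated as a direct reformulation of Theorem \ref{thm: Hankel}, and the paper itself uses the identical substitution (see the displayed identity for ${_{\phantom{\nu}}^{\rho} \hskip -1.5pt \boldsymbol{\varPhi}_{\nu, \shskip p}^{\mu, \shskip  d}} (\hskip -1pt \sqrt u)$ in \S\ref{sec: diff equations}) to pass between the two forms. The factor-of-two bookkeeping from the two-to-one cover is handled exactly as you describe.
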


By the Gauss formula for $ {_2F_1} (a, b; c; 1) $  (see \cite[\S 2.1]{MO-Formulas}):
\begin{align*}
	{_2F_1} (a, b; c; 1) = \frac {\Gamma (c) \Gamma (c-a-b)} {\Gamma (c-a) \Gamma (c-b)}, \hskip 15 pt \Re (a+b-c ) < 0, \, c \neq 0, -1, -2,...,
\end{align*}
 it is easy to express the integral 
\begin{align*}
	\viint_{\BC^{\times}} \boldJ_{ \nu,\shskip  p}  (\hskip -1pt \sqrt{\varv z})  \boldJ_{ \mu,\shskip  d} (\hskip -1pt \sqrt{\varv z})     |z|^{  \rho   - 2}     i d      z \vwedge d      \widebar{z} 
\end{align*}
in terms of  $\Gamma (s)$,  $\cos (\pi s)$, or $ \sin (\pi s) $, but the formula is quite long. For later use,  we only record the formula here in the spherical case. 

\begin{cor}\label{cor: Gauss} For  $  |\Re (\nu)| + |\Re (\mu)| < \Re (\rho ) < 1$, we have
	\begin{equation}
	\begin{split}
			\viint_{\BC^{\times}}  \boldJ_{ \nu } (& \hskip -1pt \sqrt{\varv z})   \boldJ_{ \mu }  (\hskip -1pt \sqrt{\varv z})   |z|^{  \rho   - 2}     i d      z \vwedge d      \widebar{z} = \frac {  \sin (\pi \rho) \Gamma (1-\rho)^2 } {2 \pi^6 (2\pi)^{2\rho} | \varv|^{\rho} }    \\
	& \cdot \big(\cos (\pi(\rho+\nu)) -	  \cos (\pi\mu) \big) \big(\cos (\pi(\rho-\nu)) -	  \cos (\pi\mu) \big)   \\
	& \cdot \Gamma \lp \frac {\rho+\nu+\mu}  2 \rp^2 \Gamma \lp \frac {\rho+\nu-\mu}  2 \rp^2 \Gamma \lp \frac {\rho-\nu+\mu}  2 \rp^2 \Gamma \lp \frac {\rho-\nu-\mu}  2 \rp^2    . 
	\end{split}
	\end{equation} 
\end{cor}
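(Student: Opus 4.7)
\medskip

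\noindent\textbf{Plan of proof.} I would specialize Corollary~\ref{cor: Hankel} to $\varw = \varv$ and $p = d = 0$, which reduces the claim to the identity
\[
\frac{1}{\pi^{2}(2\pi)^{2\rho}|\varv|^{\rho}}\,{_{\phantom{\nu}}^{\rho}\hskip -1pt \boldF_{\nu}^{\mu}}(1) \;=\; \text{(RHS of Corollary \ref{cor: Gauss})}.
\]
By the definitions in \S\ref{sec: hyper}, this amounts to evaluating $f_{\pm}:={_{\phantom{\nu}}^{\rho}\hskip -2pt f_{\pm\nu}^{\mu}}(1)$ and $F_{\pm}:={_{\phantom{\nu}}^{\rho}\hskip -1pt F_{\pm\nu}^{\mu}}(1)$, then assembling
\[
{_{\phantom{\nu}}^{\rho}\hskip -1pt \boldF_{\nu}^{\mu}}(1) \;=\; \frac{1}{\sin(\pi\nu)}\Bigl[\bigl(\cos(\pi(\rho+\nu))-\cos(\pi\mu)\bigr)F_{+}^{2} \;-\; \bigl(\cos(\pi(\rho-\nu))-\cos(\pi\mu)\bigr)F_{-}^{2}\Bigr].
\]

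\medskip

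\noindent The key input is the Gauss summation ${_2F_1}(a,b;c;1)=\Gamma(c)\Gamma(c-a-b)/(\Gamma(c-a)\Gamma(c-b))$, valid since $\Re(\rho)<1$. With $a=(\rho+\nu+\mu)/2$, $b=(\rho+\nu-\mu)/2$, $c=1+\nu$ we get $c-a-b=1-\rho$, so
\[
F_{+} \;=\; \frac{\Gamma\!\bigl(\tfrac{\rho+\nu+\mu}{2}\bigr)\Gamma\!\bigl(\tfrac{\rho+\nu-\mu}{2}\bigr)\Gamma(1-\rho)}{\Gamma\!\bigl(\tfrac{2+\nu-\rho-\mu}{2}\bigr)\Gamma\!\bigl(\tfrac{2+\nu-\rho+\mu}{2}\bigr)}.
\]
Now I would apply the reflection formula $\Gamma(s)\Gamma(1-s)=\pi/\sin(\pi s)$ to the two denominator gammas; writing $\tfrac{2+\nu-\rho\mp\mu}{2}=1-\tfrac{\rho\pm\mu-\nu}{2}$ converts them into $\Gamma\!\bigl(\tfrac{\rho-\nu+\mu}{2}\bigr)$ and $\Gamma\!\bigl(\tfrac{\rho-\nu-\mu}{2}\bigr)$ at the cost of two sine factors. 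Setting
\[
G \;=\; \Gamma\!\bigl(\tfrac{\rho+\nu+\mu}{2}\bigr)\Gamma\!\bigl(\tfrac{\rho+\nu-\mu}{2}\bigr)\Gamma\!\bigl(\tfrac{\rho-\nu+\mu}{2}\bigr)\Gamma\!\bigl(\tfrac{\rho-\nu-\mu}{2}\bigr),
\]
I obtain the pleasingly symmetric expressions
\[
F_{+} \;=\; \frac{G\,\Gamma(1-\rho)}{\pi^{2}}\sin\!\bigl(\tfrac{\pi(\rho+\mu-\nu)}{2}\bigr)\sin\!\bigl(\tfrac{\pi(\rho-\mu-\nu)}{2}\bigr),\qquad F_{-} \;=\; F_{+}\big|_{\nu\to -\nu},
\]
with $G$ invariant under $\nu\to-\nu$.

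\medskip

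\noindent The remaining step is purely trigonometric. Using the product-to-sum identity $\sin A\sin B=\tfrac12(\cos(A-B)-\cos(A+B))$, I rewrite
\[
\sin\!\bigl(\tfrac{\pi(\rho\pm\nu+\mu)}{2}\bigr)\sin\!\bigl(\tfrac{\pi(\rho\pm\nu-\mu)}{2}\bigr) \;=\; -\tfrac12\bigl(\cos(\pi(\rho\pm\nu))-\cos(\pi\mu)\bigr).
\]
Combining this with the expressions for $F_{\pm}^{2}$ shows first that the product of all four sines is exactly $\tfrac14$ times the symmetric product $(\cos(\pi(\rho+\nu))-\cos(\pi\mu))(\cos(\pi(\rho-\nu))-\cos(\pi\mu))$, which factors out cleanly; the leftover difference in the bracket collapses, via $\cos(\pi(\rho+\nu))-\cos(\pi(\rho-\nu))=-2\sin(\pi\rho)\sin(\pi\nu)$, to $-\sin(\pi\rho)\sin(\pi\nu)$. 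Dividing by $\sin(\pi\nu)$ (so the removable singularity in $\nu$ disappears, as it must since $\boldF$ is analytic in $\nu$), plugging back into Corollary~\ref{cor: Hankel}, and collecting the factor $\Gamma(1-\rho)^{2}\sin(\pi\rho)/(2\pi^{6}(2\pi)^{2\rho}|\varv|^{\rho})$ yields the claimed formula.

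\medskip

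\noindent The main obstacle is purely bookkeeping: making sure the two trigonometric simplifications line up so that $\sin(\pi\nu)$ cancels and the answer can be written symmetrically in the four arguments $\rho\pm\nu\pm\mu$. Nothing deeper than the Gauss summation formula, the gamma reflection formula, and elementary sine/cosine identities is needed, and the convergence condition $|\Re(\nu)|+|\Re(\mu)|<\Re(\rho)<1$ is precisely what is required both by Corollary~\ref{cor: Hankel} and by the Gauss summation.
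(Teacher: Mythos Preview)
Your proposal is correct and follows exactly the approach the paper indicates: specialize Corollary~\ref{cor: Hankel} at $\varw=\varv$, $p=d=0$, evaluate ${_{\phantom{\nu}}^{\rho}\hskip -1pt \boldF_{\nu}^{\mu}}(1)$ via the Gauss summation formula, and then simplify using the reflection formula and product-to-sum identities. Your bookkeeping checks out (in particular, the factorization $C_{+}F_{+}^{2}-C_{-}F_{-}^{2}=\tfrac{G^{2}\Gamma(1-\rho)^{2}}{4\pi^{4}}\,C_{+}C_{-}(C_{-}-C_{+})$ with $C_{-}-C_{+}=2\sin(\pi\rho)\sin(\pi\nu)$ is exactly what makes the $\sin(\pi\nu)$ cancel), though your verbal description of the final trigonometric collapse is a bit compressed.
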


\subsection{Revisiting the hypergeometric functions over \protect \scalebox{1.08}{$\BC$}}\label{sec: hyper, 2}

Let us also introduce the hypergeometric functions that arose in the author's previous work \cite{Qi-BE}. 

Define 
\begin{align}\label{1eq: defn of F, 21}
	F^{\rho}_{ \nu } (z) = \frac {      \Gamma (\rho + \nu)     } {   \Gamma  (1 +  \nu )  } \cdot  z^{\frac 1 2 (\rho + \nu)} {_2 F_1}  \bigg(   \frac {\rho  +   \nu } {2}, \frac {1   +   \rho +   \nu  } {2}; 1   +  \nu  \shskip; z   \bigg) .
\end{align}
Further, we define
\begin{equation}\label{1eq: sim hyper, 1}
	F^{\rho}_{\nu, \shskip p} (z) =	  2^{-2\nu} \sin (\pi (\rho+\nu)) F^{\rho}_{\nu+p} (z) F^{\rho}_{\nu-p} (\widebar{z}),  
\end{equation}
and
\begin{equation}\label{1eq: sim hyper, 2}
	\boldF^{\rho}_{\nu, \shskip p} (z) = 	\frac 1 {\sin (\pi \nu)} \big( {F}^{\rho}_{-\nu, \shskip - p} (z) -  {F}^{\rho}_{\nu, \shskip p} (z)   \big).
\end{equation}

Note that  
\begin{align*}
	{  \sqrt{\pi}  }	F^{\rho}_{ \nu } (z) =     2^{\rho + \nu - 1} \shskip   z^{\frac 1 2 \rho } \cdot {_{\phantom{\nu}}^{ \rho + \frac 1 2} \hskip -1pt  {F}_{\nu}^{\pm \frac 1 2}} (z) ,
\end{align*} 
by the duplication formula for $\Gamma (s)$. It follows that 
\begin{equation}\label{1eq: special hypergeometric}
	\pi   \boldF^{\rho}_{\nu, \shskip p} (z) =   {  2^{2\rho - 2}  }   |z|^{\rho } \cdot    {_{\phantom{\nu}}^{  \rho + \frac 1 2} \hskip -1pt \boldF_{\nu, \shskip p}^{\frac 1 2 , \shskip  0}} (z). 
\end{equation}  

The reader may find a type of hypergeometric functions in \eqref{2eq: defn of F, 21}--\eqref{3eq: F, 32} which is more general than  $  \boldF^{\rho}_{\nu, \shskip p} (z) $ but still regarded as a special case of  ${_{\phantom{\nu}}^{\rho} \hskip -1pt \boldF_{\nu, \shskip p}^{\mu, \shskip  d}} (z)$.

\subsection{Fourier transform of Bessel functions over \protect \scalebox{1.08}{$\BC$}}  \label{sec: double Fourier}

As the complex analogues of \eqref{0eq: Weber's formula}--\eqref{0eq: Weber-Sch for K}, the author proved in \cite{Qi-Sph,Qi-II-G,Qi-BE} the following Bessel integral formulae\footnote{The formulae in the author's previous papers are expressed   in the polar coordinates, but here it will be simpler to work in the Cartesian coordinates.  
}:
\begin{equation}\label{0eq: Fourier, C}
	\begin{split}
		& \viint_{\BC^{\times}}   \boldJ_{ \nu,\shskip  p}   (\hskip -1.5pt \sqrt{z})  e (- 2 \mathrm{Re} (uz) )    \frac {i d      z \vwedge d      \widebar{z}} {|z|}
		=  \frac 1 { 2|u|} e \bigg(\hskip -1pt \Re \bigg(\frac 1 {u} \bigg) \hskip -1pt \bigg) \boldJ_{ \frac 1 2 \nu,  \frac 1 2 p} \bigg(\frac 1 {4 u}\bigg), 
	\end{split}
\end{equation}
for $ |\Re (\nu)| < 1$, and 
\begin{equation}\label{0eq: general W-S, C}
	\begin{split}
		\viint_{\BC^{\times}}   \boldJ_{ \nu,\shskip  p}   (z)  & e (- 2 \mathrm{Re} (uz) )   |z|^{2 \rho - 2}  i d      z \vwedge d      \widebar{z}
		=  \frac 2 {(4\pi)^{2 \rho}}  \boldF^{\rho}_{\nu, \shskip p} \bigg( \hskip -1pt  \frac 4  {u^2} \hskip -1pt  \bigg)  ,
	\end{split}
\end{equation} 
for $  |\Re (\nu)| < \Re (\rho ) <  1 / 2$. Moreover, \eqref{0eq: general W-S, C} is valid 
for $|u| > 2$ if we only assume $  |\Re (\nu)| < \Re (\rho ) <  1 $.


\subsection{Double Fourier--Mellin transform of Bessel functions  over \protect \scalebox{1.08}{$\BC$}}

Our second theorem  (in particular, the second identity in \eqref{1eq: double Fourier, 1C}) is the complex analogue of the   integral formulae in \eqref{1eq: double Fourier, 1}--\eqref{1eq: condition for K} in the special case $\beta =  1 / 2$. It may be easily deduced by a composition of  \eqref{0eq: Fourier, C} and \eqref{0eq: general W-S, C} (hence the quadruple integral is considered  iterated), along with Kummer's quadratic transformation law (see Lemma \ref{lem: quad trans} ). 

\begin{thm}\label{thm: double Fourier} Let $p$ be even. Then for  $  |\Re (\nu)|, \Re ( \gamma ) < 1$ with $ |\Re (\nu)| <   \Re (2 \gamma ) $, when $ \Re (1/  u \varw) < 1$, we have 
	\begin{equation}\label{1eq: double Fourier, 1C}
		\begin{split}
			\viint \hskip -2pt \viint   \boldJ_{ \nu,\shskip  p}   (\hskip -1pt \sqrt{\varv z})    e ( - 2   \mathrm{Re}  ( uz + \varw \varv) )      \frac {i d      z \vwedge d      \widebar{z}} {|z|} &    \frac {i d      \varv  \vwedge d      \widebar{\varv }} { |\varv |^{2 -2\gamma} } = \\
		&	\left\{  \begin{aligned}
			& \displaystyle 	\frac {{|u |^{2\gamma - 1}} } {\pi^{2\gamma} } \boldF^{\gamma}_{\frac 1 2 \nu, \shskip \frac 1 2 p} \bigg( \hskip -1pt  \frac 1  {(1 - 2    u \varw)^2} \hskip -1pt  \bigg),  \\
			& \displaystyle \frac { 1 } {\pi (2\pi)^{   2\gamma  } |u|  |\varw|^{2\gamma} }           {_{\phantom{\nu}}^{\frac 1 2 + \gamma } \hskip -1.5pt \boldF_{ \nu, p}^{\frac 1 2 - \gamma   }} \bigg( \hskip -1pt \frac 1 {   u \varw} \hskip -1pt  \bigg),  
			\end{aligned} \right.
		\end{split}
	\end{equation} 
and the first identity is valid as long as $u \varw \neq 0$ for  $  |\Re (\nu)| <   \Re (2 \gamma ) < 1$. 
\end{thm}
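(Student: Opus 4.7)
The idea is to evaluate the quadruple integral iteratively, first in $z$ and then in $\varv$, by two successive applications of \eqref{0eq: Fourier, C} and \eqref{0eq: general W-S, C}. This directly yields the first of the two claimed identities, and the second is then extracted from it by Kummer's quadratic transformation (Lemma \ref{lem: quad trans}).

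\emph{Inner integration over $z$.} The substitution $z \mapsto z/\varv$ casts the inner double integral as an instance of \eqref{0eq: Fourier, C} with $u/\varv$ in place of $u$. Since $p$ is even, $p/2$ is an integer, and \eqref{0eq: Fourier, C} (valid for $|\Re(\nu)| < 1$) produces
\begin{equation*}
\frac{1}{2|u|}\, e\bigl(\Re(\varv/u)\bigr)\, \boldJ_{\frac 1 2 \nu,\, \frac 1 2 p}\bigl(\varv/(4u)\bigr).
\end{equation*}

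\emph{Outer integration over $\varv$.} Combining the surviving phases into $e(-2\Re((\varw - 1/(2u))\varv))$ and then rescaling $\varv = 4u\eta$ turns the outer integral into an instance of \eqref{0eq: general W-S, C} with $\rho = \gamma$ and auxiliary parameter $u' = 4u\varw - 2$. Applying \eqref{0eq: general W-S, C}, simplifying $4/u'^2 = 1/(1 - 2u\varw)^2$, and tracking the Jacobian yields the first identity with overall constant $|u|^{2\gamma-1}/\pi^{2\gamma}$. The unextended range $|\Re(\nu)| < \Re(2\gamma) < 1$ suffices by itself; the extension of \eqref{0eq: general W-S, C} to $\Re(2\gamma) < 2$ requires $|u'| > 2$, which, after writing $u\varw$ in polar form, is precisely equivalent to the hypothesis $\Re(1/(u\varw)) < 1$.

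\emph{From the first to the second identity.} Set $W = 1/(u\varw)$ and $Z = W^2/(2 - W)^2$, which equals $1/(1 - 2u\varw)^2$. The Gauss parameters of each factor ${}_{\phantom{\nu}}^{1/2+\gamma}F_{\nu \pm p}^{1/2-\gamma}(W)$ work out to $\bigl(\tfrac{1+\nu \pm p}{2},\, \gamma + \tfrac{\nu \pm p}{2};\, 1 + \nu \pm p\bigr)$, which satisfies the relation $c = 2a$ needed to invoke Kummer's quadratic transformation; the transformation sends the argument $W$ precisely to $Z$ and, after absorbing the resulting $\Gamma$-quotient through Legendre duplication, produces the clean monomial identity ${}_{\phantom{\nu}}^{1/2+\gamma}F_{\nu \pm p}^{1/2-\gamma}(W) = \sqrt{\pi}\, 2^{\gamma - (\nu \pm p)/2}\, W^{-\gamma} F^{\gamma}_{\nu/2 \pm p/2}(Z)$. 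Assembling the four components via \eqref{1eq: hypergoem, C, 2} and \eqref{1eq: sim hyper, 1}, and reconciling the mismatched trigonometric coefficients through the half-angle identities $\sin(\pi(\gamma+\nu)) + \sin(\pi\gamma) = 2\sin(\pi(\gamma + \nu/2))\cos(\pi\nu/2)$ and $\sin(\pi\nu) = 2\sin(\pi\nu/2)\cos(\pi\nu/2)$, yields
\begin{equation*}
{}_{\phantom{\nu}}^{1/2+\gamma}\boldF_{\nu, p}^{1/2-\gamma}(W) \;=\; \pi\, 2^{2\gamma}\, |W|^{-2\gamma}\, \boldF^{\gamma}_{\frac 1 2 \nu,\, \frac 1 2 p}(Z),
\end{equation*}
which converts the first identity into the second. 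The principal obstacle is this last reconciliation: the two product constructions \eqref{1eq: hypergoem, C, 2} and \eqref{1eq: sim hyper, 1} use structurally different (cosine vs.\ sine) prefactors, and one must verify that their ratio telescopes correctly under the antisymmetrization $\nu \to -\nu$, $p \to -p$ that defines the bold-face $\boldF$ from $F$.
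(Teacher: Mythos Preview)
Your proposal is correct and follows essentially the same route as the paper's proof: compose \eqref{0eq: Fourier, C} with \eqref{0eq: general W-S, C}, then invoke Kummer's quadratic transformation (Lemma~\ref{lem: quad trans}) to pass from the first identity to the second. The only cosmetic differences are that the paper first normalizes to $u=\varw$ by the substitution $z\to\sqrt{\varw/u}\,z$, $\varv\to\sqrt{u/\varw}\,\varv$ before carrying out your two steps, and that it simply cites Lemma~\ref{lem: quad trans} rather than re-deriving the trigonometric reconciliation you sketch in your third paragraph---that reconciliation is precisely the content of the proof of Lemma~\ref{lem: quad trans}, so you are redoing work the paper has already packaged.
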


By examining the arguments over $\BR$ in Appendix \ref{sec: Fourier-Mellin, R}, we formulate the following conjecture over $\BC$.

\begin{conj}\label{conj: double Fourier}
	Let $ \Re (1/  u\varw) < 1$. 	The integral formula  {\rm(}here $p$ is not necessarily even{\rm)}
	\begin{equation}\label{1eq: double Fourier, 2C} 
		\begin{split}
			\viint \hskip -2pt \viint   \boldJ_{ \nu,\shskip  p}   (\hskip -1pt \sqrt{\varv z})    e ( - 2  \mathrm{Re}  ( uz + \varw \varv) )   &   \frac {i d      z \vwedge d      \widebar{z}} {|z|^{2-2\beta} }     \frac {i d      \varv  \vwedge d      \widebar{\varv }} { |\varv |^{2 -2\gamma} } \\
			& \qquad  =   \frac { 2 } {(2\pi)^{2 \beta + 2 \gamma  } |u|^{2\beta } |\varw|^{2 \gamma}  }           {_{\phantom{\nu}}^{ \beta + \gamma  } \hskip -1.5pt \boldF_{ \nu, p}^{\beta - \gamma }} \bigg( \hskip -1pt \frac 1 {  u \varw} \hskip -1pt  \bigg) 
		\end{split}
	\end{equation} 
	is valid for 
	\begin{align}
		|\Re (\nu) | < \Re (2\beta) < \frac 3 2, \quad  	|\Re (\nu) | < \Re (2\gamma) < \min \big\{ 1 + \Re (2\beta), 3 - \Re (2\beta)  \big\}. 
	\end{align} 
\end{conj}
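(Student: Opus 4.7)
The plan is to mimic the Mellin--Barnes strategy that proves \eqref{1eq: double Fourier, 1} over $\BR$ in Appendix \ref{sec: Fourier-Mellin, R}: insert a Mellin--Barnes representation of $\boldJ_{\nu,p}(\sqrt{\varv z})$, interchange with the Fourier integrations in $z$ and $\varv$, evaluate each of the two resulting one-variable Fourier--Mellin integrals on $\BC^{\times}$ via the classical Bernstein--Gelfand--Shilov evaluation of $\viint z^{a} \widebar{z}^{b} e(-2\mathrm{Re}(uz))\, i d z \vwedge d \widebar{z}$, and recognize the surviving contour integral as the Mellin--Barnes representation of ${_{\phantom{\nu}}^{\beta+\gamma} \hskip -1.5pt \boldF_{\nu,\shskip p}^{\beta-\gamma}} (1/(u\varw))$. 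When $\beta = 1/2$, this strategy reproduces the proof of Theorem \ref{thm: double Fourier}, in which the $z$-integral is instead performed directly via the Weber-type identity \eqref{0eq: Fourier, C}.

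The essential new ingredient for general $\beta$ is an evaluation of the preliminary integral
\begin{equation*}
\viint_{\BC^{\times}} \boldJ_{\nu,p} (\hskip-1pt \sqrt z) \, e(-2\mathrm{Re}(uz))\, |z|^{2\beta - 2}\, i d z \vwedge d \widebar{z},
\end{equation*}
which reduces to \eqref{0eq: Fourier, C} only when $\beta = 1/2$. An alternative to deriving this directly is to invoke the Hankel--Mellin formula (Theorem \ref{thm: Hankel}, or Corollary \ref{cor: Hankel}) after writing one of the two Fourier--Mellin integrals as a Mellin transform of a product of Bessel functions; the presence of $\sqrt{\varv z}$ inside the Bessel function effectively places a Mellin-type weight on each of $z$ and $\varv$, and the Hankel--Mellin identity of Theorem \ref{thm: Hankel} is precisely designed to capture such a product. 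Either route produces a double Mellin--Barnes integral whose integrand is a ratio of Gamma functions times $(u\varw)^{-s} (\widebar{u\varw})^{-t}$; one or two applications of Kummer's quadratic transformation together with the reciprocity formula (Lemmas \ref{lem: quad trans}, \ref{lem: reciprocity}) should then identify this contour integral with the right-hand side of \eqref{1eq: double Fourier, 2C}. The convergence range $|\Re \nu| < \Re(2\beta) < 3/2$ and $|\Re\nu| < \Re(2\gamma) < \min\{1+\Re(2\beta), 3-\Re(2\beta)\}$ should emerge from the same exponent bookkeeping as in \eqref{1eq: condition for J}--\eqref{1eq: condition for K}, since the growth of $\boldJ_{\nu,p}$ over $\BC$ mirrors that of $J_\nu$ over $\BR$ up to factors that depend polynomially on $p$.

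The principal obstacle is that the quadruple integral is not absolutely convergent: every interchange of sums with integrals, every contour shift, and even the very meaning of ``iterated integration'' must be legitimized. The distributional Hankel transform on $\BC \smallsetminus \{0\}$ developed elsewhere in this paper, together with the regularization method introduced in \cite{Qi-BE}, should supply the necessary framework. A second difficulty is specific to $\BC$ and is flagged already in \S \ref{sec: spectral}: the Mellin--Barnes expansion of the Bessel function on $\BC$ is an \emph{infinite} sum of contour integrals (in contrast to the single Mellin--Barnes integral over $\BR$), so uniform control of this sum, together with the cancellation of spurious poles at integral values of $\nu$ arising from the non-generic definition of $\boldJ_{\nu,p}$ as a limit, will require careful bookkeeping and appears to be the most delicate step of the argument.
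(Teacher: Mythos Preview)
The statement you are attempting to prove is a \emph{conjecture}: the paper does not prove it. The paper explicitly leaves Conjecture \ref{conj: double Fourier} open, offers a purely formal heuristic via the non-convergent integral \eqref{0eq: formal integral}, and then establishes only the weak version in Theorem \ref{thm: double Fourier, 2} under a more restrictive range of parameters and a specific order of integration.

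Your proposal is therefore not being compared to a proof in the paper---there is none---and the difficulties you flag as ``the most delicate step'' are precisely the obstructions the author identifies as the reason the statement remains a conjecture. Two points in particular:
\begin{itemize}
\item You suggest evaluating $\viint_{\BC^{\times}} \boldJ_{\nu,p}(\sqrt{z})\, e(-2\Re(uz))\, |z|^{2\beta-2}\, i\, dz\wedge d\widebar{z}$ for general $\beta$. The paper notes explicitly that this generalization of \eqref{0eq: Fourier, C} ``seems difficult'' because the proof of \eqref{0eq: Fourier, C} in \cite{Qi-II-G} rests on a radial exponential formula whose combinatorial argument fails when $1/|z|$ is replaced by $1/|z|^{2-2\beta}$. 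Your plan does not offer a way around this.
\item You invoke the Mellin--Barnes representation of $\boldJ_{\nu,p}$ and propose controlling the resulting infinite sum of contour integrals. The paper points out that proving the conjecture ``would need a theory of Kummer's confluent hypergeometric function over $\BC$,'' which does not currently exist; your appeal to the distributional Hankel transform and the regularization of \cite{Qi-BE} does not supply this theory, and phrases like ``should supply the necessary framework'' and ``should emerge'' leave the actual analytic gap unfilled.
\end{itemize}
In short, your proposal correctly locates the obstacles but does not overcome them; the paper treats these same obstacles as genuine and settles for the partial result of Theorem \ref{thm: double Fourier, 2}.
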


To prove this conjecture, one would need a theory of Kummer's confluent hypergeometric function over $\BC$. Moreover, it seems difficult to generalize the formula \eqref{0eq: Fourier, C}---the first step for Theorem \ref{thm: double Fourier}---as its proof in \cite{Qi-II-G} relies on the radial exponential formula 
\begin{equation*}
	\viint  \boldJ_{ \nu,\shskip  p}   (\hskip -1.5pt \sqrt{z})  \exp (- 2 \pi c |z| )    \frac {i d      z \vwedge d      \widebar{z}} {|z|}
	= \frac {4 i^{2p}} {\pi c} K_{\nu} \lp \frac {4\pi} {c} \rp I_{p} \lp \frac {4\pi} {c} \rp , \quad \text{($\Re (c) > 0$)}, 
\end{equation*}
and the combinatorial arguments therein fail  if the $1/|z|$ in the integral were replaced by $1 / |z|^{2-2\rho}$.


A heuristic way of circumventing any knowledge of Kummer's confluent hypergeometric function over $\BC$ 
is to reformulate the double Fourier--Mellin type integral by the {formal} integral representation:  \begin{equation}\label{0eq: formal integral}
	 {2\pi^2} \boldJ_{ \mu }  (z)  =   \viint e (2 \Re (z \varv + z / \varv)) \shskip |\varv|^{2\mu - 2} i d      \varv  \vwedge d      \widebar{\varv };  
\end{equation}
the integral above is {\it formal} as it is {\it not} absolutely convergent (see \cite[\S 6]{Qi-Bessel}). Thus,   unsurprisingly,  \eqref{1eq: double Fourier, 2C} may be deduced in a {\it formal} and {\it unrigorous} manner from \eqref{1eq: main, 2} or \eqref{1eq: main, 3}.  Nevertheless, by this idea, we may prove a weak version of Conjecture {\rm \ref{conj: double Fourier}}. 

\begin{thm}\label{thm: double Fourier, 2}
The integral formula {\rm\eqref{1eq: double Fourier, 2C}}, for $u \varw \neq 0$, in  Conjecture {\rm \ref{conj: double Fourier}} is valid in the polar coordinates with $z = x e^{i\phi}$ and $\varv = y e^{i\omega}$ for
 \begin{align}
 	|\Re (\nu) | < \Re (2\beta), \Re (2\gamma), \quad \Re (\beta+\gamma) < \frac 1 2, \quad |\Re (\beta-\gamma)| < \frac 1 4, 
 \end{align} if the quadruple integral in {\rm\eqref{1eq: double Fourier, 2C}} is integrated in the order $ d \phi \, d \omega \,  d x\, d y$. 
\end{thm}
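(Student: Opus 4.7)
The plan is to make rigorous, in the polar integration order $d\phi\, d\omega\, dx\, dy$ specified by the theorem, the formal argument sketched immediately before the statement: I will Fourier-decompose both sides of \eqref{1eq: double Fourier, 2C} with respect to $\arg(u\varw)$ and check the identity mode by mode, where each mode-$k$ identity reduces to a one-dimensional Bessel integral that is furnished by Corollary \ref{cor: Hankel}.

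Writing $z = xe^{i\phi}$, $\varv = ye^{i\omega}$, $u = |u|e^{i\alpha}$, and $\varw = |\varw|e^{i\alpha_{\varw}}$, the Taylor series for $J_{\nu \pm p}$ give a Fourier expansion $\boldJ_{\nu, p}(\sqrt{\varv z}) = \sum_{k \in \BZ} B_k(\nu, p;\, xy)\, e^{ik(\phi + \omega)}$ whose coefficients $B_k$ depend only on $xy$ (the $\nu$-dependent phases from the two conjugate factors cancel). Combined with the Jacobi--Anger expansions $e(-2\Re(uz)) = \sum_{l} (-i)^l J_l(4\pi |u|x)\, e^{il(\phi + \alpha)}$ and the analogue for $\varw \varv$, orthogonality in $\phi$ and in $\omega$ selects $l = l' = -k$; using $J_{-k}(w) = (-1)^k J_k(w)$, the quadruple integral collapses to
\[
\text{LHS of \eqref{1eq: double Fourier, 2C}} \;=\; 16 \pi^2 \sum_{k \in \BZ} (-1)^k\, e^{-ik(\alpha + \alpha_{\varw})}\, H_k,
\]
where
\[
H_k \;=\; \int_0^\infty \int_0^\infty B_k(\nu, p;\, xy)\, J_k(4\pi |u| x)\, J_k(4\pi |\varw| y)\, x^{2\beta - 1}\, y^{2\gamma - 1}\, dx\, dy .
\]

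Next, I would substitute $(x, y) \to (x, s)$ with $s = xy$ (Jacobian $1/x$) and then $x = \xi / |u|$, to rewrite
\[
H_k = |u|^{2(\gamma - \beta)} \int_0^\infty B_k(\nu, p; s)\, s^{2\gamma - 1} \left[ \int_0^\infty J_k(4\pi \xi)\, J_k(4\pi |u \varw| s / \xi)\, \xi^{2(\beta - \gamma) - 1}\, d\xi \right] ds .
\]
In the stated range $|\Re(\beta - \gamma)| < 1/4$, the inner $\xi$-integral is absolutely convergent (by $J_k(w) = O(w^{|k|})$ at $0$ and $O(w^{-1/2})$ at $\infty$); by a Mellin--Barnes contour computation it evaluates to
\[
\int_0^\infty J_k(4\pi \xi)\, J_k(4\pi r / \xi)\, \xi^{2\mu - 1}\, d\xi \;=\; \frac{(-1)^k \pi\, r^{\mu}}{2}\, B_{-k}(\mu, 0;\, r),
\]
where $B_{-k}(\mu, 0; \cdot)$ denotes the $(-k)$-th Fourier coefficient of $\boldJ_{\mu}(\sqrt{r\, e^{i \psi}})$ in $\psi$ (so $B_{-k} = B_k$ in the spherical case $p = 0$); this identity is the rigorous Fourier-mode form of the formal representation \eqref{0eq: formal integral}. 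Substituting back,
\[
H_k \;=\; \frac{(-1)^k \pi}{2}\, |u/\varw|^{\gamma - \beta} \int_0^\infty s^{\beta + \gamma - 1}\, B_k(\nu, p;\, s)\, B_{-k}(\beta - \gamma, 0;\, |u\varw| s)\, ds .
\]

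Finally, applying Corollary \ref{cor: Hankel} with $\varv = 1$, $\varw = u \varw$, $\rho = \beta + \gamma$, $\mu = \beta - \gamma$, and $d = 0$, and performing the same polar Fourier decomposition on both of its sides, identifies the single integral just obtained (up to the correct constant) with the $k$-th Fourier coefficient of ${_{\phantom{\nu}}^{\beta + \gamma} \hskip -1.5pt \boldF_{\nu, p}^{\beta - \gamma}}(1/(u \varw))$ in $\alpha + \alpha_{\varw}$; reassembling the Fourier series yields the right-hand side of \eqref{1eq: double Fourier, 2C}. The main obstacle is the rigorous Mellin--Barnes evaluation of the inner Bessel integral together with the justification of Fubini for the $(x, s)$ interchange after the substitution $s = xy$: the $(x, y)$-integrand of $H_k$ is not absolutely convergent, which is precisely why the prescribed polar integration order is essential. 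The parameter constraints $|\Re \nu| < \Re(2 \beta),\, \Re(2 \gamma)$, $\Re(\beta + \gamma) < 1 / 2$, and $|\Re(\beta - \gamma)| < 1 / 4$ are calibrated exactly to secure absolute convergence of the inner Bessel integral, applicability of Fubini, and convergence of the outer $s$-integral in view of the oscillatory $O(s^{-1/2})$ decay of $B_k$ and $B_{-k}$.
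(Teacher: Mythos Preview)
Your approach is genuinely different from the paper's, and the obstacles you flag at the end are real and, as stated, unresolved.

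The paper \emph{reverses} the direction of the argument. Rather than attempting to evaluate the quadruple Fourier--Mellin integral directly, it starts from the Hankel--Mellin integral $\iint \boldJ_{\nu,\shskip p}(z)\,\boldJ_{\mu}(uz)\,|z|^{2\rho-2}\,i\,dz\wedge d\widebar z$, which is already known to equal the hypergeometric expression by Theorem~\ref{thm: Hankel}. Into this it inserts the absolutely convergent integral representation
\[
\boldJ_{\mu}(xe^{i\phi}) \;=\; \frac{2}{\pi}\int_0^\infty J_0\big(4\pi x\,|ye^{i\phi}+1/ye^{i\phi}|\big)\,y^{2\mu-1}\,dy,
\]
valid for $|\Re(\mu)|<1/4$, obtaining a triple integral that is absolutely convergent under the stated hypotheses. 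From there a sequence of changes of variable (and one application of the Bessel integral for $J_0$) unfolds the triple integral into the quadruple Fourier--Mellin integral in precisely the polar order $d\phi\,d\omega\,dx\,dy$. Every rearrangement is performed while absolute convergence is in force, so no delicate Fubini or sum--integral interchange is needed.

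Your route, by contrast, requires you to pull the Fourier sum $\sum_k$ through the $dx\,dy$ integrations and then to interchange the $(x,s)$ integrals to isolate the inner $\xi$-integral. The prescribed integration order $d\phi\,d\omega\,dx\,dy$ tells you only how the iterated integral is \emph{defined}; it does not by itself justify these subsequent rearrangements, and the $(x,y)$-integrand of $H_k$ is, as you note, not absolutely integrable. In addition, the Bessel identity you assert ``by a Mellin--Barnes contour computation'' --- relating $\int_0^\infty J_k(4\pi\xi)J_k(4\pi r/\xi)\,\xi^{2\mu-1}\,d\xi$ to the Fourier coefficient $B_{-k}(\mu,0;r)$ --- is exactly the Fourier-mode shadow of the \emph{formal} representation \eqref{0eq: formal integral}, and would itself require a careful proof. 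So your plan, while conceptually coherent, leaves three nontrivial steps open; the paper's reversal sidesteps all of them.
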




\section{Preliminaries} \label{sec: prelim}


\subsection{The classical Bessel functions} 

Recall that the Bessel function $J_{\nu} (z)$ is defined by the series
\begin{align}\label{2eq: defn of classical J}
	J_{\nu} (z) = \sum_{n=0}^{\infty} \frac {(-1)^n (z/2)^{\nu +2n}} {n! \Gamma (\nu + n + 1)}, 
\end{align}
and it satisfies the Bessel differential equation 
\begin{equation}\label{eq: Bessel Equation}
	z^2 \frac {d^2 \txw} {d z^2}  (z) + z \frac {d \txw} {d z} (z) + (  z^2 - \nu ^2  )  \txw(z) = 0 .
\end{equation}


For  $|z| \Lt 1$, when $\nu  \neq -1, -2, -3, ...$, it  follows from \cite[3.13 (1)]{Watson}  that  
\begin{equation}\label{2eq: bound for J}
	  J_{\nu } (z)   = \frac {(z/2)^{\nu }} {\Gamma (\nu +1)} \lp 1 + O_{\nu }   ( |z|^2 )  \rp ,  
\end{equation}
and 
\begin{equation}\label{2eq: bound for J, 2}
	z^{r} (d/dz)^{r} J_{\nu } (z) \Lt_{\, r, \shskip \nu } \left|z^{\nu } \right| .
\end{equation} 

For $|z| \Gt 1$, it is convenient to introduce the Hankel functions  $H^{(1)}_{\nu }  (z) $ and $H^{(2)}_{\nu }  (z) $. According to \cite[3.61 (1, 2)]{Watson},  
\begin{align}
	\label{2eq: J and H} 
	& J_\nu  (z) = \frac {H_\nu ^{(1)} (z) + H_\nu ^{(2)} (z)} 2, \hskip 30 pt 
	J_{-\nu } (z) =  \frac {e^{\pi i \nu } H_\nu ^{(1)} (z) + e^{-\pi i \nu } H_\nu ^{(2)} (z) } 2.
\end{align}
By \cite[7.2 (1, 2)]{Watson} and \cite[7.13.1]{Olver},  
\begin{align}\label{2eq: asymptotic H (1)}
	H^{(1)}_{\nu } (z) &= \lp \frac 2 {\pi z} \rp^{\frac 1 2} e^{ i \lp z - \frac 12 {\pi \nu }    - \frac 14 \pi    \rp }    
	\lp \sum_{n=0}^{N-1} \frac {(-1)^n \cdot (\nu , n) } {(2iz)^n} + E^{(1)}_N (z) \rp, \\
	\label{2eq: asymptotic H (2)}
	H^{(2)}_{\nu } (z) &= \lp \frac 2 {\pi z} \rp^{\frac 1 2} e^{ - i \lp z - \frac 12 {\pi \nu }    - \frac 1 4 \pi   \rp }   
	\lp \sum_{n=0}^{N-1} \frac {  (\nu , n) } {(2iz)^n} + E^{(2)}_N (z) \rp,
\end{align} 
with $$(\nu , n) = \frac {\Gamma \lp \nu  + n +   1 / 2 \rp}   {n! \Gamma \lp \nu  - n +   1 / 2 \rp},  $$ of which \eqref{2eq: asymptotic H (1)} is valid when $z$ is such that $- \pi +  \delta \leqslant \arg z \leqslant 2 \pi -   \delta$,  and  \eqref{2eq: asymptotic H (2)} when $- 2 \pi +  \delta \leqslant \arg z \leqslant   \pi -  \delta$, for $  \delta  $ any acute angle, and  
\begin{align}\label{2eq: estimates for E}
	z^r (d/dz)^r E_N^{(1, \shskip 2)} (z)  \Lt_{\shskip \delta, \shskip r, \shskip N, \shskip \nu } 1/ |z|^{N } , 
\end{align} 
for   $|z| \Gt 1$. 

\subsection*{Remarks on the uniformity and analyticity in the orders} One may choose all the implied constants above uniformly for $\nu $  in any given compact set. Consequently,   the Bessel integrals of concern are analytic functions in $ \nu $, $\mu$, ... on the domain of convergence. As it is quite tedious, in the sequel, this kind of implied uniformity will 
not be mentioned, but the analyticity in the orders will be presumed.

\subsection{The Bessel function $\boldJ_{\nu ,\shskip  p} (z)$} 
Define 
\begin{align}\label{2eq: nabla, 0}
	\nabla = z \frac {\partial  } {\partial z }, \qquad \overline  \nabla =  \widebar z \frac {\partial  } {\partial \widebar z }. 
\end{align}
We introduce 
\begin{equation}\label{2eq: nabla}
	\nabla_{\nu }   = 4 \nabla^2 +  16 \pi^2 z  - \nu ^2 = 4 z^2 \frac {\partial^2 } {\partial z^2} + 4 z \frac {\partial  } {\partial z }    +  16 \pi^2 z  - \nu ^2 ,
\end{equation}
and
\begin{equation}\label{2eq: nabla bar}
	\overline  \nabla_{\nu }  = 4 \overline  \nabla^2 + 16 \pi^2 \widebar z  - \nu ^2  = 4 \widebar z^2 \frac {\partial^2 } {\partial \widebar z^2} +   4 \widebar z \frac {\partial  } {\partial \widebar z }  + 16 \pi^2 \widebar z  - \nu ^2  .
\end{equation}
From the definition of $\boldJ_{ \nu,\shskip  p} (z) $ as in \eqref{0def: J mu m (z)}, \eqref{0eq: defn of Bessel} and the Bessel equation \eqref{eq: Bessel Equation}, we infer that 
\begin{align}\label{2eq: nabla J = 0}
	\nabla_{\nu + p} \lp \boldJ_{ \nu,\shskip  p} (\sqrt{z} ) \rp = 0, \qquad  \overline{\nabla}_{  \nu - p} \lp \boldJ_{ \nu,\shskip  p} (\sqrt{z} ) \rp = 0.
\end{align} 
Note that $  \boldJ_{ \nu,\shskip  p} (z) $ is an even function on $\BC \smallsetminus \{0\}$.


Next, we define  
\begin{equation}\label{0def: P mu m (z)}
	D_{\nu ,\shskip  p} (z) = \frac {(|z|/2)^{2\nu} (z/|z|)^{2p} } {\Gamma (\nu +p+1) \Gamma (\nu -p+1)} ,
\end{equation}
and
\begin{equation}\label{0eq: defn of P}
	\boldsymbol{P}_{ \nu,\shskip  p} (z)  =   \frac {1} {\sin (\pi \nu)} \lp D_{-\nu,\shskip  -p} (4 \pi   z) - D_{\nu,\shskip  p} (4 \pi   z)  \rp. 
\end{equation}
Note that $D_{\nu ,\shskip  p} (z)$ is the leading term in  $J_{\nu ,\shskip  p} (z)$ in view of the expansion \eqref{2eq: defn of classical J}. 

Set $\vlambda = |\Re (\nu)|$.  It follows from \eqref{2eq: bound for J} and \eqref{2eq: bound for J, 2} that if $\nu $ is not integral\footnote{On choosing $\vlambda > |\Re (\nu)|$, the validity of   \eqref{2eq: bound for J mu m} and \eqref{2eq: bound for J mu m, weak, 2} may be extended for $\nu $ integral. However, this requires some calculations by the formulae  of $ \left.( \partial J_{\nu }  (z) /\partial \nu  ) \right|_{\nu  = \pm n}$  ($n = 0, 1, 2,...$) in \cite[\S 3.52 (1, 2)]{Watson}. It would be easier to avoid the integral case by the principle of analytic continuation. }, then, for $|z| \Lt 1$, we  have
\begin{align}\label{2eq: bound for J mu m}
	\boldJ_{ \nu,\shskip  p} (z) - \boldsymbol {P}_{ \nu,\shskip  p} (z)  \Lt_{\, \nu,\shskip  p} |z|^{2 - 2 \vlambda} ,
\end{align}  
\begin{align}\label{2eq: bound for J mu m, weak, 1}
  \boldJ_{ \nu,\shskip  p} (z)   \Lt_{  \nu,\shskip  p}  |z|^{- 2 \vlambda } ,
\end{align}
and, more generally,
\begin{align}\label{2eq: bound for J mu m, weak, 2}
	z^{\shskip r} \widebar z^{ \,s} (\partial /\partial z)^r (\partial / \partial \widebar z)^{s} \boldJ_{ \nu,\shskip  p} (z)   \Lt_{\, r,\shskip s, \shskip \nu,\shskip  p}  |z|^{- 2 \vlambda }.
\end{align}

In view of the connection formulae in \eqref{2eq: J and H}, we have an alternative expression of $ \boldJ_{ \nu,\shskip  p} (z)$ in terms of Hankel functions:
\begin{equation}\label{2eq: J = H1 + H2}
	\boldJ_{ \nu,\shskip  p} (z) =  \frac i  2 \hskip -1 pt \big( e^{  \pi i \nu} H^{(1)}_{\nu,\shskip  p} \lp 4 \pi   z \rp  - e^{-   \pi i \nu} H^{(2)}_{\nu,\shskip  p} ( 4 \pi   z ) \big),
\end{equation}
with the definition
\begin{equation}\label{0def: H mu m (z)}
	H^{(1,\shskip  2)}_{\nu,\shskip  p} (z) = H^{(1,\shskip  2)}_{\nu + p} \lp   z \rp  H^{(1,\shskip  2)}_{\nu - p} \lp  { \widebar z} \rp.
\end{equation}
The reader should be warned that the product in \eqref{0def: H mu m (z)} is {\it not well-defined} as function on $\BC \smallsetminus \{0\}$.
By \eqref{2eq: asymptotic H (1)}--\eqref{2eq: estimates for E}, we may write
\begin{align}\label{1eq: J = W + W}
	\boldJ_{ \nu,\shskip  p} (z) =   {e (4 \Re  (z))}   \boldsymbol{W}_{ \nu,\shskip  p}   (z) +   {e (- 4 \Re (z))}   \boldsymbol{W}_{ \nu,\shskip  p}   (- z) + \boldsymbol{E}^N_{ \nu,\shskip  p}  (z), \
\end{align}
where 
$\boldsymbol{W} (z)$ and $ \boldsymbol{E}_N (z) $ are real analytic functions on $\BC \smallsetminus \{0\}$ satisfying 
\begin{align}\label{1eq: derivatives of W}
	& z^{  r} \shskip \widebar z^{ \,s} (\partial /\partial z)^ r (\partial / \partial \widebar z)^{s} \boldsymbol{W}_{ \nu,\shskip  p}  (z) \Lt_{\,  r,\shskip s, \shskip N, \shskip \nu,\shskip  p} 1 / |z|,     \\
	\label{1eq: derivatives of E}
	& (\partial /\partial z)^ r (\partial / \partial \widebar z)^{s} \boldsymbol{E}_{ \nu,\shskip  p}^N   (z) \Lt_{\,  r,\shskip s, \shskip N, \shskip \nu,\shskip  p} 1 / |z|^{N+1},   
\end{align}   
for $|z|  \Gt 1$. It follows that 
\begin{align}\label{2eq: bound 1/z}
	 \boldJ_{ \nu,\shskip  p} (z) = O_{\nu, \shskip p} (1/|z|) , 
\end{align}
for $|z| \Gt 1$, and by \eqref{2eq: bound for J mu m, weak, 1},   for all $z$ provided that $ \vlambda \leqslant 1/2$.

\subsection{Mellin transform of $ \boldJ_{ \nu,\shskip  p}   (z)$} 

As will be seen later in \S \ref{sec: asymptotic}, the definition of the hypergeometric function $  {_{\phantom{\nu}}^{\rho} \hskip -1pt \boldF_{\nu, \shskip p}^{\mu, \shskip  d}} (z)$ 
as in \eqref{1eq: hypergoem, C, 1.0}--\eqref{1eq: hypergeometric, 3} stems from the following Mellin-type integral formula. 

\begin{lem}\label{lem: Mellin}
	Let   $\gamma, \nu \in \BC$ and $ h,  p  \in \BZ$.   Then 
	\begin{equation}\label{3eq: Mellin}
		\begin{split}
		&	\viint_{\BC^{\times}}   \boldJ_{ \nu,\shskip  p}   (z)    |z|^{2 \gamma   - 2} (z/|z|)^{2h}  i d      z \vwedge d      \widebar{z} = - \frac {2  (\cos (\pi\gamma) -	(-1)^{h+p} \cos (\pi\nu)) } {(2 \pi)^{2\gamma + 2}} \\
	& \cdot 	\Gamma \hskip -1.5pt  \lp \hskip -1pt \frac {\gamma+ h+\nu+p}  2 \hskip -1pt \rp \hskip -1.5pt \Gamma \hskip -1.5pt \lp \hskip -1pt \frac {\gamma+ h-\nu-p}  2 \hskip -1pt \rp \hskip -1.5pt \Gamma \hskip -1.5pt  \lp \hskip -1pt \frac {\gamma-h+\nu+p}  2 \hskip -1pt \rp \hskip -1.5pt \Gamma \hskip -1.5pt \lp \hskip -1pt \frac {\gamma-h-\nu-p}  2 \hskip -1pt \rp	\hskip -1.5pt  , 
		\end{split}
	\end{equation}
	for   $ |\mathrm{Re} (\nu)| < \mathrm{Re}(\gamma) <  1 / 2 $. 
\end{lem}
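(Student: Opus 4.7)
The plan is to pass to polar coordinates $z = re^{i\phi}$, under which $i\, dz \vwedge d\widebar{z} = 2r\, dr\, d\phi$ and the integral becomes
\begin{equation*}
2 \int_0^\infty r^{2\gamma - 1} \int_0^{2\pi} \boldJ_{\nu, p}(re^{i\phi})\, e^{2ih\phi}\, d\phi\, dr.
\end{equation*}
After inserting the definition \eqref{0eq: defn of Bessel} and the Taylor series \eqref{2eq: defn of classical J} of each $J_\mu$, the $\phi$-integral (finite, with uniformly convergent integrand on compact sets) selects only those monomials whose net angular exponent vanishes; for the $(\nu, p)$-piece, and in the case $p + h \geq 0$, this yields
\begin{equation*}
\int_0^{2\pi} J_{\nu+p}(4\pi re^{i\phi}) J_{\nu-p}(4\pi re^{-i\phi}) e^{2ih\phi}\, d\phi = 2\pi(-1)^{p+h} (2\pi r)^{2\nu + 2(p+h)} S_{\nu, p, h}(r),
\end{equation*}
where $S_{\nu, p, h}(r)$ is, up to $\Gamma$-prefactors, the generalised hypergeometric series ${}_0F_3(-;\, p+h+1,\, \nu+p+1,\, \nu+h+1;\, (2\pi r)^4)$; analogous formulae hold for the $(-\nu,-p)$-piece and for the case $p+h < 0$.

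The delicate step is the $r$-integration. Although the Mellin integral converges absolutely in the strip $|\Re(\nu)| < \Re(\gamma) < 1/2$ by the bounds \eqref{2eq: bound for J mu m, weak, 1} at $r = 0$ and \eqref{2eq: bound 1/z} at $r = \infty$, each ${}_0F_3$ piece separately is entire of exponential growth and therefore \emph{not} Mellin-integrable on its own; the cancellation built into $\boldJ_{\nu, p} = (\sin\pi\nu)^{-1}\bigl[J_{-\nu,-p}(4\pi\,\cdot) - J_{\nu, p}(4\pi\,\cdot)\bigr]$ is essential. To evaluate rigorously, I would invoke the Mellin--Barnes representation
\begin{equation*}
\frac{{}_0F_3(-; a, b, c; -x)}{\Gamma(a)\Gamma(b)\Gamma(c)} = \frac{1}{2\pi i}\int_L \frac{\Gamma(-s)\, x^s}{\Gamma(a+s)\Gamma(b+s)\Gamma(c+s)}\, ds,
\end{equation*}
merge the two pieces into a single doubly-iterated Barnes-type integrand in which the $r$-Mellin transform is absolutely convergent, perform that inner transform (which contributes a further $\Gamma$-factor from the resulting beta integral in $r$), and then shift the $s$-contour to harvest residues.

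Finally, the $(\sin\pi\nu)^{-1}$ prefactor combines with the $\Gamma$-quotients via the reflection formula $\Gamma(s)\Gamma(1-s) = \pi/\sin(\pi s)$, and the identity $\cos A - \cos B = -2\sin\tfrac{A+B}{2}\sin\tfrac{A-B}{2}$ collapses the resulting linear combination of sines into $\cos(\pi\gamma) - (-1)^{h+p}\cos(\pi\nu)$; the parity $(-1)^{h+p}$ reflects the Fourier mode $2h$ pitted against the $K$-type $p$ of $\boldJ_{\nu, p}$. The four surviving $\Gamma$-factors $\Gamma((\gamma \pm h \pm (\nu+p))/2)$ fall out of the residue calculation, while the overall normalisation $-2/(2\pi)^{2\gamma + 2}$ tracks powers of $2\pi$ and the sign of residues. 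The principal obstacle throughout is this contour bookkeeping: because the two pieces $(\pm\nu, \pm p)$ are not separately integrable, the Barnes integrals must be manipulated in tandem so that their divergent parts cancel exactly while the finite residues yielding the four $\Gamma$-factors survive---an analogue over $\BC$ of the familiar cancellation $(\sin\pi\nu)^{-1}(I_{-\nu}(x) - I_\nu(x)) = (2/\pi) K_\nu(x)$ that underlies the classical Mellin transform of $K_\nu$.
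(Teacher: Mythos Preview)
Your approach is genuinely different from the paper's. The paper does not compute the Mellin integral directly at all: it invokes an already-established formula (Theorem A.1 of the author's earlier paper \cite{Qi-BE}) expressing the \emph{cosine-modulated} integral
\[
\viint_{\BC^\times}\boldJ_{\nu,p}(z)\cos(4\pi\Re(uz))\,|z|^{2\gamma-2}(z/|z|)^{2h}\,i\,dz\vwedge d\widebar z
\;=\;\frac{2}{(4\pi)^{2\gamma}}\,\boldF^{\gamma,h}_{\nu,p}\!\Big(\frac{4}{u^2}\Big)
\]
in terms of products of Gauss hypergeometric functions, and then simply lets $u\to 0$ so that $\cos(4\pi\Re(uz))\to 1$. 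The limit on the right-hand side is computed via the $z\mapsto 1/z$ transformation formula for ${}_2F_1$ together with the duplication and reflection formulae for $\Gamma$; this yields \eqref{3eq: Mellin} after short trigonometric bookkeeping. No Barnes integrals, no ${}_0F_3$.

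Your polar-coordinate reduction to a ${}_0F_3$ and a subsequent Mellin--Barnes evaluation is a reasonable plan, but as written it is a sketch rather than a proof: the decisive step---combining the two Barnes integrals for the $(\nu,p)$ and $(-\nu,-p)$ pieces so that the exponentially growing parts cancel and the radial Mellin transform becomes absolutely convergent, then performing the residue harvest---is described (``I would invoke\dots'') but not executed, and this is genuinely where all the work lies. One small inaccuracy to watch: the series you obtain has \emph{positive} argument $(2\pi r)^4$, since the alternating signs $(-1)^m(-1)^{m+p+h}=(-1)^{p+h}$ collapse to a constant; the Barnes representation you quote is for argument $-x$, so the contour for positive argument needs a different treatment. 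The paper's limit trick is much shorter precisely because it reuses heavy machinery already in hand; your direct route would be self-contained but substantially longer, and would need the contour analysis written out in full to count as a proof.
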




\begin{proof}
	Recall the definition of $ F^{\gamma}_{ \nu } (z)  $ as in \eqref{1eq: defn of F, 21}:
	\begin{align}\label{2eq: defn of F, 21}
		F^{\gamma}_{ \nu } (z) = \frac {      \Gamma (\gamma + \nu)     } {   \Gamma  (1 +  \nu )  } \cdot  z^{\frac 1 2 (\gamma + \nu)} {_2 F_1}  \bigg(   \frac {\gamma  +   \nu } {2}; \frac {1   +   \gamma +   \nu  } {2}; 1   +  \nu  \shskip; z   \bigg) .
	\end{align}
	Define 
	\begin{equation}\label{3eq: F, 31}
		F^{\gamma, \shskip h}_{\nu,\shskip  p} (z) =	2^{-2\nu} \sin (\pi (\gamma+\nu)) F^{\gamma+h}_{\nu+p} (z) F^{\gamma - h}_{\nu-p} (\widebar{z}) ,
	\end{equation}  
	and  
	\begin{equation}\label{3eq: F, 32}
		\boldF^{\gamma, \shskip h}_{\nu,\shskip  p} (z) =  \frac {1} {\sin (\pi \nu)} \big( {F}^{\gamma, \shskip h}_{-\nu, \shskip -p} (z) -  {F}^{\gamma, \shskip h}_{\nu,\shskip  p} (z)   \big) . 
	\end{equation}	 
	By \cite[Theorem A.1]{Qi-BE}, we have  
	\begin{align}\label{3eq: cos Mellin}
		\viint_{\BC^{\times}}   \boldJ_{ \nu,\shskip  p}   (z)  \cos (4\pi \mathrm{Re}(u z))  |z|^{2 \gamma   - 2} (z/|z|)^{2h}  i d      z \vwedge d      \widebar{z} = \frac 2 {(4\pi)^{2\gamma}}  \boldF^{\gamma, \shskip h}_{\nu,\shskip  p} \bigg( \hskip -1pt \frac 4  {u^2} \hskip -1pt  \bigg). 
	\end{align}
	Thus, to evaluate the Mellin integral in \eqref{3eq: Mellin}, we just need to let $4/u^2 \ra \infty$. To this end, we invoke the   transformation formula for ${_2F_1} (a,b;c; z)$ with respect to $z \ra 1/z$ in \eqref{2eq: transformation z - 1/z}. It follows that 
	\begin{align*}
		\mathop{\lim_{|z| \ra \infty}}_{\arg (z) \in (0,2\pi)} F^{\gamma}_{ \nu } (z) & =   \frac {\sqrt{\pi} \Gamma (\gamma+\nu) e^{  \frac 1 2 \pi i (\gamma + \nu)} } { \Gamma ((1+\gamma+\nu)/2) \Gamma ((2-\gamma+\nu)/ 2)}   , \\
		\mathop{\lim_{|z| \ra \infty}}_{\arg (z) \in (0,2\pi)} F^{\gamma}_{ \nu } (\widebar{z}) & =   \frac {\sqrt{\pi} \Gamma (\gamma+\nu) e^{ - \frac 1 2 \pi i (\gamma + \nu)} } { \Gamma ((1+\gamma+\nu)/2) \Gamma ((2-\gamma+\nu)/ 2)}  .
	\end{align*} 
	Note that we need to choose  $\arg(- 1) = - \pi$ or $\arg(- 1) =  \pi$ in order for $ |\arg (-z)| < \pi $ or  $ |\arg (-\widebar{z})| < \pi $ to hold, respectively. Finally, we let \eqref{3eq: F, 31}--\eqref{3eq: cos Mellin} pass to the limit, and conclude the proof by the duplication and reflection formulae for $\Gamma (s)$ followed by some   trigonometric calculations.  
\end{proof}

\begin{cor} \label{cor: Mellin, spherical}
		We have
		\begin{equation}\label{3eq: Mellin, spherical}
			\viint_{\BC^{\times}}   \boldJ_{ \nu }   (\hskip -1.5pt \sqrt{\varv z})    |z|^{  \gamma   - 2}   i d      z \vwedge d      \widebar{z} =   \frac {  \cos (\pi\nu) -   \cos (\pi\gamma)     } {\pi^2 (2 \pi)^{2\gamma } |\varv|^{\gamma} } \Gamma \lp \frac {\gamma+\nu}  2 \rp^2 \Gamma \lp \frac {\gamma-\nu}  2 \rp^2, 
		\end{equation}
		for   $ |\mathrm{Re} (\nu)| < \mathrm{Re}(\gamma) <   1/ 2 $.  
\end{cor}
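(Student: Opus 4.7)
The plan is to derive Corollary 3.2 directly from Lemma 3.1 by performing two changes of variables, in the spherical ($p=0$, $h=0$) case of the lemma.

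First, I would rescale to eliminate the parameter $\varv$ from the argument of $\boldJ_{\nu}$. Starting from the left-hand side
\[
I(\varv) = \viint_{\BC^{\times}} \boldJ_{\nu}(\sqrt{\varv z})\,|z|^{\gamma-2}\,i\,dz\vwedge d\widebar{z},
\]
the substitution $z\mapsto z/\varv$ (noting that $i\,dz\vwedge d\widebar{z}$ transforms by $|\varv|^2$ and $|z|^{\gamma-2}$ by $|\varv|^{2-\gamma}$) produces
\[
I(\varv) = |\varv|^{-\gamma}\viint_{\BC^{\times}} \boldJ_{\nu}(\sqrt{z})\,|z|^{\gamma-2}\,i\,dz\vwedge d\widebar{z}.
\]
Here $\sqrt{z}$ denotes either square root; since $\boldJ_{\nu}(z) = \boldJ_{\nu,0}(z)$ is even in $z$ (see \S\ref{sec: Bessel C}), the branch is immaterial.

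Next, I would change variables $z=u^{2}$. The holomorphic map $u\mapsto u^{2}$ is a two-to-one cover of $\BC^{\times}$ with real Jacobian $|2u|^{2}=4|u|^{2}$, so $|z|^{\gamma-2}\,i\,dz\vwedge d\widebar{z}=4|u|^{2\gamma-2}\,i\,du\vwedge d\widebar{u}$, and folding in the factor of $1/2$ for the double cover yields
\[
\viint_{\BC^{\times}} \boldJ_{\nu}(\sqrt{z})\,|z|^{\gamma-2}\,i\,dz\vwedge d\widebar{z}
= 2\viint_{\BC^{\times}} \boldJ_{\nu}(u)\,|u|^{2\gamma-2}\,i\,du\vwedge d\widebar{u}.
\]
The hypothesis $|\Re(\nu)|<\Re(\gamma)<1/2$ is exactly what is needed for this Mellin integral to converge absolutely and to invoke Lemma \ref{lem: Mellin}.

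Finally, I would apply Lemma \ref{lem: Mellin} with $p=0$, $h=0$, which simplifies the trigonometric prefactor to $\cos(\pi\gamma)-\cos(\pi\nu)$ and collapses the four gamma factors into $\Gamma\bigl((\gamma+\nu)/2\bigr)^{2}\Gamma\bigl((\gamma-\nu)/2\bigr)^{2}$. Combining with the factors $|\varv|^{-\gamma}$ and $2$ accumulated from the two substitutions, and simplifying $4/(2\pi)^{2\gamma+2}=1/(\pi^{2}(2\pi)^{2\gamma})$, one obtains the claimed identity. No step presents a serious obstacle; the only care needed is the factor $1/2$ coming from the double cover in the square-root substitution.
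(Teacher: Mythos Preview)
Your proof is correct and is precisely the intended derivation: the corollary is stated in the paper without a separate proof, being an immediate specialization of Lemma~\ref{lem: Mellin} to $p=h=0$ after the two elementary substitutions $z\mapsto z/\varv$ and $z=u^2$ that you carry out. Your bookkeeping of the Jacobian, the factor $1/2$ from the double cover, and the simplification $4/(2\pi)^{2\gamma+2}=1/(\pi^2(2\pi)^{2\gamma})$ are all correct.
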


\subsection{The classical hypergeometric function}\label{sec: classical hyper} Recall that the hypergeometric function ${_2F_1} (a, b; c; z)$  is defined by the Gauss series 
\begin{align}\label{2eq: Gauss series}
	{_2F_1} (a, b; c; z) = \sum_{n=0}^{\infty} \frac {(a)_n (b)_n } { (c)_n  n!} z^n =  \frac {\Gamma (c)} {\Gamma (a) \Gamma (b)} \sum_{n=0}^{\infty} \frac {\Gamma (a+n) \Gamma (b+n)} {\Gamma (c+n) n!} z^n
\end{align} 
within its circle of convergence $|z| < 1$, and  by analytic continuation elsewhere. The series is absolutely convergent on the unit circle $|z| = 1$ if $\Re(a+b-c) < 0$. 
The function $ {_2F_1} (a, b; c; z) $ is a single-valued analytic function of $z$ 
with a branch cut along the positive real axis from
$1$ to $\infty$. Moreover, $ 1/\Gamma (c) \cdot {_2F_1} (a, b; c; z)$ is analytic in $a$, $b$, and $c$. 
See \cite[\S 2.1]{MO-Formulas}. 

The hypergeometric differential equation reads: 
\begin{align}\label{2eq: hypergeometric equation}
	z (1-z) \frac {d^2 \txw} {d z^2}  (z) + (c - (1+a+b) z ) \frac {d \txw} {d z} (z) - a b  \txw(z) = 0.
\end{align}
and it has three regular singular
points $z = 0, 1, \infty$. According to   \cite[\S 2.2]{MO-Formulas}, in the generic case when none of  $c$, $ a-b$, and $ c-a-b$ is an integer, the functions
\begin{align*}
	 {_2F_1} (a, b; c; z), \qquad z^{1-c}  {_2F_1} (1+a-c, 1+b-c; 2 - c; z), 
\end{align*} 
form a system of (linearly independent) solutions of \eqref{2eq: hypergeometric equation} in the vicinity of $z = 0$. 

It is easy to prove that in the generic case when none of $ \nu $, $\mu$, and  $\rho$ is an integer, the functions (see \eqref{1eq: hypergoem, C, 1.0}) 
\begin{align*}
& {_{\phantom{\nu}}^{\rho} \hskip -1.5pt f_{\nu}^{\mu}} (z) =	z^{\frac 1 2  \nu }	{_2F_1} \hskip -1pt \bigg(   \frac {\rho + \nu +\mu} 2, \frac {\rho+\nu - \mu} 2  ; 1+ \nu ;  z    \bigg), \\
& {_{\phantom{\nu}}^{\rho} \hskip -1.5pt f_{-\nu}^{\mu}} (z) = z^{- \frac 1 2  \nu }	{_2F_1} \hskip -1pt \bigg(   \frac {\rho - \nu +\mu} 2, \frac {\rho-\nu - \mu} 2  ; 1 - \nu ;  z    \bigg) , 
\end{align*} form a system of solutions near $z = 0$ of the differential equation: 
\begin{align}\label{2eq: hypergeometric equation, 2}
	  z (1-z) \frac {d^2 \txw} {d z^2}  (z) +   (1- (1+\rho) z) \frac {d \txw} {d z} (z) - \bigg(   \frac {  \rho^2  - \mu^2} {4} + \frac {\nu^2} {4 z} \bigg)  \txw(z) = 0.
\end{align}

Finally, we record here  the transformation formula with respect to $z \ra 1/z$ (see \cite[\S 2.4.1]{MO-Formulas}):
\begin{equation}\label{2eq: transformation z - 1/z}
	\begin{split}
		{_2F_1} (a, b; c; z)  =\, & \frac {\Gamma (c) \Gamma (b-a)} {\Gamma (b) \Gamma (c-a) } (-z)^{-a} {_2F_1} ( a, a-c+1; a-b+1; 1/z ) \\
		+\, & \frac {\Gamma (c) \Gamma (a-b)} {\Gamma (a) \Gamma (c-b) } (-z)^{-b} {_2F_1} ( b, b-c+1; b-a+1; 1/z ),\\
		&\hskip 38 pt |\arg (-z)| < \pi, \ a-b \neq \pm n, \ \text{($n = 0, 1, 2, ...$),}
	\end{split}
\end{equation}
and a quadratic transformation formula of Kummer (see \cite[\S 2.4.2]{MO-Formulas} or \cite[\S 2.1.6 (32), (33)]{Erdelyi-HTF-1}):
\begin{align}\label{2eq: quad transform}
	{_2F_1} (a, b; 2b; z) = \lp 1- \frac z 2 \rp^{-a} {_2F_1} \hskip -1pt \lp \frac {a} 2, \frac {a+1} 2; b+\frac 1 2 ; \frac {z^2} { (2-z)^2 } \rp, \quad |z| < |2-z|. 
\end{align} 
It is only assumed that $z$ is in the vicinity of $0$ for the list of  quadratic transformation formulae in \cite[\S 2.4.2]{MO-Formulas}. The condition $ |z| < |2-z|  $ (or $\Re (z) < 1$) in \eqref{2eq: quad transform} arises from examining its proof in \cite[\S 2.1.6]{Erdelyi-HTF-1}.

\subsection{The hypergeometric function  $   {_{\protect\phantom{\nu}}^{\rho} \hskip -1pt \boldF_{\nu, \shskip p}^{\mu, \shskip  d}} (z)$} \label{sec: prel hyper} 
For $   \nabla$ and $\overline {\nabla}$ as in \eqref{2eq: nabla, 0}, define 
\begin{align}\label{2eq: nabla, 1}
	{_{\phantom{\nu}}^{\rho} \hskip -0.5pt   \nabla_{\nu }^{\mu}} =  (1-z) \nabla^2 - \rho z \shskip  \nabla - \bigg(   \frac {  \rho^2  - \mu^2} {4} z + \frac {\nu^2} {4  } \bigg),  
\end{align}
and its conjugate 
\begin{align}\label{2eq: nabla, 2}
	{_{\phantom{\nu}}^{\rho} \hskip -0.5pt \overline{\nabla}{}_{\nu }^{\mu}} = 
	(1-\widebar z) \overline\nabla^2 - \rho \widebar z \shskip \overline \nabla - \bigg(   \frac {  \rho^2  - \mu^2} {4} \widebar z + \frac {\nu^2} {4  } \bigg) . 
\end{align}
Then, in view of \eqref{1eq: hypergoem, C, 1.0}--\eqref{1eq: hypergoemetric} and \eqref{2eq: hypergeometric equation, 2}, it is clear that 
\begin{align}
	 {_{\phantom{\nu}}^{\rho} \hskip -0.5pt \nabla_{\nu + p }^{\mu + d}} \big( {_{\phantom{\nu}}^{\rho} \hskip -1pt \boldF_{\nu, \shskip p}^{\mu, \shskip  d}} (z) \big) = 0, \qquad {_{\phantom{\nu}}^{\rho} \hskip -0.5pt \overline \nabla{}_{\nu - p }^{\mu - d}} \big( {_{\phantom{\nu}}^{\rho} \hskip -1pt \boldF_{\nu, \shskip p}^{\mu, \shskip  d}} (z) \big) = 0. 
\end{align}

Next, in parallel to \eqref{1eq: hypergoem, C, 1.0}--\eqref{1eq: hypergoemetric}, we define 
\begin{equation} \label{3eq: hyper power, C, 1}
	{_{\phantom{\nu}}^{\rho} \hskip -1pt  {D}_{\nu}^{\mu}} (z) \hskip -0.5pt = \hskip -0.5pt	\frac {\Gamma  (   ({\rho + \nu +\mu}) / 2    ) \Gamma  (     {(\rho+\nu - \mu)}/ 2     )} {\Gamma (1+\nu)} \cdot  z^{\frac 1 2 \nu}  , 
\end{equation} 
\begin{equation}\label{1eq: hyper power, C, 2}
	{_{\phantom{\nu}}^{\rho} \hskip -1.5pt {D}_{\nu, \shskip p}^{\mu, \shskip  d}} (z)  =  \big( \hskip -1pt   \cos (\pi(\rho+\nu))  \hskip -0.5pt -  \hskip -1pt	(-1)^{d+p} \cos (\pi\mu) \big) \cdot {_{\phantom{\nu}}^{\rho} \hskip -1pt D_{\nu+p}^{\mu+d}} (z) {_{\phantom{\nu}}^{\rho} \hskip -1pt D_{\nu-p}^{\mu-d}} (\widebar{z}),  
\end{equation}
and 
\begin{equation}\label{1eq: hyper power}
	{_{\phantom{\nu}}^{\rho} \hskip -1pt \boldsymbol{P}_{\nu, \shskip p}^{\mu, \shskip  d}} (z) = 
	\frac 1 {\sin (\pi \nu)} \big(  {_{\phantom{\nu}}^{\rho} \hskip -1.5pt {D}_{\nu, \shskip p}^{\mu, \shskip  d}} (z)  - {_{\phantom{\nu}}^{\rho} \hskip -1.5pt {D}_{-\nu, \shskip -p}^{\mu, \shskip  d}}  (z)  \big). 
\end{equation} 
Note that $ {_{\phantom{\nu}}^{\rho} \hskip -1.5pt {D}_{\nu, \shskip p}^{\mu, \shskip  d}} (z)$ is the leading term in  ${_{\phantom{\nu}}^{\rho} \hskip -1.5pt {F}_{\nu, \shskip p}^{\mu, \shskip  d}} (z) $ in view of the Gauss series expansion \eqref{2eq: Gauss series}.  
More succinctly, one may also   write 
\begin{align}\label{1eq: hyper power, 3}
{_{\phantom{\nu}}^{\rho} \hskip -1pt \boldsymbol{P}_{\nu, \shskip p}^{\mu, \shskip  d}} (z) = 
	{_{\phantom{\nu}}^{\rho} \hskip -1.5pt {C}_{\nu, \shskip p}^{\mu, \shskip  d}} \cdot |z|^{\nu} (z/|z|)^p  + {_{\phantom{\nu}}^{\rho} \hskip -1.5pt {C}_{-\nu, \shskip -p}^{\mu, \shskip  d}} \cdot |z|^{-\nu} (z/|z|)^{-p} , 
\end{align}
in comparison with  \eqref{1eq: hypergeometric, 3}. It is   clear that 
\begin{align}\label{2eq: asymp F sim P}
{_{\phantom{\nu}}^{\rho} \hskip -1pt \boldF_{\nu, \shskip p}^{\mu, \shskip  d}} (z) 	\sim
{_{\phantom{\nu}}^{\rho} \hskip -1pt \boldsymbol{P}_{\nu, \shskip p}^{\mu, \shskip  d}} (z), \qquad z \ra 0, 
\end{align}
provided   $|\Re (\nu)| <  {1} / {2}$. 

\subsection{Transformation formulae for \protect$ {_{\protect \phantom{\nu}}^{\rho} \hskip -1pt \boldF_{\nu, \shskip p}^{\mu, \shskip  d}} (z)$} 

In this section, we apply the transformation formulae for $ {_2F_1} (a, b; c; z) $ in  \eqref{2eq: transformation z - 1/z} and \eqref{2eq: quad transform} to deduce the corresponding formulae for ${_{\phantom{\nu}}^{\rho} \hskip -1pt \boldF_{\nu, \shskip p}^{\mu, \shskip  d}} (z)$ and $  \boldF^{\rho}_{\nu, \shskip p} (z)$. 
For the proofs, we shall always assume that $\nu$ and $\mu$ are non-integral---the non-generic case follows by the limit. This kind of deduction should be easily adapted for other transformation formulae. 

The following reciprocity formula for $   {_{\phantom{\nu}}^{\rho} \hskip -1pt \boldF_{\nu, \shskip p}^{\mu, \shskip  d}} (z) $ is due to  the transformation formula for $ {_2F_1} (a, b; c; z) $ with respect to $z \ra 1/z$ as in \eqref{2eq: transformation z - 1/z}.  Since $z \leftrightarrow 1/z$ yields $(0, 1)  \leftrightarrow (1, \infty)$, the real analyticity of $   {_{\phantom{\nu}}^{\rho} \hskip -1pt \boldF_{\nu, \shskip p}^{\mu, \shskip  d}} (z) $ on $\BC \smallsetminus \{0, 1\}$ is indeed  a direct consequence. 

\begin{lem}\label{lem: reciprocity} For $z \in \BC \smallsetminus \{0, 1\}$ we have 
	\begin{equation}\label{2eq: reciprocity}
		|z|^{\rho} \cdot  {_{\phantom{\nu}}^{\rho} \hskip -1pt \boldF_{\nu, \shskip p}^{\mu, \shskip  d}} (z) =   {_{\phantom{\nu}}^{\rho} \hskip -1pt \boldF^{\nu, \shskip p}_{\mu, \shskip  d}} \lp   1 / z \rp . 
	\end{equation}
\end{lem}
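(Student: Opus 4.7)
The plan is to derive the reciprocity formula directly from the scalar $z\mapsto 1/z$ transformation in \eqref{2eq: transformation z - 1/z}, applied to each of the four Gauss hypergeometric factors that appear in the definition (\ref{1eq: hypergoem, C, 1.0})--(\ref{1eq: hypergoemetric}) of ${_{\phantom{\nu}}^{\rho} \hskip -1pt \boldF_{\nu, \shskip p}^{\mu, \shskip  d}} (z)$.  By the analyticity of both sides in $\nu,\mu$ and the principle of analytic continuation, it suffices to verify the identity generically, i.e.\ when $\nu,\mu$, and $\rho\pm\nu\pm\mu$ are not integers.

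\textbf{Step 1.} Set $a=(\rho+\nu+p+\mu+d)/2$, $b=(\rho+\nu+p-\mu-d)/2$, $c=1+\nu+p$.  Then $a-c+1=(\rho-\nu-p+\mu+d)/2$, $a-b+1=1+\mu+d$, and similarly for the $b$-term.  Substituting into \eqref{2eq: transformation z - 1/z} and collecting the $z^{(\nu+p)/2}$ prefactor of $f$, one finds
\[
{_{\phantom{\nu}}^{\rho} \hskip -1.5pt f_{\nu+p}^{\mu+d}}(z)
=C_{+}\,(-z)^{-\rho/2}\,{_{\phantom{\nu}}^{\rho} \hskip -1.5pt f_{\mu+d}^{\nu+p}}(1/z)
+C_{-}\,(-z)^{-\rho/2}\,{_{\phantom{\nu}}^{\rho} \hskip -1.5pt f_{-\mu-d}^{\nu+p}}(1/z),
\]
with $C_\pm$ equal to the ratio of $\Gamma$'s produced by \eqref{2eq: transformation z - 1/z}, after using the definition \eqref{1eq: hypergoem, C, 1.0} in reverse to recognize the ${_2F_1}$'s at $1/z$ as $f_{\pm(\mu+d)}^{\nu+p}$.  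The same expansion, with $p\mapsto -p$, $d\mapsto -d$, and $z\mapsto\widebar z$, applies to ${_{\phantom{\nu}}^{\rho} \hskip -1.5pt f_{\nu-p}^{\mu-d}}(\widebar z)$.

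\textbf{Step 2.} Multiply the two expansions and pair the branches $(-z)^{-\rho/2}(-\widebar z)^{-\rho/2}=|z|^{-\rho}$ (the $z$-independent phases that remain will be absorbed in Step 3).  This yields a sum of four terms, each of the form
\[
|z|^{-\rho}\, \Gamma\text{-ratios}\,\cdot\,{_{\phantom{\nu}}^{\rho} \hskip -1.5pt f_{\pm\mu\pm d}^{\nu\pm p}}(1/z)\,{_{\phantom{\nu}}^{\rho} \hskip -1.5pt f_{\pm\mu\mp d}^{\nu\mp p}}(\widebar{1/z}),
\]
which are precisely the building blocks of ${_{\phantom{\nu}}^{\rho} \hskip -1pt \boldF^{\nu,\shskip p}_{\mu,\shskip d}}(1/z)$ after the role of $(\nu,p)$ and $(\mu,d)$ is interchanged.

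\textbf{Step 3.} Attach the $\Gamma\Gamma/\Gamma$ prefactor and the trigonometric factor $\cos(\pi(\rho+\nu))-(-1)^{d+p}\cos(\pi\mu)$ from \eqref{1eq: hypergoem, C, 1}--\eqref{1eq: hypergoem, C, 2}.  The crucial simplification is that the combination of the $C_\pm$-gamma factors with this $\Gamma\Gamma/\Gamma$ prefactor, after repeated use of the reflection formula $\Gamma(s)\Gamma(1-s)=\pi/\sin(\pi s)$ and the duplication formula, rearranges into
\[
\frac{\Gamma((\rho+\mu+\nu)/2)\Gamma((\rho+\mu-\nu)/2)}{\Gamma(1+\mu)}\cdot\bigl(\cos(\pi(\rho+\mu))-(-1)^{d+p}\cos(\pi\nu)\bigr),
\]
which is exactly the prefactor occurring in the \emph{swapped} function ${_{\phantom{\nu}}^{\rho} \hskip -1.5pt F^{\nu,\shskip p}_{\mu,\shskip d}}$.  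Simultaneously, the residual phases $e^{\pm i\pi(\rho+\nu\pm\mu)/2}$ from the branches $(-z)^{-a}$ cancel against the $\cos$/$\sin$ factors, guaranteeing the final expression is single-valued on $\BC\smallsetminus\{0,1\}$ (as it must be).

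\textbf{Step 4.} Finally, the antisymmetrization $\frac{1}{\sin(\pi\nu)}\bigl(\cdots(\nu,p)-\cdots(-\nu,-p)\bigr)$ that defines $\boldF$ becomes, after Steps 1--3 and a reindexing of the four cross-terms, the antisymmetrization $\frac{1}{\sin(\pi\mu)}\bigl(\cdots(\mu,d)-\cdots(-\mu,-d)\bigr)$ on the right-hand side, producing $|z|^{-\rho}\,{_{\phantom{\nu}}^{\rho} \hskip -1pt \boldF^{\nu,\shskip p}_{\mu,\shskip d}}(1/z)$ as desired.

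The main obstacle will be the bookkeeping in Step 3: several $\Gamma$-ratios and trigonometric factors must collapse into the expected prefactor, and the half-integer power branches $(-z)^{-a}(-\widebar z)^{-a}$ must be tracked so that only $|z|^{-\rho}$ (and no residual $(z/|z|)^\alpha$) survives.  Both simplifications are forced by the real analyticity of the left-hand side on $\BC\smallsetminus\{0,1\}$ and the symmetry $(\nu,p,\mu,d)\leftrightarrow(-\nu,-p,\mu,d)$ of the antisymmetrization, so any stray phase must cancel.  The non-generic integer cases then follow from the limit formula recalled in \S\ref{sec: hyper}.
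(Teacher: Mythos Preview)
Your approach is the same as the paper's---apply the $z\to 1/z$ transformation \eqref{2eq: transformation z - 1/z} to each hypergeometric factor, multiply out, and simplify the resulting $\Gamma$-ratios and phases---but there is a structural gap in Steps~2 and~4 that the paper's proof addresses explicitly and yours does not.

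When you expand ${_{\phantom{\nu}}^{\rho} \hskip -1.5pt f_{\nu+p}^{\mu+d}}(z)\,{_{\phantom{\nu}}^{\rho} \hskip -1.5pt f_{\nu-p}^{\mu-d}}(\widebar z)$ via Step~1, you obtain four products at $1/z$, $1/\widebar z$: two ``diagonal'' ones ${_{\phantom{\nu}}^{\rho} \hskip -1.5pt f_{\mu+d}^{\nu+p}}(1/z)\,{_{\phantom{\nu}}^{\rho} \hskip -1.5pt f_{\mu-d}^{\nu-p}}(1/\widebar z)$ and ${_{\phantom{\nu}}^{\rho} \hskip -1.5pt f_{-\mu-d}^{\nu+p}}(1/z)\,{_{\phantom{\nu}}^{\rho} \hskip -1.5pt f_{-\mu+d}^{\nu-p}}(1/\widebar z)$, and two ``mixed'' ones ${_{\phantom{\nu}}^{\rho} \hskip -1.5pt f_{\mu+d}^{\nu+p}}(1/z)\,{_{\phantom{\nu}}^{\rho} \hskip -1.5pt f_{-\mu+d}^{\nu-p}}(1/\widebar z)$ and its partner. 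Your Step~2 asserts that all four are ``precisely the building blocks'' of ${_{\phantom{\nu}}^{\rho} \hskip -1pt \boldF^{\nu,\shskip p}_{\mu,\shskip d}}(1/z)$; this is false---the mixed products are \emph{not} of the form required by \eqref{1eq: hypergeometric, 3}, and no ``reindexing'' in Step~4 can turn them into diagonal ones. The $\nu$-antisymmetrization does not rearrange them into a $\mu$-antisymmetrization either, because under $(\nu,p)\mapsto(-\nu,-p)$ the mixed $f$-products are invariant (recall ${_{\phantom{\nu}}^{\rho} \hskip -1.5pt f_{\alpha}^{\beta}}={_{\phantom{\nu}}^{\rho} \hskip -1.5pt f_{\alpha}^{-\beta}}$), so the two $(\nu,p)$-contributions to each mixed product simply add with different scalar coefficients.

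What actually happens---and this is the heart of the paper's proof---is that after attaching the trigonometric factor $\cos(\pi(\rho+\nu))-(-1)^{d+p}\cos(\pi\mu)$ from \eqref{1eq: hypergoem, C, 2} and performing the $(\nu,p)$-antisymmetrization, the total coefficient in front of each mixed product is \emph{zero}; this is a non-trivial trigonometric identity involving the phases $e^{\pm\frac{1}{2}\pi i(\rho\pm\nu\pm\mu)}$ and the $\sin(\pi(\rho-\nu\pm\mu)/2)/\sin(\pi\mu)$ factors that \eqref{2eq: transformation z - 1/z} and the reflection formula produce. The paper states this vanishing explicitly (with an exclamation mark). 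Your Step~3 only asserts the correct diagonal coefficient and your Step~4 hides the vanishing behind the word ``reindexing''; you need to actually carry out the trigonometric calculation that shows the mixed coefficients cancel.
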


\begin{proof}

First let  $\arg (z) \in (0, 2\pi)$.  By \eqref{1eq: hypergoem, C, 1.0}, \eqref{1eq: hypergoem, C, 1}, and \eqref{2eq: transformation z - 1/z}, along with the reflection  formula for $\Gamma (s)$, we infer that
	 \begin{align*}
	 z^{ \frac 1 2  \rho  } \cdot	{_{\phantom{\nu}}^{\rho} \hskip -1.5pt F_{\nu}^{\mu}} (z)   =  & -  \frac { \sin (\pi (\rho-\nu+\mu)   /2)    } {\sin (\pi \mu)    } e^{\frac 1 2 \pi i (\rho+\nu+\mu)} \cdot {_{\phantom{\nu}}^{\rho} \hskip -1pt  {F}_{\mu}^{\nu}}  (1/z) \\
	 	& +  \frac { \sin (\pi (\rho-\nu-\mu)  /2)    } {\sin (\pi \mu)    } e^{\frac 1 2 \pi i (\rho+\nu-\mu)} \cdot  {_{\phantom{\nu}}^{\rho} \hskip -1pt  {F}_{-\mu}^{\nu}}  (1/z) ,
	 \end{align*}
 and similarly
 \begin{align*}
 \widebar{z}^{ \frac 1 2  \rho  } \cdot	{_{\phantom{\nu}}^{\rho} \hskip -1.5pt F_{\nu}^{\mu}} (\widebar{z})   =  & -  \frac { \sin (\pi (\rho-\nu+\mu)   /2)    } {\sin (\pi \mu)    } e^{- \frac 1 2 \pi i (\rho+\nu+\mu)} \cdot {_{\phantom{\nu}}^{\rho} \hskip -1pt  {F}_{\mu}^{\nu}}  (1/ \widebar{z}) \\
 	& +  \frac { \sin (\pi (\rho-\nu-\mu)  /2)    } {\sin (\pi \mu)    } e^{- \frac 1 2 \pi i (\rho+\nu-\mu)}  \cdot {_{\phantom{\nu}}^{\rho} \hskip -1pt  {F}_{-\mu}^{\nu}}  (1/ \widebar{z}) .
 \end{align*}
 It follows from applying these to \eqref{1eq: hypergoem, C, 2} and \eqref{1eq: hypergoemetric} that $|z|^{\rho} \cdot  {_{\phantom{\nu}}^{\rho} \hskip -1pt \boldF_{\nu, \shskip p}^{\mu, \shskip  d}} (z)$ is a linear combination of ${_{\phantom{\nu}}^{\rho} \hskip -1pt  {F}_{\mu+d}^{\nu+p}}  (1/z) {_{\phantom{\nu}}^{\rho} \hskip -1pt  {F}_{\mu-d}^{\nu-p}}  (1/\widebar{z})$, ${_{\phantom{\nu}}^{\rho} \hskip -1pt  {F}_{\mu+d}^{\nu+p}}  (1/z) {_{\phantom{\nu}}^{\rho} \hskip -1pt  {F}_{-\mu+d}^{\nu-p}}  (1/\widebar{z})$, and two other similar products.  By some   trigonometric calculations, it turns out that the coefficient of ${_{\phantom{\nu}}^{\rho} \hskip -1pt  {F}_{\mu+d}^{\nu+p}}  (1/z) {_{\phantom{\nu}}^{\rho} \hskip -1pt  {F}_{\mu-d}^{\nu-p}}  (1/\widebar{z})$ is exactly $  \big( \hskip -1pt   \cos (\pi(\rho+\mu))  \hskip -0.5pt -  \hskip -1pt	(-1)^{d+p} \cos (\pi\nu) \big) / \sin (\pi \mu) $, while that of ${_{\phantom{\nu}}^{\rho} \hskip -1pt  {F}_{\mu+d}^{\nu+p}}  (1/z) {_{\phantom{\nu}}^{\rho} \hskip -1pt  {F}_{-\mu+d}^{\nu-p}}  (1/\widebar{z})$ is equal to zero!

Thus \eqref{2eq: reciprocity} is proven for $z \in \BC \smallsetminus \overline{\BR}_+$ ($\arg (z) \in (0, 2\pi)$). However, its left or right hand side is real analytic on $(0, 1)$ or $(1, \infty)$, respectively, so  \eqref{2eq: reciprocity} remains valid for all $ z \in \BC \smallsetminus \{0, 1\} $. 
\end{proof}

The next formula is a   consequence of Kummer's quadratic transformation law \eqref{2eq: quad transform}. 

\begin{lem}
	\label{lem: quad trans} 
	For $ \Re (z) < 1 $ and $z \neq 0$ we have 
	\begin{equation}\label{2eq: quad trans}
		\pi \boldF^{\rho}_{\nu, \shskip p} \lp \frac {z^2} { (2-z)^2} \rp =	     {(|z|/2)^{2\rho}}      \cdot  {_{\phantom{\nu}}^{\frac 1 2 + \rho } \hskip -1.5pt \boldF_{2\nu, \shskip 2p}^{\frac 1 2 - \rho   }} (z) .  
	\end{equation} 
\end{lem}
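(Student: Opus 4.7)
The proof would proceed by applying Kummer's quadratic transformation \eqref{2eq: quad transform} at the level of the classical hypergeometric function and then unpacking the definitions of $F^{\rho}_{\nu}$, $F^{\rho}_{\nu, p}$, $\boldF^{\rho}_{\nu, p}$ and ${_{\phantom{\nu}}^{\rho} \hskip -1pt F_{\nu}^{\mu}}$, ${_{\phantom{\nu}}^{\rho} \hskip -1.5pt F_{\nu, p}^{\mu, d}}$, ${_{\phantom{\nu}}^{\rho} \hskip -1pt \boldF_{\nu, p}^{\mu, d}}$ on both sides. The hypothesis $\Re(z) < 1$ is exactly the condition $|z| < |2-z|$ needed for \eqref{2eq: quad transform}; as usual, the integral cases of $\nu$ follow by continuity so I shall assume $\nu$ non-integral throughout.

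First, in \eqref{2eq: quad transform}, take $a = \rho+\nu$ and $b = \nu + \tfrac 12$ (so $2b = 1+2\nu$ and $b+\tfrac 12 = 1+\nu$). Inverting the transformation and multiplying through by the prefactor of $F^{\rho}_\nu$ from \eqref{1eq: defn of F, 21}, the cancellation $(z^2/(2-z)^2)^{(\rho+\nu)/2}(1-z/2)^{\rho+\nu} = (z/2)^{\rho+\nu}$ yields
\begin{align*}
F^{\rho}_\nu \bigl(z^2/(2-z)^2\bigr) = \frac{\Gamma(\rho+\nu)}{\Gamma(1+\nu)} (z/2)^{\rho+\nu}\, {_2F_1}\bigl(\rho+\nu, \nu+\tfrac 12; 1+2\nu; z\bigr).
\end{align*}
With $\rho' = \tfrac 12+\rho$, $\nu' = 2\nu$, $\mu' = \tfrac 12 - \rho$ in \eqref{1eq: hypergoem, C, 1.0}, one computes $(\rho'+\nu'\pm\mu')/2 = \nu+\tfrac 12$ or $\rho+\nu$, so that ${_{\phantom{\nu}}^{1/2+\rho} \hskip -1.5pt f_{2\nu}^{1/2-\rho}}(z) = z^{\nu}{_2F_1}(\nu+\tfrac 12, \rho+\nu; 1+2\nu; z)$. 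The Legendre duplication formula $\Gamma(1+2\nu) = (2^{2\nu}/\sqrt{\pi})\Gamma(\nu+\tfrac 12)\Gamma(1+\nu)$ then collapses the $\Gamma$-quotient in \eqref{1eq: hypergoem, C, 1} into the one above, giving the clean identity
\begin{align*}
F^{\rho}_\nu\bigl(z^2/(2-z)^2\bigr) = \frac{2^{\nu}(z/2)^{\rho}}{\sqrt{\pi}} \cdot {_{\phantom{\nu}}^{1/2+\rho} \hskip -1pt F_{2\nu}^{1/2-\rho}}(z).
\end{align*}

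Next, I would replace $\nu$ by $\nu \pm p$ (or $-\nu \mp p$) and apply the conjugate version (replacing $z$ by $\bar z$) and substitute into the bilinear definitions \eqref{1eq: sim hyper, 1} and \eqref{1eq: hypergoem, C, 2}. The $2^{\pm p}$ and $2^{\pm \nu}$ factors cancel against $2^{-2\nu}$ and $2^{2\nu}$ respectively, while $(z/2)^{\rho}(\bar z/2)^{\rho} = (|z|/2)^{2\rho}$, giving
\begin{align*}
F^{\rho}_{\nu, p}\bigl(z^2/(2-z)^2\bigr) &= \frac{\sin(\pi(\rho+\nu))}{\pi}\,(|z|/2)^{2\rho} \cdot {_{\phantom{\nu}}^{1/2+\rho} \hskip -1pt F_{2\nu+2p}^{1/2-\rho}}(z)\, {_{\phantom{\nu}}^{1/2+\rho} \hskip -1pt F_{2\nu-2p}^{1/2-\rho}}(\bar z), \\
F^{\rho}_{-\nu, -p}\bigl(z^2/(2-z)^2\bigr) &= \frac{\sin(\pi(\rho-\nu))}{\pi}\,(|z|/2)^{2\rho} \cdot {_{\phantom{\nu}}^{1/2+\rho} \hskip -1pt F_{-2\nu-2p}^{1/2-\rho}}(z)\, {_{\phantom{\nu}}^{1/2+\rho} \hskip -1pt F_{-2\nu+2p}^{1/2-\rho}}(\bar z).
\end{align*}

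Finally, I would expand ${_{\phantom{\nu}}^{1/2+\rho} \hskip -1.5pt F_{2\nu, 2p}^{1/2-\rho}}(z)$ using \eqref{1eq: hypergoem, C, 2}: since $p$ is an integer, $(-1)^{0+2p} = 1$, and the trig prefactor is
\begin{align*}
\cos(\pi(\tfrac 12+\rho+2\nu)) - \cos(\pi(\tfrac 12-\rho)) = -\sin(\pi(\rho+2\nu)) - \sin(\pi\rho) = -2\sin(\pi(\rho+\nu))\cos(\pi\nu)
\end{align*}
by the sum-to-product identity. The analogous expansion for ${_{\phantom{\nu}}^{1/2+\rho} \hskip -1.5pt F_{-2\nu, -2p}^{1/2-\rho}}(z)$ gives the coefficient $-2\sin(\pi(\rho-\nu))\cos(\pi\nu)$. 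Inserting these into \eqref{1eq: hypergoemetric} with $\nu \to 2\nu$, the denominator $\sin(2\pi\nu) = 2\sin(\pi\nu)\cos(\pi\nu)$ causes $\cos(\pi\nu)$ to cancel, yielding a linear combination of the $F^{\rho}$-products exactly matching $\pi\boldF^{\rho}_{\nu,p}(z^2/(2-z)^2)/(|z|/2)^{2\rho}$ from \eqref{1eq: sim hyper, 2}.

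The only real obstacle is bookkeeping: keeping the many factors of $\sqrt{\pi}$, $2^{\pm \nu}$, $\Gamma$-quotients and trigonometric prefactors straight, and verifying that all branch choices $(z/2)^{\rho+\nu}$, $(\bar z/2)^{\rho+\nu}$ etc. are consistent (this is fine since the identity is eventually between real analytic functions on the connected open set $\Re(z) < 1$, $z \neq 0$, and need only be checked on a subdomain where all branches are natural).
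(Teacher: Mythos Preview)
Your proposal is correct and follows essentially the same route as the paper: first derive the scalar identity $\sqrt{\pi}\,F^{\rho}_{\nu}(z^2/(2-z)^2) = 2^{\nu-\rho} z^{\rho}\,{_{\phantom{\nu}}^{1/2+\rho}\hskip -1pt F_{2\nu}^{1/2-\rho}}(z)$ from Kummer's quadratic transformation and the duplication formula, then pass to the bilinear definitions and finish with the trigonometric reduction. The only difference is that you have written out the sum-to-product step and the cancellation of $\cos(\pi\nu)$ against $\sin(2\pi\nu)$ explicitly, whereas the paper dismisses this as ``simple trigonometric calculations.''
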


\begin{proof}
	It follows from    \eqref{1eq: hypergoem, C, 1.0}, \eqref{1eq: hypergoem, C, 1} \eqref{1eq: defn of F, 21}, and \eqref{2eq: quad transform}, along with the duplication  formula for $\Gamma (s)$,  that 
	\begin{align*}
	\sqrt{\pi}	F^{\rho}_{ \nu } \lp \frac {z^2} { (2-z)^2} \rp  =    {2^{ \nu - \rho} }   z^{ \shskip  \rho  } \cdot {_{\phantom{\nu}}^{\frac 1 2 + \rho } \hskip -1.5pt F_{2\nu}^{\frac 1 2 - \rho   }} (z)  . 
	\end{align*}
Then, in view of  the definitions \eqref{1eq: hypergoem, C, 2}--\eqref{1eq: hypergoemetric} and \eqref{1eq: sim hyper, 1}--\eqref{1eq: sim hyper, 2}, the identity \eqref{2eq: quad trans} follows directly by simple   trigonometric calculations.
\end{proof}

\section{Analytic Lemmas} 

\begin{defn}\label{defn: annulus}
	For an interval $ I   \subset \BR_+$, define the  annulus 
	\begin{align*}
		\BA (I)= \big\{ z \in \BC\smallsetminus \{ 0 \} : |z| \in I \big\}{\rm;}
	\end{align*}
	usually,  we shall write $\BA (I) = \BA (c, d)$, for example,  if $I = (c, d)$ is an open interval. 
\end{defn}

\subsection{A simple lemma on oscillatory integrals} 


By Stokes' theorem, one can easily prove the following lemma. 

	\begin{lem}\label{lem: integration by parts}
	Let $a,  b \in \BC$ with $ |a| < | b| $. 
	Let $\gamma <  {1}/{2} $.  
	Suppose that $ f (z) $ is a $C^{2M}$ function on $\BC$ that is supported on the annulus $\BA [ c, \infty)$ and satisfies  $$z^{  r} \shskip \widebar z^{ s} (\partial /\partial z)^r (\partial / \partial \widebar z)^{s} f (z) \Lt_{r, \shskip s, \shskip \gamma} S |z|^{2 \gamma - 2}, $$
	for any $r+s \leqslant 2 M$.  Define
	\begin{align*}
		I ( a, b; f ) = \viint_{\BA [ c, \infty) }  f (z) e ( \mathrm{Re}  ((a  + b ) z) ) i d      z \vwedge d      \widebar{z} .
	\end{align*}
	Then  we have
	\begin{align*}
		I ( a, b; f ) =  \frac {I ( a, b ;    {\mathrm{D}^{M} f (z)} )} { (\pi i |a+b|)^{2M} } , \qquad  
		\mathrm{D} =  \partial^2 / \partial z  \partial \widebar{z}   , 
	\end{align*} 
where $ \mathrm{D}^{M} f (z)  $ is a continuous function  with support in $\BA[ c, \infty)$ and bound  
	\begin{align*}
		  \mathrm{D}^{M} f (z)  \Lt_{M, \shskip \gamma} S { |z|^{  2 \gamma -  2 M - 2}    } . 
	\end{align*}
Consequently, if $M > 0$, then  $I ( a, b; f ) $ is convergent {\rm(}it is already absolutely convergent if $\gamma < 0${\rm)} and 
	\begin{align*}
	I ( a, b; f ) \Lt_{M, \shskip \gamma } \frac {S} {c^{2 M - 2\gamma} (|b| - |a|)^{2 M} } .
	\end{align*}
Moreover, the integral $I  ( a, b; f )$ gives rise to a continuous function of $ a $ on $\BA (0, |b|)$ {\rm(}the continuity extends to all complex values of $ a $ if $ \gamma < 0 ${\rm)}.  
\end{lem}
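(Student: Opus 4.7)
The plan is to integrate by parts $2M$ times via Stokes' theorem, exploiting that the phase is an eigenfunction of $\mathrm{D}$. A direct computation from $e(\Re((a+b)z)) = \exp(\pi i(a+b)z + \pi i \overline{a+b}\,\widebar z)$ gives
\[
\mathrm{D}^M e(\Re((a+b)z)) = (\pi i |a+b|)^{2M}\, e(\Re((a+b)z)),
\]
so formally $(\pi i |a+b|)^{2M} I(a,b;f) = I(a, b; \mathrm{D}^M f)$ once $\mathrm{D}^M$ is moved from the exponential onto $f$.

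To make this rigorous, I would first truncate to the compact annulus $\BA[c, R]$ and integrate by parts there. Two kinds of boundary contributions appear. At the inner boundary $|z|=c$, all terms vanish because $f \in C^{2M}(\BC)$ is identically zero on $|z|<c$, which forces all its partial derivatives up to order $2M$ to vanish along $|z|=c$. At the outer boundary $|z|=R$, a typical term is a line integral of $(\partial_z^r \partial_{\widebar z}^s f)\cdot (\partial_z^{r'}\partial_{\widebar z}^{s'} e)$ with $r+s+r'+s' = 2M-1$; the hypothesis bounds the $f$-factor by $S R^{2\gamma - 2 - r - s}$, the exponential derivatives by $|a+b|^{r' + s'}$, and the arclength contributes $R$, giving an overall bound $S|a+b|^{r' + s'} R^{2\gamma - 1 - r - s}$, which tends to zero as $R \to \infty$ precisely because $\gamma < 1/2$ and $r+s \geq 0$. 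Passing to the limit yields both the convergence of the truncated integrals $I_R(a,b;f)$ and the stated identity.

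With the identity in hand, the hypothesis with $r = s = M$ yields $|\mathrm{D}^M f(z)| \Lt_{M, \gamma} S|z|^{2\gamma - 2M - 2}$, where $\mathrm{D}^M f$ is continuous and supported in $\BA[c, \infty)$. In polar coordinates the resulting tail $\int_c^{\infty} r^{2\gamma - 2M - 1} dr$ is finite whenever $M > \gamma$, which is automatic for $M \geq 1$ and $\gamma < 1/2$; this gives $|I(a, b; \mathrm{D}^M f)| \Lt_{M,\gamma} S\, c^{2\gamma - 2M}$. Combining with the triangle inequality $|a+b| \geq |b| - |a| > 0$ furnishes the desired estimate $I(a,b;f) \Lt S/(c^{2M-2\gamma}(|b|-|a|)^{2M})$.

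Continuity of $I(a, b; f)$ in $a$ follows by dominated convergence: when $\gamma < 0$, the original integrand $|f(z)|$ is an $a$-independent integrable majorant valid for all $a \in \BC$; when $0 \leq \gamma < 1/2$, I apply dominated convergence to the integration-by-parts form, using that on any compact subset of $\BA(0,|b|)$ the factor $|a+b|^{-2M}$ is uniformly bounded while $|\mathrm{D}^M f(z)|$ serves as the $a$-free integrable majorant. The main (mild) obstacle in the whole argument is the careful verification that the outer-boundary terms vanish in the limit $R \to \infty$; this hinges on the arithmetic $2\gamma - 1 - r - s < 0$ for all $r, s \geq 0$, which is exactly the content of the hypothesis $\gamma < 1/2$.
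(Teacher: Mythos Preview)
Your proposal is correct and matches the paper's approach: the paper simply states ``By Stokes' theorem, one can easily prove the following lemma'' without further details, and your write-up spells out exactly that argument. The key observation that $\mathrm{D}^M$ applied to the exponential pulls out $(\pi i |a+b|)^{2M}$, the vanishing of inner-boundary terms from the support condition, and the vanishing of outer-boundary terms at $R\to\infty$ from $\gamma<1/2$ are all as intended.
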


\subsection{A uniqueness lemma for the hypergeometric equations} 

Let notation and definitions be as in \S\S  \ref{sec: hyper}, \ref{sec: classical hyper}, and \ref{sec: prel hyper}. 

\begin{lem}\label{lem: hypergeometric equation} 
	Let $ \nu $, $\mu$, and $\rho$ be non-integral.  Assume $|\Re (\nu)| <   1 / 2 $. Let $f (z)$ be a continuous function on $ \BC \smallsetminus \{0, 1\}$ which is a solution of the following two differential equations{\rm:}
	\begin{equation}\label{2eq: differential equations}
		{_{\phantom{\nu}}^{\rho} \hskip -0.5pt \nabla_{\nu + p }^{\mu + d}} \, \txw = 0, \qquad {_{\phantom{\nu}}^{\rho} \hskip -0.5pt \overline \nabla{}_{\nu - p }^{\mu - d}} \, \txw = 0 .
	\end{equation}   Suppose further that $f (z)$ admits the following asymptotic{\rm:}
\begin{align}\label{2eq: asymptoic f (z)}
	f ( z ) \sim c_{+}  |z|^{\nu} (z/|z|)^p  + c_{-} |z|^{-\nu} (z/|z|)^{-p}, \qquad |z| \ra 0.
\end{align}
Then  \begin{align}\label{2eq: identity f (z)}
	f ( z) = c_{+} \cdot {_{\phantom{\nu}}^{\rho} \hskip -2pt f_{\nu+p}^{\mu+d}} (z) \, {_{\phantom{\nu}}^{\rho} \hskip -2pt f_{\nu-p}^{\mu-d}} (\widebar z)  + c_{-} \cdot {_{\phantom{\nu}}^{\rho} \hskip -2pt f_{-\nu-p}^{\mu+d}} (z) \, {_{\phantom{\nu}}^{\rho} \hskip -2pt f_{-\nu+p}^{\mu-d}} (\widebar z)
\end{align}
for all $z \in \BC \smallsetminus \{0, 1\}$.
\end{lem}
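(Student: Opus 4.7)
The plan is to combine standard Fuchsian local theory, applied separately in $z$ and $\bar z$, with a monodromy argument to determine the shape of $f$, and then to match leading asymptotics.

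First I would observe that the equations in \eqref{2eq: differential equations} decouple into a Fuchsian ODE in $z$ (with $\bar z$ serving as a parameter) and its $\bar z$-conjugate. After upgrading the regularity of $f$ to real analyticity on $\BC\smallsetminus\{0,1\}$ by standard arguments (the leading symbols of the two operators together control both $\partial_z^2$ and $\partial_{\bar z}^2$, so one can bootstrap), I would work in the simply connected slit domain $\BC\smallsetminus\overline{\BR}_+$. Since $\nu\pm p$ is non-integral, the local theory of \S\ref{sec: classical hyper} applied to the hypergeometric equation \eqref{2eq: hypergeometric equation, 2} produces two independent Frobenius solutions ${_{\phantom{\nu}}^{\rho}\hskip -2pt f_{\pm(\nu+p)}^{\mu+d}}(z)$ at $z=0$. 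Writing $f(z) = A(\bar z)\,{_{\phantom{\nu}}^{\rho}\hskip -2pt f_{\nu+p}^{\mu+d}}(z) + B(\bar z)\,{_{\phantom{\nu}}^{\rho}\hskip -2pt f_{-\nu-p}^{\mu+d}}(z)$ and substituting into the conjugate equation, the coefficients $A$ and $B$ must themselves satisfy the analogous ODE in $\bar z$. Hence on the slit domain $f$ lies in the four-dimensional span of the products
\[
{_{\phantom{\nu}}^{\rho}\hskip -2pt f_{\varepsilon_1(\nu+p)}^{\mu+d}}(z)\,{_{\phantom{\nu}}^{\rho}\hskip -2pt f_{\varepsilon_2(\nu-p)}^{\mu-d}}(\bar z), \qquad \varepsilon_1,\varepsilon_2 \in \{\pm 1\}.
\]

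Next I would impose the single-valuedness of $f$ on $\BC\smallsetminus\{0,1\}$. Under analytic continuation along a small counterclockwise loop around $z=0$, the leading factor $z^{\varepsilon_1(\nu+p)/2}\bar z^{\varepsilon_2(\nu-p)/2}$ picks up the phase $e^{\pi i\varepsilon_1(\nu+p)-\pi i\varepsilon_2(\nu-p)}$. For the two diagonal choices $\varepsilon_1=\varepsilon_2$ this equals $e^{\pm 2\pi i p}=1$ since $p\in\BZ$, while for the two mixed choices $\varepsilon_1=-\varepsilon_2$ it equals $e^{\pm 2\pi i\nu}\neq 1$ by the non-integrality of $\nu$. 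Single-valuedness therefore forces the two mixed coefficients to vanish, leaving
\[
f(z) = A\,{_{\phantom{\nu}}^{\rho}\hskip -2pt f_{\nu+p}^{\mu+d}}(z)\,{_{\phantom{\nu}}^{\rho}\hskip -2pt f_{\nu-p}^{\mu-d}}(\bar z) + B\,{_{\phantom{\nu}}^{\rho}\hskip -2pt f_{-\nu-p}^{\mu+d}}(z)\,{_{\phantom{\nu}}^{\rho}\hskip -2pt f_{-\nu+p}^{\mu-d}}(\bar z),
\]
and by real analyticity this identity propagates from the slit domain to all of $\BC\smallsetminus\{0,1\}$.

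Finally I would match leading asymptotics at $z=0$: the Gauss series \eqref{2eq: Gauss series} gives ${_{\phantom{\nu}}^{\rho}\hskip -2pt f_{\pm(\nu+p)}^{\mu+d}}(z)\,{_{\phantom{\nu}}^{\rho}\hskip -2pt f_{\pm(\nu-p)}^{\mu-d}}(\bar z) \sim |z|^{\pm\nu}(z/|z|)^{\pm p}$ as $z\to 0$, with subdominant corrections of order $|z|^{\pm\nu+2}$. Under $|\Re(\nu)|<1/2$ these corrections are strictly smaller than both leading exponents $|z|^{\pm\Re(\nu)}$, so comparison with \eqref{2eq: asymptoic f (z)} pins down $A=c_+$ and $B=c_-$, which gives \eqref{2eq: identity f (z)}. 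The main obstacle will be the marginal case in which $\Re(\nu)$ is close to $0$, so that $|z|^{\nu}$ and $|z|^{-\nu}$ have comparable magnitude and one cannot read off $A$ and $B$ from a single dominant term. I would handle this by expanding $f(z)$ in angular Fourier modes $(z/|z|)^{n}$ on each circle $|z|=r$ and extracting the $(z/|z|)^{\pm p}$-component as $r\to 0$; within each mode the two radial exponents $r^{\nu}$ and $r^{-\nu}$ are distinct powers (since $\nu\neq 0$), so they can be separated in the limit, yielding $A=c_+$ and $B=c_-$ unambiguously.
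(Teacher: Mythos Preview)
Your proof is correct and follows essentially the same route as the paper's: express $f$ on a slit domain as a combination of the four Frobenius products, use single-valuedness (monodromy around $0$) to kill the two mixed products since $\nu\notin\BZ$, and then match leading asymptotics using $|\Re(\nu)|<1/2$ to determine the two remaining coefficients, finally extending across the branch cut by continuity. Two small remarks: the first correction in the product expansion is of order $|z|^{\pm\nu+1}$ rather than $|z|^{\pm\nu+2}$ (one factor of the Gauss series contributes an extra $z$ or $\bar z$), though this only strengthens your comparison; and your angular-Fourier discussion of the ``marginal case'' $\Re(\nu)\approx 0$ is a valid way to separate $|z|^{\nu}(z/|z|)^p$ from $|z|^{-\nu}(z/|z|)^{-p}$, but the paper simply observes that since $\nu\notin\BZ$ these two functions are distinct characters of $\BC^\times$ and the error terms are $o$ of both, so the coefficients are determined.
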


\begin{proof}
Since, in the generic case, any holomorphic (or anti-holomorphic) solution to $ 	{_{\phantom{\nu}}^{\rho} \hskip -0.5pt \nabla_{\nu   }^{\mu }} \, \txw = 0 $ (or ${_{\phantom{\nu}}^{\rho} \hskip -0.5pt \overline \nabla{}_{\nu }^{\mu  }} \, \txw = 0$) is a linear combination of $ {_{\phantom{\nu}}^{\rho} \hskip -2pt f_{\pm \nu }^{ \mu }} (z) $ (or ${_{\phantom{\nu}}^{\rho} \hskip -2pt f_{\pm \nu}^{\mu }} (\widebar z)$),    it follows from \eqref{2eq: differential equations} that $f (z)$ may be uniquely written as a linear combination of the products 
	\begin{align*}
	  {_{\phantom{\nu}}^{\rho} \hskip -2pt f_{\nu+p}^{\mu+d}} (z) \, {_{\phantom{\nu}}^{\rho} \hskip -2pt f_{\nu-p}^{\mu-d}} (\widebar z)  , \quad {_{\phantom{\nu}}^{\rho} \hskip -2pt f_{\nu+p}^{\mu+d}} (z) \, {_{\phantom{\nu}}^{\rho} \hskip -2pt f_{-\nu+p}^{\mu-d}} (\widebar z)  , \quad 
	  {_{\phantom{\nu}}^{\rho} \hskip -2pt f_{-\nu-p}^{\mu+d}} (z) \, {_{\phantom{\nu}}^{\rho} \hskip -2pt f_{\nu-p}^{\mu-d}} (\widebar z)  , \quad {_{\phantom{\nu}}^{\rho} \hskip -2pt f_{-\nu-p}^{\mu+d}} (z) \, {_{\phantom{\nu}}^{\rho} \hskip -2pt f_{-\nu+p}^{\mu-d}} (\widebar z) ,
	\end{align*}
or, in other words,
\begin{align*}
	f (z) = f_+ (z) + f_- (z), 
\end{align*}
with
\begin{align*}
	f_+ (z) & = a_{+} \cdot  {_{\phantom{\nu}}^{\rho} \hskip -2pt f_{\nu+p}^{\mu+d}} (z) \, {_{\phantom{\nu}}^{\rho} \hskip -2pt f_{\nu-p}^{\mu-d}} (\widebar z) + a_{-}   \cdot {_{\phantom{\nu}}^{\rho} \hskip -2pt f_{-\nu-p}^{\mu+d}} (z) \, {_{\phantom{\nu}}^{\rho} \hskip -2pt f_{-\nu+p}^{\mu-d}} (\widebar z) , \\
	f_- (z) & = b_{+} \cdot  {_{\phantom{\nu}}^{\rho} \hskip -2pt f_{\nu+p}^{\mu+d}} (z) \, {_{\phantom{\nu}}^{\rho} \hskip -2pt f_{-\nu+p}^{\mu-d}} (\widebar z) 
	 + b_{-}  \cdot  {_{\phantom{\nu}}^{\rho} \hskip -2pt f_{-\nu-p}^{\mu+d}} (z) \, {_{\phantom{\nu}}^{\rho} \hskip -2pt f_{\nu-p}^{\mu-d}} (\widebar z) ,
\end{align*}
for all $z$ away from $0$ and the branch cut from $1$ to $\infty$.   

Let us first restrict to $z \in \BA (0, 1)$. 

Of course, we need to prove $f_- (z) = 0$. In view of the definition \eqref{1eq: hypergoem, C, 1.0}, the quotient of ${_{\phantom{\nu}}^{\rho} \hskip -2pt f_{\nu+p}^{\mu+d}} (z) \, {_{\phantom{\nu}}^{\rho} \hskip -2pt f_{-\nu+p}^{\mu-d}} (\widebar z) $ and $ (z/|z|)^{\nu} $ as well as that of $ {_{\phantom{\nu}}^{\rho} \hskip -2pt f_{-\nu-p}^{\mu+d}} (z) \, {_{\phantom{\nu}}^{\rho} \hskip -2pt f_{\nu-p}^{\mu-d}} (\widebar z )$ and $ (z/|z|)^{- \nu} $ are both single-valued (real analytic) functions on  $\BA (0, 1)$. However, since $   \nu $ is not an integer, the functions $ (z/|z|)^{  \nu}$ and  $ (z/|z|)^{  - \nu}$ are {not} single-valued. On choosing $\arg (z) = 0, 2 \pi$, it follows from simple considerations that any non-trivial linear combination of  ${_{\phantom{\nu}}^{\rho} \hskip -2pt f_{\nu+p}^{\mu+d}} (z) \, {_{\phantom{\nu}}^{\rho} \hskip -2pt f_{-\nu+p}^{\mu-d}} (\widebar z) $ and $ {_{\phantom{\nu}}^{\rho} \hskip -2pt f_{-\nu-p}^{\mu+d}} (z) \, {_{\phantom{\nu}}^{\rho} \hskip -2pt f_{\nu-p}^{\mu-d}} (\widebar z) $ is not independent on $\arg (z)$ modulo $2 \pi$. So we must have $b_{+} = b_{-} = 0$ and hence  $ f_{-}(z) = 0$.

Next, we need to prove $a_{+} = c_{+}$ and $a_{-} = c_{-}$. For this, we deduce from  \eqref{1eq: hypergoem, C, 1.0} and \eqref{2eq: Gauss series} that
\begin{align*}
	  {_{\phantom{\nu}}^{\rho} \hskip -2pt f_{\nu+p}^{\mu+d}} (z) \, {_{\phantom{\nu}}^{\rho} \hskip -2pt f_{\nu-p}^{\mu-d}} (\widebar z) & = |z|^{\nu} (z/|z|)^p  + O \lp \big||z|^{1+\nu} \big| \rp, \\
	  {_{\phantom{\nu}}^{\rho} \hskip -2pt f_{-\nu-p}^{\mu+d}} (z) \, {_{\phantom{\nu}}^{\rho} \hskip -2pt f_{-\nu+p}^{\mu-d}} (\widebar z) & = |z|^{-\nu} (z/|z|)^{-p}  + O \lp \big||z|^{1- \nu  } \big| \rp,
\end{align*}
as $|z| \ra 0$. Since $|\Re (\nu)| <   1 / 2$, the two error terms have order strictly lower than the least order of the two main terms. This forces  $a_{+} = c_{+}$ and $a_{-} = c_{-}$ in order for $f (z)$ to have the prescribed asymptotic in \eqref{2eq: asymptoic f (z)}.

It is now proven that the identity \eqref{2eq: identity f (z)} is valid except on the branch cut $(1, \infty)$. However, $f (z)$ is continuous on $\BC \smallsetminus \{0, 1\}$, the identity may be extended to the branch cut.
\end{proof}

Moreover, it is of interest to note that  the ratio $c_{+} /c_{-}$ has to be unique, since neither ${_{\phantom{\nu}}^{\rho} \hskip -2pt f_{\nu+p}^{\mu+d}} (z) \, {_{\phantom{\nu}}^{\rho} \hskip -2pt f_{\nu-p}^{\mu-d}} (\widebar z) $ nor $ {_{\phantom{\nu}}^{\rho} \hskip -2pt f_{-\nu-p}^{\mu+d}} (z) \, {_{\phantom{\nu}}^{\rho} \hskip -2pt f_{-\nu+p}^{\mu-d}} (\widebar z)   $ is single-valued on $\BA (1, \infty)$. Actually, by the continuity    of   $ {_{\phantom{\nu}}^{\rho} \hskip -1pt \boldF_{\nu, \shskip p}^{\mu, \shskip  d}} (z) $, the ratio must be ${_{\phantom{\nu}}^{\rho} \hskip -1.5pt {C}_{\nu, \shskip p}^{\mu, \shskip  d}}/ \, {_{\phantom{\nu}}^{\rho} \hskip -1.5pt {C}_{-\nu, \shskip - p}^{\mu, \shskip  d}}$ (see \eqref{1eq: hypergeometric, 3}, \eqref{1eq: hyper power, 3}, and \eqref{2eq: asymp F sim P}). Thus it makes sense to say that ${_{\phantom{\nu}}^{\rho} \hskip -1pt \boldF_{\nu, \shskip p}^{\mu, \shskip  d}} (z) $ is {\it the} hypergeometric function over $\BC  $.

\section{Proof of Theorem \ref{thm: Hankel}: The Hankel--Mellin Transform}

For $u \in \BC \smallsetminus \{0\}$, consider 
\begin{align}
{_{\phantom{\nu}}^{\rho} \hskip -1.5pt \boldsymbol{\varPhi}_{\nu, \shskip p}^{\mu, \shskip  d}} (u) =	\viint   \boldJ_{ \mu,\shskip  d} (z)  \boldJ_{ \nu,\shskip  p}  (uz)   |z|^{2 \rho   - 2}     i d      z \vwedge d      \widebar{z}, 
\end{align} 
and
\begin{align}\label{4eq: defn of Pi}
{_{\phantom{\nu}}^{\rho} \hskip -1.5pt \boldsymbol{\varPi}_{\nu, \shskip p}^{\mu, \shskip  d}} (u) =	\viint 	\boldJ_{ \mu,\shskip  d} (z)   \boldsymbol{P}_{ \nu,\shskip  p}  (uz)   |z|^{2 \rho   - 2}     i d      z \vwedge d      \widebar{z} . 
\end{align} 
Let us focus  on $ u \in \BA (0, 1)  $ (see Definition \ref{defn: annulus}),  as our main concern in \S \ref{sec: asymptotic} is the asymptotic of ${_{\phantom{\nu}}^{\rho} \hskip -1.5pt \boldsymbol{\varPhi}_{\nu, \shskip p}^{\mu, \shskip  d}} (u)$ near  the origin. By symmetry, we may also consider $ u \in \BA (1, \infty)  $. 

First of all, the convergence of  ${_{\phantom{\nu}}^{\rho} \hskip -1.5pt \boldsymbol{\varPhi}_{\nu, \shskip p}^{\mu, \shskip  d}} (u)$ and   ${_{\phantom{\nu}}^{\rho} \hskip -1.5pt \boldsymbol{\varPi}_{\nu, \shskip p}^{\mu, \shskip  d}} (u)$ may be easily examined with the help of Lemma \ref{lem: integration by parts}, along with \eqref{2eq: bound for J mu m, weak, 1} and \eqref{1eq: J = W + W}--\eqref{2eq: bound 1/z}.

	\begin{lem}\label{lem: convergence of F and G}
	Let  $\nu$ and $\mu$ be non-integral. 
	
	{\rm(1)}  ${_{\phantom{\nu}}^{\rho} \hskip -1.5pt \boldsymbol{\varPhi}_{\nu, \shskip p}^{\mu, \shskip  d}} (u)$ is absolutely convergent if $|\Re (\nu) | + |\Re (\mu)| <   \Re (\rho) < 1$, and, for $u  \in \BA (0, 1)$, convergent if $|\Re (\nu) | + |\Re (\mu)| <   \Re (\rho) <   3 / 2$. 
	
	{\rm(2)} ${_{\phantom{\nu}}^{\rho} \hskip -1.5pt \boldsymbol{\varPi}_{\nu, \shskip p}^{\mu, \shskip  d}} (u)$ is convergent for $|\Re (\nu) | + |\Re (\mu)|  <   \Re (\rho) < 1 - |\Re (\nu) | $. 
	
	\noindent		Moreover, if the convergence is ensured, then ${_{\phantom{\nu}}^{\rho} \hskip -1.5pt \boldsymbol{\varPhi}_{\nu, \shskip p}^{\mu, \shskip  d}} (u)$ and  ${_{\phantom{\nu}}^{\rho} \hskip -1.5pt \boldsymbol{\varPi}_{\nu, \shskip p}^{\mu, \shskip  d}} (u)$ are continuous functions of $u $. 
\end{lem}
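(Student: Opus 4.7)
My plan is to split each integral at some small radius $|z|=c$ into an inner piece $I_0$ over $\BA(0,c]$ and an outer piece $I_\infty$ over $\BA[c,\infty)$, use the small-argument bounds to control $I_0$ absolutely, and handle $I_\infty$ either by the crude bound $\boldJ \Lt 1/|z|$ or, when sharper information is needed, by exploiting oscillation through Lemma \ref{lem: integration by parts}.

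For $\boldsymbol{\varPhi}_{\nu,p}^{\mu,d}(u)$ near $z=0$: with $\vlambda = |\Re(\nu)|$ and $\vmu = |\Re(\mu)|$, applying \eqref{2eq: bound for J mu m, weak, 1} to both $\boldJ_{\mu,d}(z)$ and $\boldJ_{\nu,p}(uz)$ gives an integrand of size $|z|^{2\Re(\rho)-2-2\vlambda-2\vmu}$, integrable against $|z|\,d|z|$ near the origin precisely when $\vlambda+\vmu<\Re(\rho)$ (by analytic continuation in $\nu$, $\mu$ the formal exclusion of integral values is harmless). Near $z=\infty$, the naive bound \eqref{2eq: bound 1/z} yields integrand of size $|z|^{2\Re(\rho)-4}$, absolutely integrable when $\Re(\rho)<1$. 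To extend convergence to $\Re(\rho)<3/2$ under the assumption $u\in\BA(0,1)$, I substitute the decomposition \eqref{1eq: J = W + W}--\eqref{1eq: derivatives of E} into both Bessel factors on a cutoff of $\BA[c,\infty)$. Taking $N$ large, the error terms $\boldsymbol{E}^N$ produce tails which are absolutely integrable. The surviving four principal products are of the form $e(4\Re((\pm 1\pm u)z))\, f(z)$, where $f(z)=W_{\mu,d}(\pm z)\,W_{\nu,p}(\pm uz)\,|z|^{2\rho-2}$ satisfies the weighted derivative bound $z^r\widebar{z}^s(\partial/\partial z)^r(\partial/\partial\widebar{z})^s f(z)\Lt|z|^{2\gamma-2}$ with $\gamma=\Re(\rho)-1<1/2$ by \eqref{1eq: derivatives of W}. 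Taking $b=\pm 4$ and $a=\pm 4u$, we have $|b|-|a|=4(1-|u|)>0$, so Lemma \ref{lem: integration by parts} with $M=1$ gives convergence and a uniform bound.

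For $\boldsymbol{\varPi}_{\nu,p}^{\mu,d}(u)$: here $\boldsymbol{P}_{\nu,p}(uz)$ is a linear combination of two monomials $|uz|^{\pm 2\nu}(uz/|uz|)^{\pm 2p}$, so no oscillation is available. The inner piece again demands $\vlambda+\vmu<\Re(\rho)$, while on the outer piece the dominant contribution is $\boldJ_{\mu,d}(z)\Lt 1/|z|$ against $|uz|^{2\vlambda}$, yielding integrand of size $|z|^{2\vlambda+2\Re(\rho)-3}$; this is integrable precisely when $\Re(\rho)<1-\vlambda$, giving the stated range.

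Finally, continuity in $u$ follows from dominated convergence: in the absolutely convergent regime the bounds above furnish a $u$-uniform majorant on compact subsets of the allowed region; in the extended range $\Re(\rho)<3/2$, the last assertion of Lemma \ref{lem: integration by parts} supplies continuity of $I(a,b;f)$ in $a=\pm 4u\in\BA(0,4)$, hence of $\boldsymbol{\varPhi}$ in $u$ on $\BA(0,1)$. The one delicate point will be verifying that the weighted derivatives of the cross terms $e(\pm 4\Re z)W_\mu(\pm z)\cdot \boldsymbol{E}^N_{\nu,p}(\pm uz)\cdot|z|^{2\rho-2}$ (and their conjugates), which mix one oscillatory factor with one error term, satisfy the hypothesis of Lemma \ref{lem: integration by parts} with acceptable constants; this is routine given the chain rule together with \eqref{1eq: derivatives of W}--\eqref{1eq: derivatives of E}, but it is the only place where care is required.
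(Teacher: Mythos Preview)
Your treatment of (1) is essentially the paper's: same splitting, same use of the decomposition \eqref{1eq: J = W + W} and Lemma~\ref{lem: integration by parts} on the four $\boldsymbol{W}\cdot\boldsymbol{W}$ pieces with $\gamma=\Re(\rho)-1$. (The cross terms $\boldsymbol{W}\cdot\boldsymbol{E}^N$ are in fact already absolutely integrable for $\Re(\rho)<3/2$ once $N\geqslant 1$, so nothing delicate is needed there.)

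The gap is in (2). Your assertion that ``no oscillation is available'' is the mistake: while $\boldsymbol{P}_{\nu,p}(uz)$ is indeed non-oscillatory, the factor $\boldJ_{\mu,d}(z)$ still carries the phases $e(\pm 4\Re z)$ at infinity via \eqref{1eq: J = W + W}. Using only the crude bound $\boldJ_{\mu,d}(z)\Lt 1/|z|$ gives an integrand of size $|z|^{2\vlambda+2\Re(\rho)-3}$, but against the two-dimensional measure $i\,dz\wedge d\widebar{z}$ this is integrable over $\BA[c,\infty)$ only when $2\vlambda+2\Re(\rho)-3<-2$, i.e.\ $\Re(\rho)<\tfrac12-\vlambda$, not $\Re(\rho)<1-\vlambda$ as you wrote. (Your integrability check here silently dropped the radial Jacobian; compare with your correct computation for $\boldsymbol{\varPhi}$ just above.)

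To reach the stated range you must exploit the oscillation of $\boldJ_{\mu,d}(z)$: apply \eqref{1eq: J = W + W} to $\boldJ_{\mu,d}$, and for the $\boldsymbol{W}_{\mu,d}(\pm z)$ pieces invoke Lemma~\ref{lem: integration by parts} with $a=0$, $b=\pm 4$, $f(z)=\txw(z)\,\boldsymbol{W}_{\mu,d}(\pm z)\,\boldsymbol{P}_{\nu,p}(uz)\,|z|^{2\rho-2}$ and $\gamma=\Re(\rho)+\vlambda-\tfrac12$; the hypothesis $\gamma<\tfrac12$ is then exactly $\Re(\rho)<1-\vlambda$. This is precisely how the paper handles $\boldsymbol{\varPi}$, and it also supplies the continuity in $u$ in this range via the last clause of Lemma~\ref{lem: integration by parts}.
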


\begin{proof}
	We partition the integrals ${_{\phantom{\nu}}^{\rho} \hskip -1.5pt \boldsymbol{\varPhi}_{\nu, \shskip p}^{\mu, \shskip  d}} (u)$ and  ${_{\phantom{\nu}}^{\rho} \hskip -1.5pt \boldsymbol{\varPi}_{\nu, \shskip p}^{\mu, \shskip  d}} (u)$ according to a partition of unity $(1- \txw(z)) + \txw(z) \equiv 1$  for $ \BA (  0, 2 ] \cup \BA [1, \infty)$,  say. 
	
	By \eqref{2eq: bound for J mu m, weak, 1}, it is obvious that the integral
	\begin{align*}
	\viint_{\BA (0,2] }    (1- \txw(z)) \boldJ_{ \mu,\shskip  d} (z)  \boldJ_{ \nu,\shskip  p}  (uz)   |z|^{2 \rho   - 2}     i d      z \vwedge d      \widebar{z} 
	\end{align*}
	is (absolutely) convergent if $ |\Re (\nu) | + |\Re (\mu)| <   \Re (\rho) $. Similarly, the same statement is true if $ \boldJ_{ \nu,\shskip  p } (uz)  $ is substituted by $\boldsymbol{P}_{ \nu,\shskip  p } (uz) $.
	
	Next, we consider the  integral
	\begin{align*}
		\viint_{\BA [1, \infty) }      \txw(z) \boldJ_{ \mu,\shskip  d} (z)  \boldJ_{ \nu,\shskip  p}  (uz)   |z|^{2 \rho   - 2}     i d      z \vwedge d      \widebar{z} . 
	\end{align*}
By \eqref{2eq: bound 1/z}, it is absolutely convergent if  $\Re (\rho) < 1$. Further, according to \eqref{1eq: J = W + W}, we may divide it into nine integrals. Four are of the same type,  containing $ \boldsymbol{W}_{\mu, \shskip d} ( \pm  z)   \boldsymbol{W}_{\nu, \shskip p} ( \pm u z) $, and their convergence for $  \Re (\rho) <   3 / 2 $ follows from an application of Lemma \ref{lem: integration by parts} with $ a = \pm 4 u$, $b = \pm 4 $, $f (z) = \txw(z)  \boldsymbol{W}_{\mu, \shskip d} ( \pm z)   \boldsymbol{W}_{\nu, \shskip p} ( \pm u z) |z|^{2 \rho   - 2} $, and $\gamma = \Re (\rho)- 1$ (by \eqref{1eq: derivatives of W}).  By \eqref{1eq: derivatives of W} and \eqref{1eq: derivatives of E}, the rest five integrals, containing $  \boldsymbol{E}_{ \mu,\shskip  d}^1   (z) $, $\boldsymbol{E}_{ \nu,\shskip  p}^1   (u z)$ or both,  are absolutely convergent when $  \Re (\rho) <   3 / 2 $. 
Moreover, similar arguments apply if $ \boldJ_{ \nu,\shskip  p } (uz)  $ is substituted by $\boldsymbol{P}_{ \nu,\shskip  p } (uz) $. Note that the application of Lemma \ref{lem: integration by parts} now uses  $ a = 0$, $b = \pm 4 $, $f (z) = \txw(z)  \boldsymbol{W}_{\mu, \shskip d} ( \pm z) \boldsymbol{P}_{\nu, \shskip p}   ( u z) |z|^{2 \rho   - 2}$, and $\gamma = \Re (\rho) + |\Re (\nu)| -   1 / 2 $. 
	\end{proof}

\subsection{Asymptotic of \protect ${_{\protect\phantom{\nu}}^{\rho} \hskip -1.5pt \boldsymbol{\varPhi}_{\nu, \shskip p}^{\mu, \shskip  d}} (u)$} \label{sec: asymptotic}

For brevity, set $\vlambda   = |\Re (\nu)|$, $\kappa = |\Re (\mu)|$, and $\beta = \Re (\rho)$. In this section, we assume 
\begin{align}\label{4eq: assump}
	\vlambda < \frac 1 4, \qquad \kappa < \frac 1 2, \qquad \vlambda + \kappa  < \beta < 1 - \vlambda. 
\end{align} Now  ${_{\phantom{\nu}}^{\rho} \hskip -1.5pt \boldsymbol{\varPhi}_{\nu, \shskip p}^{\mu, \shskip  d}} (u)$ and  ${_{\phantom{\nu}}^{\rho} \hskip -1.5pt \boldsymbol{\varPi}_{\nu, \shskip p}^{\mu, \shskip  d}} (u)$ are   both convergent.  Let  $\nu$ and $ \mu $ be non-zero. 

The first ingredient of our proof is  the following asymptotic: 
\begin{align}\label{4eq: asympt}
	{_{\phantom{\nu}}^{\rho} \hskip -1.5pt \boldsymbol{\varPhi}_{\nu, \shskip p}^{\mu, \shskip  d}} (u) \sim  \frac {2} {(2\pi)^{2\rho+ 2}}  {_{\phantom{\nu}}^{\rho} \hskip -1.5pt \boldsymbol{P}_{\nu, \shskip p}^{\mu, \shskip  d}} (u^2)   , \qquad u \ra 0.
\end{align}

\begin{lem} \label{lem: Phi sim Pi} We have 
	\begin{align}
	{_{\phantom{\nu}}^{\rho} \hskip -1.5pt \boldsymbol{\varPhi}_{\nu, \shskip p}^{\mu, \shskip  d}} (u) = 	{_{\phantom{\nu}}^{\rho} \hskip -1.5pt \boldsymbol{\varPi}_{\nu, \shskip p}^{\mu, \shskip  d}} (u) + o  (|u|^{2\vlambda  }  ) , \qquad u \ra 0. 
	\end{align}
\end{lem}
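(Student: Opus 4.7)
The plan is to write the difference as
\begin{align*}
\boldsymbol{\varPhi}(u) - \boldsymbol{\varPi}(u) = \viint \boldJ_{\mu,\shskip d}(z) \bigl(\boldJ_{\nu,\shskip p}(uz) - \boldsymbol{P}_{\nu,\shskip p}(uz)\bigr) |z|^{2\rho - 2} \, i d z \vwedge d \widebar{z},
\end{align*}
and bound each piece after a double partition of unity on $\BC^{\times}$ that separates the scales $|z| \lessgtr 1$ and $|uz| \lessgtr 1$. Choose smooth cutoffs $\chi,\phi$ with $\chi \equiv 1$ on $\BA(0,1]$ and $\chi \equiv 0$ on $\BA[2,\infty)$ (similarly for $\phi$), decompose $1 = \chi(z) + (1-\chi(z))\phi(uz) + (1-\chi(z))(1-\phi(uz))$, and accordingly split the integral into three regions
\begin{align*}
\text{(A)}\ |z| \leqslant 2, \qquad \text{(B)}\ 1 \leqslant |z|,\ |uz| \leqslant 2, \qquad \text{(C)}\ |z|,\, |uz| \geqslant 1.
\end{align*}

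In regions (A) and (B) the quantity $|uz|$ stays bounded, so the core estimate is the asymptotic $|\boldJ_{\nu,p}(uz) - \boldsymbol{P}_{\nu,p}(uz)| \Lt |uz|^{2-2\vlambda}$ coming from \eqref{2eq: bound for J mu m}, together with the matching bound \eqref{2eq: bound for J mu m, weak, 2} for scaled derivatives $(z \partial_z)^r (\widebar z \partial_{\widebar z})^s$. In region (A) the companion bound $|\boldJ_{\mu, d}(z)| \Lt |z|^{-2\kappa}$ from \eqref{2eq: bound for J mu m, weak, 1} yields an absolutely convergent integral of order $|u|^{2-2\vlambda}$, convergence at $z=0$ being ensured by $\beta > \vlambda + \kappa$. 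In region (B), the naive use of $|\boldJ_{\mu, d}(z)| \Lt 1/|z|$ from \eqref{2eq: bound 1/z} only gives $O(|u|^{1-2\beta})$, which is too weak once $\beta > 1/2 - \vlambda$; instead, insert the oscillatory expansion $\boldJ_{\mu, d}(z) = \sum_{\pm} e(\pm 4 \Re z) \boldsymbol{W}_{\mu, d}(\pm z) + \boldsymbol{E}^N_{\mu, d}(z)$ from \eqref{1eq: J = W + W} and apply Lemma \ref{lem: integration by parts} with $a = 0$, $b = \pm 4$, $c = 1$, $S = |u|^{2-2\vlambda}$, and $M$ chosen large enough (e.g.\ $M=2$). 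This produces a bound of order $|u|^{2-2\vlambda}$, with the $\boldsymbol{E}^N$ remainder absorbed trivially. Since $\vlambda < 1/4$, the exponent $2-2\vlambda$ exceeds $2\vlambda$, so both regions contribute $o(|u|^{2\vlambda})$.

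In region (C) the bound on $\boldJ_{\nu,p} - \boldsymbol{P}_{\nu,p}$ is no longer useful, and the $\boldJ$- and $\boldsymbol{P}$-pieces must be treated separately. For the $\boldJ$ contribution, apply \eqref{1eq: J = W + W} to both $\boldJ_{\mu, d}(z)$ and $\boldJ_{\nu, p}(uz)$; the oscillatory cross terms have combined phases $\pm 4 \Re z \pm 4 \Re(uz)$, so Lemma \ref{lem: integration by parts} applies with $a = \pm 4u$, $b = \pm 4$ (so $|b|-|a| \geqslant 2$ for $|u| \leqslant 1/2$), $c = 1/|u|$, $S = |u|^{-1}$, $M=1$, yielding $O(|u|^{3-2\beta})$, while the $\boldsymbol{E}^N$ residues decay faster. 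For the $\boldsymbol{P}$ contribution, only $\boldJ_{\mu,d}(z)$ oscillates; however, because $z\partial_z$ and $\widebar z\partial_{\widebar z}$ act on each constituent $D_{\pm\nu, \pm p}(uz)$ by the constant eigenvalues $\pm\nu \pm p$, all scaled derivatives of $\boldsymbol{P}_{\nu,p}(uz)$ are dominated by $|\boldsymbol{P}_{\nu,p}(uz)| \Lt |uz|^{2\vlambda}$ on $|uz| \geqslant 1$. Then Lemma \ref{lem: integration by parts} applies with $a=0$, $b = \pm 4$, $c = 1/|u|$, $S = |u|^{2\vlambda}$, $M=1$, again producing $O(|u|^{3-2\beta})$. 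The hypothesis $\beta < 1 - \vlambda$ from \eqref{4eq: assump} guarantees $3 - 2\beta > 1 + 2\vlambda > 2\vlambda$, so region (C) is also $o(|u|^{2\vlambda})$.

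The main obstacle is the bookkeeping: one must choose the partition so that, in every region, the effective parameters $(S, c, \gamma, M)$ fit the hypotheses of Lemma \ref{lem: integration by parts} and the resulting bound is strictly better than $|u|^{2\vlambda}$. The most delicate verification is the power-law derivative estimate for $\boldsymbol{P}_{\nu, p}(uz)$ in region (C), which follows from the explicit form \eqref{0def: P mu m (z)}--\eqref{0eq: defn of P} together with the eigenvalue action of $z\partial_z$ and $\widebar z\partial_{\widebar z}$; once this is in hand, the same IBP machinery handles all three regions uniformly.
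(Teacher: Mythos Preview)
Your argument is correct and follows the same overall strategy as the paper---partition of unity, Bessel asymptotics near~$0$, oscillatory expansion \eqref{1eq: J = W + W} near~$\infty$, and integration by parts via Lemma~\ref{lem: integration by parts}---but with a different choice of cutoffs. You place the inner transition at the fixed scale $|z|\sim 1$ and keep the difference $\boldJ_{\nu,\shskip p}-\boldsymbol{P}_{\nu,\shskip p}$ intact through regions~(A) and~(B); the paper instead puts its inner cutoff at $|z|=|u|^{-1/2}$, estimates that whole piece trivially, and separates $\boldJ$ from $\boldsymbol{P}$ immediately beyond it. Your partition is a little cleaner and yields the sharper exponent $2-2\vlambda$ in the inner regions, but it costs two extra ingredients in region~(B): first, the scaled-derivative estimate $(z\partial_z)^r(\widebar z\,\partial_{\widebar z})^s\bigl(\boldJ_{\nu,\shskip p}-\boldsymbol{P}_{\nu,\shskip p}\bigr)(z)\Lt |z|^{2-2\vlambda}$, which does follow from the power series but is not literally \eqref{2eq: bound for J mu m, weak, 2} (that bound is only $|z|^{-2\vlambda}$ for $\boldJ$ alone); second, a mild extension of Lemma~\ref{lem: integration by parts}, since here $\gamma=\beta-\vlambda+\tfrac12>\tfrac12$, so you must either note that the support is compact (so convergence is automatic) or that the lemma's proof only needs $M>\gamma$, which your $M=2$ satisfies. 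The paper's $|u|^{-1/2}$ cutoff sidesteps both points at the price of tracking a fourth integral and obtaining the slightly weaker exponent $\tfrac32-\vlambda-\beta$.
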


\begin{lem}\label{lem: Pi = P}
	We have  
	\begin{align}\label{4eq: Pi = P}
		{_{\phantom{\nu}}^{\rho} \hskip -1.5pt \boldsymbol{\varPi}_{\nu, \shskip p}^{\mu, \shskip  d}} (u) = \frac {2} {(2\pi)^{2\rho+ 2}}  {_{\phantom{\nu}}^{\rho} \hskip -1.5pt \boldsymbol{P}_{\nu, \shskip p}^{\mu, \shskip  d}} (u^2) . 
	\end{align}
\end{lem}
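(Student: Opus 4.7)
My approach is to expand $\boldsymbol{P}_{\nu,p}(uz)$ explicitly so that the $u$- and $z$-dependences decouple, and then evaluate each of the two resulting Mellin integrals by Lemma~\ref{lem: Mellin}. From \eqref{0def: P mu m (z)},
\[
D_{\nu, p}(4\pi u z) = \frac{(2\pi)^{2\nu}}{\Gamma(1+\nu+p)\Gamma(1+\nu-p)} \, |u|^{2\nu}(u/|u|)^{2p} \, |z|^{2\nu}(z/|z|)^{2p},
\]
and likewise for $D_{-\nu,-p}(4\pi u z)$. Inserting the resulting two-term expansion of $\boldsymbol{P}_{\nu, p}(uz)$ from \eqref{0eq: defn of P} into the definition \eqref{4eq: defn of Pi} and interchanging the finite sum with the integral, one obtains
\[
{_{\phantom{\nu}}^{\rho} \hskip -1.5pt \boldsymbol{\varPi}_{\nu, p}^{\mu, d}}(u) = \frac{1}{\sin\pi\nu}\Big[A_{-\nu,-p}(u)\, \mathcal{I}_{-\nu,-p} - A_{\nu, p}(u)\, \mathcal{I}_{\nu, p}\Big],
\]
where $A_{\nu, p}(u)$ is the $u$-dependent factor above and
\[
\mathcal{I}_{\nu, p} = \viint \boldJ_{\mu, d}(z)\, |z|^{2(\rho+\nu)-2}(z/|z|)^{2p}\, i\, d z \wedge d \bar z .
\]

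Next I apply Lemma~\ref{lem: Mellin}, with Bessel indices $\mu, d$, Mellin exponent $\gamma = \rho \pm \nu$, and twist $h = \pm p$, to evaluate $\mathcal{I}_{\pm\nu, \pm p}$. Each evaluation gives a constant times $(2\pi)^{-2(\rho\pm\nu)-2}$, times the trigonometric factor $\cos(\pi(\rho\pm\nu)) - (-1)^{p+d}\cos(\pi\mu)$, times the appropriate product of four Gamma functions. Combining with $A_{\pm\nu,\pm p}(u)$, the powers of $2\pi$ collapse to $(2\pi)^{-2\rho-2}$ and the $u$-factor becomes $|u|^{\pm 2\nu}(u/|u|)^{\pm 2p} = (u^2)^{\pm(\nu+p)/2}(\bar u^2)^{\pm(\nu-p)/2}$. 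Matching this combination against the product ${_{\phantom{\nu}}^{\rho} \hskip -1pt D_{\pm\nu + p}^{\mu+d}}(u^2)\,{_{\phantom{\nu}}^{\rho} \hskip -1pt D_{\pm\nu - p}^{\mu-d}}(\bar u^2)$ via \eqref{3eq: hyper power, C, 1} and \eqref{1eq: hyper power, C, 2} identifies each term with $-\tfrac{2}{(2\pi)^{2\rho+2}} \,{_{\phantom{\nu}}^{\rho} \hskip -1.5pt D_{\pm\nu, \pm p}^{\mu, d}}(u^2)$. Dividing by $\sin\pi\nu$ and subtracting as in \eqref{1eq: hyper power} gives the right-hand side $\tfrac{2}{(2\pi)^{2\rho+2}}\,{_{\phantom{\nu}}^{\rho} \hskip -1pt \boldsymbol{P}_{\nu, p}^{\mu, d}}(u^2)$ of \eqref{4eq: Pi = P}.

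The main obstacle is the domain of validity. Lemma~\ref{lem: Mellin} requires $|\Re\mu|<\Re\gamma<1/2$, which translates to $\kappa + \vlambda < \beta < 1/2 - \vlambda$, a proper subset of the standing hypothesis $\vlambda + \kappa < \beta < 1 - \vlambda$ of \eqref{4eq: assump}. I would handle this by analytic continuation in $\rho$. The right-hand side of \eqref{4eq: Pi = P} is a product of Gamma and trigonometric functions, meromorphic in $\rho$ with no poles in the strip $\vlambda + \kappa < \beta < 1-\vlambda$. The left-hand side is holomorphic in $\rho$ on the same strip, since the integral defining ${_{\phantom{\nu}}^{\rho} \hskip -1.5pt \boldsymbol{\varPi}_{\nu, p}^{\mu, d}}(u)$ converges throughout by Lemma~\ref{lem: convergence of F and G}(2)---via \eqref{2eq: bound for J mu m, weak, 2} near $0$ and \eqref{1eq: J = W + W}--\eqref{1eq: derivatives of E} together with Lemma~\ref{lem: integration by parts} at $\infty$---and these estimates are locally uniform in $\rho$, so differentiation under the integral yields analyticity. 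Since Steps 1--3 establish the identity on the nonempty open substrip $\kappa + \vlambda < \beta < 1/2 - \vlambda$, the identity theorem extends it to the full range of \eqref{4eq: assump}, completing the proof.
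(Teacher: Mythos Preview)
Your proposal is correct and follows essentially the same approach as the paper: expand $\boldsymbol{P}_{\nu,p}(uz)$ via \eqref{0def: P mu m (z)}--\eqref{0eq: defn of P}, apply the Mellin formula of Lemma~\ref{lem: Mellin} term by term, and match the result with the definitions \eqref{3eq: hyper power, C, 1}--\eqref{1eq: hyper power}. The paper's proof is a single sentence citing exactly these ingredients; you have simply filled in the details and added an explicit analytic-continuation step in $\rho$ (to bridge the gap between the range $|\Re(\mu)|<\Re(\rho\pm\nu)<1/2$ needed for Lemma~\ref{lem: Mellin} and the standing hypothesis \eqref{4eq: assump}) that the paper leaves implicit here and only carries out at the end of the section.
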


Lemmas \ref{lem: Phi sim Pi} and \ref{lem: Pi = P} clearly imply the asymptotic in \eqref{4eq: asympt}.

\begin{proof}[Proof of Lemma \ref{lem: Phi sim Pi}] Let $|u| < 1$ be sufficiently small. All the implied constants in our computation will only depend on  $\vlambda $, $\kappa$, $p$,  $ d $, and  $\beta$. 
	
	We split ${_{\phantom{\nu}}^{\rho} \hskip -1.5pt \boldsymbol{\varPhi}_{\nu, \shskip p}^{\mu, \shskip  d}} (u) - 	{_{\phantom{\nu}}^{\rho} \hskip -1.5pt \boldsymbol{\varPi}_{\nu, \shskip p}^{\mu, \shskip  d}} (u)  $ as the sum
	\begin{align*}
	{_{\phantom{\nu}}^{\rho} \hskip -1.5pt \boldsymbol{\varPhi}_{\nu, \shskip p}^{\mu, \shskip  d}} (u) - &\,	{_{\phantom{\nu}}^{\rho} \hskip -1.5pt \boldsymbol{\varPi}_{\nu, \shskip p}^{\mu, \shskip  d}} (u)    =  {_{\phantom{\nu}}^{\rho} \hskip -1.5pt \boldsymbol{D}_{\nu, \shskip p}^{\mu, \shskip  d}}  (u)  + {_{\phantom{\nu}}^{\rho} \hskip -1.5pt \acute{\boldsymbol{E}}_{\nu, \shskip p}^{\mu, \shskip  d}}   (u)  + {_{\phantom{\nu}}^{\rho} \hskip -1.5pt \grave{\boldsymbol{E}}_{\nu, \shskip p}^{\mu, \shskip  d}}   (u)  - {_{\phantom{\nu}}^{\rho} \hskip -1.5pt \hat{\boldsymbol{E}}_{\nu, \shskip p}^{\mu, \shskip  d}}   (u)  \\
	= &\,  \viint_{\BA(0, \, 2  |u|^{- \frac 1 2 }  ] }      \txw_{0}(|z|) \boldJ_{ \mu,\shskip  d} (z) \big(  \boldJ_{ \nu,\shskip  p}  (uz) - \boldsymbol{P}_{ \nu,\shskip  p}  (uz)  \big)    |z|^{2 \rho   - 2}     i d      z \vwedge d      \widebar{z} \\
	& + \viint_{\BA[|u|^{- \frac 1 2 }  , \, 2 |u|^{-1}  ] }      \txw_{\natural} (|z|) \boldJ_{ \mu,\shskip  d} (z)    \boldJ_{ \nu,\shskip  p}  (uz)     |z|^{2 \rho   - 2}     i d      z \vwedge d      \widebar{z} \\
	& + \viint_{\BA[ |u|^{-1}, \, \infty) }      \txw_{\infty} (|z|) \boldJ_{ \mu,\shskip  d} (z)    \boldJ_{ \nu,\shskip  p}  (uz)     |z|^{2 \rho   - 2}     i d      z \vwedge d      \widebar{z} \\
	& - \viint_{\BA[ |u|^{-\frac 1 2}, \, \infty) }      (\txw_{\natural} (|z|) + \txw_{\infty} (|z|) )\boldJ_{ \mu,\shskip  d} (z)    \boldsymbol{P}_{ \nu,\shskip  p}  (uz)     |z|^{2 \rho   - 2}     i d      z \vwedge d      \widebar{z} , 
	\end{align*}
	where  $ \txw_{0} (|z|) + \txw_{\natural} (|z|) + \txw_{\infty} (|z|) \equiv 1 $
	is a partition of unity such that $\txw_{0} (|z|) $, $\txw_{\natural} (|z|)  $, and $\txw_{\infty} (|z|) $ are smooth functions supported on $\BA  (  0, 2  |u|^{- \frac 1 2 }  ]$,       $ \BA  [ |u|^{- \frac 1 2 } , 2  |u|^{-1} ] $ and $ \BA  [  |u|^{- 1 } , \infty )$, respectively, and that $x^{r} \txw_{0}^{(r)} (x), x^r \txw_{\natural}^{(r)}  (x), x^r \txw_{\infty}^{(r)}   (x) \Lt_{\, r} \hskip -1pt 1 $.   
	
	It follows from  \eqref{2eq: bound for J mu m}, \eqref{2eq: bound for J mu m, weak, 1}, and \eqref{2eq: bound 1/z} (also by $\kappa <   1 / 2$) that   
 \begin{align*}
 {_{\phantom{\nu}}^{\rho} \hskip -1.5pt \boldsymbol{D}_{\nu, \shskip p}^{\mu, \shskip  d}}  (u) \Lt 	\viint_{\BA(0, \, 2  |u|^{- \frac 1 2 } ] }   |z|^{-1}  |uz|^{2-2\vlambda} |z|^{2 \beta   - 2}    | d      z \vwedge d      \widebar{z} | \Lt |u|^{\frac 3 2 - \vlambda - \beta}   . 
 \end{align*}

The $\boldsymbol{E}$-integrals $  {_{\phantom{\nu}}^{\rho} \hskip -1.5pt \acute{\boldsymbol{E}}_{\nu, \shskip p}^{\mu, \shskip  d}}   (u)$, $ {_{\phantom{\nu}}^{\rho} \hskip -1.5pt \grave{\boldsymbol{E}}_{\nu, \shskip p}^{\mu, \shskip  d}}   (u)$, and $ {_{\phantom{\nu}}^{\rho} \hskip -1.5pt \hat{\boldsymbol{E}}_{\nu, \shskip p}^{\mu, \shskip  d}}   (u)  $ are all oscillatory. It may be proven by choosing $ N$ and $M$  large in \eqref{1eq: J = W + W} and Lemma \ref{lem: integration by parts} that these are all negligibly small. 
However, $N = M = 1$ is sufficient for the estimates: 
\begin{align*}
	 {_{\phantom{\nu}}^{\rho} \hskip -1.5pt \acute{\boldsymbol{E}}_{\nu, \shskip p}^{\mu, \shskip  d}}   (u),   {_{\phantom{\nu}}^{\rho} \hskip -1.5pt \hat{\boldsymbol{E}}_{\nu, \shskip p}^{\mu, \shskip  d}}   (u)  = O ( |u|^{\frac 3 2 - \vlambda - \beta}) , \qquad {_{\phantom{\nu}}^{\rho} \hskip -1.5pt \grave{\boldsymbol{E}}_{\nu, \shskip p}^{\mu, \shskip  d}}   (u) = O (|u|^{2-2\beta}).
\end{align*}

According to \eqref{1eq: J = W + W}, we divide ${_{\phantom{\nu}}^{\rho} \hskip -1.5pt \acute{\boldsymbol{E}}_{\nu, \shskip p}^{\mu, \shskip  d}}   (u)$ into three integrals that contain $\boldsymbol{W}_{ \mu,\shskip  d}   (\pm z)$ or  $ \boldsymbol{E}^1_{ \mu,\shskip  d}  (z)$. For the first two integrals, we apply Lemma  \ref{lem: integration by parts} with $a = 0$, $b = \pm 4$, $c = |u|^{-\frac 1 2 }$, $M = 1$,  $f (z) = \txw_{\natural} (|z|)  \boldsymbol{W}_{ \mu,\shskip  d}   (\pm z)  \boldJ_{ \nu,\shskip  p}  (uz)     |z|^{2 \rho   - 2}$, $S = |u|^{-2\vlambda}$, and $\gamma = \beta - \vlambda -   1 / 2$ (by \eqref{2eq: bound for J mu m, weak, 2} and \eqref{1eq: derivatives of W}), and it follows that these two integrals are both $ O  ( |u|^{\frac 3 2 - \vlambda - \beta})$. For the third integral containing $ \boldsymbol{E}^1_{ \mu,\shskip  d}  (z)$, trivial estimation by \eqref{1eq: derivatives of E} yields the same bound  $ O  ( |u|^{\frac 3 2 - \vlambda - \beta})$.  In the same way, we may prove  $ {_{\phantom{\nu}}^{\rho} \hskip -1.5pt \hat{\boldsymbol{E}}_{\nu, \shskip p}^{\mu, \shskip  d}}   (u) = O  ( |u|^{\frac 3 2 - \vlambda - \beta})$. 

By  \eqref{1eq: J = W + W}, we divide ${_{\phantom{\nu}}^{\rho} \hskip -1.5pt \grave{\boldsymbol{E}}_{\nu, \shskip p}^{\mu, \shskip  d}}   (u) $ into seven integrals that contain $  \boldsymbol{W}_{\mu, \shskip d} ( \pm  z)   \boldsymbol{W}_{\nu, \shskip p} ( \pm u z)$, $\boldsymbol{W}_{\mu, \shskip d} ( \pm  z)   \boldsymbol{E}_{\nu, \shskip p}^1 (   u z) $, and $\boldsymbol{E}^1_{\mu, \shskip d} (    z)   \boldJ_{\nu, \shskip p} (   u z)$. For the first four integrals, we apply Lemma  \ref{lem: integration by parts} with $a = \pm 4 u$, $b = \pm 4$, $c = |u|^{- 1 }$, $M = 1$,  $f (z) = \txw_{\natural} (|z|)  \boldsymbol{W}_{ \mu,\shskip  d}   (\pm z)  \boldsymbol{W}_{\nu, \shskip p} ( \pm u z)   |z|^{2 \rho   - 2}$, $S = |u|^{-1}$, and $\gamma = \beta - 1$ (by \eqref{1eq: derivatives of W}). For the second two integrals, we apply Lemma  \ref{lem: integration by parts} with $a = 0$, $b = \pm 4$, $c = |u|^{- 1 }$, $M = 1$,  $f (z) = \txw_{\natural} (|z|)  \boldsymbol{W}_{ \mu,\shskip  d}   (\pm z)  \boldsymbol{E}^1_{\nu, \shskip p} ( u z)   |z|^{2 \rho   - 2}$, $S = |u|^{-2}$, and $\gamma = \beta -   1 / 2 $ (by \eqref{1eq: derivatives of W} and \eqref{1eq: derivatives of E}). In both cases we have $O (|u|^{3-2\beta})$. For the last integral, trivial estimation by  \eqref{1eq: derivatives of E} and \eqref{2eq: bound 1/z} yields $ O (|u|^{2-2\beta}) $. 

It is clear that the bounds $  O ( |u|^{\frac 3 2 - \vlambda - \beta})$ and $  O (|u|^{2-2\beta}) $ obtained above are  indeed $o (|u|^{2\vlambda})$ by the assumptions in  \eqref{4eq: assump}. 
\end{proof}

\begin{proof}[Proof of Lemma \ref{lem: Pi = P}] 
	The identity \eqref{4eq: Pi = P} is a direct consequence of Lemma \ref{lem: Mellin}, along with the definitions \eqref{0def: P mu m (z)}, \eqref{0eq: defn of P},  \eqref{3eq: hyper power, C, 1}--\eqref{1eq: hyper power}, and \eqref{4eq: defn of Pi}.  
\end{proof}

\subsection{Differential equations for ${_{\protect\phantom{\nu}}^{\rho} \hskip -1.5pt \boldsymbol{\varPhi}_{\nu, \shskip p}^{\nu, \shskip  d}} (\hskip -1pt \sqrt u)$} \label{sec: diff equations}

Next, we need to verify that ${_{\phantom{\nu}}^{\rho} \hskip -1.5pt \boldsymbol{\varPhi}_{\nu, \shskip p}^{\nu, \shskip  d}} (\hskip -1pt \sqrt u)$ satisfies the (hypergeometric) differential equations 
\begin{align}\label{4eq: hyper eq} 
	{_{\phantom{\nu}}^{\rho} \hskip -0.5pt \nabla_{\nu + p }^{\mu + d}} \big( 
	{_{\phantom{\nu}}^{\rho} \hskip -1.5pt \boldsymbol{\varPhi}_{\nu, \shskip p}^{\nu, \shskip  d}} (\hskip -1pt \sqrt u) \big) = 0, \qquad {_{\phantom{\nu}}^{\rho} \hskip -0.5pt \overline \nabla{}_{\nu - p }^{\mu - d}} \big(
	 {_{\phantom{\nu}}^{\rho} \hskip -1.5pt \boldsymbol{\varPhi}_{\nu, \shskip p}^{\nu, \shskip  d}} (\hskip -1pt \sqrt u) \big) = 0,  
\end{align}
with differential operators ${_{\phantom{\nu}}^{\rho} \hskip -0.5pt \nabla_{\nu }^{\mu}} $ and ${_{\phantom{\nu}}^{\rho} \hskip -0.5pt \overline{\nabla}{}_{\nu }^{\mu}}$ defined as in \eqref{2eq: nabla, 1} and   \eqref{2eq: nabla, 2}. By symmetry, we only need to verify the former, which, if we set $\valpha = \nu + p$ and $\gamma = \mu + d$ for simplify, may be explicitly written as
	\begin{align*}
		   (1-u) \nabla^2  \big( {_{\phantom{\nu}}^{\rho} \hskip -1.5pt \boldsymbol{\varPhi}_{\nu, \shskip p}^{\nu, \shskip  d}} (\hskip -1pt \sqrt u) \big)   -\rho  u \nabla   \big( {_{\phantom{\nu}}^{\rho} \hskip -1.5pt \boldsymbol{\varPhi}_{\nu, \shskip p}^{\nu, \shskip  d}} (\hskip -1pt \sqrt u) \big)  - \bigg(   \frac {  \rho^2  - \gamma^2} {4} u + \frac {\valpha^2} {4  } \bigg) {_{\phantom{\nu}}^{\rho} \hskip -1.5pt \boldsymbol{\varPhi}_{\nu, \shskip p}^{\nu, \shskip  d}} (\hskip -1pt \sqrt u) = 0 . 
	\end{align*}  
	As $  \boldJ_{ \nu,\shskip  p}  (z)$ and  $\boldJ_{ \mu,\shskip  d} (z)$ are even functions,  it is more convenient to write 
	\begin{align*}
		{_{\phantom{\nu}}^{\rho} \hskip -1.5pt \boldsymbol{\varPhi}_{\nu, \shskip p}^{\mu, \shskip  d}} (\hskip -1pt \sqrt u) = \frac 1 2 	\viint   \boldJ_{ \mu,\shskip  d} (\hskip -1pt \sqrt{z})  \boldJ_{ \nu,\shskip  p}  (\hskip -1pt \sqrt{uz})    |z|^{ \rho   - 2}     i d      z \vwedge d      \widebar{z} ,
	\end{align*}
so that, according to Appendix \ref{append: Hankel}, we have
\begin{align*}
	{_{\phantom{\nu}}^{\rho} \hskip -1.5pt \boldsymbol{\varPhi}_{\nu, \shskip p}^{\mu, \shskip  d}} (\hskip -1pt \sqrt u) = \frac 1 {4\pi^2}  \EuH_{\nu, \shskip p}   \big(\boldJ_{ \mu,\shskip  d} (\hskip -1pt \sqrt{z}) |z|^{ \rho   - 2}    \big) (u). 
\end{align*}
Of course, we need to assume $|\Re (\nu) | + |\Re (\mu)| <   \Re (\rho) < 1$ to ensure convergence. 

For $r = 0, 1, 2$, define 
\begin{align*}
\boldsymbol{I}_{r} (z) =	{_{\phantom{\nu}}^{\rho} \hskip -1.5pt \boldsymbol{I}_{r}^{\mu, \shskip  d}} (z) = \frac 1 {4\pi^2} \nabla^r \big( \boldJ_{ \mu,\shskip  d} (\hskip -1pt \sqrt{z} ) \big) |z|^{\rho - 1} , 
\end{align*}
regarded as distributions in the space  $\mathscr{T}_{\mathrm{sis}}^{\nu, \shskip p} (\BC^{\times})$ as in  Appendix \ref{append: Hankel}. 
Note that $ {_{\phantom{\nu}}^{\rho} \hskip -1.5pt \boldsymbol{\varPhi}_{\nu, \shskip p}^{\mu, \shskip  d}} (\hskip -1pt \sqrt u) = \EuH_{\nu, \shskip p} {_{\phantom{\nu}}^{\rho} \hskip -1.5pt \boldsymbol{I}_{0}^{\mu, \shskip  d}} (u) $. 
It is readily seen that  
\begin{align*}
	\nabla \boldsymbol{I}_r = \boldsymbol{I}_{r+1}   + \Big(\frac {\rho} 2 - 1 \Big) \boldsymbol{I}_r  ,  \qquad  \nabla^2 \boldsymbol{I}_0 = \boldsymbol{I}_2   +     (  {\rho}   - 2  ) \boldsymbol{I}_1  + \Big(\frac {\rho} 2 - 1 \Big)^2 \boldsymbol{I}_0  ,
\end{align*}
and hence 
\begin{align*}
	\nabla \boldsymbol{I}_0 + \boldsymbol{I}_0   = \boldsymbol{I}_1   +  \frac {\rho} 2   \boldsymbol{I}_0, \qquad 
	\nabla^2  \boldsymbol{I}_0   + 2 \nabla  \boldsymbol{I}_0   + \boldsymbol{I}_0     =    \boldsymbol{I}_2 +   {\rho}     \boldsymbol{I}_1  +   \frac {\rho^2} 4 \boldsymbol{I}_0 . 
\end{align*}
It follows from \eqref{2eq: nabla}--\eqref{2eq: nabla J = 0} that
\begin{align*}
	\boldsymbol{I}_2 (z) + 4\pi^2 z \boldsymbol{I}_0 (z) - \frac {\gamma^2} 4 \boldsymbol{I}_0 (z) = 0. 
\end{align*}
Therefore, by \eqref{append: nabla H, 1} and \eqref{append: nabla H, 21}, 
\begin{align*}
	  & \quad \ \nabla^2 \EuH_{\nu, \shskip p}   \boldsymbol{I}_0 + \rho \nabla \EuH_{\nu, \shskip p}   \boldsymbol{I}_0 + \frac {  \rho^2  - \gamma^2} {4}  \EuH_{\nu, \shskip p}   \boldsymbol{I}_0 \\
	  &=  \EuH_{\nu, \shskip p} \bigg(  (\nabla^2  \boldsymbol{I}_0   + 2 \nabla  \boldsymbol{I}_0   + \boldsymbol{I}_0 ) -	\rho ( \nabla \boldsymbol{I}_0 + \boldsymbol{I}_0) + \frac {\rho^2 -\gamma^2} {4} \boldsymbol{I}_0 \bigg) \\
	  &= \EuH_{\nu, \shskip p} \boldsymbol{I}_2   -  \frac {\gamma^2} 4 \EuH_{\nu, \shskip p} \boldsymbol{I}_0 \\
	  & = - 4\pi^2 \EuH_{\nu, \shskip p} (z  \boldsymbol{I}_0) . 
\end{align*}
Next, by \eqref{append: nabla H, 21'}, 
\begin{align*}
	\nabla^2 \EuH_{\nu, \shskip p} \boldsymbol{I}_0  - \frac {\valpha^2} 4 \EuH_{\nu, \shskip p}  \boldsymbol{I}_0   =  - 4 \pi^2 u \shskip \EuH_{\nu, \shskip p} (z  \boldsymbol{I}_0 ) . 
\end{align*}
Consequently, 
\begin{align*}
(1-u)	\nabla^2 \EuH_{\nu, \shskip p}   \boldsymbol{I}_0 - \rho u \nabla \EuH_{\nu, \shskip p}   \boldsymbol{I}_0 - \bigg( \frac {  \rho^2  - \gamma^2} {4} u + \frac {\valpha^2} 4   \bigg)   \EuH_{\nu, \shskip p}   \boldsymbol{I}_0 = 0, 
\end{align*}
as desired. 

\subsection{Conclusion} Combining \eqref{4eq: asympt} and \eqref{4eq: hyper eq}, we deduce from Lemma \ref{lem: hypergeometric equation},  under the conditions 
\begin{align}
	|\Re (\nu)| < \frac 1 4, \quad |\Re (\mu)| < \frac 1 2, \  \quad  |\Re (\nu)| + |\Re (\mu)|  \hskip -1pt < |\Re (\rho)| \hskip -1pt < 1 - |\Re (\nu)|,  
\end{align}
that
\begin{align}\label{4eq: final identity}
	{_{\phantom{\nu}}^{\rho} \hskip -1.5pt \boldsymbol{\varPhi}_{\nu, \shskip p}^{\mu, \shskip  d}} (u) =   \frac {2} {(2\pi)^{2\rho+ 2}}  {_{\phantom{\nu}}^{\rho} \hskip -1.5pt \boldF_{\nu, \shskip p}^{\mu, \shskip  d}} (u^2) . 
\end{align}
This is exactly  \eqref{1eq: main, 1} in Theorem \ref{thm: Hankel}. By Lemma \ref{lem: convergence of F and G} and the principle of analytic continuation,  the validity of \eqref{4eq: final identity} may be extended to $|\Re (\nu)| + |\Re (\mu)|    < |\Re (\rho)|  < 1$, or even $|\Re (\nu)| + |\Re (\mu)|    < |\Re (\rho)|  <   3 / 2$ for $ u \in \BA (0, 1)$ and  also $ u \in \BA (1, \infty)$ by symmetry. 

\section{Proof of Theorems \ref{thm: double Fourier} and  \ref{thm: double Fourier, 2}: The Double Fourier--Mellin Transform} 

In this section, we consider Theorems \ref{thm: double Fourier}, \ref{thm: double Fourier, 2}, and  Conjecture \ref{conj: double Fourier}. For simplicity, we assume with no lose of generality that $u = \varw$ (by $ z \ra \hskip -1.5pt \sqrt{\varw/u} \shskip z $ and $\varv \ra \hskip -1.5pt \sqrt{u/\varw}  \shskip \varv$).

\subsection{Proof of Theorem \ref{thm: double Fourier}}\label{sec: proof 2.4}

For $p$ even, we need to consider 
\begin{align*}
	\viint  \viint   \boldJ_{ \nu,\shskip  p}   (\hskip -1pt \sqrt{\varv z})  e (- 2 \mathrm{Re} ( uz ) )    |z|^{-1}  {i d      z \vwedge d      \widebar{z}} \cdot e ( - 2 \mathrm{Re} (u \varv     ) )  |\varv |^{2\gamma - 2} {i d      \varv  \vwedge d      \widebar{\varv }}. 
\end{align*}
First we use $z \ra \varv z$ and $u \ra u/\varv$ to transform \eqref{0eq: Fourier, C} into 
\begin{equation*} 
		  \viint    \boldJ_{ \nu,\shskip  p}   (\hskip -1pt \sqrt{\varv z})  e (- 2 \mathrm{Re} (u  z) )    \frac {i d      z \vwedge d      \widebar{z}} {|z|}
		=  \frac { 1 } { 2|u|} e \bigg(\hskip -1pt  \Re \bigg(\frac {\varv} {u} \bigg) \hskip -1pt  \bigg) \boldJ_{ \frac 1 2 \nu,  \frac 1 2 p} \bigg(\frac {\varv} {4 u}\bigg).
\end{equation*}
Thus  the quadruple integral becomes 
\begin{align*}
	  1 / {2|u|} \cdot	\viint   e (\Re (\varv/u)) \boldJ_{ \frac 1 2 \nu,  \frac 1 2 p} (\varv/4u) e ( - 2 \mathrm{Re} (u \varv    ) )  |\varv |^{2\gamma -2 }   {i d      \varv  \vwedge d      \widebar{\varv }},   
\end{align*}
or, by $\varv \ra 4u \varv $, 
\begin{align*}
	2 {|4u|^{2\gamma - 1}}  	\viint   \boldJ_{ \frac 1 2 \nu,  \frac 1 2 p} (\varv) e (- 4 \mathrm{Re} ((2  u^2 - 1  ) \varv   ) )  |\varv|^{2\gamma - 2} {i d      \varv \vwedge d      \widebar{\varv}}.
\end{align*}
Then by \eqref{0eq: general W-S, C} this integral is equal to
\begin{align*}
\frac {{|u |^{2\gamma - 1}} } {\pi^{2\gamma} } \boldF^{\gamma}_{\frac 1 2 \nu, \shskip \frac 1 2 p} \bigg( \hskip -1pt  \frac 1  {(1 - 2   u^2)^2} \hskip -1pt  \bigg) ,
\end{align*} 
as in the first identity of \eqref{1eq: double Fourier, 1C}, and by Lemma  \ref{lem: quad trans} (with $z = 1/   u^2$), it is further transformed   into 
\begin{align*}
	\frac { 1 } {\pi (2\pi)^{  2\gamma }  |u |^{1+2\gamma} }           {_{\phantom{\nu}}^{\frac 1 2 + \gamma } \hskip -1.5pt \boldF_{ \nu, p}^{\frac 1 2 - \gamma   }} \bigg( \hskip -1pt \frac 1 {   u ^2} \hskip -1pt  \bigg) , 
\end{align*} as in the second identity of \eqref{1eq: double Fourier, 1C}.

\subsection{Remarks on Conjecture \ref{conj: double Fourier}} Next, in a {formal} manner, we show how to transform  the integral  in \eqref{1eq: double Fourier, 2C} into the one in \eqref{1eq: main, 3} via the {formal} integral \eqref{0eq: formal integral}.  
We start with the change $\varv z \ra z$ so that the integral in \eqref{1eq: double Fourier, 2C} turns into 
\begin{align*}
	\viint     \boldJ_{ \nu,\shskip  p}   (\hskip -1pt \sqrt{ z}) \bigg( \viint  e ( - 2  \mathrm{Re}  ( uz/\varv  + u \varv) ) \frac {i d      \varv  \vwedge d      \widebar{\varv }} { |\varv |^{2 +2 \beta -2\gamma} }   \bigg)   \frac {i d      z \vwedge d      \widebar{z}} {|z|^{2-2\beta} }     . 
\end{align*}
By $  \varv \ra - \hskip -1.5pt  \sqrt{  z} \shskip \varv    $, the inner integral becomes
\begin{align*}
 |  z |^{\gamma - \beta  } 	 \viint  e (   2  \mathrm{Re}  ( u \hskip -1.5pt  \sqrt{  z}  (\varv  + 1/ \varv)) ) \shskip { |\varv |^{2 \gamma - 2\beta - 2} }  \shskip {i d      \varv  \vwedge d      \widebar{\varv }} ,
\end{align*}
which, in view of  \eqref{0eq: formal integral},   equals to $2 \pi^2 |  z  |^{\gamma - \beta} \boldJ_{ \gamma - \beta }  (u\hskip -1.5pt  \sqrt{    z} )$. Therefore, we arrive  at 
\begin{align*}
	 2 \pi^2  	\viint     \boldJ_{ \nu,\shskip  p}   (\hskip -1pt \sqrt{ z}) \boldJ_{ \gamma - \beta    }  (u \hskip -1.5pt  \sqrt{  z} ) |z|^{\gamma + \beta     - 2}  i d      z \vwedge d      \widebar{z}, 
\end{align*}
and we obtain \eqref{1eq: double Fourier, 2C} (in the case of $u = \varw$) by a simple  application of  \eqref{1eq: main, 3}. 

The arguments above are entirely {\it formal}, since the   integrals in  \eqref{1eq: double Fourier, 2C} and  \eqref{0eq: formal integral} are {\it not} absolutely convergent. 


\subsection{Proof of Theorem \ref{thm: double Fourier, 2}} 

To prove Theorem \ref{thm: double Fourier, 2} rigorously, we  reverse the line of arguments above and replace the  formal integral \eqref{0eq: formal integral} by the following convergent integral:
\begin{align}\label{5eq: integral}
	\boldJ_{ \mu } (x e^{i\phi}) = \frac {2} {\pi} \int_0^{\infty} J_0 (4\pi x |ye^{i\phi} + 1/ y e^{i\phi}|) y^{2\mu - 1} d y;
\end{align}
the integral is absolutely convergent for $|\Re (\mu)| <   1 / 4$, due to
\begin{align}\label{5eq: bound for J0}
	J_0 (x) \Lt \min \bigg\{ 1,  \frac 1  {\sqrt{x}}  \bigg\}. 
\end{align}  

Let us first show the deduction of \eqref{0eq: formal integral} from \eqref{5eq: integral} via 
\begin{align}\label{5eq: Bessel integral for J0}
	J_0 (2\pi |z|) = \frac 1 {2\pi} \int_0^{2\pi} e (\Re (z e^{i\omega}))  d \omega , 
\end{align}
which is a simple consequence of the Bessel integral representation (see \cite[\S 2.2]{Watson})
\begin{align*}
	J_0 (x) = \frac 1 {2\pi} \int_0^{2\pi} e^{i x \cos \omega} d \omega. 
\end{align*} 
Indeed, by applying \eqref{5eq: Bessel integral for J0} to \eqref{5eq: integral}, we obtain 
\begin{align*} 
	\boldJ_{ \mu } (x e^{i\phi}) & = \frac {1} {\pi^2 } \int_0^{\infty} \hskip -2pt \int_0^{2\pi} e (2 \Re (x e^{i\omega} ( ye^{i\phi} + 1/ y e^{i\phi} ) )) d \omega \,  y^{2\mu - 1}  d y \\
	& = \frac {1} {\pi^2 } \int_0^{\infty} \hskip -2pt \int_0^{2\pi} e (2 (xy \cos (\phi+\omega)  + x/y \cdot \cos (\phi - \omega) )  ) d \omega \,  y^{2\mu - 1}  d y . 
\end{align*}
Thus, in the Cartesian coordinates,  
\begin{align}
	\boldJ_{ \mu }  (z)  = \frac 1 {2\pi^2}   \viint e (2 \Re (z \varv + z / \varv)) \shskip |\varv|^{2\mu - 2} i d      \varv  \vwedge d      \widebar{\varv } ,
\end{align}
if we let $z = x e^{i\phi}$ and $\varv = y e^{i\omega}$. Keep in mind that it should really be interpreted as an iterated integral in the polar coordinates in the order $d \omega \, d y$.  

Now we return to the proof of Theorem \ref{thm: double Fourier, 2}.  In the polar coordinates, we start with the integral
\begin{align*}
	2 \int_0^{\infty} \hskip -2pt \int_0^{2\pi} \boldJ_{ \nu,\shskip  p} (x e^{i\phi})   \boldJ_{ \mu }  ( r x  e^{i (\theta + \phi )})   d \phi \, x^{2\rho-1} d x. 
\end{align*}
This is the integral in \eqref{1eq: main, 2} with $z = x e^{i\phi}$ and $u = r e^{i\theta}$.  By inserting \eqref{5eq: integral}, we obtain the triple integral
 \begin{align*}
 	\frac {4} {\pi} \int_0^{\infty} \hskip -2pt \int_0^{2\pi} \hskip -2pt \int_0^{\infty}  \boldJ_{ \nu,\shskip  p} \big(x e^{i\phi} \big)   J_0 \big(4\pi r x \big|ye^{i(\theta+\phi)} + 1/ y e^{i(\theta+\phi)} \big| \big) y^{2\mu-1} d y \,    d \phi \,  x^{2\rho-1} d x. 
 \end{align*}
On the assumptions
\begin{align}\label{5eq: assump}
	 |\Re (\nu) |  + |\Re (\mu) |  <   \Re (\rho) < \frac 12, \qquad |\Re (\mu)| <  \frac 1 4 ,
\end{align}  one may use the bounds in  \eqref{2eq: bound for J mu m, weak, 1}, \eqref{2eq: bound 1/z},  and \eqref{5eq: bound for J0}  to verify the absolute convergence. Next, we make the change $y \ra y /x$,  move the $ y$-integral out,   in the fashion above, apply 
\begin{align*}
	J_0 \big(4\pi r  \big|ye^{i(\theta+\phi)} + x^2 / y e^{i(\theta+\phi)} \big| \big) = \frac 1 {2\pi} \int_0^{2\pi} e \big(2\Re \big(ry \cdot e^{i(\omega +\theta+\phi )} + r x^2 / y \cdot e^{i(\omega -\theta -\phi )}  \big) \big)  d \omega , 
\end{align*} by \eqref{5eq: Bessel integral for J0}, and make the change $\omega \ra \omega - \phi$. It is more succinct to display the resulting integral in the Cartesian coordinates. Then, assuming \eqref{5eq: assump}, we have proven
\begin{align*}
	\viint \hskip -2pt  \boldJ_{ \nu,\shskip  p} (z)  \boldJ_{ \mu }  (uz)   |z|^{2 \rho   - 2}     i d      z \vwedge d      \widebar{z} \hskip -1pt = \hskip -1pt \frac 1 {2\pi^2} \hskip -2pt \viint \hskip -3pt \viint  \hskip -2pt \boldJ_{ \nu,\shskip  p}   ( {  z})    e (  2  \mathrm{Re}  ( u  (z^2/\varv \hskip -1pt +  \hskip -1pt \varv )  \hskip -1pt)  \hskip -1pt ) \frac {i d      z \vwedge d      \widebar{z}}    {|z|^{2-2\rho+2\mu} } \frac {i d      \varv  \vwedge d      \widebar{\varv }} {|\varv|^{2- 2\mu}}     ,
	\end{align*}
in which the quadruple integral is integrated in the order $ d \phi \, d \omega \,  d x\, d y$ (no longer absolutely convergent!). Finally, by $z \ra \sqrt{\varv z}$ (indeed, $x \ra \hskip -1.5pt \sqrt{xy}$ and $\phi \ra (\phi+\omega)/ 2$, thanks to the order of integration), we obtain 
\begin{align*}
	\viint \hskip -1pt  \boldJ_{ \nu,\shskip  p} (z)  \boldJ_{ \mu }  (uz)   |z|^{2 \rho   - 2}     i d      z \vwedge d      \widebar{z} \hskip -1pt = \hskip -1pt \frac 1 {4\pi^2} \hskip -2pt \viint \hskip -3pt \viint  \hskip -1pt \boldJ_{ \nu,\shskip  p}   ( \hskip -1pt \sqrt{\varv z})    e (  2  \mathrm{Re}  ( u (z \hskip -0.5pt +  \hskip -0.5pt \varv)  \hskip -0.5pt)  \hskip -0.5pt ) \frac {i d      z \vwedge d      \widebar{z}}    {|z|^{2-\rho+\mu} }   \frac {i d      \varv  \vwedge d      \widebar{\varv }} {|\varv|^{2-\rho-\mu}}    ,
\end{align*}
and hence \eqref{1eq: double Fourier, 2C}  becomes a direct consequence of \eqref{1eq: main, 2}.

 \begin{appendices}

\section{Double Fourier--Mellin Transform of Bessel Functions} \label{sec: Fourier-Mellin, R}

To prove the double Bessel integral formulae in \S \ref{sec: double integral}, we need to introduce Kummer's  confluent hypergeometric functions $ \Phi (a, c; z) $ and $\Psi (a, c; z)$. For the reference, we shall use Chapter VI of \cite{Erdelyi-HTF-1}.\footnote{Note that $  \Phi (a, c; z) = {_1F_1} (a;c;z)$. The notations $ M (a, c, z)  $ and $U (a, c, z)$ are also widely used in the literature.}

Define 
\begin{align}
	\Phi (a, c; z) = \sum_{n=0}^{\infty} \frac {(a)_n z^n } {(c)_n n! } = \frac {\Gamma (c)} {\Gamma (a)} \sum_{n=0}^{\infty} \frac {\Gamma (a+n) z^n } { \Gamma (c+n) n! } . 
\end{align}
According to \cite[6.5 (7)]{Erdelyi-HTF-1},  we have 
\begin{align}\label{Aeq: Psi}
	\Psi (a, c; z) = \frac {\Gamma (1-c)} {\Gamma (a-c+1)} \Phi (a, c; z) + \frac {\Gamma (c-1)} {\Gamma (a)} z^{1-c} \Phi (1+a-c, 2-c; z),
\end{align}
or its limit form if $c$ is an integer. 

For $y$ real and $|y| $ large, it follows from \cite[6.13.1 (1), (2)]{Erdelyi-HTF-1} the asymptotic formulae:
\begin{align}\label{Aeq: asymp Phi (iy)}
	\Phi (a, c;   i y) = \frac {\Gamma (c)  } {\Gamma (c-a)}  (- i y)^{-a} (1 + O (1/|y|)) + \frac {\Gamma (c)  } {\Gamma (a)}  e^{  i y} (i y)^{a-c} (1 + O (1/|y|)), 
\end{align} 
\begin{align}\label{Aeq: asymp Psi (iy)}
	\Psi (a, c;   i y) = (iy)^{-a} (1 + O (1/|y|)), 
\end{align}
where it is understood that $\arg (\pm i y) = \pm \pi / 2$ or $\mp \pi /2$ according as $y > 0$ or $y < 0$. 

By \cite[6.10 (8)]{Erdelyi-HTF-1}, for $\Re (z) > 0$, we have 
\begin{align}\label{Aeq: J}
	\int_0^{\infty} J_{\nu} (2 \sqrt{x}) \exp (-   x z) x^{\shskip a - \frac 1 2 \nu - 1} d      x =  \frac {\Gamma (a)} {\Gamma (1+\nu) } z^{-a} \Phi (a, 1+\nu; -1/z) , 
\end{align}
if $ \Re (a) > 0 $. 
For real $y \neq 0$, if we let $ z \ra i y $ in \eqref{Aeq: J}, 
then 
\begin{align}\label{Aeq: J, 2}
	\int_0^{\infty} J_{\nu} (2 \sqrt{x}) \exp (-  i x y) x^{\shskip a - \frac 1 2 \nu - 1} d      x =  \frac {\Gamma (a)} {\Gamma (1+\nu) } (iy)^{-a} \Phi (a, 1+\nu; i/y) , 
\end{align}
for $ 0 < \Re (a) < \Re (\nu/2) +   1 / 4$. 
One may extend its validity to $ 0 < \Re (a) < \Re (\nu/2) +   5 / 4$ by the asymptotic expansion for $J_{\nu} (x) $ (see \eqref{2eq: J and H}--\eqref{2eq: asymptotic H (2)}),  partial integration (twice!), and analytic continuation. Note that \eqref{0eq: Weber's formula} is essentially  \eqref{Aeq: J, 2} in the special case $ 2 a = \nu + 1  $ as (see \cite[6.9.1 (9)]{Erdelyi-HTF-1})
\begin{align*}
	J_{\nu} (x) = \frac {(x/2)^{\nu} \exp (-ix)} {\Gamma (1+\nu)}  \Phi \big(\tfrac 1 2 + \nu, 1+2\nu; 2i x\big) . 
\end{align*}

Let $y \neq 0$ be real. For  $\Re (b) > 0$ and  $\Re (z) >  0 $, it follows from  \cite[6.10 (5)]{ET-I} that
\begin{align}\label{Aeq: Phi, 1}
	\int_0^{\infty} \Phi (a, c; i x y ) \exp (- x z) x^{b-1} d      x = \Gamma (b) z^{-b} {_2F_1} (a, b; c; iy /z) , 
\end{align}
if $ |z| > |y| $, and 
\begin{align}\label{Aeq: Phi, 2}
	\int_0^{\infty} \Phi (a, c; i x y) \exp (- x z) x^{b-1} d      x = \Gamma (b) (z- iy)^{-b} {_2F_1} (c-a, b; c; iy/(iy-z)) , 
\end{align}
if $|z- iy| > |y|$. For real $\varw \neq 0$, now let  $ z \ra i \varw $,   then \eqref{Aeq: Phi, 1} and \eqref{Aeq: Phi, 2} turn into
\begin{align}\label{Aeq: Phi, 1.1}
	\int_0^{\infty} \Phi (a, c; i x y ) \exp (- i x \varw) x^{b-1} d      x = \Gamma (b) (i \varw)^{-b} {_2F_1} (a, b; c;  y /\varw) , 
\end{align}
if $ |\varw| > |y| $, and 
\begin{align}\label{Aeq: Phi, 2.1}
	\int_0^{\infty} \Phi (a, c; i x y) \exp (- i x \varw) x^{b-1} d      x = \Gamma (b) (i \varw- iy)^{-b} {_2F_1} (c-a, b; c; y/(y-\varw)) , 
\end{align}
if $|\varw- y| > |y|$, provide that $\Re (b-a), \Re (b+a-c) < 0 < \Re (b)$, or equivalently $ 0 < \Re (b) < \min \{ \Re (a), \Re (c-a) \} $, so that the integrals remain  absolutely convergent. One may extend their validity to $ 0 < \Re (b) < 1 + \min \{ \Re (a), \Re (c-a) \} $ by the asymptotic formula for $\Phi (a, c; \pm i y)$ (see \eqref{Aeq: asymp Phi (iy)}),  partial integration, and analytic continuation.

\begin{lem}\label{A-lem: J}
	Let $y, \varw \neq 0$ be real. Then, for \begin{align}
		0 < \Re (a) <   \Re (\nu/2) +   \frac 5 4, \quad   0 < \Re (b)  <  \min \big\{ 1+\Re (a), 2+\Re (\nu-a) \big \} , 
	\end{align}    we have 
	\begin{equation}\label{Aeq: formula J}
		\begin{split}
			\int_0^{\infty} \hskip -2pt	\int_0^{\infty}   J_{\nu}  & ( 2  \sqrt{\varv x})  \exp ( - i x y  -  i  \varv \varw) x^{ \shskip  a - \frac 1 2 \nu   -1} \varv^{     b - \frac 1 2 \nu - 1} d      x d      \varv = \frac {\Gamma (a) \Gamma (b)} {\Gamma (1+\nu) (iy)^{a} (i \varw)^{b} }  \\
			\cdot &	\left\{ 
			\begin{aligned}
				&	\displaystyle  {_2F_1} \bigg(  a, b; 1+\nu; \frac 1 {y \varw} \bigg), & & \text{ if $|y \varw|  > 1$,} \\
				& \displaystyle  \bigg( \frac {y \varw} {y \varw - 1} \bigg)^b {_2F_1} \bigg(  1+\nu-a, b; 1+\nu;  \frac 1 {1- y \varw}  \bigg), & & \text{ if $|y\varw- 1| > 1$.}
			\end{aligned}
			\right.
		\end{split}
	\end{equation}  
\end{lem}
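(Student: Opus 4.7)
The plan is to evaluate the double integral as an iterated integral (inner variable $x$, outer variable $\varv$), applying the single-variable formulae \eqref{Aeq: J, 2} and then \eqref{Aeq: Phi, 1.1}--\eqref{Aeq: Phi, 2.1} in succession. Fubini is not directly applicable since the double integral is not absolutely convergent in the full claimed range, so the equality must be read in the iterated sense throughout.

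First I would carry out the inner $x$-integral. After the substitution $x \to x/\varv$ (which converts $2\sqrt{\varv x}$ into $2\sqrt{x}$ and $\exp(-ixy)$ into $\exp(-ixy/\varv)$), formula \eqref{Aeq: J, 2} applied with $y$ there replaced by $y/\varv$ yields
\[
\int_0^{\infty} J_{\nu}(2\sqrt{\varv x}) \exp(-ixy)\, x^{a-\nu/2-1}\, dx \;=\; \varv^{\nu/2}\,\frac{\Gamma(a)}{\Gamma(1+\nu)(iy)^{a}}\,\Phi(a,1+\nu;i\varv/y).
\]
This identity is a priori valid in the absolute range $0<\Re(a)<\Re(\nu/2)+1/4$ and then extended to $0<\Re(a)<\Re(\nu/2)+5/4$ by the partial-integration / analytic-continuation argument already recorded in the excerpt after \eqref{Aeq: J, 2}.

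Next I would feed the resulting factor into the outer $\varv$-integral, which becomes
\[
\frac{\Gamma(a)}{\Gamma(1+\nu)(iy)^{a}}\int_0^{\infty}\Phi\bigl(a,1+\nu;i\varv\cdot (1/y)\bigr)\exp(-i\varv\varw)\,\varv^{b-1}\,d\varv.
\]
This is exactly the integral treated in \eqref{Aeq: Phi, 1.1} and \eqref{Aeq: Phi, 2.1} with $c=1+\nu$ and the $y$ there replaced by $1/y$. The condition $|\varw|>|y|$ in \eqref{Aeq: Phi, 1.1} becomes $|y\varw|>1$ and gives the first branch of \eqref{Aeq: formula J}; the condition $|\varw-y|>|y|$ in \eqref{Aeq: Phi, 2.1} becomes $|y\varw-1|>1$ and gives the second branch after the identity $\bigl(i\varw-i/y\bigr)^{-b}=(i\varw)^{-b}\bigl(y\varw/(y\varw-1)\bigr)^{b}$ (with branches chosen consistently in the upper/lower half-plane as in the convention above \eqref{Aeq: asymp Phi (iy)}). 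The extended range $0<\Re(b)<\min\{1+\Re(a),\,2+\Re(\nu-a)\}$ is obtained from the asymptotic \eqref{Aeq: asymp Phi (iy)} for $\Phi(a,1+\nu;\pm i\varv/y)$ (two main terms of respective decay orders $-a$ and $a-1-\nu$) followed by partial integration and analytic continuation, precisely as indicated after \eqref{Aeq: Phi, 2.1}.

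The main technical point---really the only subtlety---is justifying the iterated integration when the parameters lie outside the strip of absolute convergence of the original double integral. I would handle this by first fixing $a,b$ strictly inside the region where both the inner integral (absolute range for \eqref{Aeq: J, 2}) and the outer integral (absolute range for \eqref{Aeq: Phi, 1.1} or \eqref{Aeq: Phi, 2.1}) converge absolutely, where the iterated evaluation is unambiguous. The identity \eqref{Aeq: formula J} is holomorphic in $(a,b)$ on the claimed tube, and the iterated integral (interpreted via partial integration in each variable) is likewise holomorphic there, so the principle of analytic continuation extends the identity to the full range. Branch issues for $(iy)^{a}$, $(i\varw)^{b}$, and $\bigl(y\varw/(y\varw-1)\bigr)^{b}$ are resolved by the standing convention $\arg(\pm iy)=\pm\pi/2$ recorded after \eqref{Aeq: asymp Psi (iy)}.
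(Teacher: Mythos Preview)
Your proof is correct and follows essentially the same route as the paper: rescale the inner integral to apply \eqref{Aeq: J, 2}, producing $\varv^{\nu/2}\,\Gamma(a)/\bigl(\Gamma(1+\nu)(iy)^a\bigr)\,\Phi(a,1+\nu;i\varv/y)$, then evaluate the outer $\varv$-integral via \eqref{Aeq: Phi, 1.1}--\eqref{Aeq: Phi, 2.1} and extend the parameter range by analytic continuation using the asymptotics already set up in the appendix. Your explicit remarks on branch choices and on securing the iterated integral first in the absolute strip before continuing analytically make the argument a bit more transparent than the paper's version, but the substance is the same.
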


\begin{proof}
	
	First, we use $x \ra \varv x$ and $ y \ra y /\varv $ to transform \eqref{Aeq: J, 2} into 
	\begin{align*}
		\int_0^{\infty} J_{\nu} (2 \sqrt{\varv x}) \exp (-  i x y) x^{\shskip a - \frac 1 2 \nu - 1} d      x =   \frac {\Gamma (a)} {\Gamma (1+\nu) }  \varv^{ \frac 12 \nu }  (iy)^{-a}    \Phi (a, 1+\nu; i \varv/y) . 
	\end{align*}
	Thus the double Bessel integral in \eqref{Aeq: formula J} is equal to
	\begin{align*}
		\frac {\Gamma (a)} {\Gamma (1+\nu) }    (iy)^{-a} 	\int_0^{\infty} \Phi (a, 1+\nu; i \varv / y ) \exp (- \varv z) \varv^{b-1} d      \varv. 
	\end{align*}
	Then, we use \eqref{Aeq: Phi, 1.1} and \eqref{Aeq: Phi, 2.1} to evaluate this integral, obtaining
	\begin{align*}
		\frac {\Gamma (a) \Gamma (b)} {\Gamma (1+\nu) }	 (iy)^{-a}  (i \varw)^{-b} {_2F_1} (a, b; 1+\nu; 1 /y \varw) , 
	\end{align*}
	if $ |y\varw|  > 1$, and 
	\begin{align*}
		\frac {\Gamma (a) \Gamma (b)} {\Gamma (1+\nu) }	 (iy)^{-a}  (i \varw - i/y)^{-b} {_2F_1} (1+\nu-a, b; 1+\nu; 1/(1-y \varw)) , 
	\end{align*}
	if $|y\varw- 1| > 1$, as desired. 
\end{proof}


Similar to \eqref{Aeq: J, 2}, it follows from \cite[6.10  (9)]{Erdelyi-HTF-1} that 
\begin{align}\label{Aeq: K, 2}
	\int_0^{\infty} K_{\nu} (2 \sqrt{x}) \exp (- i x y) x^{\shskip a - \frac 1 2 \nu - 1} d      x =  \frac 1 2 \Gamma (a) \Gamma (a-\nu) (iy)^{-a} \Psi (a, 1+\nu; -i/ y), 
\end{align}
for $\Re (a) >  \max  \{0 , \Re (\nu)  \} $. This case is  easier as $ K_{\nu} (2\sqrt{x}) $ is of exponential decay.  Further, by \eqref{Aeq: Psi}, \eqref{Aeq: asymp Psi (iy)}, \eqref{Aeq: Phi, 1.1}, and \eqref{Aeq: Phi, 2.1}, we may prove the following analogue of Lemma \ref{A-lem: J}. 
\begin{lem}\label{A-lem: K}
	Let $y, \varw \neq 0$ be real. 	Then, for   \begin{align}
		\max  \big \{0 , \Re (\nu)  \big \} < \Re (a), \quad 		 \max  \big \{0 , \Re (\nu)  \big \} <   \Re (b) < \Re (a) + 1  ,
	\end{align} the integral
	\begin{align}
		\int_0^{\infty} \hskip -2pt	\int_0^{\infty}   K_{\nu}   ( 2   \sqrt{\varv x})  \exp ( - i x y  -  i  \varv \varw) x^{ \shskip  a - \frac 1 2 \nu   -1} \varv^{     b - \frac 1 2 \nu - 1} d      x d      \varv  
	\end{align}
	is equal to 
	\begin{equation}\label{Aeq: integral K, 1}
		\begin{split}
			- \frac {\pi} {2 \sin (\pi \nu)} \bigg\{	&  \frac {\Gamma (a) \Gamma (b)   } { \Gamma (1+\nu) (iy)^a (i\varw)^b } {_2F_1} \bigg(a, b; 1+\nu ; - \frac 1 {y \varw} \bigg) \\
			- & \frac {\Gamma (a-\nu) \Gamma (b-\nu)    } { \Gamma (1-\nu) (iy)^{a-\nu} (i\varw)^{b-\nu} } {_2F_1} \bigg(a-\nu, b-\nu; 1-\nu; - \frac 1  {y \varw}\bigg) \bigg\} ,
		\end{split}
	\end{equation}
	if $ |y \varw|  > 1$,  and 
	\begin{equation}\label{Aeq: integral K, 2}
		\begin{split}
			-   \frac {\pi} {2 \sin (\pi \nu)}   \bigg\{  \frac {\Gamma (a) \Gamma (b)   } { \Gamma (1+\nu) (iy)^a (i\varw)^b } \bigg( \frac {y \varw} {y \varw + 1} \bigg)^b {_2F_1} \bigg(1+\nu-a, b; 1+ \nu; \frac 1 {1 + y \varw} \bigg) & \\
			\quad  - \frac {\Gamma (a-\nu) \Gamma (b-\nu)    } { \Gamma (1-\nu) (iy)^{a-\nu} (i\varw)^{b-\nu} } \bigg( \frac {y \varw} {y \varw + 1} \bigg)^{b-\nu} {_2F_1} \bigg(1-a, b-\nu ; 1-\nu;   \frac 1 {1 + y \varw} \bigg) \hskip -1pt \bigg\}  & ,
		\end{split}
	\end{equation}
	if $|y\varw + 1| > 1$.
\end{lem}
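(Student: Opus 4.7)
The plan is to proceed in direct parallel to the proof of Lemma~\ref{A-lem: J}, with the role of the $J_\nu$ Fourier--Mellin formula \eqref{Aeq: J, 2} now played by its $K_\nu$ counterpart \eqref{Aeq: K, 2}. The essential new feature is that \eqref{Aeq: K, 2} produces a Kummer function of the second kind $\Psi$ rather than $\Phi$, so the $\Phi$-integrals \eqref{Aeq: Phi, 1.1} and \eqref{Aeq: Phi, 2.1} cannot be applied directly; the bridge is the decomposition \eqref{Aeq: Psi}, which expresses $\Psi$ as a linear combination of two $\Phi$-functions.

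Concretely, after the substitution $x\to\varv x$ in \eqref{Aeq: K, 2}, the inner $x$-integral contributes $\tfrac12\Gamma(a)\Gamma(a-\nu)\varv^{\nu/2}(iy)^{-a}\Psi(a,1+\nu;-i\varv/y)$, reducing the double integral to
\begin{equation*}
\tfrac12\Gamma(a)\Gamma(a-\nu)(iy)^{-a}\int_0^{\infty}\Psi(a,1+\nu;-i\varv/y)\exp(-i\varv\varw)\varv^{b-1}\,d\varv.
\end{equation*}
Expanding $\Psi$ via \eqref{Aeq: Psi} with $z=-i\varv/y$, and using the elementary branch identity $(-i\varv/y)^{-\nu}=(iy)^{\nu}\varv^{-\nu}$ (which follows from $(-i/y)(iy)=1$ under the conventions fixed just after \eqref{Aeq: asymp Phi (iy)}), one splits this into two $\Phi$-integrals to which \eqref{Aeq: Phi, 1.1} applies in the regime $|y\varw|>1$ and \eqref{Aeq: Phi, 2.1} in the regime $|y\varw+1|>1$. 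The reflection formulae $\Gamma(-\nu)=-\pi/(\sin(\pi\nu)\Gamma(1+\nu))$ and $\Gamma(\nu)=\pi/(\sin(\pi\nu)\Gamma(1-\nu))$ then convert the prefactors $\Gamma(-\nu)/\Gamma(a-\nu)$ and $\Gamma(\nu)/\Gamma(a)$ into a common envelope $-\pi/(2\sin(\pi\nu))$, automatically producing the opposite signs between the two hypergeometric summands in \eqref{Aeq: integral K, 1} and \eqref{Aeq: integral K, 2}. In the second regime, the rewrite $(i\varw+i/y)^{-b}=(i\varw)^{-b}(y\varw/(y\varw+1))^{b}$ accounts for the extra power of $y\varw/(y\varw+1)$ displayed in \eqref{Aeq: integral K, 2}.

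The main technical obstacle, and the only nontrivial point, concerns the parameter range. A brute-force absolute-convergence analysis of the split $\Phi$-integrals would force the more restrictive $\Re(b)<1+\min\{\Re(a),\Re(1+\nu-a)\}$, falling short of the claimed $\Re(b)<\Re(a)+1$. To recover the full range one first proves the identity under such restrictive hypotheses and then extends by analytic continuation in $b$, exploiting the fact that \eqref{Aeq: asymp Psi (iy)} has the single leading term $(iy)^{-a}$ (in sharp contrast to the two-term asymptotic \eqref{Aeq: asymp Phi (iy)} for $\Phi$); one integration by parts in $\varv$ applied to $\Psi$ \emph{before} splitting into $\Phi$-pieces then pushes absolute convergence up to $\Re(b)<\Re(a)+2$, comfortably covering the stated range. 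This is precisely the extension mechanism flagged by the author just after \eqref{Aeq: Phi, 2.1}. Beyond this point, the calculation is mechanical, requiring only care with branch cuts of $(iy)^{-\alpha}$ and $(i\varw)^{-\beta}$ under the prescribed conventions.
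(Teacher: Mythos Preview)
Your proposal is correct and follows essentially the same route as the paper's own (very terse) proof, which simply points to \eqref{Aeq: Psi}, \eqref{Aeq: asymp Psi (iy)}, \eqref{Aeq: Phi, 1.1}, and \eqref{Aeq: Phi, 2.1} as the ingredients. In particular, your observation that the single-term asymptotic \eqref{Aeq: asymp Psi (iy)} (applied to the $\Psi$-integral \emph{before} splitting) is what yields the full range $\Re(b)<\Re(a)+1$, rather than the weaker $\Re(b)<1+\min\{\Re(a),\Re(1+\nu-a)\}$ coming from the individual $\Phi$-pieces, is exactly why the paper cites \eqref{Aeq: asymp Psi (iy)} alongside the other formulae.
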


Finally, the integral formulae in \eqref{1eq: double Fourier, 1} and \eqref{1eq: double Fourier, 2} follows directly from Lemmas \ref{A-lem: J} and \ref{A-lem: K} on the changes $ 2a = 2\beta +   \nu $ and $2 b = 2 \gamma +   \nu$.   Note that the expressions in \eqref{Aeq: integral K, 1} and \eqref{Aeq: integral K, 2} become more symmetric in this way. 


\section{Distributional Hankel Transform} \label{append: Hankel}

In the sense of distributions, we establish some basic results on the Hankel transform that justify the calculations in \S \ref{sec: diff equations}.

Let $\nu \in \BC \smallsetminus \BZ$ and $p \in \BZ$.   According to \cite[\S\S 3.4, 17.3]{Qi-Bessel}, define the function space 
\begin{equation}
	\begin{split}
		\mathscr{S}_{\mathrm{sis}}^{\nu, \shskip p} (\BC^{\times}) & = |z|^{\nu} (z/|z|)^p \mathscr{S} (\BC) + |z|^{- \nu} (z/|z|)^{-p} \mathscr{S} (\BC) \\
		& = \big\{ |z|^{\nu} (z/|z|)^p \phi_+ (z) + |z|^{- \nu} (z/|z|)^{-p} \phi_-(z) : \phi_+ (z), \phi_- (z) \in \mathscr{S} (\BC) \big\},
	\end{split}
\end{equation}   
where $\mathscr{S} (\BC)$ is the Schwartz space on $\BC$. 
Note the symmetry  $  \mathscr{S}_{\mathrm{sis}}^{\nu, \shskip p} (\BC^{\times}) =  \mathscr{S}_{\mathrm{sis}}^{- \nu, \shskip - p} (\BC^{\times}) $.  
It is clear that $\mathscr{S}_{\mathrm{sis}}^{\nu, \shskip p} (\BC^{\times})$ is invariant under the operators $\nabla = z \partial / \partial z$ and  $\overline{\nabla} = \widebar{z} \partial / \partial \widebar{z}$.  The topology of $\mathscr{S}_{\mathrm{sis}}^{\nu, \shskip p} (\BC^{\times})$ may be easily induced from that of $\mathscr{S} (\BC)$ (see \cite[\S 4.2]{Zemanian-Distributions}), and we denote by $\mathscr{T}_{\mathrm{sis}}^{\nu, \shskip p} (\BC^{\times})  $ its dual space   of continuous linear functionals. Define the operators $ \nabla $ and $\overline{\nabla}$ on  $\mathscr{T}_{\mathrm{sis}}^{\nu, \shskip p} (\BC^{\times})  $ via
\begin{align}\label{AppendB: nabla}
	\langle \nabla f, \phi  \rangle = -   \langle  f, \nabla \phi + \phi  \rangle , \qquad 	 \langle \overline{\nabla} f, \phi  \rangle = -   \langle  f, \overline{\nabla} \phi + \phi  \rangle,  
\end{align}
for any $f \in \mathscr{T}_{\mathrm{sis}}^{\nu, \shskip p} (\BC^{\times})$ and $ \phi \in \mathscr{S}_{\mathrm{sis}}^{\nu, \shskip p} (\BC^{\times}). $ As usual, we write 
\begin{align}\label{append: inner product}
	\langle   f, \phi  \rangle = \viint_{\BC^{\times}} f (z) \phi (z)    {i d      z \vwedge d      \widebar{z}}, 
\end{align}
so that \eqref{AppendB: nabla} is just formal partial integration. For $|\Re (\nu)| < 1$, we have  $ \mathscr{S}_{\mathrm{sis}}^{\nu, \shskip p} (\BC^{\times}) \subset  \mathscr{T}_{\mathrm{sis}}^{\nu, \shskip p} (\BC^{\times}) $ as the $\langle \cdot, \cdot \rangle $ in \eqref{append: inner product} is indeed a  bilinear form on $  \mathscr{S}_{\mathrm{sis}}^{\nu, \shskip p} (\BC^{\times}) $. 

For $|\Re (\nu)| < 1$, it is proven in  \cite[\S  3.4]{Qi-Bessel} that the Hankel transform is  the integral transform: 
\begin{align}
	\EuH_{\nu, \shskip p} {\phi} (u) = 2\pi^2 \viint_{\BC^{\times}} \phi (z) \boldJ_{ \nu,\shskip  p} (\hskip -1.5pt \sqrt{uz})  {i d      z \vwedge d      \widebar{z}} , \qquad \text{($\phi \in \mathscr{S}_{\mathrm{sis}}^{\nu, \shskip p} (\BC^{\times})$)},
\end{align}
and $ \EuH_{\nu, \shskip p}$ is an automorphism on the space $\mathscr{S}_{\mathrm{sis}}^{\nu, \shskip p} (\BC^{\times})$ due to the Hankel inversion: 
\begin{align}
	\varPhi =  \EuH_{\nu, \shskip p} \phi, \qquad \phi =  \EuH_{\nu, \shskip p} \varPhi . 
\end{align}
As a standard and easy consequence, we have the Parseval formula:
\begin{align}\label{apped: Parseval}
	\langle  \EuH_{\nu, \shskip p} {\psi} ,  \EuH_{\nu, \shskip p} {\phi} \rangle =	\langle \psi, \phi \rangle   , \qquad \psi, \phi \in \mathscr{S}_{\mathrm{sis}}^{\nu, \shskip p} (\BC^{\times}). 
\end{align}
For $\phi \in \mathscr{S}_{\mathrm{sis}}^{\nu, \shskip p} (\BC^{\times})$,  by Stokes' formula, it is easy to prove
\begin{align}\label{append: nabla H}
	\nabla \EuH_{\nu, \shskip p} \phi = - \EuH_{\nu, \shskip p} (  \nabla \phi + \phi ), \qquad \overline \nabla \EuH_{\nu, \shskip p} \phi = - \EuH_{\nu, \shskip p} ( \overline \nabla \phi +   \phi ), 
\end{align}
\begin{align}\label{append: nabla H, 2}
	\nabla^2 \EuH_{\nu, \shskip p} \phi =   \EuH_{\nu, \shskip p}  \big( \nabla^2 \phi + 2   \nabla  \phi +    \phi  \big), \qquad \overline \nabla{}^2 \EuH_{\nu, \shskip p} \phi = \EuH_{\nu, \shskip p} \big(  \overline \nabla{}^2 \phi + 2  \overline \nabla \phi +  \phi \big) , 
\end{align}
and also by \eqref{2eq: nabla}--\eqref{2eq: nabla J = 0}, 
\begin{equation}\label{append: nabla H, 2'}
	\begin{split}
		& 4 \nabla^2 \EuH_{\nu, \shskip p} \phi = (\nu+p)^2 \EuH_{\nu, \shskip p}   \phi - 16 \pi^2 u \shskip \EuH_{\nu, \shskip p} (z \phi)   , \\
		& 4 \overline \nabla{}^2 \EuH_{\nu, \shskip p} \phi = (\nu-p)^2 \EuH_{\nu, \shskip p}   \phi - 16 \pi^2 \widebar{u} \shskip \EuH_{\nu, \shskip p} (\widebar z \phi) . 
	\end{split}
\end{equation}

Now we follow Schwartz's approach to the distributional Fourier transform (see \cite{Schwartz-Distributions} or \cite[Chapter 7]{Zemanian-Distributions}) to extend the Hankel transform  to distributions by generalizing the Parseval equation \eqref{apped: Parseval}. To this end, we define the Hankel transform $ \EuH_{\nu, \shskip p} {f}$ of a distribution $  f \in \mathscr{T}_{\mathrm{sis}}^{\nu, \shskip p} (\BC^{\times})$ by 
\begin{align}
	\label{append: distr Hankel} 
	\langle \EuH_{\nu, \shskip p} {f} , \EuH_{\nu, \shskip p} {\phi}  \rangle =	\langle f, \phi \rangle  , \qquad \phi \in \mathscr{S}_{\mathrm{sis}}^{\nu, \shskip p} (\BC^{\times}),
\end{align}
or equivalently, by Hankel inversion on  $\mathscr{S}_{\mathrm{sis}}^{\nu, \shskip p} (\BC^{\times})$, 
\begin{align}
	\label{append: distr Hankel, 2} 
	\langle \EuH_{\nu, \shskip p} {f} ,  {\phi}  \rangle =	\langle f, \EuH_{\nu, \shskip p} \phi \rangle  , \qquad \phi \in \mathscr{S}_{\mathrm{sis}}^{\nu, \shskip p} (\BC^{\times}). 
\end{align}
This determines $  \EuH_{\nu, \shskip p} {f} $ as a distribution in the dual space $\mathscr{T}_{\mathrm{sis}}^{\nu, \shskip p} (\BC^{\times})$.

\begin{thm}
	For $| \Re (\nu)| < 1$, the {\rm(}distributional{\rm)} Hankel transform  $\EuH_{\nu, \shskip p}$ is an automorphism on the space  $\mathscr{T}_{\mathrm{sis}}^{\nu, \shskip p} (\BC^{\times})$. 
\end{thm}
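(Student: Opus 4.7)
The plan is to follow the classical Schwartz scheme for the distributional Fourier transform, using the fact that $\EuH_{\nu,p}$ is already known to be a topological automorphism of the test space $\mathscr{S}_{\mathrm{sis}}^{\nu, p}(\BC^{\times})$ (cited from \cite{Qi-Bessel}). Once this ingredient is in hand, the argument reduces to pure duality.

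First I would verify that the defining formula \eqref{append: distr Hankel, 2} actually produces an element of $\mathscr{T}_{\mathrm{sis}}^{\nu, p}(\BC^{\times})$. Since $\EuH_{\nu, p}$ is a continuous linear automorphism of $\mathscr{S}_{\mathrm{sis}}^{\nu, p}(\BC^{\times})$, the map $\phi \mapsto \EuH_{\nu,p}\phi$ is continuous, so for any $f \in \mathscr{T}_{\mathrm{sis}}^{\nu, p}(\BC^{\times})$ the composition $\phi \mapsto \langle f, \EuH_{\nu, p}\phi\rangle$ is a continuous linear functional on $\mathscr{S}_{\mathrm{sis}}^{\nu, p}(\BC^{\times})$. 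This gives $\EuH_{\nu, p} : \mathscr{T}_{\mathrm{sis}}^{\nu, p}(\BC^{\times}) \to \mathscr{T}_{\mathrm{sis}}^{\nu, p}(\BC^{\times})$, and the map is continuous for the weak-$\ast$ topology because $f_n \to f$ weakly implies $\langle f_n, \EuH_{\nu,p}\phi\rangle \to \langle f, \EuH_{\nu, p}\phi\rangle$ for every $\phi$.

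Next, I would establish that this map is its own inverse, which is the key bijectivity statement. Given any $f \in \mathscr{T}_{\mathrm{sis}}^{\nu, p}(\BC^{\times})$ and $\phi \in \mathscr{S}_{\mathrm{sis}}^{\nu, p}(\BC^{\times})$, iterate the definition \eqref{append: distr Hankel, 2}:
\begin{equation*}
\langle \EuH_{\nu, p}(\EuH_{\nu, p} f), \phi\rangle = \langle \EuH_{\nu, p} f, \EuH_{\nu, p}\phi\rangle = \langle f, \EuH_{\nu, p}(\EuH_{\nu, p}\phi)\rangle = \langle f, \phi\rangle,
\end{equation*}
where the last equality uses the Hankel inversion on the test space $\mathscr{S}_{\mathrm{sis}}^{\nu, p}(\BC^{\times})$. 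Since this holds for all $\phi$, we conclude $\EuH_{\nu, p}\EuH_{\nu, p} f = f$ in $\mathscr{T}_{\mathrm{sis}}^{\nu, p}(\BC^{\times})$. This identity yields both injectivity (if $\EuH_{\nu, p} f = 0$ then $f = \EuH_{\nu, p} 0 = 0$) and surjectivity (any $g \in \mathscr{T}_{\mathrm{sis}}^{\nu, p}(\BC^{\times})$ is the Hankel transform of $\EuH_{\nu, p} g$), hence $\EuH_{\nu, p}$ is an automorphism.

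Finally, I would check the consistency of the distributional definition with the integral definition on the subspace $\mathscr{S}_{\mathrm{sis}}^{\nu, p}(\BC^{\times}) \subset \mathscr{T}_{\mathrm{sis}}^{\nu, p}(\BC^{\times})$ (valid for $|\Re(\nu)| < 1$): given $\psi \in \mathscr{S}_{\mathrm{sis}}^{\nu, p}(\BC^{\times})$, the Parseval identity \eqref{apped: Parseval} gives $\langle \EuH_{\nu, p}\psi, \phi\rangle = \langle \psi, \EuH_{\nu, p}\phi\rangle$ for all $\phi$, which matches the distributional definition. The whole argument is essentially formal duality; the only substantive hurdle is the continuity of $\EuH_{\nu, p}$ on the test space, which is where the condition $|\Re(\nu)| < 1$ enters (through the explicit seminorm estimates on $\mathscr{S}_{\mathrm{sis}}^{\nu, p}(\BC^{\times})$ established in \cite{Qi-Bessel}).
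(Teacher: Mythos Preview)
Your proposal is correct and follows precisely the Schwartz duality scheme that the paper invokes; the paper itself does not spell out a proof but simply states the theorem after remarking that ``we follow Schwartz's approach to the distributional Fourier transform'' and citing \cite{Schwartz-Distributions} and \cite[Chapter~7]{Zemanian-Distributions}. Your write-up is thus a faithful expansion of what the paper leaves implicit.
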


It is straightforward to prove that \eqref{append: nabla H}--\eqref{append: nabla H, 2'} remain valid on $\mathscr{T}_{\mathrm{sis}}^{\nu, \shskip p} (\BC^{\times})$. 

\begin{prop}
	Let $| \Re (\nu)| < 1$. Then for $f \in \mathscr{T}_{\mathrm{sis}}^{\nu, \shskip p} (\BC^{\times})$ we have 
	\begin{align}\label{append: nabla H, 1}
		\nabla \EuH_{\nu, \shskip p} f     = - \EuH_{\nu, \shskip p} (  \nabla f     + f     ), \qquad \overline \nabla \EuH_{\nu, \shskip p} f     = - \EuH_{\nu, \shskip p} ( \overline \nabla f     +   f     ), 
	\end{align}
	\begin{align}\label{append: nabla H, 21}
		\nabla^2 \EuH_{\nu, \shskip p} f     =   \EuH_{\nu, \shskip p}  \big( \nabla^2 f     + 2   \nabla  f     +    f      \big), \qquad \overline \nabla{}^2 \EuH_{\nu, \shskip p} f     = \EuH_{\nu, \shskip p} \big(  \overline \nabla{}^2 f     + 2  \overline \nabla f     +  f     \big) , 
	\end{align} 
	\begin{equation}\label{append: nabla H, 21'}
		\begin{split}
			& 4 \nabla^2 \EuH_{\nu, \shskip p} f     = (\nu+p)^2 \EuH_{\nu, \shskip p}   f     - 16 \pi^2 u \shskip \EuH_{\nu, \shskip p} (z f    )   , \\
			& 4 \overline \nabla{}^2 \EuH_{\nu, \shskip p} f     = (\nu-p)^2 \EuH_{\nu, \shskip p}   f     - 16 \pi^2 \widebar{u} \shskip \EuH_{\nu, \shskip p} (\widebar z f    ) . 
		\end{split}
	\end{equation}
\end{prop}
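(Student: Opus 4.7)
The plan is to prove all three distributional identities by pure duality, testing both sides against an arbitrary $\phi \in \mathscr{S}_{\mathrm{sis}}^{\nu,p}(\BC^{\times})$ and reducing to the already-proven test-function identities \eqref{append: nabla H}, \eqref{append: nabla H, 2}, \eqref{append: nabla H, 2'}. The structural point is that each of the three operators appearing---$\nabla$ (and $\overline\nabla$), the Hankel transform $\EuH_{\nu,p}$, and multiplication by the identity function---has a clean formal adjoint on $\mathscr{S}_{\mathrm{sis}}^{\nu,p}(\BC^{\times})$: $\nabla^{*}=-\nabla-1$ by \eqref{AppendB: nabla}, $\EuH_{\nu,p}^{*}=\EuH_{\nu,p}$ by \eqref{append: distr Hankel, 2} (which is the distributional form of the Parseval relation \eqref{apped: Parseval}), and multiplication by $z$ or $\bar z$ is manifestly self-adjoint with respect to the pairing \eqref{append: inner product}. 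Because these operators all stabilize $\mathscr{S}_{\mathrm{sis}}^{\nu,p}(\BC^{\times})$, every operation on the distribution side can be pushed onto the test function, the pointwise identity applied, and then the operations pulled back.

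Concretely for \eqref{append: nabla H, 1}, I would compute $\langle \nabla \EuH_{\nu,p} f, \phi \rangle$ by first using \eqref{AppendB: nabla} to rewrite it as $-\langle \EuH_{\nu,p} f, \nabla\phi + \phi\rangle$, then \eqref{append: distr Hankel, 2} to turn this into $-\langle f, \EuH_{\nu,p}(\nabla\phi + \phi)\rangle$, then the test-function identity \eqref{append: nabla H} to replace the inner argument by $-\nabla \EuH_{\nu,p}\phi$ (which lies in $\mathscr{S}_{\mathrm{sis}}^{\nu,p}(\BC^{\times})$, so it is a legitimate test function), then a second application of \eqref{AppendB: nabla} to move $\nabla$ back onto $f$, and finally \eqref{append: distr Hankel, 2} once more. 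The endpoint is $-\langle \EuH_{\nu,p}(\nabla f + f), \phi\rangle$, and arbitrariness of $\phi$ gives the identity; the $\overline\nabla$ case is the conjugate computation. The second-order identity \eqref{append: nabla H, 21} follows by iterating \eqref{AppendB: nabla}, which yields the symmetric adjoint rule $\langle \nabla^{2} g, \phi\rangle = \langle g, \nabla^{2}\phi + 2\nabla\phi + \phi\rangle$, and invoking \eqref{append: nabla H, 2} in place of \eqref{append: nabla H}.

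For \eqref{append: nabla H, 21'} the template is unchanged: transfer $4\nabla^{2}$ onto $\phi$ and collapse the resulting second-order operator via \eqref{append: nabla H, 2}, arriving at $4\langle f, \nabla^{2}\EuH_{\nu,p}\phi\rangle$; then apply \eqref{append: nabla H, 2'} pointwise to $\phi$ to expand this as $(\nu+p)^{2}\langle f, \EuH_{\nu,p}\phi\rangle - 16\pi^{2}\langle f, z\EuH_{\nu,p}(z\phi)\rangle$, where the two $z$-factors are multiplications by the identity function on the two sides of the transform. Moving the outer multiplication back onto $f$ (self-adjointness), then the transform, then the inner multiplication, produces $(\nu+p)^{2}\EuH_{\nu,p}f - 16\pi^{2} u \EuH_{\nu,p}(zf)$ paired against $\phi$; the $\overline\nabla^{2}$ version is the complex conjugate. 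The hardest step is purely notational: keeping straight the two copies of ``multiplication by the identity function'' as one passes between the input and output sides of the Hankel transform, and verifying that multiplication by $z$ (or $\bar z$) stabilizes $\mathscr{S}_{\mathrm{sis}}^{\nu,p}(\BC^{\times})$---immediate from the splitting $|z|^{\nu}(z/|z|)^{p}\mathscr{S}(\BC) + |z|^{-\nu}(z/|z|)^{-p}\mathscr{S}(\BC)$ in the definition, since $z$ and $\bar z$ preserve $\mathscr{S}(\BC)$. No new convergence or regularity issues arise beyond those already settled at the Schwartz level.
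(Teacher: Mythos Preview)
Your proposal is correct and is precisely the ``straightforward'' duality argument the paper has in mind: the paper does not write out a proof at all, merely stating that \eqref{append: nabla H}--\eqref{append: nabla H, 2'} remain valid on $\mathscr{T}_{\mathrm{sis}}^{\nu,\shskip p}(\BC^{\times})$, and your computation via the adjoint rules \eqref{AppendB: nabla} and \eqref{append: distr Hankel, 2} is the standard way to justify this.
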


\subsection*{Remarks} As the Hankel inversion is established for fundamental Bessel functions for $\GL_n (\BR)$ and $\GL_n (\BC)$ in \cite[\S 3]{Qi-Bessel}, at least in the self-dual case, the same constructions of distributions and Hankel transform may be applied for $\GL_n$.  Note that in the $\GL_2$ case (see \cite[\S 18]{Qi-Bessel})  the Hankel inversion is equivalent to 
\begin{align*}
	\begin{pmatrix}
		& -1 \\ 1 & 
	\end{pmatrix}^2 = \begin{pmatrix}
		-1 & \\ & \hskip -4pt -1
	\end{pmatrix}. 
\end{align*}

The distributional Hankel transform for $J_{\mu} (x)$ ($ \mu  \geqslant -   1 / 2$) has been studied in \cite{Zemanian-Hankel} and \cite[Chapter V]{Zemanian-Int-Trans}. Although the original definition seems different,  his space $\mathscr{H}_{\mu}$ is actually similar to our $ \mathscr{S}_{\mathrm{sis}} $ space in view of \cite[Lemma 5.2-1]{Zemanian-Int-Trans}. 

  \end{appendices}

{\large \part*{II. Arithmetic Part} \addtocounter{part}{2}  \label{part: arithm}}

\section{Main Results II: Explicit Spectral Formulae}

\subsection{Notation over \protect \scalebox{1.08}{$\BQ (i)$}} \label{sec: notation Q(i)}

Denote $\RF = \BQ (i)$ and $ \CaloO = \BZ [i]$. For $p \in \BZ$, define 
\begin{align}
	\zeta_{\RF} (s, p) =  \sum_{(n) \shskip \subset \CaloO } \frac {(n/|n|)^{4 p}} {|n|^{2s}}, \qquad \text{($\Re (s) > 1$)}, 
\end{align}
which is the Hecke $L$-function associated with the Gr\"ossencharakter $(n) \ra (n/|n|)^{4 p}$. Note that  $$\zeta_{\RF} (s, 0) = \zeta_{\RF} (s) =  \sum_{(n) \shskip \subset \CaloO } \frac {1} {|n|^{2s}}, \qquad \text{($\Re (s) > 1$)},  $$ is the Dedekind zeta function. It is well known that $\zeta_{\RF} (s, p) $ has analytic continuation on the whole complex plane, except for a simple pole at $s = 1$ if $p =0$.  Define the  Stieltjes constant   $\gamma_{\RF}  $ 
by 
\begin{align}\label{2eq: zeta (s), s=1}
	\zeta_{\RF} (s) = \frac {\pi} 4 \cdot \frac {1} {s-1} + \gamma_{\RF} + O (|s-1|), \qquad \text{($s \ra 1$)}.  
\end{align}  

For $n \in \CaloO \smallsetminus \{0\}$ and $\nu \in \BC$, define 
\begin{align}\label{1eq: tau s}
	\vtau_{\nu} (n) = \sum_{ (a) (b) = (n)} |a/b|^{2\nu}. 
\end{align}  
Note that $\vtau_{-\nu} (n) = \vtau_{\nu} (n)$. By our convention, let $\vtau (n) = \vtau_0 (n)$.  

For $r \in \BR$ and $2 p \in \BZ$, let $\pi_{i r, p} $ be the unitary principle series of $\mathrm{PGL}_2 (\BC)$: the unique infinite dimensional constituent of the representation   unitarily induced from
\begin{align*}
	\begin{pmatrix}
		\sqrt{z} & \varv \\
		& 1/\sqrt{z}
	\end{pmatrix}   \ra 
	|z|^{2i r} (z/|z|)^{2 p}. 
\end{align*}
As the Selberg conjecture holds for $\mathrm{PGL}_2 (\CaloO)$, we do not consider complementary series. 

Let  $  L^2_{c} (\mathrm{PGL}_2 (\CaloO) \backslash \mathrm{PGL}_2 (\BC))  $ be the space of  cusp forms for $ \mathrm{PGL}_2 (\CaloO) $.  Let $\Pi_{c} $ denote the discrete   spectrum comprising the irreducible constituents of $L^2_{c} (\mathrm{PGL}_2 (\CaloO) \backslash \mathrm{PGL}_2 (\BC))$. It may be assumed that each  $\pi \in \Pi_{c}$ is Hecke invariant. Let $\vlambda_{\pi} (n)$ be the Hecke eigenvalues of $ \pi $. Note that the $\mathrm{PGL}_2 (\CaloO) $-invariance ensures that $ \vlambda_{\pi} (n) $ only depends on the ideal $(n)$. We have the Hecke relation:
\begin{align}\label{3eq: Hecke relation}
	\vlambda_{\pi} ( m) \vlambda_{\pi}  (n) = \sum_{ (d) | (m, n) } \vlambda_{\pi}  (m n / d^2  ). 
\end{align} 
Let $L(s, \pi)$ and $L (s, \mathrm{Sym}^2 \pi)$ be the standard and the symmetric square $L$-functions for $\pi$: 
\begin{align}\label{3eq: L-functions}
	L (s, \pi) = \sum_{(n)   \subset   \CaloO }  \frac { \vlambda_{\pi} (n) } {|n|^{2  s} }, \quad L (s, \mathrm{Sym}^2 \pi) = \zeta_{\RF} (2s) \sum_{(n)   \subset   \CaloO }  \frac { \vlambda_{\pi} (n^2) } {|n|^{2  s} }, \qquad \text{($\Re (s) > 1$)}. 
\end{align}
For $\pi \in \Pi_{c}$, let $  (r_{\pi}, p_{\pi})$ be the parameter $(r, p)$ such that  $ \pi \simeq \pi_{i r , p }$. Then   the root number of $ \pi $ is equal to $(-1)^{2 p_{\pi}}$. Finally, let $ \Pi_{c}^+ $ or $\Pi_{c}^0$ denote the set of even or spherical $\pi$ according as $2 p_{\pi}$ even or $p_{\pi} = 0$, respectively. In the spherical case $\Pi_{c}^0$ corresponds to an orthonormal basis of Hecke cusp forms on $ \mathrm{PGL}_2 (\CaloO) \backslash \BH^3  $.  By convention, we shall use $f  $ instead of $\pi$ for the spherical Hecke cusp forms in $\Pi_{c}^0$.  

\subsection{First application: The explicit Bruggeman--Motohashi formula on \protect \scalebox{1.08}{$\BQ (i)$}} 

First we use Theorem \ref{thm: Hankel} (or Corollary \ref{cor: Hankel}) to make the Bruggeman--Motohashi formula as in \cite[Theorem 14.1]{B-Mo}\footnote{Occasionally, our definitions are slightly different from those in \cite{B-Mo}---usually by a factor $2$---as theirs have incorporated the different ideal $(2)$.} more explicit in terms of the hypergeometric function.


\begin{thm}\label{thm: Bru-Moto} Let notation be as above. For $\varg \in \mathscr{H}$ as in Definition {\rm\ref{1defn: H}},  define  
	\begin{align}
		\SZ_2 (\varg; \RF) = \int_{-\infty}^{\infty} \left|\zeta_{\RF}  \big(\tfrac 12+it \big)\right|^4 \varg (t) d      \shskip t . 
	\end{align} 
	Then we have the identity
	\begin{equation}
		\begin{split}
			\SZ_2 (\varg; \RF) = \RM_{\RF} (\varg) & + \frac 1 {\pi^2}  \sum_{\pi \in \Pi_{c}^{+}}  \frac {L \big(\frac 1 2 , \pi \big)^3 } { L (1, \mathrm{Sym}^2 \pi) } \Lambda (    r_{\pi},  p_{\pi}; \varg) \\
			& + \frac 1 {  \pi^3} \sum_{ p = - \infty }^{\infty}  \int_{-\infty}^{\infty} \frac {\big|\zeta_{\RF} \big(\frac 1 2 + ir, p \big)\big|^6} {|\zeta_{\RF} (1+2ir, 2 p)|^2}  \Lambda ( r, 2 p; \varg) d r , 
		\end{split}
	\end{equation}
	where $ \RM_{\RF} (\varg)$ is   explicit in terms of $\varg (t) $, $\Gamma (s)$, and {\rm(}the Stieltjes constants of{\rm)}  $\zeta_{\RF} (s)$,
	\begin{align}\label{1eq: Lmabda}
		\Lambda (r, p; \varg) = \viint 	\frac { {\varg_c}  (2\log |1+1/u|) } {|u(1+u)|} \Xi (u ; ir, p ) \shskip i d      u \vwedge d      \widebar{u} ,
	\end{align} 
	\begin{align}
		\varg_c (x) = \int_{-\infty}^{\infty} \varg (t) \cos (t x) d      \shskip t 
	\end{align}
	is the cosine Fourier transform, and  
	\begin{equation} \label{1eq: Xi hyper} 
		\Xi (u; \nu, p) =  - \Re \bigg\{    \frac {   \cot (\pi \nu) } {  |u|^{1+2\nu} (u/|u|)^{2p} } F \bigg(\nu+ p; -\frac 1 u \bigg)  F \bigg(\nu - p; -\frac 1 {\widebar u} \bigg)  \bigg\} ,  
	\end{equation}
	with
	\begin{equation}\label{1eq: Xi hyper, 2}
		F (\nu; z) =	\frac { \Gamma \big(\frac 1 2 + \nu   \big)^2  } {\Gamma (1+2\nu)}  {_2F_1} \hskip -1pt \bigg(  \frac 1 2 + \nu  , \frac 1 2 + \nu   ; 1+ 2 \nu   ; z  \bigg) . 
	\end{equation}
\end{thm}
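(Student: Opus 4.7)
The spectral identity itself — the extraction of the main term $\RM_{\RF}(\varg)$, the appearance of the cuspidal sum over $\Pi_c^+$, and the Eisenstein contribution — is inherited directly from the Bruggeman--Motohashi formula \cite[Theorem 14.1]{B-Mo}. What remains is purely to re-express the kernel $\Xi(u; \nu, p)$, which in \cite[(15.5), (15.7)]{B-Mo} is displayed as a Hankel--Mellin transform of Bessel functions on $\BC^\times$, as the hypergeometric quantity \eqref{1eq: Xi hyper}--\eqref{1eq: Xi hyper, 2}. This is the complex analogue of the passage \eqref{0eq: Hankel integral, R} $\Rightarrow$ \eqref{0eq: Xi} over $\BR$, where the role of the Weber--Schafheitlin integrals \eqref{0eq: Weber-Sch for K, Hankel}--\eqref{0eq: Weber-Sch for K, Hankel, 2} is now played by Corollary \ref{cor: Hankel}.

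The plan is to transcribe \cite[(15.5), (15.7)]{B-Mo} into our conventions. As in the real setting \eqref{0eq: Bessel, R}, where the spectral parameter $\nu$ enters via the doubled-order combination $J_{\pm 2\nu}$, the Kuznetsov-type normalisation of the Bessel transform on $\GL_2(\BC)$ attaches to the spectral parameter $(\nu, p)$ a Bessel function of doubled order $(2\nu, 2p)$; this is paired against the spherical kernel $\boldJ_{0,0}$ encoding the divisor function $\vtau$ on $\CaloO$. After a change of variables pulling out $|u|^{-\rho}$, the transform takes the shape
\[
\Xi(u; \nu, p) \;=\; C \viint_{\BC^{\times}} \boldJ_{2\nu, \shskip 2p}(\hskip -1pt \sqrt{z})\, \boldJ_{0}(\hskip -1pt \sqrt{-u z})\, |z|^{\rho - 2}\, i d      z \vwedge d      \widebar{z}
\]
for some constant $C$ and exponent $\rho$ (expected to be $\rho = 1$) to be read off from the reference. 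Corollary \ref{cor: Hankel} with $\mu = d = 0$ then evaluates this as a constant multiple of $|u|^{-\rho} \cdot {_{\phantom{\nu}}^{\rho}\hskip -1pt\boldF_{2\nu, \shskip 2p}^{0, \shskip 0}}(-1/u)$.

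Next I would expand this hypergeometric function using \eqref{1eq: hypergoem, C, 1.0}--\eqref{1eq: hypergoemetric}. Since $p$ is even on both the cuspidal side ($\pi \in \Pi_c^+$) and the Eisenstein side (the index is $2p$), the trigonometric prefactor in \eqref{1eq: hypergoem, C, 2} reduces to $\cos(\pi(\rho + 2\nu)) - 1$, which at $\rho = 1$ equals $-2\cos^2(\pi \nu)$. Divided by the $\sin(2\pi\nu) = 2\sin(\pi\nu)\cos(\pi\nu)$ of \eqref{1eq: hypergoemetric}, this collapses to $-\cot(\pi \nu)$, common to both terms in the antisymmetrisation $\nu \leftrightarrow -\nu$; taking the real part as in \eqref{1eq: Xi hyper} then combines these two terms, since for $\nu$ imaginary and $z = -1/u$ they are complex conjugates of each other. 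With $\rho = 1$ the gamma factors of \eqref{1eq: hypergoem, C, 1} evaluated at argument $(2\nu \pm 2p)$ become $\Gamma(1/2 + \nu \pm p)^2 / \Gamma(1 + 2\nu \pm 2p)$, and the ${_2F_1}$ of \eqref{1eq: hypergoem, C, 1.0} becomes ${_2F_1}(1/2 + \nu \pm p, 1/2 + \nu \pm p; 1 + 2\nu \pm 2p; \cdot)$ — precisely the definition of $F(\nu \pm p; \cdot)$ in \eqref{1eq: Xi hyper, 2}. The monomial prefactor $z^{\nu + p}\widebar z^{\nu - p}$ with $z = -1/u$ evaluates to $|u|^{-2\nu}(u/|u|)^{-2p}$, which combined with the $|u|^{-1}$ produces the stated prefactor $|u|^{-1 - 2\nu}(u/|u|)^{-2p}$ of \eqref{1eq: Xi hyper}. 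Lemma \ref{lem: reciprocity} may be invoked, should the sign conventions in the transcription produce the hypergeometric argument $-u$ rather than $-1/u$, to interchange the two.

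The main obstacle is the bookkeeping of normalisations. The work \cite{B-Mo} uses conventions — their Haar measure on $\SL_2(\BC)$, the $(z/2)^{-\nu}$ factor absorbed into the Bessel functions (noted in our footnote after \eqref{0eq: defn of Bessel}), their parametrisation of the principal series, and their treatment of the ideal $(2) \subset \CaloO$ — that differ from ours by explicit constants and powers of $2$. Tracking these through their derivation is what pins down the leading coefficients $1/\pi^2$ and $1/\pi^3$ in front of the spectral sums, the weight $1/|u(1+u)|$ and the argument $2\log|1+1/u|$ in \eqref{1eq: Lmabda}, and the exact value of $\rho$ together with the doubling factor in the Bessel orders appearing above. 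Once the dictionary is fixed, the kernel evaluation reduces to the direct application of Corollary \ref{cor: Hankel} together with the elementary trigonometric and $\Gamma$-function identities just recorded.
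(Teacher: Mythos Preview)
Your proposal is correct and follows essentially the same route as the paper: starting from the Bruggeman--Motohashi integral representation \eqref{1eq: Xi}--\eqref{1eq: j}, the paper rewrites $\Xi(u;\nu,p)$ as $2\pi^4 \viint \boldJ_{2\nu,2p}(\sqrt{z})\,\boldJ_0(\sqrt{-uz})\, i\,dz\wedge d\widebar z$ (so indeed $\rho=1$ and $C=2\pi^4$), applies Corollary~\ref{cor: Hankel} to obtain $\frac{1}{2|u|}\,{}^{1}\hskip -1pt\boldF_{2\nu,2p}^{0}(-1/u)$, and then unpacks the definition exactly as you describe. Your trigonometric reduction to $-\cot(\pi\nu)$ and the identification of the $F(\nu\pm p;\cdot)$ factors are precisely what the paper's phrase ``follows immediately from the definition'' encapsulates.
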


Our   $ \Lambda (r, p; \varg)  $ in \eqref{1eq: Lmabda}--\eqref{1eq: Xi hyper, 2} is clearly the complex analogue of  $\Lambda (r; \varg)$ in \eqref{0eq: Lambda}--\eqref{0eq: Xi}, while its  formula  given by  Lemmas 2.1, 14.1, and Theorem 14.1 in  \cite{B-Mo} is quite    involved and different. Actually, our proof is based on another formula of  Bruggeman and Motohashi  in their concluding \S 15:  
\begin{align}\label{1eq: Xi}
	\Xi (u; \nu, p) = \frac 1 {2  } \viint_{\BC^{\times}} j_{\nu, p} (z / u) j_{0} (-z) \frac {i d      z \vwedge d      \widebar{z}}   {|z|^{3 } },  
\end{align}
with
\begin{align}\label{1eq: j}
	j_{\nu, p} (z) = 2\pi^2 |z| \boldJ_{2 \nu,\shskip 2 p} ( \hskip -1.5pt  \sqrt{z}). 
\end{align}
It is therefore clear that  Theorem \ref{thm: Bru-Moto} is a direct consequence of  their Theorem 14.1  and our Theorem \ref{thm: Hankel}.

\subsection{Second application: The explicit Kuznetsov--Motohashi formula on \protect \scalebox{1.08}{$\BQ (i)$}} \label{sec: 2nd Appl}

By  the idea of Kuznetsov in \cite{Kuznetsov-Motohashi-formula} and the method of regularization in \cite{Qi-BE}, we shall prove in \S \ref{sec: proof of K-M} the following Kuznetsov--Motohashi formula over $\BQ (i)$.  

\begin{thm}\label{thm: Kuz-Moto} Let notation be as above.   Let $m \in \CaloO \smallsetminus \{0\}$.  Let $h \in \mathscr{H}$ satisfy
	\begin{align}\label{0eq: h (i/2)=0}
		h  (\pm   i/ 2  ) = 0,
	\end{align}
	and   
	\begin{align}\label{1eq: decay of h}
		h (r) \Lt \exp (-c |r|^2) , \qquad \text{{\rm($c > 0$)}}, 
	\end{align}
	in any fixed horizontal strip.  
	Define 
	\begin{equation} 
		\ScrL_2  ( m; h)  = \sum_{f \in \Pi_{c}^0}   \frac { L \big(\frac 1 2 , f \big)^2       } {L(1, \mathrm{Sym^2} f )} \vlambda_f ( m )    h  ( r_f ) +  \frac 1 {\pi}    \int_{-\infty}^{\infty} \hskip -2pt   
		\frac {\left| \zeta_\RF \big(\frac 1 2 +ir \big) \right|^4     } {|\zeta_\RF(1+2i r)|^2} \vtau_{i r} ( m ) h ( r )  \shskip   d  r   .
	\end{equation}
	Then we have the identity 
	\begin{equation}\label{1eq: Kuznetsov, Q(i)}
		\begin{split}
			\ScrL_2  ( m; h)  = 		\frac {2 \vtau  (m)} { \pi^3  | m| } \hskip -1pt \Big( \hskip -1pt \Big(     \gamma_{\RF} -  \frac {\pi} 4 \log |\pi^2m|   \Big)  \mathrm{B}(h)    + \frac {\pi} 2 \breve{\mathrm{B}} (h) \hskip -1pt \Big)  - \frac {i \pi     \vtau_{ \frac 1 2} (m)   } {4 \zeta_{\RF} (2 ) }  h' (i/2) & \\
			+    \frac 1 {2 \pi^3} \sum_{n \shskip  \in \CaloO \smallsetminus \{0, \shskip m  \} }   \frac {  \vtau (n-m) \vtau  (n  )   } {       |  n|  }  \Phi (n/m) &  , 
		\end{split}
	\end{equation}
	where 
	\begin{align}\label{1eq: Phi}
		\mathrm{B}(h) =  \int_{-\infty}^{\infty} h (r) r^2 d r, \qquad \breve{\mathrm{B}}(h)  = \int_{-\infty}^{\infty} \frac{\Gamma'} {\Gamma}  \lp \frac 1 2 + ir \rp  h (r) r^2 d r,
	\end{align}
	\begin{align}\label{1eq: Theta}
		\Phi (u; h) = 	\int_{-\infty}^{\infty}   \Theta (u; ir)  h (r)  r^2 d r ,
	\end{align} 
	\begin{align}\label{1eq: F}
		\Theta (u; \nu) = - \frac {   \cot (\pi \nu) } {  |u|^{ 2\nu}   } F \bigg(\nu ;  \frac 1 u \bigg)  F \bigg(\nu ;  \frac 1 {\widebar u} \bigg),
	\end{align}
	with $F (\nu; z)$ defined as in {\rm\eqref{1eq: Xi hyper, 2}.}
 
\end{thm}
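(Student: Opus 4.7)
\medskip

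\noindent\textbf{Proof proposal for Theorem \ref{thm: Kuz-Moto}.}

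The plan is to follow Kuznetsov's original strategy from \cite{Kuznetsov-Motohashi-formula}, adapted to $\BQ(i)$, using the regularization technique of \cite{Qi-BE}. First, I would open $L \big(\frac 12, f\big)^2$ and the corresponding $\big|\zeta_\RF\big(\frac 12+ir\big)\big|^4$ on the Eisenstein side as Dirichlet series (regularized by an auxiliary parameter $s$), then combine with $\vlambda_f(m)$ via the Hecke relation \eqref{3eq: Hecke relation}, so that $\ScrL_2(m;h)$ becomes a sum of the shape
\begin{align*}
\sum_{(d)|(m)} \sum_{(n) \subset \CaloO} \frac{\vtau(n)}{|n|^{1/2}} \cdot \Bigg(\sum_{\pi \in \Pi_c^0} \frac{\vlambda_\pi(nm/d^2)}{L(1,\mathrm{Sym}^2\pi)} h(r_\pi) + \mathrm{Eis.}\Bigg)
\end{align*}
(with a suitable weight function in $n$). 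The spherical Kuznetsov trace formula for $\mathrm{PGL}_2(\CaloO)$ then converts the inner spectral sum into a diagonal term plus a sum of Kloosterman sums $\mathrm{Kl}(nm/d^2, 1;c)/|c|^2$ weighted by a Bessel transform $\boldJ_{ir}(\cdot)$ of $h$.

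Next, I would break the Kloosterman sum as $ \mathrm{Kl}(nm/d^2, 1;c) = \sum_{a\,(c)} e(\Re((nm/d^2)a/c)) e(\Re(\bar a/c))$, and apply the \Voronoi\ summation formula over $\BQ(i)$ for the divisor function $\vtau(n)$ to the sum in $n$. This produces a dual sum involving another Bessel transform, so that the composite transform acting on $h$ has the shape
\begin{align*}
\int h(r)\Bigg(\viint_{\BC^\times} \boldJ_{ir}(\hskip -1pt \sqrt{\varv z})\,\boldJ_{ir}(\hskip -1pt\sqrt{\varw z})\,|z|^{\rho-2}\, i d z \vwedge d \widebar z\Bigg) r^2\, d r,
\end{align*}
which is precisely a Hankel--Mellin transform of Bessel functions over $\BC$. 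Corollary \ref{cor: Hankel} (with $\nu = \mu = ir$, $p = d = 0$) evaluates the inner double integral as $ {_{\phantom{\nu}}^{\rho}\hskip -1pt\boldF^{ir}_{ir}}(\varv/\varw) $ times an explicit gamma factor, and after a limit or specialization to the appropriate $\rho$, this becomes the hypergeometric kernel $\Theta(u;ir)$ in \eqref{1eq: F}, \eqref{1eq: Xi hyper, 2}. Summing over the dual variable yields the shifted divisor sum $\sum_n \vtau(n-m)\vtau(n)\Phi(n/m;h)/|n|$ in \eqref{1eq: Kuznetsov, Q(i)}.

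The arithmetic main terms are then collected from three sources: (i) the diagonal contribution of the Kuznetsov formula (matching the $n = m$ terms excluded in the shifted sum), (ii) the residues encountered when shifting contours of the Mellin variable $s$ back to its original line after the \Voronoi\ transformation---poles of $\zeta_\RF(s)^2$ at $s=1$ give, via \eqref{2eq: zeta (s), s=1}, the contribution involving $\gamma_\RF$, $\log|\pi^2 m|$, $\mathrm{B}(h)$, and $\breve{\mathrm B}(h)$, and (iii) the polar term $h'(i/2)\vtau_{1/2}(m)/\zeta_\RF(2)$, coming from a simple pole at the spectral parameter $r = i/2$ (equivalently $s = 1$ in the resolvent) of an Eisenstein-type contribution. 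The condition $h(\pm i/2) = 0$ from \eqref{0eq: h (i/2)=0} ensures these double poles at $r = \pm i/2$ reduce to simple poles, producing the $h'(i/2)$ term cleanly; the Gaussian decay \eqref{1eq: decay of h} justifies all horizontal contour shifts.

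The main obstacle I anticipate is the \emph{convergence and interchange} of sums and integrals at the critical step combining Kuznetsov and \Voronoi. As noted in \S\ref{sec: spectral}, over $\BQ(i)$ the Bessel function is an infinite sum of Mellin--Barnes integrals (cf.\ \cite[\S 3]{Qi-Bessel}), so neither Kuznetsov's \cite[Theorem 2.3]{Kuznetsov-Motohashi-formula} nor Motohashi's complex-analytic method applies verbatim. To circumvent this, I would introduce an auxiliary regularizing parameter (as in \cite{Qi-BE}), carry out the spectral-then-arithmetic manipulations in a region of absolute convergence where both the Kuznetsov--Kloosterman series and the \Voronoi-dualized series converge, and only at the very end analytically continue in the regularizing parameter back to the desired value. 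The identification of the hypergeometric kernel via Theorem \ref{thm: Hankel} is the mechanism that allows this analytic continuation to be performed in closed form, since $ {_{\phantom{\nu}}^{\rho}\hskip -1pt\boldF_{\nu,p}^{\mu,d}}(z)$ is meromorphic in $\rho$.
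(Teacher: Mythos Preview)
Your overall strategy---open the $L$-series with an auxiliary parameter $s$, apply the spherical Kuznetsov formula, apply Vorono\"i--Oppenheim in the $n$-variable, evaluate the resulting Hankel--Mellin integral by Corollary~\ref{cor: Hankel}, and pass to the limit---is the paper's. Two points need correction.

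First, the Bessel orders in your Hankel--Mellin integral are misidentified. The Kuznetsov kernel (Lemma~\ref{lem: Kuznetsov}) contributes $\boldJ_{2ir}$, while the Vorono\"i kernel for $\vtau_\mu$ (Lemma~\ref{lem: Voronoi}) contributes $\boldJ_{2\mu}$, with $\mu$ a \emph{second} auxiliary parameter sent to $0$ only at the end. So Corollary~\ref{cor: Hankel} is applied with $(\nu,\mu,p,d) = (2ir, 2\mu_{\mathrm{aux}}, 0, 0)$, not $\nu=\mu=ir$; the output is ${_{\phantom{\nu}}^{2-2s}\hskip -1pt\boldF^{2\mu}_{2ir}}(m/n)$, which in the limit $(s,\mu)\to(1/2,0)$ gives ${_{\phantom{\nu}}^{1}\hskip -1pt\boldF^{0}_{2ir}}(m/n)$ and hence $\Theta(n/m;ir)$. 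Relatedly, your Hecke-relation step (collapsing $\vlambda_f(m)\vlambda_f(n)$ into $\vlambda_f(mn/d^2)$ before Kuznetsov) is an unnecessary detour: the paper keeps $m$ and $n$ separate and applies Kuznetsov to $\vlambda_f(m)\vlambda_f(n)$ directly, so that the Kloosterman sums $S_\RF(m,\epsilon n;c)$ have clean conductor $c$ for the subsequent Vorono\"i step.

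Second, and this is the real gap, your description of the regularization is not yet a method. The obstruction is not merely a matter of working in a region of absolute convergence: after opening the Kloosterman sum, the test function $z\mapsto|z|^{-2s}\boldJ_{2ir}(\sqrt{mz}/2c)$ fails the hypotheses of the Vorono\"i formula (Corollary~\ref{cor: V-O}) for \emph{any} $s$, because $\boldJ_{2ir}(\sqrt{z})$ is only $O(1)$ near $z=0$. The paper's fix (\S\ref{sec: regularization}) is to split $\boldJ_{2ir} = \txw\,\boldsymbol{R}_{2ir} + \boldsymbol{M}_{2ir}$, where $\boldsymbol{R}_{2ir}$ is an explicit two-term power-series truncation and $\txw$ a cutoff near $0$, so that $\boldsymbol{M}_{2ir}(z)=O(|z|^4)$ lies in the space $\mathscr{P}^\infty_{\valpha,\beta}$ on which Vorono\"i is valid. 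The elementary piece $\txw\,\boldsymbol{R}_{2ir}$ is handled separately via Mellin inversion and the functional equation of the Estermann function (Lemma~\ref{lem: Estermann}), and the two contributions from the cutoff cancel exactly. Your proposal needs this mechanism spelled out; ``analytically continue in the regularizing parameter'' alone will not get past this step. Finally, the $\gamma_\RF$, $\log|\pi^2 m|$, and $\breve{\mathrm B}(h)$ terms do not come from Mellin-contour residues but from pairing the Kuznetsov diagonal with the dual-sum diagonal ($n=m$, via Corollary~\ref{cor: Gauss}) in the limit $s\to1/2$; the $h'(i/2)$ term arises from the analytic continuation of the Eisenstein piece $\SE_\mu^s$ across $\Re(s)=1$, where poles of $\zeta_\RF$ are picked up at $r=\pm i(1-s\pm\mu)$.
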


The convergence of the $r$-integral in \eqref{1eq: Theta} and the $n$-sum in  {\rm \eqref{1eq: Kuznetsov, Q(i)}} will be discussed   in {\rm \S} {\rm \ref{sec: uniform bounds, hyper}}. 

According to {\rm\cite[(1), (4)]{Zagier-Dedekind}}, 
\begin{align*}
	\zeta_{\RF} (2) = \frac {\pi^2} 6 A(1), \qquad A(1) = \int_0^1 \frac {1} {1+t^2} \log \frac {4} {1+t^2} d t,
\end{align*}
so the second term in {\rm \eqref{1eq: Kuznetsov, Q(i)}} could be slightly more explicit.   

\subsection{Remarks on the Kuznetsov--Motohashi formula on \protect \scalebox{1.08}{$\BQ$}}\label{sec: remarks Kuznetsov}
Finally, we  explain and provide our solution to the issues in Kuznetsov's proof  \cite{Kuznetsov-Motohashi-formula}\allowbreak---Motohashi \cite{Motohashi-JNT-Mean} only said that Kuznetsov's argument `is sketchy and seems to need some revision'. 

By comparing with the argument for the holomorphic case as in \cite[\S 4]{BF-Moments}, it seems that Kuznetsov's proof of his Theorem 3.1 becomes valid if he shifted the contour of $$\frac {2i} {\pi} \int_{-\infty}^{\infty}   \frac {J_{2 i r }    (\hskip -1pt \sqrt{x})  } {\cosh (\pi r) }  h (r)   r  d r$$
to $\Re (ir) = 1$, say, prior to the application of Vorono\"i--Oppenheim. However, Kuznetsov did not provide a proof for his Theorem 3.3, indicating that it follows the scheme of the proof of  Theorem 3.1 by using $$\frac {2i} {\pi} \int_{-\infty}^{\infty}   \frac {I_{2 i r }    (\hskip -1pt \sqrt{x})  } {\cosh (\pi r) }  h (r)   r  d r. $$ There is a serious issue with the application of Vorono\"i--Oppenheim as $ I_{2 i r } ( {x})$ is of exponential growth, so, instead,  Motohashi considered 
$$\frac {4} {\pi^2} \int_{-\infty}^{\infty}     {K_{2 i r }    (\hskip -1pt \sqrt{x})  } {\sinh (\pi r) }  h (r)   r  d r, $$
and applied the Mellin--Barnes integral representation of $K_{2ir} (x)$,  and subsequently  the functional equation for the Estermann function. Nevertheless, it is explained in \cite[\S 3.3]{Motohashi-Riemann}  that his approach does not apply for  $J_{2ir} (x)$. 

Define 
\begin{align*}
	P_{\nu } (x) = \frac {(x/2)^{\nu } } {\Gamma (\nu+1)}.
\end{align*} 
Let $\txw (x) \in C^\infty(\BR_+)$ be such that  $\txw (x) \equiv 1$ for $x$ near $0$ and $\txw (x) \equiv  0$ for $x$ large.   Our idea is to write 
\begin{align*}
	J_{2ir} (\hskip -1pt \sqrt{x}) = \txw (x) P_{2ir} (\hskip -1pt \sqrt{x}) +  \big( J_{2ir} (\hskip -1pt \sqrt{x})  - \txw (x) P_{2ir} (\hskip -1pt \sqrt{x}) \big) , 
\end{align*}
or apply
\begin{align*}
	I_{2ir} (\hskip -1pt \sqrt{x}) = \txw (x) P_{2ir} (\hskip -1pt \sqrt{x}) + \big( I_{2ir} (\hskip -1pt \sqrt{x})  - \txw (x) P_{2ir} (\hskip -1pt \sqrt{x}) \big) , 
\end{align*}
in the formula
\begin{align*}
	K_{2 ir} (\hskip -1pt \sqrt {x }) = 	\frac {\pi i} {2 \sinh (2\pi \nu) }   \big( I_{2  ir} (\hskip -1pt  \sqrt {x }) - I_{2 ir } (\hskip -1pt  \sqrt {x }) \big) .     
\end{align*}
After the regularization, the Bessel functions would have sufficient decay at $0$ to apply Vorono\"i--Oppenheim, while $\txw (x) P_{2ir} (\hskip -1pt \sqrt{x})$ may be handled easily by the Mellin technique.

\section{Proof of Theorem \ref{thm: Bru-Moto}: The Explicit Bruggeman--Motohashi Formula over $\BQ(i)$} 

Let $\Re (\nu) = 0$ and $p$ be integral. By \eqref{1eq: Xi} and \eqref{1eq: j},   we have 
\begin{align}
	\Xi (u; \nu, p) =    {2 \pi^4}   \viint_{\BC^{\times}} \boldJ_{2\nu, 2p} (\hskip -1.5pt \sqrt{z } ) \boldJ_{0} (\hskip -1.5pt \sqrt{-uz} )   {i d      z \vwedge d      \widebar{z}}    . 
\end{align}
It follows from Corollary    \ref{cor: Hankel}   that 
\begin{align}
	\Xi (u; \nu, p) =   \frac {1} {2   |u| }  {_{\phantom{\nu}}^{1 } \hskip -1pt \boldF_{2\nu, \shskip 2p}^{0}} \lp - \frac 1 {u} \rp . 
\end{align}
As $\Re (\nu) = 0$, the formula for $  \Xi (u; \nu, p) $ given by \eqref{1eq: Xi hyper} and \eqref{1eq: Xi hyper, 2} follows immediately from the definition of ${_{\phantom{\nu}}^{1 } \hskip -1pt \boldF_{2\nu, \shskip 2p}^{0}} (z)$ as in \eqref{1eq: hypergoem, C, 1.0}--\eqref{1eq: hypergoemetric}.

\section{Preliminaries for the Proof of Theorem  \ref{thm: Kuz-Moto}}


In this section, we introduce some fundamental notions and tools that we need to prove Theorem  \ref{thm: Kuz-Moto}. For the notation over $\RF = \BQ (i)$, the reader is referred to \S \ref{sec: notation Q(i)}. 

\subsection{Kloosterman and Ramanujan sums}  
For $  m,  n \in \CaloO $ and $c \in \CaloO \smallsetminus \{0\}$,    we define the Kloosterman sum 
\begin{align}\label{2eq: defn Kloosterman KS}
	S_{\RF} (m, n ; c ) = \sum_{a \, \in (\CaloO /c)^{\times } } e \bigg(   \Re \bigg( \frac {  a m +   \widebar{a} n } {c} \bigg)  \bigg), 
\end{align} 
where $\widebar{a} $ is the multiplicative inverse of $a$ modulo $c$.   We have the Weil bound:
\begin{align}
	\label{7eq: Weil}
	 S_{\RF} (m, n; c)   \Lt \vtau (c) {|(m, n, c)|} {|c|}. 
\end{align} 
The sum $S_{\RF} (m, 0; c)$ is usually named after Ramanujan. We have 
\begin{align}\label{2eq: Ramanujan}
	S_{\RF} (m, 0; c) = \sum_{ (d) | ( m , c)}   \mu   ( c /d   ) |d|^2 ,
\end{align} 
where $\mu (n) $ is the M\"obius function  (of ideals) over $\RF$. For $m \neq 0$, we have the Ramanujan identity:
\begin{align}\label{7eq: sum of (c)}
	\sum_{(c)   \subset   \CaloO  }  \frac { S_{\RF} (m , 0; c)  } { |c  |^{2 s} } = \frac { \sigma_{1-s} (m) } {  \zeta_{\RF} (s) }, \qquad \text{($\Re (s) > 1$)},
\end{align}
with 
\begin{align}
	\sigma_{s} (m) = \sum_{(d) | (m)} |d|^{2s} .  
\end{align}
Note that $S_{\RF} (0, 0; c) = \varphi (c)$ is the Euler toitent function over $\RF$, and 
\begin{align}\label{7eq: sum of c, 2}
	 \sum_{(c)   \subset   \CaloO  }  \frac { \varphi (c)  } { |c  |^{2s} } = \frac {\zeta_{\RF}  (s-1)} {\zeta_{\RF} (s)}, \qquad \text{($\Re (s) > 2$)}.
\end{align}
To unify\eqref{7eq: sum of (c)} and \eqref{7eq: sum of c, 2}, it will be convenient to define 
\begin{align}\label{7eq: sigma(0)} 
	\sigma_{ s} (0) = \zeta_{\RF} (-s). 
\end{align}

\subsection{Kuznetsov trace formula over  \protect \scalebox{1.08}{$\BQ (i)$}} 

The version of Kuznetsov formula here for $\mathrm{PGL}_2 (\CaloO)$ is from   \cite[Proposition 3.3]{Qi-Liu-Moments}. It may be deduced from the Kuznetsov formula for $ \mathrm{SL}_2 (\CaloO)$ as in \cite[Theorem]{Motohashi-Kuznetsov-Picard-0}.  The reader  is also referred to \cite[Theorem 6.1]{BM-Kuz-Spherical},  \cite[Proposition 1]{Venkatesh-BeyondEndoscopy},  or \cite[Proposition 3.5]{Qi-GL(3)} for the Kuznetsov formula in various forms over number fields.

\begin{lem}\label{lem: Kuznetsov}
	 Let $ h \in \mathscr{H} $ be as in Definition {\rm \ref{1defn: H}}. 
	 We have the identity{\rm:} 
	 \begin{equation}\label{1eq: Kuznetsov} 
	 	\begin{aligned}
	 		\sum_{f \in  \Pi_{c}^{0}  }       \frac {  \vlambda_f ( m )     \vlambda_f  ( n )} {L(1, \mathrm{Sym^2} f )} & h  ( r_f ) + \frac 1 {\pi}  \int_{-\infty}^{\infty} \hskip -2pt   
	 		\frac {\vtau_{i r} (m )  \vtau_{i r} ( n )} {|\zeta_\RF(1+2i r)|^2} h ( r )  \shskip   d  r  \\
	 			=    &   \frac {1} { 2\pi^3 }  \delta_{(m),   (n)}  \mathrm{B} (h)  + \frac 1 {4 \pi }  \sum_{ \epsilon   \in   \CaloO^{\times} \hskip -1pt / \CaloO^{\times 2} } \sum_{c   \in   \CaloO \smallsetminus \{0\} } \frac {S_{\RF} (  m , \epsilon  n  ; c  ) } { |c|^2  } \mathrm{B}  \bigg( \frac { {\epsilon m n} } {  4 c^2     }; h  \bigg),
	 	\end{aligned}
	 \end{equation}
	 where $\delta_{(m),   (n)}$ is the Kronecker $\delta$ symbol for ideals, $ S_{\RF} ( m ,  \epsilon n  ; c  ) $ is the Kloosterman sum defined as in {\rm\eqref{2eq: defn Kloosterman KS}}, and  
	 \begin{align}\label{1eq: defn Bessel integral}
	 	\mathrm{B}(h)  = \int_{-\infty}^{\infty} h (r) r^2 d r, \qquad \mathrm{B} (z; h) = \int_{-\infty}^{\infty} h (r)  \boldJ_{2 i r} (\hskip -1.5pt \sqrt z ) r^2 d r  . 
	 \end{align} 
\end{lem}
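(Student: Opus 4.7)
The plan is to derive Lemma \ref{lem: Kuznetsov} by adapting the classical Kuznetsov trace formula for $\mathrm{SL}_2 (\CaloO)$ proven in \cite{Motohashi-Kuznetsov-Picard-0}, passing from $\mathrm{SL}_2$ to $\mathrm{PGL}_2$ via the unit group $\CaloO^{\times} / \CaloO^{\times 2}$ and recasting the Fourier coefficients in terms of Hecke eigenvalues normalised by $L(1, \mathrm{Sym}^2 f)$. The starting point is a Poincar\'e series $P_m (z ; h)$ for $\mathrm{SL}_2 (\CaloO) \backslash \BH^3$ constructed so that its $m$-th Whittaker coefficient encodes the test function $h$ on the spectral side, and whose inner product $\langle P_m, P_n \rangle$ is to be computed in two different ways.

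The first (spectral) computation expands $\langle P_m, P_n \rangle$ via the $L^2$-decomposition of $\mathrm{SL}_2 (\CaloO) \backslash \BH^3$ into cusp forms and Eisenstein series. Each cuspidal term is weighted by the product of Fourier coefficients $\rho_f (m) \overline{\rho_f (n)}$, and one uses the standard identity $|\rho_f (1)|^2 = 1 / L(1, \mathrm{Sym}^2 f)$ (the Gaussian analogue of the formula of Hoffstein--Lockhart), together with the Hecke relation, to rewrite $\rho_f (m) \overline{\rho_f (n)}$ as $\vlambda_f (m) \vlambda_f (n)  / L(1, \mathrm{Sym}^2 f)$. The continuous spectrum contributes $\vtau_{ir}(m) \vtau_{ir}(n) / |\zeta_\RF (1 + 2 i r)|^2$, with the Plancherel density $r^2 / 2\pi^2$ on $\BH^3$; this produces the left-hand side of \eqref{1eq: Kuznetsov} upon restricting to $\mathrm{PGL}_2 (\CaloO)$-invariant (hence even/spherical) forms. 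In this restriction the non-invariant Maass forms are killed because their $m$- and $n$-th coefficients cancel when averaged over the non-trivial units in $\CaloO^\times / \CaloO^{\times 2}$.

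The second (geometric) computation uses the Bruhat decomposition of $\mathrm{SL}_2 (\CaloO)$: the identity double coset $N  \backslash N / N$ yields the diagonal term $\delta_{(m), (n)} \mathrm{B}(h)$, and the big-cell double cosets yield
\[
\sum_{c   \in   \CaloO \smallsetminus \{0\}} \frac{S_\RF (m, n; c)}{|c|^2} \cdot \mathcal{B}\!\left(\frac{m n}{c^2}; h\right),
\]
where $\mathcal{B}(z;h)$ is a Bessel-type integral transform of $h$. Identifying the Bessel kernel as the spherical Bessel function $\boldJ_{2 i r}(\sqrt{z})$ on $\BC$ in the sense of \S\ref{sec: Bessel C} is the analytic core of the argument; in the $\mathrm{SL}_2 (\BZ)$ setting it corresponds to Kuznetsov's original derivation, and over $\BQ(i)$ it follows from the Jacquet--Whittaker function computation carried out e.g.\ in \cite{BM-Kuz-Spherical}. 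Passing from the $\mathrm{SL}_2$ formula to the $\mathrm{PGL}_2$ formula introduces the average over $\epsilon \in \CaloO^\times / \CaloO^{\times 2}$ on the Kloosterman side, because the relation $(n) = (n')$ in the $\mathrm{PGL}_2$ summation translates, on the level of representatives, to $n' = \epsilon n$ for a unit $\epsilon$; correspondingly the Kloosterman sum $S_\RF (m, n; c)$ is replaced by $S_\RF (m, \epsilon n; c)$, and an explicit factor of $ 1/2 $ (resp.\ $1 / 4$ in the Kloosterman block) arises from $|\CaloO^\times / \CaloO^{\times 2}| = 2$ and the center of $\mathrm{PGL}_2$.

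The main technical obstacle is keeping track of the normalisations: the precise constants in front of $\mathrm{B}(h)$ and the Kloosterman sums depend delicately on the Plancherel measure on $\mathrm{PGL}_2(\BC) / \mathrm{PU}(2)$, the volume of $\mathrm{PGL}_2 (\CaloO) \backslash \BH^3$, and the Mellin--Whittaker normalisation of $P_m$; a clean way to pin these down is to test the identity against $m = n = 1$ and the trivial archimedean component (via an auxiliary test function) and compare with the known residue at the trivial Eisenstein pole. Once these constants are matched, the identity \eqref{1eq: Kuznetsov} follows, and the convergence of the spectral side under the decay \eqref{1eq: decay of h} is automatic from the standard Weyl law for $\mathrm{PGL}_2 (\CaloO)$.
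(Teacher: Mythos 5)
You should first be aware that the paper does not actually prove this lemma: it is quoted verbatim from \cite[Proposition 3.3]{Qi-Liu-Moments}, with the remark that it may be deduced from the $\mathrm{SL}_2(\CaloO)$ Kuznetsov formula of \cite{Motohashi-Kuznetsov-Picard-0} (see also \cite{BM-Kuz-Spherical}). Your outline --- Poincar\'e series on $\mathrm{SL}_2(\CaloO)\backslash \BH^3$, inner product computed spectrally and via the Bruhat decomposition, Hoffstein--Lockhart type normalisation $|\rho_f(1)|^2 \asymp 1/L(1,\mathrm{Sym}^2 f)$, and then passage to $\mathrm{PGL}_2(\CaloO)$ by averaging over $\epsilon \in \CaloO^{\times}/\CaloO^{\times 2}$ so that forms whose coefficients are not ideal-invariant cancel --- is exactly the standard derivation underlying those references, and in particular the $\epsilon$-averaging step is precisely how the cited source deduces the $\mathrm{PGL}_2$ statement from Motohashi's $\mathrm{SL}_2(\CaloO)$ formula. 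So the architecture of your argument is the right one and consistent with the route the paper points to; the difference is only that the paper imports the finished formula rather than re-deriving it.

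The genuine soft spot in your write-up is the treatment of the normalisations, which is where all the actual content of the lemma lies. The constants $1/(2\pi^3)$ and $1/(4\pi)$, the argument $\epsilon m n/(4c^2)$ of the Bessel transform, and the identification of the kernel as $\boldJ_{2ir}(\sqrt{z})$ in the sense of \S\ref{sec: Bessel C} must come out of the archimedean Whittaker/Plancherel computation (or be imported from \cite{Motohashi-Kuznetsov-Picard-0}, whose kernel must then be matched against the definition \eqref{0def: J mu m (z)}--\eqref{0eq: defn of Bessel}); your proposal defers all of this and instead suggests ``calibrating'' the constants by testing the identity at $m=n=1$ against the Eisenstein pole. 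That is not a derivation: one cannot fix the unknown constants of an identity by appealing to the identity itself, and a single test point cannot separate the constant in front of $\mathrm{B}(h)$ from the one in front of the Kloosterman block, since both depend on the same unknown Poincar\'e-series normalisation. To make the argument complete you would either carry out the explicit Whittaker-function computation over $\BC$ (as in \cite{BM-Kuz-Spherical}) or simply quote the $\mathrm{SL}_2(\CaloO)$ formula with its constants and perform only the bookkeeping of the unit average and the harmonic weight $1/L(1,\mathrm{Sym}^2 f)$, which is what the paper's citation amounts to.
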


\subsection{Vorono\"i--Oppenheim summation formula over  \protect \scalebox{1.08}{$\BQ (i)$}} 
The version of Vorono\"i--\allowbreak Oppenheim formula here with additive twist is from  \cite[Theorem 1.3]{Qi-VO} in the case $\RF = \BQ (i)$ (choose therein $\zeta = a/c$, $\mathfrak{a} = (1)$, $S = \big\{ \mathfrak{p} : \mathfrak{p} | (c) \big\}$, and  $\mathfrak{b} = (c^2)$). The special case of $a =  c = 1$ (with no additive twist) may also be found in \cite{BBT-VO}. 

\begin{lem}\label{lem: Voronoi} Let $\mu \in \BC$.    Let $a, \widebar{a},  c \in \CaloO$ with $c \neq 0$ be such that $(a, c) = (1)$ and $a \widebar{a} \equiv 1 (\mathrm{mod}\, c)$.  Then for $ \phi \in C_c^{\infty} (\BC^{\times}) $  we have the identity{\rm:}
	\begin{align}\label{7eq: Voronoi} 
		\begin{aligned}
			 	   \sum_{n  \shskip \in   \CaloO \smallsetminus \{0\}  }        e \Big(   \Re \Big(  \frac {a n} {c} \Big) \Big)        \vtau_{\mu}  (n )   \phi   (n )        
		&	=	  \frac {\zeta_{\RF} (1 + 2 \mu) } {2 |c|^{2 + 4 \mu}}  \EuM \phi ({2\mu}) + \frac {\zeta_{\RF} (1 - 2 \mu) } {2 |c|^{2 - 4 \mu}}  \EuM  \phi ({-2\mu} )\\
		&	  +   \frac 1 {|2 c|^2}   \sum_{ n \shskip  \in    \CaloO \smallsetminus \{0\} }    e \Big( \Re \Big( \hskip -2 pt - \frac {\widebar{a} n} {c} \Big) \Big)  \vtau_{\mu} (n  ) \EuH_{2\mu} \shskip  \phi  \bigg(\frac { {n}}{ {4} c^2 }\bigg)   ,
		\end{aligned}
	\end{align}
where $ \EuM \phi ({\mu}) $ is the Mellin transform 
\begin{align}
	\EuM  \phi ({\mu}) = \viint_{\BC^{\times} } \phi (z) |z|^{\mu}  i d      z \vwedge d      \widebar{z} ,
\end{align}
and $\EuH_{\mu} \shskip  \phi (u) $ is the Hankel transform {\rm(}see Appendix {\ref{append: Hankel}}{\rm)}
\begin{align}
\EuH_{\mu} \shskip  \phi (u)  =	2\pi^2 \viint_{\BC^{\times}} \phi (z) \boldJ_{ \mu } (\hskip -1.5pt \sqrt{uz})  {i d      z \vwedge d      \widebar{z}} . 
\end{align}
\end{lem}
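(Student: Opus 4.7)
The plan is to derive this summation formula from the functional equation of a suitably twisted Dirichlet series, following the classical Estermann--Voronoi strategy adapted to $\RF = \BQ(i)$. First I would introduce the additively twisted Dirichlet series
\[
	D_{a/c}(s; \mu) := \sum_{n \in \CaloO \smallsetminus \{0\}} \vtau_\mu(n)\, e(\Re(an/c))\, |n|^{-2s},
\]
which converges absolutely for $\Re(s) > 1 + |\Re(\mu)|$, and then apply Mellin inversion to rewrite the left-hand side of \eqref{7eq: Voronoi} as a contour integral of $D_{a/c}(s;\mu)$ against the Mellin transform $\EuM\phi$.

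The key structural step is the functional equation for $D_{a/c}(s; \mu)$. Expanding the divisor function as $\vtau_\mu(n) = \sum_{(b)(d)=(n)} |b/d|^{2\mu}$ and exploiting the orthogonality of characters on $(\CaloO/c)^\times$ together with Gr\"ossencharaktere on $\RF$, one decomposes $D_{a/c}(s; \mu)$ as a finite linear combination of products $L(s+\mu, \chi)\, L(s-\mu, \bar\chi)$ of Hecke $L$-functions on $\RF$. Each such $L$-function admits its own functional equation $s \mapsto 1 - s$ with archimedean gamma factors of the form $\Gamma_\BC\bigl(s + \tfrac{1}{2}|p|\bigr)$; the product of these gamma factors, after collapsing by the duplication and reflection identities, is precisely what produces the Bessel kernel $\boldJ_{2\mu}$ on $\BC$. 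This is where the Bessel function enters and is ultimately responsible for the appearance of the Hankel transform $\EuH_{2\mu}$ in the final formula.

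With the functional equation in hand, I would shift the contour of the Mellin-inverted expression from $\Re(s) = \sigma$ (large) to $\Re(s) = 1 - \sigma$. In so doing, one crosses simple poles at $s = 1 \pm \mu$ coming from the pole of $\zeta_\RF$ inside the factorization of $D_{a/c}(s;\mu)$, whose residues give exactly the two main terms $\zeta_\RF(1 \pm 2\mu)\, \EuM\phi(\pm 2\mu) / \bigl(2|c|^{2 \pm 4\mu}\bigr)$. After the shift, the functional equation converts the dual Dirichlet series back into a sum over $n$ against the Bessel-kernel transform, producing the dual additive twist $e(\Re(-\widebar{a} n/c))$; the sign flip and the appearance of the multiplicative inverse $\widebar{a}$ arise from tracking the root numbers of the Hecke characters together with their Gauss sums (this is exactly where the hypothesis $a\widebar{a} \equiv 1 \pmod{c}$ is used).

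The main obstacle is the careful bookkeeping of the non-archimedean local factors at places dividing $c$: the individual Gauss sums attached to the ramified Dirichlet components must be collapsed correctly into the single overall factor $|2c|^{-2}$, and the archimedean contribution must be matched identically against the explicit kernel $\boldJ_{2\mu}(\hskip -1pt\sqrt{uz})$ defining $\EuH_{2\mu}$. Fortunately the latter identification is exactly the complex analogue of the Weber--Schafheitlin Hankel integral worked out in \cite{Qi-Sph,Qi-BE} and recalled in \S\ref{sec: double Fourier} above, so once it is invoked, the contour shift together with Mellin inversion pass directly to the stated formula \eqref{7eq: Voronoi}, with the normalization convention $\sigma_s(0) = \zeta_\RF(-s)$ of \eqref{7eq: sigma(0)} absorbing the degenerate case.
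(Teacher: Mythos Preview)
The paper does not prove this lemma at all; it is quoted directly from \cite[Theorem 1.3]{Qi-VO} with the specialization $\zeta = a/c$, $\mathfrak{a} = (1)$, $S = \{\mathfrak{p} : \mathfrak{p} \mid (c)\}$, $\mathfrak{b} = (c^2)$ indicated in the sentence preceding the statement. Your outline follows the classical Estermann--Vorono\"{i} strategy, which is indeed how such formulae are proven (and is close in spirit to the argument in the cited reference).

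A few points in your sketch deserve correction. First, your claim that $D_{a/c}(s;\mu)$ decomposes as a \emph{finite} linear combination of products of Hecke $L$-functions is not right: over $\RF = \BQ(i)$ the unitary dual of $\BC^\times$ contains infinitely many angular Gr\"ossencharaktere $(z/|z|)^{4p}$, $p \in \BZ$, so expanding the additive character multiplicatively forces an infinite sum over $p$. This is exactly why the Estermann function in Lemma~\ref{lem: Estermann} carries the extra parameter $h$. Correspondingly, the Mellin inversion you invoke must be the full $\BC^\times$-Mellin transform (radial plus angular Fourier sum), not just a one-variable contour integral; the single-variable $\EuM\phi(\pm 2\mu)$ appearing in the statement arises only because the poles at $s = 1 \pm \mu$ live at $h = 0$. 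Second, the identification of the archimedean gamma factor with the kernel $\boldJ_{2\mu}$ is the content of the Mellin formula in Lemma~\ref{lem: Mellin} (or rather its inverse), not the Fourier formulae of \S\ref{sec: double Fourier} that you cite. With these adjustments your sketch would close.
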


Subsequently, let us assume $\Re (\mu) = 0$ and $\mu \neq 0$ for simplicity.   For our application, we have to enlarge the space $ C_c^{\infty} (\BC^{\times}) $. 
To this end, we introduce the following function spaces. 

\begin{defn}\label{7defn: function space, 1}
 	For $ \valpha > 0 > \beta$, define $\mathscr{T}_{\valpha, \shskip \beta}^{M} (\BC^{\times})  $ to be the space of functions $\phi   \in C^{2 M} (\BC^{\times})$  {\rm(}here $M = \infty$ is admissible{\rm)} such that 
\begin{align}\label{7eq: bound phi}
	  z^{  r} \shskip \widebar z^{ s} (\partial /\partial z)^r (\partial / \partial \widebar z)^{s} \phi (z) \Lt_{r, \shskip s  } \min \left\{ |z|^{\valpha},  |z|^{  \beta - 2 } \right\} ,  
\end{align}
\begin{align}\label{7eq: bound phi, 2}
   \frac {z^{  r} \shskip \widebar z^{ s} (\partial /\partial z)^r (\partial / \partial \widebar z)^{s} \phi (z) } { \min \left\{ |z|^{\valpha},  |z|^{  \beta - 2 } \right\}  } \longrightarrow 0, \qquad |z| \ra 0, \, \infty, 
\end{align}
for any $r+s \leqslant 2 M$.  For every $N \leqslant 2 M$, define the semi-norm
\begin{align}
	\rhoup_{N} (\phi) = \max_{r+s = N} \left\{ \sup_{z \shskip \in \BC^{\times} \hskip -2pt}   \frac{ |z|^{r+s}  \left| (\partial /\partial z)^r (\partial / \partial \widebar z)^{s} \phi (z) \right|}{\min \left\{ |z|^{\valpha},  |z|^{  \beta -2 } \right\} } \right\}  . 
\end{align} 
\end{defn}

\begin{defn}\label{7defn: function space, 2}
Fix $a \in \BC^{\times}$.	For $ \valpha > 0 > \beta$, define $\mathscr{P}^{\infty}_{\valpha, \shskip \beta}  (\BC^{\times}; a)  $ to be the space of functions $\phi   \in C^{\infty} (\BC^{\times})$ such that for each $M$ one may split $ \phi (z) $ into 
\begin{align}\label{7eq: phi split}
	 \phi (z) = {e (  \Re  (a\hskip -1.5pt \sqrt{z}))}     \phi_{M}  (\hskip -1.5pt \sqrt{z}) +   {e (-   \Re (a\hskip -1.5pt \sqrt{z}))}   \phi_{M}   (- \hskip -1.5pt \sqrt{z}) + \psi_{M} (  \hskip -1.5pt \sqrt{z}) , 
\end{align}
for  certain $  \phi_{M} ,   \psi_{M} \in  \mathscr{T}_{2\valpha, \shskip 2 \beta-2}^{M} (\BC^{\times}) $, with $\psi_M$ even.  
\end{defn} 

The topology of $\mathscr{T}_{\valpha, \shskip \beta}^{M} (\BC^{\times})  $ is defined by the semi-norms $\rhoup_N$ in the standard way. Naturally, the topology of $ \mathscr{P}^{\infty}_{\valpha, \shskip \beta}  (\BC^{\times}; a)   $ is induced from that of   $\mathscr{T}_{2\valpha, \shskip 2 \beta-2}^{M} (\BC^{\times})  $.   Note that  $C_c^{\infty} (\BC^{\times})$ is a dense  subspace of  $\mathscr{T}_{\valpha, \shskip \beta}^{M} (\BC^{\times})  $ or $ \mathscr{P}^{\infty}_{\valpha, \shskip \beta}  (\BC^{\times}; a)   $.   

\begin{lem}\label{lem: bound for H, 1} Let  $\Re (\mu) = 0$ and $\mu \neq 0$.  Let  $ \valpha > \vepsilon > 0 > \beta$  and $M$ be large {\rm(}say $ \vepsilon  M  \geqslant   (2+\valpha) (2+\valpha - \beta)  ${\rm)}.  Then, for $\phi \in \mathscr{T}_{\valpha, \shskip \beta}^{M} (\BC^{\times}) $ as in Definition {\rm\ref{7defn: function space, 1}},  its Hankel transform has bound
	\begin{align}\label{7eq: bound for H}
		 \EuH_{\mu} \shskip  \phi (u) = O_{\mu, \shskip  \vepsilon, \shskip \phi} (1 / |u|^{2+ \valpha - \vepsilon} )  , 
	\end{align}
for $|u| \Gt 1$, with the implied constant uniformly bounded if $\phi $ stays in a bounded set in   $\mathscr{T}_{\valpha, \shskip \beta}^{M} (\BC^{\times})   $. 
\end{lem}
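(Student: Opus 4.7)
The plan is to reduce the oscillation of $\boldJ_\mu(\hskip -1.5pt \sqrt{uz})$ to a linear phase via the two-to-one holomorphic substitution $z=w^2/u$. Since $\boldJ_\mu$ is even, a direct Jacobian computation gives
\[
\EuH_\mu\phi(u)=\frac{4\pi^2}{|u|^2}\viint_{\BC^{\times}} \phi(w^2/u)\,\boldJ_\mu(w)\,|w|^2\,i\,dw\vwedge d\bar{w},
\]
and then the asymptotic expansion \eqref{1eq: J = W + W} exhibits the main oscillation of $\boldJ_\mu(w)$ as $e(\pm 4\Re(w))$, which is linear in the new variable $w$. I next introduce a smooth cutoff at $|w|\sim C:=|u|^{\delta/2}$ with $\delta:=\vepsilon/(2+\valpha)$, and treat the two resulting subregions separately.

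On the inner region $|w|\le C$, I use the trivial bounds $|\phi(w^2/u)|\ll(|w|^2/|u|)^{\valpha}$ (valid since $|w|^2/|u|\le C^2/|u|<1$ for $|u|$ large) together with $|\boldJ_\mu(w)|\ll 1$ (which follows from \eqref{2eq: bound for J mu m, weak, 1} since $\Re(\mu)=0$). The resulting volume integral contributes $\ll |u|^{-\valpha}C^{4+2\valpha}=|u|^{(2+\valpha)\delta-\valpha}$, and the choice of $\delta$ delivers exactly $|u|^{-(2+\valpha-\vepsilon)}$ after incorporating the $|u|^{-2}$ prefactor.

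On the outer region $|w|>C$, I apply Lemma \ref{lem: integration by parts} to each of the two oscillatory pieces of $\boldJ_\mu(w)$ from \eqref{1eq: J = W + W}, with $a=0$ and $b=\pm 4$. By the chain rule together with the seminorm control of $\phi$ and \eqref{1eq: derivatives of W}, the derivatives of $f(w)=\chi(|w|)\,\phi(w^2/u)\,\boldsymbol{W}_\mu(\pm w)\,|w|^2$ satisfy
\[
|w|^{r+s}\,\big|(\partial/\partial w)^r(\partial/\partial\bar w)^s f(w)\big|\ll \min\big\{(|w|^2/|u|)^{\valpha},(|w|^2/|u|)^{\beta-2}\big\}\,|w|,
\]
so I further split at $|w|=\sqrt{|u|}$ with a smooth cutoff. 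On the subregion $C<|w|\le\sqrt{|u|}$ the lemma is applied with $S=|u|^{-\valpha}$ and $\gamma=\valpha+3/2$; on the subregion $|w|>\sqrt{|u|}$ with $S=|u|^{2-\beta}$ and $\gamma=\beta-1/2$. After $M$ iterations these produce bounds of order $|u|^{-(2+\valpha)}\,C^{-(2M-2\valpha-3)}$ and $|u|^{3/2-M}$ respectively, both admissible once $\vepsilon M\ge(2+\valpha)(2+\valpha-\beta)$ forces $M$ large enough. The non-oscillatory remainder $\boldsymbol{E}^N_\mu(w)$ is handled trivially via \eqref{1eq: derivatives of E} with $N$ large.

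The main technical obstacle is coordinating the three scales---the substitution cutoff at $|w|\sim C$, the transition of $\phi$ at $|w|\sim\sqrt{|u|}$, and the minimum number $M$ of Stokes iterations needed on each subregion---while bookkeeping the chain-rule derivative estimates for $\phi(w^2/u)$. The uniformity of the implied constant in $\phi$ is automatic, since every bound depends only on the seminorms $\rhoup_N(\phi)$ with $N\le 2M$.
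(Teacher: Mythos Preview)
Your argument is correct and follows the same strategy as the paper: linearize the phase of $\boldJ_\mu$ by a square-root substitution, split into a small-variable piece (estimated trivially via $|\phi|\ll|z|^{\valpha}$ and $|\boldJ_\mu|\ll 1$) and a large-variable piece (handled by the asymptotic \eqref{1eq: J = W + W} and repeated Stokes integration as in Lemma~\ref{lem: integration by parts}). Your global substitution $w=\sqrt{uz}$ and cutoff at $|w|=|u|^{\delta/2}$ with $\delta=\vepsilon/(2+\valpha)$ is exactly the paper's cutoff at $|z|=|u|^{-\delta'}$ with $\delta'=1-\vepsilon/(2+\valpha)$, just in rescaled coordinates.

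The one place where you make life harder than necessary is the secondary split at $|w|=\sqrt{|u|}$. The paper avoids this by using the crude bound $|\phi(z)|\le|z|^{\beta-2}$ \emph{throughout} the outer region (valid since $\min\{|z|^{\valpha},|z|^{\beta-2}\}\le|z|^{\beta-2}$), which keeps $\gamma=\beta-\tfrac12<\tfrac12$ inside the stated hypothesis of Lemma~\ref{lem: integration by parts}; in the paper's variable the oscillation has $b=4\sqrt{u}$, so each Stokes step already gains a power of $|u|$. Your middle-region application with $\gamma=\valpha+\tfrac32>\tfrac12$ lies formally outside Lemma~\ref{lem: integration by parts} as written; the conclusion of that lemma still holds because $M>\gamma$ (which follows from $\vepsilon M\ge(2+\valpha)(2+\valpha-\beta)$), but you should say so explicitly rather than cite the lemma verbatim.
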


\begin{lem}\label{lem: bound for H, 2} Let  $  \mu, \valpha, \beta, \vepsilon$ be as in Lemma {\rm\ref{lem: bound for H, 1}}. Fix $a \in \BC^{\times}$. Then, for $\phi \in \mathscr{P}^{\infty}_{\valpha, \shskip \beta}  (\BC^{\times}; a)  $ as in Definition {\rm\ref{7defn: function space, 2}},  its Hankel transform has bound
	\begin{align}\label{7eq: bound for H, 2}
		\EuH_{\mu} \shskip  \phi (u) = O_{\mu, \shskip  \vepsilon, \shskip \phi} (1 / |u|^{2+ \valpha - \vepsilon} )  , 
	\end{align}
	for $|u| \Gt 1$, with the implied constant uniformly bounded if the  $\phi_{M} ,   \psi_{M} $ in {\rm\eqref{7eq: phi split}}   stay in a bounded set in  $\mathscr{T}_{\valpha, \shskip \beta}^{M} (\BC^{\times})   $ for a large $M$ {\rm(}in terms of $\vepsilon$, $\valpha$, and $\beta${\rm)}. 
\end{lem}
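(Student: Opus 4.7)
\textbf{Proof proposal for Lemma \ref{lem: bound for H, 2}.} The plan is to exploit the defining decomposition \eqref{7eq: phi split},
\[
\phi(z) = e(\Re(a\shskip\sqrt{z}))\, \phi_M(\sqrt{z}) + e(-\Re(a\shskip\sqrt{z}))\, \phi_M(-\sqrt{z}) + \psi_M(\sqrt{z}),
\]
and estimate the Hankel transform of each summand. Since $\psi_M$ is even, the composition $\tilde{\psi}(z):=\psi_M(\sqrt{z})$ is well-defined and single-valued on $\BC^{\times}$. A direct chain-rule computation (using that $z\partial/\partial z$ corresponds to $\frac{1}{2}w\partial/\partial w$ under $z=w^2$, and similarly for $\bar{z}$) together with the bounds defining $\mathscr{T}_{2\valpha,\shskip 2\beta-2}^{M}(\BC^{\times})$ show that $\tilde{\psi}\in\mathscr{T}_{\valpha,\shskip \beta}^{M}(\BC^{\times})$, with the relevant semi-norms controlled by those of $\psi_M$. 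Hence Lemma \ref{lem: bound for H, 1} applies directly to $\tilde{\psi}$ and yields a contribution of size $O(|u|^{-(2+\valpha-\vepsilon)})$.

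For the two oscillatory summands, denote their sum by $\chi$. Substituting $z=w^2$ in the Hankel integral, using the evenness of $\boldJ_{\mu}$, and collapsing the two pieces via $w\mapsto -w$, I would obtain
\[
\EuH_{\mu}\chi(u) = 8\pi^2\viint_{\BC^{\times}} e(\Re(aw))\, \phi_M(w)\, \boldJ_{\mu}(\sqrt{u}\shskip w)\, |w|^2\, i\, dw\wedge d\bar{w}.
\]
Introduce a smooth partition of unity $\omega_0(|w|)+\omega_{\infty}(|w|)\equiv 1$ with $\omega_0$ supported in $|w|\leqslant 2/\sqrt{|u|}$ and $\omega_{\infty}$ in $|w|\geqslant 1/\sqrt{|u|}$. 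On the $\omega_0$-piece, the assumption $\Re(\mu)=0$ combined with \eqref{2eq: bound for J mu m, weak, 1} gives $\boldJ_{\mu}(\sqrt{u}\shskip w)=O(1)$, while $|\phi_M(w)|\ll |w|^{2\valpha}$ near $0$; the trivial estimate yields $O(|u|^{-(2+\valpha)})$, which beats the target bound.

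On the $\omega_{\infty}$-piece, I would insert the asymptotic expansion \eqref{1eq: J = W + W} of $\boldJ_{\mu}(\sqrt{u}\shskip w)$, producing three subintegrals. The $\boldsymbol{E}^N_{\mu}$ term, by \eqref{1eq: derivatives of E} and the decay of $\phi_M$ at infinity, contributes at most $|u|^{-(2+\valpha-\vepsilon)}$ once $N$ is chosen large enough. The two main subintegrals, upon combining $e(\Re(aw))$ with $e(\pm 4\Re(\sqrt{u}\shskip w))$, become oscillatory integrals with phase $e(\Re((a\pm 4\sqrt{u})w))$. For $|u|$ large, $|a\pm 4\sqrt{u}|\geqslant 4\sqrt{|u|}-|a|\gg\sqrt{|u|}$, so Lemma \ref{lem: integration by parts} applies with $b=\pm 4\sqrt{u}$, $a=0$, $c\asymp 1/\sqrt{|u|}$, and $f(w)=\omega_{\infty}(|w|)\phi_M(w)\boldsymbol{W}_{\mu}(\pm\sqrt{u}\shskip w)|w|^2$; the derivative hypothesis on $f$ is verified by \eqref{1eq: derivatives of W} and the semi-norms of $\phi_M$. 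Choosing the integration-by-parts order $M'$ appropriately produces arbitrarily fast decay in $\sqrt{|u|}$, and in particular an estimate better than $|u|^{-(2+\valpha-\vepsilon)}$.

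The main obstacle will be a careful bookkeeping of parameters: specifying the minimal order $M$ of differentiability of $\phi_M$ and $\psi_M$ in terms of $\vepsilon$, $\valpha$, $\beta$ needed to simultaneously make the $\boldsymbol{E}^N$ piece negligible, the integration-by-parts gain sufficient, and the dependence on $\phi$ expressible through the semi-norms of $\phi_M$ and $\psi_M$ so that uniformity over bounded subsets of $\mathscr{T}_{2\valpha,\shskip 2\beta-2}^{M}(\BC^{\times})$ is preserved. A secondary technical point is the treatment of the transition region $|w|\asymp 1/\sqrt{|u|}$ where the asymptotic expansion becomes ineffective; fortunately the partition of unity confines it to an area of measure $\asymp 1/|u|$ on which the trivial bound already suffices.
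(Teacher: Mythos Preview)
Your overall strategy coincides with the paper's: the paper simply says ``the proof of Lemma~\ref{lem: bound for H, 2} is similar'' to that of Lemma~\ref{lem: bound for H, 1}, and indeed one reduces the $\psi_M$--piece to Lemma~\ref{lem: bound for H, 1}, changes variable $z=w^2$ in the oscillatory pieces, inserts the expansion \eqref{1eq: J = W + W}, and applies Lemma~\ref{lem: integration by parts} to the combined phase $e(\Re((a\pm 4\sqrt{u})w))$. So the route is the right one.

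There is, however, a genuine gap in your $\boldsymbol W$--estimate. With the cutoff placed at $|w|\asymp |u|^{-1/2}$, Lemma~\ref{lem: integration by parts} \emph{cannot} give ``arbitrarily fast decay in $\sqrt{|u|}$''. To fit your $f(w)=\omega_\infty(|w|)\phi_M(w)\boldsymbol W_\mu(\pm\sqrt{u}\,w)|w|^2$ into the single power--law hypothesis $f\Lt S|w|^{2\gamma-2}$ of that lemma you are forced to take $\gamma=\beta-\tfrac12$ and $S=|u|^{-1/2}$; then with $c=|u|^{-1/2}$ and $|b|-|a|\asymp|u|^{1/2}$ the bound reads
\[
\frac{S}{c^{2M'-2\gamma}(|b|-|a|)^{2M'}}\asymp |u|^{-\beta},
\]
\emph{independent of $M'$} --- and since $\beta<0$ this is worse than trivial. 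The cancellation is exact because your lower cutoff and your oscillation frequency are reciprocals. The paper avoids this by keeping the partition threshold at $|z|\asymp |u|^{-\delta}$ (equivalently $|w|\asymp |u|^{-\delta/2}$) with $\delta<1$, exactly as in the proof of Lemma~\ref{lem: bound for H, 1}: then the trivial $\omega_0$--bound is $|u|^{-(2+\valpha)\delta}$, while on $\omega_\infty$ Lemma~\ref{lem: integration by parts} yields $|u|^{-\delta\beta-(1-\delta)(M'+1/2)}$, which does improve with $M'$; choosing $\delta=1-\vepsilon/(2+\valpha)$ and $M'$ large gives \eqref{7eq: bound for H, 2}. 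Alternatively you could keep your cutoff but bypass Lemma~\ref{lem: integration by parts} and estimate $\int|\mathrm D^{M'}f|$ directly using the two--sided bound $|\phi_M(w)|\ll\min\{|w|^{2\valpha},|w|^{2\beta-4}\}$; the contribution near $w=c$ then produces exactly $|u|^{-(2+\valpha)}$. Either way, the sentence claiming arbitrarily fast decay needs to be replaced by one of these arguments.
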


\begin{rem}\label{rem: implied const}
	It is easy to see from the proof that the implied constant in Lemma {\rm\ref{lem: bound for H, 1}}  is linear in terms of $ \rhoup_{N} (\phi) $ {\rm(}similar is that in Lemma {\rm\ref{lem: bound for H, 2}}{\rm)}. 
\end{rem}

By uniform approximation using   functions in $C_c^{\infty} (\BC^{\times})$, we have the following corollary of Lemmas \ref{lem: bound for H, 1} and \ref{lem: bound for H, 2}. 

\begin{cor}\label{cor: V-O}
The Vorono\"i--Oppenheim summation formula in {\rm\eqref{7eq: Voronoi}} is valid for any $\phi$ in the spaces $  \mathscr{T}_{\valpha, \shskip \beta}^{\infty} (\BC^{\times})  $ and $\mathscr{P}^{\infty}_{\valpha, \shskip \beta}  (\BC^{\times}; a)$ as in Definitions {\rm\ref{7defn: function space, 1}} and {\rm\ref{7defn: function space, 2}}. 
\end{cor}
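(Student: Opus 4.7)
The plan is to deduce the corollary from Lemma \ref{lem: Voronoi} by a density argument, with the decay estimates in Lemmas \ref{lem: bound for H, 1} and \ref{lem: bound for H, 2} providing the summable majorants required to pass to the limit in the dual Hankel sum on the right of \eqref{7eq: Voronoi}.

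\textbf{Approximation.} I would fix a cutoff $\chi_\delta \in C_c^\infty(\BR_+)$ with $\chi_\delta \equiv 1$ on $[\delta, 1/\delta]$, supported in $[\delta/2, 2/\delta]$, and such that $x^r \chi_\delta^{(r)}(x) \Lt_r 1$ uniformly in $\delta$, and set $\phi_\delta(z) = \chi_\delta(|z|) \phi(z) \in C_c^\infty(\BC^\times)$. A Leibniz rule computation shows that $\{\phi_\delta\}$ stays bounded in $\mathscr{T}^{\infty}_{\valpha,\beta}(\BC^\times)$ when $\phi$ is, and that the decomposition \eqref{7eq: phi split} for $\phi$ yields one for $\phi_\delta$ whose pieces remain bounded in $\mathscr{T}^{M}_{2\valpha,\,2\beta-2}(\BC^\times)$ uniformly in $\delta$ for every fixed $M$. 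Pointwise, $\phi_\delta(z) \to \phi(z)$ on $\BC^\times$ as $\delta \to 0$.

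\textbf{Passage to the limit on the elementary terms.} Applying Lemma \ref{lem: Voronoi} to each $\phi_\delta$, I would pass to the limit in each term. For the diagonal sum on the left of \eqref{7eq: Voronoi}, the bound $|\phi_\delta(n)| \Lt \min\{|n|^\valpha,\, |n|^{\beta-2}\}$ combined with $\vtau_\mu(n) \Lt_\vepsilon |n|^\vepsilon$ (available since $\Re(\mu) = 0$) produces a summable majorant because $\beta < 0$, so dominated convergence applies. The two Mellin terms $\EuM\phi_\delta(\pm 2\mu)$ are handled the same way, the majorant $|\phi(z)|$ being integrable on $\BC^\times$ thanks to $\valpha > -2$ near $0$ and $\beta - 2 < -2$ near $\infty$.

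\textbf{The dual Hankel sum.} The main obstacle, and the point at which the analytic work of Lemmas \ref{lem: bound for H, 1} and \ref{lem: bound for H, 2} pays off, is the convergence of the dual sum. Pointwise convergence $\EuH_{2\mu}\phi_\delta(u) \to \EuH_{2\mu}\phi(u)$ at each $u \neq 0$ follows from dominated convergence in the defining Hankel integral, the majorant $|\phi(z) \boldJ_{2\mu}(\hskip -1pt \sqrt{uz})|$ being integrable by \eqref{2eq: bound for J mu m, weak, 1} and \eqref{2eq: bound 1/z}. For a summable majorant in $n$, I would pick $\vepsilon > 0$ with $\valpha > 2\vepsilon$ and invoke Remark \ref{rem: implied const} to obtain, uniformly in $\delta$,
$$\bigg|\EuH_{2\mu}\phi_\delta\bigg(\frac{n}{4c^2}\bigg)\bigg| \Lt_\vepsilon \bigg(\frac{|c|^2}{|n|}\bigg)^{2+\valpha-\vepsilon}, \qquad |n| \geqslant |c|^2,$$
while for the finitely many $n$ with $|n| < |c|^2$ the continuity of $\EuH_{2\mu}\phi_\delta(u)$ in $\delta$ at a given $u$ suffices. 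Multiplying by $\vtau_\mu(n) \Lt |n|^\vepsilon$ produces a summable majorant in $n$, and a final application of dominated convergence concludes the argument; the reasoning is identical in the two function spaces, the only distinction being the use of Lemma \ref{lem: bound for H, 1} versus Lemma \ref{lem: bound for H, 2}.
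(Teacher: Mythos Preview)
Your proposal is correct and is exactly the approximation argument the paper has in mind: the paper's own proof consists of the single sentence ``By uniform approximation using functions in $C_c^{\infty}(\BC^{\times})$, we have the following corollary of Lemmas~\ref{lem: bound for H, 1} and~\ref{lem: bound for H, 2},'' and you have supplied the details of that approximation and the dominated-convergence steps, using precisely the uniform Hankel bounds (via Remark~\ref{rem: implied const}) that the paper intends.
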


Now we prove Lemma \ref{lem: bound for H, 1}---the proof of Lemma \ref{lem: bound for H, 2} is similar.

\begin{proof}[Proof of Lemma \ref{lem: bound for H, 1}]
For $  \delta < 1$, let  $\txw_{0} (|z|) + \txw_{\infty} (|z|) \equiv 1$ be a smooth partition of unity for $\BA  (  0, 2  |u|^{- \delta }  ] \cup \BA  [ |u|^{- \delta } ,   \infty )$, satisfying  $x^{r} \txw_{0}^{(r)} (x),  x^r \txw_{\infty}^{(r)}   (x) \Lt_{\, r} \hskip -1pt 1 $. Accordingly, we partition  $\EuH_{\mu} \shskip  \phi (u) $ into 
	 \begin{align*}
	 	 \EuH_{\mu} \shskip  \phi (u) & = \EuH_{\mu}^0 \shskip  \phi (u) + \EuH_{\mu}^{\infty} \shskip  \phi (u) . 
	 \end{align*}
 By \eqref{2eq: bound for J mu m, weak, 2}, \eqref{2eq: bound 1/z}, and \eqref{7eq: bound phi}, trivial estimation yields
 \begin{align}\label{7eq: bound for H, 1}
 	 \EuH_{\mu}^0 \shskip  \phi (u) \Lt \viint_{\BA (0, \, 2 |u|^{- \delta }]} |z|^{\valpha  }    | d      z \vwedge d      \widebar{z} | \Lt |u|^{-  (2 + \valpha) \delta} . 
 \end{align} 
By  \eqref{1eq: J = W + W}, we further divide the integral $ \EuH_{\mu}^{\infty} \shskip  \phi (u) $ into two integrals containing $ \boldsymbol{W}_{ \mu}  (\pm \hskip -1.5pt \sqrt{uz}) $ and $\boldsymbol{E}^N_{ \mu }  (\hskip -1.5pt \sqrt{uz})$.  The change of variable $\pm \hskip -1.5pt \sqrt{z} \ra z$ turns the first integral into the form considered in Lemma \ref{lem: integration by parts}.  By an application of  Lemma \ref{lem: integration by parts} with $a = 0$, $b = 4 \hskip -1.5pt \sqrt{u}$, $c = |u|^{-\frac 12 \delta}$, $f (z) = \txw_{\infty} (|z|^2) \phi (z^2)   \boldsymbol{W}_{ \mu }   ( \hskip -1.5pt \sqrt{u} z)  |z|^2 $,   $S = 1 / \hskip -1.5pt \sqrt{u}$, and $\gamma = \beta -   1 / 2 $, we infer that the first integral is bounded by 
\begin{align}\label{7eq: bound for H, 2.1}
	\frac {|u|^{-\frac 1 2 }} {|u|^{- \delta (M - \beta + \frac 12  )} |u|^{M} } = |u|^{  -  \beta  \delta  -  (1-\delta) (M+\frac 1 2 )}. 
\end{align}
By \eqref{1eq: J = W + W} and \eqref{1eq: derivatives of E},   the second integral is trivially bounded by 
\begin{align}\label{7eq: bound for H, 2.2}
	\viint_{\BA [ |u|^{- \delta } ,  \, \infty ) }    {|z|^{\beta -2 }  }   {\, |zu|^{- \frac 1 2 (N+1)} } | d      z \vwedge d      \widebar{z} | \Lt |u|^{-    \beta \delta - \frac 1 2 (1-\delta) (N+1)}. 
\end{align}
It follows from \eqref{7eq: bound for H, 1}, \eqref{7eq: bound for H, 2.1}, and \eqref{7eq: bound for H, 2.2} that 
\begin{align}
	\EuH_{\mu} \shskip  \phi (u) = O_{\mu, \phi} \big(|u|^{-  (2 + \valpha)\delta } \big), 
\end{align}
as long as 
\begin{align*}
\frac {2M+1} 2, \ \frac {N+1} 2    \geqslant \frac { (2+\valpha -\beta )\delta}  {1-\delta } . 
\end{align*}
Consequently, we obtain \eqref{7eq: bound for H} upon setting $ \delta  =  1-  (2+\valpha) / \vepsilon $. Finally, the uniformity of the implied constant is clear from the statement of Lemma  \ref{lem: integration by parts}.  
\end{proof}

\subsection{Estermann function over \protect \scalebox{1.08}{$\BQ (i)$}} Let $s, \mu \in \BC$ and $2 h \in \BZ$. For $(a, c) = (1)$,  define the Estermann function
\begin{align}\label{7eq: defn of Estermann}
	E (s,  \mu,   h ; a/c) = \sum_{ n \shskip  \in    \CaloO \smallsetminus \{0\} } \frac {\vtau_{\mu} (n) e (\Re (an/c)) (n/|n|)^{2 h}} {|n|^{2s}}, \qquad \text{($\Re (s \pm \mu)  > 1$)}.  
\end{align}
Note that $ E (s,  \mu,   h ; - a/c) = (-1)^{2h} E (s,  \mu,   h ; a/c) $.

Now, in slightly  altered notation, we record here \cite[Lemma 3]{Motohashi-ImQuad}. 

\begin{lem}\label{lem: Estermann}
The Estermann function	$ E (s, \mu, h; a/c)$ has  analytic continuation to the whole $s$-plane except that in the case $h = 0$ there are two  simple poles at $ s = 1 \pm \mu $ with residue $ \pi |c|^{\mp 4 \mu -2} \zeta_{\RF} (1\pm 2 \mu) $ for $\mu \neq 0$. Moreover,  the following functional equation holds{\rm:}
\begin{align}\label{7eq: FE Estermann}
	E (s, \mu,  h ; a/c) = \lp   {\pi } / {|c|} \rp^{4s - 2} (c/|c|)^{4 h} \Gamma (s, \mu, h) E (1-s,   \mu, -  h; \widebar{a}/ c) ,
\end{align}
with $ a \widebar{a} \equiv 1 (\mathrm{mod}\, c)$, and 
\begin{align}
\Gamma (s, \mu, h) = 	\frac {  \Gamma (1-s+\mu+|h|) \Gamma (1-s-\mu+ |h|) } {\Gamma (s-\mu+|h|) \Gamma (s+\mu+|h|)} . 
\end{align}
\end{lem}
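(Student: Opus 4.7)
The plan is to reduce the lemma, by standard Hecke-character machinery, to the analytic continuation and Hecke--Tate functional equation for a finite collection of Hecke $L$-functions on $\BQ(i)$ with Gr\"ossencharakters of conductor dividing $(c)$ and archimedean type $2h$. First I would split the sum defining $E(s,\mu,h;a/c)$ according to the ideal $\mathfrak{d}=((n),c)$, writing $(n)=\mathfrak{d}(m)$ with $((m),c/\mathfrak{d})=(1)$. This breaks $E$ into a finite sum over $\mathfrak{d}\mid(c)$ of shifted Estermann-type series with reduced modulus $c/\mathfrak{d}$, and (up to bookkeeping with shifts in $s$ coming from redistributing $\mathfrak{d}$ between the two ideal divisors inside $\vtau_\mu$) reduces matters to the primitive case $((n),c)=(1)$.

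In the primitive case, I would Fourier-expand the additive character $e(\Re(an/c))(n/|n|)^{2h}$ using orthogonality of Hecke characters $\chi$ modulo $(c)$ with archimedean exponent $2h$:
\begin{equation*}
    e(\Re(an/c))\,(n/|n|)^{2h} \;=\; \frac{1}{\varphi(c)}\sum_{\chi}\tau(\overline\chi;a/c)\,\chi(n),
\end{equation*}
where $\varphi(c)=\#(\CaloO/(c))^{\times}$ and $\tau$ is the associated Gauss sum. Combined with $\vtau_\mu(n)=\sum_{(d_1)(d_2)=(n)}|d_1/d_2|^{2\mu}$, this rewrites the primitive piece of $E(s,\mu,h;a/c)$ as a finite linear combination of products $L(s-\mu,\chi)L(s+\mu,\chi)$. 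The Hecke--Tate functional equation for each $L(s\pm\mu,\chi)$ contributes an archimedean gamma quotient of the form $\Gamma(1-s\pm\mu+|h|)/\Gamma(s\pm\mu+|h|)$ and a conductor/power-of-$\pi$ factor $(\pi/|c|)^{2(2s-1)}$, while swapping $\chi\leftrightarrow\overline\chi$ and $s\leftrightarrow 1-s$. Multiplying the two functional equations and reassembling the resulting $\overline\chi$-sum back into an additive character against the inverse residue $\bar a\pmod c$ (with the sign flip $h\to -h$ coming from the complex conjugation of $(n/|n|)^{2h}$) produces exactly $E(1-s,\mu,-h;\bar a/c)$, multiplied by the claimed prefactor $(\pi/|c|)^{4s-2}(c/|c|)^{4h}\,\Gamma(s,\mu,h)$.

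The pole analysis is then immediate. Only the trivial Hecke character contributes poles, and triviality of its archimedean component forces $h=0$; the principal-character contribution is then $\zeta_\RF(s-\mu)\zeta_\RF(s+\mu)$, with simple poles at $s=1\pm\mu$ of residue $\frac{\pi}{4}\zeta_\RF(1\pm 2\mu)$ by \eqref{2eq: zeta (s), s=1}. Tracking the conductor power and Gauss-sum factor across $\mathfrak{d}\mid(c)$, and collapsing the resulting divisor sum via the Ramanujan identity \eqref{7eq: sum of (c)}, produces the asserted total residue $\pi|c|^{\mp 4\mu-2}\zeta_\RF(1\pm 2\mu)$.

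The main obstacle will be the careful combinatorics of ramified Hecke-character Gauss sums over $\BZ[i]$ with prescribed angular type $2h$, combined with the interaction between the element-level additive character $e(\Re(an/c))$ and the ideal-level divisor function $\vtau_\mu(n)$ (in particular the role of the units $\CaloO^\times=\{\pm 1,\pm i\}$ in the $2h$-odd case), so that the conductor prefactor, root-number factor $(c/|c|)^{4h}$, and residue all emerge with the exact normalization claimed. This is essentially the content of Lemma~3 of Motohashi \cite{Motohashi-ImQuad}, which is the cited source.
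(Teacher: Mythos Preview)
The paper does not prove this lemma at all: it is simply quoted as \cite[Lemma~3]{Motohashi-ImQuad}, so there is no ``paper's own proof'' to compare against. Your outline is therefore a genuine proof sketch rather than a reconstruction, and it is worth noting where it diverges from the standard treatment.

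Your Hecke-character route is viable, but the element-versus-ideal bookkeeping is more delicate than you indicate. The left side of your orthogonality identity depends on $n$ as an \emph{element} (both through $n\bmod c$ and through the angle $(n/|n|)^{2h}$), while a Hecke character $\chi$ of conductor dividing $(c)$ with archimedean type $2h$ is a function of the \emph{ideal} $(n)$; only those finite characters $\chi_f$ of $(\CaloO/c)^{\times}$ whose restriction to $\CaloO^{\times}$ matches $u\mapsto u^{-2h}$ extend to Hecke characters. You will have to split the $\chi_f$-sum accordingly and carry the leftover unit sum through, which is exactly the ``role of the units'' you flag but do not resolve. Also, invoking the Ramanujan identity \eqref{7eq: sum of (c)} for the residue is misplaced: that identity sums over $c$, whereas here $c$ is fixed. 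The residue drops out much more cleanly from the alternative approach below.

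The more direct route---and almost certainly what Motohashi actually does---is the Hurwitz/Lerch decomposition: open $\vtau_{\mu}(n)$ and write
\[
E(s,\mu,h;a/c)=\tfrac14\sum_{r_1,r_2\bmod c} e\!\big(\Re(ar_1r_2/c)\big)\,\zeta(s-\mu,h;r_1,c)\,\zeta(s+\mu,h;r_2,c),
\]
where $\zeta(s,h;r,c)=\sum_{d\equiv r\,(c),\,d\neq 0}(d/|d|)^{2h}|d|^{-2s}$. Each factor has the lattice functional equation (Poisson over $r+c\CaloO$) swapping the Hurwitz form for a Lerch form with additive character; multiplying and re-summing yields \eqref{7eq: FE Estermann} with the $(c/|c|)^{4h}$ and $(\pi/|c|)^{4s-2}$ factors appearing from the two Poisson steps. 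The poles (only when $h=0$) come from the $s=1$ pole of the unshifted lattice zeta, and the $r_1$-sum collapses by orthogonality of additive characters to force $r_2\equiv 0$, giving the stated residue $\pi|c|^{\mp 4\mu-2}\zeta_{\RF}(1\pm 2\mu)$ in one line.
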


By the functional equation and the Phragm\'en-Lindel\"of convexity principle, it is clear that $ E (s,  \mu,   h ; a/c) $ is of moderate growth for $s $ on vertical strips. 

By the  Euler reflection formula, we may transform $\Gamma (1-s, \mu, h)$ into
\begin{align*}
	\frac { 	 (    \cos (2\pi\mu) -	(-1)^{2h}  \cos (2\pi s)    )    \Gamma \lp   {s+\mu + h}    \rp \Gamma \lp   {s+\mu - h}    \rp    \Gamma \lp   {s-\mu +h}    \rp \Gamma \lp   {s-\mu - h}    \rp  } {2 \pi^2}  , 
\end{align*} 
and this should be compared with the right-hand side  of  \eqref{3eq: Mellin} in Lemma \ref{lem: Mellin}. 

For brevity,  in the case $h = 0$ we shall  use  the notation $ E (s, \mu; a/c)  $ and $\Gamma (s, \mu)  $. 

\subsection{Bounds for  \protect \scalebox{1.06}{$  {J}_{\nu} (z) $} and \protect \scalebox{1.06}{$ \boldJ_{\nu} (z) $}} \label{sec: uniform bounds}

Recall from \eqref{0def: J mu m (z)} and \eqref{0eq: defn of Bessel} that 
\begin{align}\label{7eq: defn of J(z)}
	\boldJ_{\nu } (z) = \frac 1 { \sin (\pi \nu)} \big( J_{- \nu }    (4 \pi  z)   J_{- \nu }       (4 \pi  {\widebar z} ) -  J_{ \nu }    (4 \pi  z)   J_{ \nu }       (4 \pi  {\widebar z} ) \big) . 
\end{align}

For $|z| \Lt 1$, it follows from \cite[3.13 (1)]{Watson}  that   
\begin{equation}\label{7eq: bound for J}
	J_{\nu } (z)  \Lt \left| \frac {(z/2)^{\nu }} {\Gamma (\nu +1)}  \right| ,  
\end{equation} 
if $\Re (\nu) \geqslant 0$. 

For $|z| \Gt 1$, it is proven in \cite[Lemma 4.1]{Qi-Gauss} that 
\begin{equation}\label{7eq: bound for J, 2}
	  \boldJ_{\nu } (z)  \Lt \frac {  |\nu|^2 + 1} {|z|}  ,  
\end{equation} 
if $\Re (\nu) = 0$.

Actually, these bounds are very crude, but sufficient to verify (absolute) convergence. For instance, the convergence of   the off-diagonal $c$-sum in the Kuznetsov formula \eqref{1eq: Kuznetsov} may be easily seen from \eqref{7eq: Weil}, \eqref{7eq: bound for J}, and contour shift of the Bessel integral (up to $ \Re ( i r) = \vlambda   $ for $\vlambda >   1 / 4 $):  \begin{align}\label{7eq: H(z), 2}
	\mathrm{B} (z; h)   = 2 \int_{-\infty}^{\infty}   \frac {J_{2 i r }    (4 \pi  \hskip -1pt \sqrt{z})   J_{ 2i r  }    (4 \pi  \hskip -1pt \sqrt{\widebar z} )} {\sin (2\pi i r) }  h (r)   r^2 d r,
\end{align}  
by the fact that $ h (r)$ is even. 

\subsection{Bounds for \protect \scalebox{1.06}{${_{\protect\phantom{\nu}}^{\rho} \hskip -1.5pt f_{\nu}^{\mu}} (z)$} and \protect \scalebox{1.06}{$ {_{\protect\phantom{\nu}}^{\rho} \hskip -1pt \boldF^{\mu }_{\nu }} (z)$}} \label{sec: uniform bounds, hyper} Recall from \eqref{1eq: hypergoem, C, 1.0}--\eqref{1eq: hypergeometric, 3} that 
\begin{align} 
	{_{\phantom{\nu}}^{\rho} \hskip -1pt \boldF_{\nu }^{\mu }} (z) = {_{\phantom{\nu}}^{\rho} \hskip -1.5pt {C}_{\nu }^{\mu }} \cdot {_{\phantom{\nu}}^{\rho} \hskip -2pt f_{\nu }^{\mu }} (z) \, {_{\phantom{\nu}}^{\rho} \hskip -2pt f_{\nu }^{\mu }} (\widebar z)  + {_{\phantom{\nu}}^{\rho} \hskip -1.5pt {C}_{-\nu }^{\mu }} \cdot {_{\phantom{\nu}}^{\rho} \hskip -2pt f_{-\nu }^{\mu }} (z) \, {_{\phantom{\nu}}^{\rho} \hskip -2pt f_{-\nu }^{\mu }} (\widebar z) , 
\end{align}
with
\begin{equation} 
	{_{\phantom{\nu}}^{\rho} \hskip -1.5pt f_{\nu}^{\mu}} (z) =    z^{\frac 1 2  \nu }	{_2F_1} \hskip -1pt \bigg(   \frac {\rho + \nu +\mu} 2, \frac {\rho+\nu - \mu} 2  ; 1+ \nu ;  z    \bigg) ,
\end{equation} 
\begin{align}
	{_{\phantom{\nu}}^{\rho} \hskip -1.5pt {C}_{\nu }^{\mu }} = \frac {\cos (\pi(\rho+\nu))   -   	  \cos (\pi\mu)} {\sin (\pi \nu)} \frac {\Gamma  (   ({\rho + \nu +\mu}) / 2    )^2 \Gamma  (     {(\rho+\nu - \mu)}/ 2     )^2} {\Gamma (1+\nu)^2}  . 
\end{align}

Again, we need to estimate ${_{\phantom{\nu}}^{\rho} \hskip -1.5pt f_{\nu}^{\mu}} (z)$ and $ {_{\phantom{\nu}}^{\rho} \hskip -1pt \boldF^{\mu }_{\nu }} (z)$ uniformly for $\nu$ in a suitable vertical strip.  To this end, we invoke the following integral representation (see \cite[\S 2.5]{MO-Formulas})
\begin{equation}\label{7eq: hyper integral}
	\begin{split}
		{_2F_1} (a,b;c;z) = \frac {\Gamma (c)} {\Gamma (b) \Gamma (c-b)} & \int_0^1    \frac {t^{b-1} (1-t)^{c-b-1}} {(1-tz)^{a}} d t  , \qquad  \Re (c) > \Re (b) > 0,  
	\end{split}
\end{equation}
for $|\arg (1-z)| < \pi $.  

 Let $z \neq 0,  1$. For simplicity, let $\Re (\mu) = 0$.   Set $ \beta = \Re (\rho)$ and $\vlambda   = \Re (\nu)$.  Let   $\rho$ and $\mu$  be fixed, and $ 0 \leqslant \vlambda < 2 $ say. Assume $ - \vlambda < \beta < \vlambda + 2$.  Trivially, for $|z| \leqslant 1$,   
\begin{align*}
	\left|\int_0^1    \frac {t^{\frac 1 2 (\rho+\nu - \mu)-1} (1-t)^{- \frac 1 2 (\rho-\nu - \mu)}} {(1-tz)^{\frac 1 2 (\rho+\nu + \mu)}} d t \right| \Lt    \int_0^1    \frac {t^{\frac 1 2 (\beta+\vlambda )-1} (1-t)^{- \frac 1 2 (\beta-\vlambda )}} {e^{- \frac 1 2 \pi |\nu|} (1-t \Re (z) )^{\frac 1 2 (\beta+\vlambda )}} d t, 
\end{align*}
and it follows from \eqref{7eq: hyper integral} and the Stirling formula that  
\begin{align}\label{7eq: bound for f} 
 {_{\phantom{\nu}}^{\rho} \hskip -1.5pt f_{\nu}^{\mu}} (z) \Lt_{\rho, \shskip \mu}  
\exp (\pi |\nu|/2)	{\textstyle \sqrt{|\nu|+1}} \cdot {_{\phantom{\nu}}^{\beta} \hskip -1.5pt f_{\vlambda}^{0}} (\Re (z) ) . 
\end{align} 
For $|z| > 1$, in view of the reciprocity formula \eqref{2eq: reciprocity} in Lemma \ref{lem: reciprocity},   similar argument yields
\begin{align}\label{7eq: bound for F}
	    {_{\phantom{\nu}}^{\rho} \hskip -1pt \boldF_{\nu }^{\mu }} (z) \Lt_{\rho, \shskip \mu} \exp (2\pi |\nu| )   (|\nu| + 1)^{2\beta - 2}       \cdot    |z|^{ - \beta} \log (2|z|), 
\end{align}
if $\Re (\nu) = \Re (\mu) = 0$ (the $\log (2|z|)$ is removable if $\mu \neq 0$). 

As $h (r)$ is even,  the integral $ \Phi (u; h)$ defined by \eqref{1eq: Xi hyper, 2}, \eqref{1eq: Theta}, and \eqref{1eq: F} may be rewritten as 
\begin{align}\label{7eq: integral Phi}
\Phi (u; h) =	 \hskip -1pt    \int_{-\infty}^{\infty}   \hskip -2pt 	{_{\phantom{\nu}}^{1} \hskip -1pt {C}_{2ir }^{\shskip 0 }} \hskip -1pt  \cdot \hskip -1pt {_{\phantom{\nu}}^{1} \hskip -1.5pt f^{ \shskip 0 }_{2 i r }}  (1/u) {_{\phantom{\nu}}^{1} \hskip -1.5pt f^{ \shskip 0 }_{2 i r }}  (1/\widebar{u})  \cdot  h (r)  r^2 d r = \frac 1 2  \hskip -1pt \int_{-\infty}^{\infty}   \hskip -2pt 	 {_{\phantom{\nu}}^{1} \hskip -1pt \boldF^{ 0 }_{2 i r }}  (1/u) \cdot h (r)  r^2 d r . 
\end{align}
Thus $ \Phi (u; h)$ is convergent for any $u \neq 1$ due to \eqref{1eq: decay of h},  \eqref{7eq: bound for f}, and \eqref{7eq: bound for F}. To verify the convergence of the $n$-sum in the Kuznetsov--Motohashi formula in \eqref{1eq: Kuznetsov, Q(i)}, we   shift the contour of the first integral in \eqref{7eq: integral Phi} up to $\Re (2ir) =   \vlambda$ for $ \vlambda > 1 $. Note that  we need  $h  (- i/ 2  ) = 0$ as in \eqref{0eq: h (i/2)=0}   to pass through $r = - i/2$. 


\section{Proof of Theorem  \ref{thm: Kuz-Moto}: The Explicit Kuznetsov--Motohashi Formula over $\BQ (i)$} 
\label{sec: proof of K-M}

\subsection{Preparations}  

For $f \in \Pi_{c}^0$, by applying \cite[Theorem 4]{Molteni-L(1)} to its Rankin--Selberg $L$-function, we have
\begin{align}\label{8eq: Rankin-Selberg}
	\sum_{|n| \leqslant X} |   \vlambda_{f} ( n) |^2 \Lt_{\vepsilon}  (1 + |r_{f}|)^{\vepsilon} X, 
\end{align}
with the implied constant independent of $f$. Recall from \eqref{3eq: L-functions} that  
\begin{align}\label{8eq: L-functions}
	L (s, f) = \sum_{(n)   \subset   \CaloO }  \frac { \vlambda_{f} (n) } {|n|^{2  s} },  \qquad \text{($\Re (s) > 1$)}. 
\end{align}
By  the Rankin--Selberg bound \eqref{8eq: Rankin-Selberg}, the Dirichlet series in \eqref{8eq: L-functions} is indeed absolutely convergent for $\Re (s) > 1$. 

For  $\Re (s+\mu), \Re (s-\mu) > 1$, it follows from the Hecke relation \eqref{3eq: Hecke relation} that
\begin{align*} 
		L (s+\mu,  f   ) L (s-\mu, f) &  = \mathop{\sum\sum}_{(m), (n) \subset \CaloO}    \sum_{   (d) | (m, n )  } \frac { \vlambda_f  ( m n  / d^2  ) } {{|m|^{2s+2\mu} |n|^{2s-2\mu} }}  \\
		& = \sum_{ (d) \subset \CaloO} \frac 1 {|d|^{4 s } } \mathop{\sum\sum}_{(m), (n) \subset \CaloO} \frac {\vlambda_f (m n)   } {|m|^{2s+2\mu} |n|^{2s-2\mu} } ,  
\end{align*} 
and hence
\begin{align}\label{8eq: L(v,f) L(w,f)}
	\frac{L (s+\mu,  f   ) L (s-\mu, f)} { \zeta_{\RF} (2s) } =  \sum_{(n) \subset \CaloO } \frac {\vlambda_f (n) \vtau_{\mu} (n)} {|n|^{2s}} . 
\end{align}
Similarly, we have the Ramanujan identity:
\begin{align}\label{8eq: L(v,f) L(w,f), 2}
	\frac {\zeta_\RF(s+\mu+ir) \zeta_\RF(s+\mu-ir) \zeta_\RF(s-\mu+ir) \zeta_\RF(s-\mu-ir)} {\zeta_{\RF} (2s)} =  \sum_{(n) \subset \CaloO  }   \frac {\vtau_{ir} (n) \vtau_{\mu} (n)} {|n|^{2s}} . 
\end{align}

We have the (harmonic weighted) Weyl law:
\begin{align}\label{8eq: Weyl law}
	 \sum_{ f \in \Pi_{c}^0 : |r_f| \leqslant T} \frac 1 {L (1, \mathrm{Sym}^2 f) } \sim \frac {1} {3 \pi^3} T^3. 
\end{align}
This may be easily proven by the analysis in \cite{Qi-Liu-Moments}.  

\subsection{Setup} 

Let $m \in \CaloO \smallsetminus \{0\}$.  Let $h \in \mathscr{H}$ be as in Theorem \ref{thm: Kuz-Moto}. 
Let $s, \mu \in \BC$, with  $\Re (\mu) = 0$.   Define
\begin{align}\label{8eq: defn C}
	\SC_{\mu}^{s} ( m; h)  = \sum_{f \in \Pi_{c}^0}   \frac { L (s+\mu, f) L (s-\mu, f)       } {L(1, \mathrm{Sym^2} f )} \vlambda_f ( m )    h  ( r_f ). 
\end{align} 
Define $ \SE_{\mu}^{s} ( m; h) $  to be
\begin{equation}\label{8eq: defn E}
	\begin{split}  \frac 1 {\pi} &  \int_{-\infty}^{\infty} \hskip -2pt   
		\frac {\zeta_\RF(s+\mu+ir) \zeta_\RF(s+\mu-ir) \zeta_\RF(s-\mu+ir) \zeta_\RF(s-\mu-ir)   } {|\zeta_\RF(1+2i r)|^2} \vtau_{i r} ( m ) h ( r )  \shskip   d  r ,
	\end{split}
\end{equation} 
if $\Re (s) > 1$, and, similar to \cite[(2.30)]{Motohashi-JNT-Mean} (see also \cite[\S 3.3]{Motohashi-Riemann}), 
\begin{equation}\label{8eq: defn E, 2}
	\begin{split}     \frac 1 {\pi}    \int_{-\infty}^{\infty} \hskip -2pt   
		\frac {\zeta_\RF(s+\mu+ir) \zeta_\RF(s+\mu-ir) \zeta_\RF(s-\mu+ir) \zeta_\RF(s-\mu-ir)   } {|\zeta_\RF(1+2i r)|^2} \vtau_{i r} ( m ) h ( r )  \shskip   d  r & \\
		  +   4 \frac { \zeta_{\RF} (2s-1) \zeta_{\RF} (1-2\mu)   } {\zeta_{\RF} (3-2s-2\mu) } \vtau_{s+\mu-1} (m) h (i(s+\mu-1)) &  \\ 
		  +    4 \frac {\zeta_{\RF} (2s-1)  \zeta_{\RF} (1+2\mu)   } {\zeta_{\RF} (3-2s+2\mu) } \vtau_{s-\mu-1} (m) h (i(s-\mu-1))  & , 
	\end{split}
\end{equation} 
if $\Re (s) < 1$.

For $\Re (s )  > 1$,   it follows from \eqref{8eq: L-functions} and \eqref{8eq: L(v,f) L(w,f), 2}  that 
\begin{align}\label{8eq: L(1)} 
	 \frac {\SC_{\mu}^{s} ( m; h)} {\zeta_{\RF} (2s)} =   \sum_{(n) \subset \CaloO} \frac { \vtau_{\mu} (n)} {|n|^{2 s}}  \sum_{f \in \Pi_{c}^0}   \frac {  \vlambda_f ( m )  \vlambda_f ( n )   } {L(1, \mathrm{Sym^2} f )} \    h  ( r_f )  ,
\end{align} 
\begin{align}\label{8eq: E(1)} 
\frac {\SE_{\mu}^{s} ( m; h)} {\zeta_{\RF} (2s)} =    \sum_{(n) \subset \CaloO} \frac { \vtau_{\mu} (n)} {|n|^{2s}} \cdot  \frac 1 8 \int_{-\infty}^{\infty} \hskip -2pt   
	\frac {\vtau_{i r} (m )  \vtau_{i r} ( n )} {|\zeta_\RF(1+2i r)|^2} h ( r )  \shskip   d  r  . 
\end{align}
Note that the order of summations (or integration) has been changed, since, for example, the absolute convergence of the sum in \eqref{8eq: L(1)}  is clearly guaranteed by \eqref{8eq: Rankin-Selberg}, \eqref{8eq: Weyl law}, and the rapid decay of $h (r)$. 

\subsection{Application of Kuznetsov}
By applying the Kuznetsov formula in Lemma \ref{lem: Kuznetsov}, we have 
\begin{align}\label{8eq: after Kuznetsov}
\SC_{\mu}^{s} ( m; h) +	 \SE_{\mu}^{s} ( m; h) = \zeta_{\RF} (2s) \big( \SD_{\mu}^{s} ( m; h) + \ScrO_{\mu}^{s} ( m; h)   \big)  , 
\end{align}
with the diagonal term
\begin{align}\label{8eq: D(m;h)}
	\SD_{\mu}^{s} ( m; h) =     \frac { \vtau_{\mu} (m)} {2\pi^3 |m|^{2 s}} \int_{-\infty}^{\infty} h (r) r^2 d r   ,
\end{align}
and the off-diagonal sum 
\begin{align}
	\ScrO_{\mu}^{s} ( m; h) = \hskip -2pt  \sum_{(n) \subset \CaloO} \sum_{ \epsilon  \shskip \in   \CaloO^{\times} \hskip -1pt / \CaloO^{\times 2} } \sum_{c \shskip  \in   \CaloO \smallsetminus \{0\} }   \hskip -2pt  \frac {   S_{\RF} (  m ,  \epsilon n  ; c  ) \vtau_{\mu} (n) } {4 \pi |c|^2 |n|^{2s}} \hskip -2pt  \int_{-\infty}^{\infty} \hskip -2pt h (r)  \boldJ_{2ir} \bigg( \hskip -2pt \frac { {\sqrt{\epsilon m n}} } {  2 c     }  \bigg) r^2 d r   . 
\end{align}
Now we combine the $\epsilon$- and $(n)$-sums into an $n$-sum, and fold the $c$-sum into a $(c)$-sum, so that 
\begin{align}
	\ScrO_{\mu}^{s} ( m; h) = \hskip -1pt  \sum_{n \shskip  \in \CaloO \smallsetminus \{0\} }   \sum_{ (c) \subset \CaloO }     \frac {   S_{\RF} (   m ,   n  ; c  ) \vtau_{\mu} (n) } {2 \pi |c|^2 |n|^{2s}}   \int_{-\infty}^{\infty} h (r)  \boldJ_{2ir} \bigg( \frac { {\sqrt{  m n}} } {  2 c     }  \bigg) r^2 d r   . 
\end{align}
Note that a factor $2$ arises in the process. 

Next, we wish to apply the Vorono\"i--Oppenheim summation formula to the $n$-sum, so we need to move it inside the $r$-integral and the $(c)$-sum.  To this end, let us assume for the moment  \begin{align*}
	\Re (s  )  > \frac 5 4  
\end{align*}  to ensure absolute convergence. According as $ |c| \Gt \hskip -1.5pt \sqrt{|mn|} $ or $ |c| \Lt \hskip -1.5pt \sqrt{|mn|} $, the convergence may be easily verified by \eqref{7eq: bound for J} or \eqref{7eq: bound for J, 2},  along with the contour shift  as in \S \ref{sec: uniform bounds} in the former case.    By moving the $n$-sum inside, we obtain
\begin{align}\label{8eq: O (m;h) = O (m;c)}
	 \ScrO_{\mu}^{s} ( m; h) = \sum_{ (c) \subset \CaloO } \frac { 1 } {2 \pi  } \int_{-\infty}^{\infty}  \sum_{n \shskip  \in \CaloO \smallsetminus \{0\} } \frac {   S_{\RF} (   m ,   n  ; c  ) \vtau_{\mu} (n) } {|c|^2 |n|^{2 s}} \boldJ_{2 i r} \bigg( \frac { {\sqrt{  m n}} } {  2 c     }  \bigg)  h (r) r^2 d r. 
\end{align} 
Since the $(c)$-sum is outermost, it is legitimate to   shift the contour to $ \Re (ir) = 0 $ again. However, in view of \eqref{2eq: bound 1/z}, for the inner  $n$-sum  to be convergent, it only requires 
\begin{align*}
	\Re (s) > \frac 3 4;
\end{align*}
indeed, in our later analysis, we shall mainly work in the region: 
\begin{align}\label{8eq: Re(s), 3}
	\frac 3 4 < \Re(s) < 1. 
\end{align}

Kuznetsov's original idea is to apply Vorono\"i--Oppenheim to the inner $n$-sum, and subsequently evaluate the Bessel integrals and the resulting $(c)$-sum. However, this can be done only in the {\it formal} manner due to the convergence issue after  Vorono\"i--Oppenheim, even though it will eventually yield  the right formula. Our idea of rectifying Kuznetsov's argument is to introduce a regularization prior to the application of Vorono\"i--Oppenheim. 

\subsection{Regularization} \label{sec: regularization}
Define 
\begin{equation}\label{8def: R (z)}
	A_{\nu } (z) = \frac {(|z|/2)^{2\nu} } {\Gamma (\nu+1)^2} + \frac {(|z|/2)^{2\nu} ((z/2)^2 + (\widebar{z}/2)^2)} {\Gamma (\nu+1) \Gamma (\nu+2) } ,
\end{equation}
and
\begin{equation}\label{8eq: defn of R}
	\boldsymbol{R}_{ \nu  } (z)  =   \frac {1} {\sin (\pi \nu)} \lp A_{-\nu } (4 \pi   z) - A_{\nu } (4 \pi   z)  \rp. 
\end{equation}
Let $\txw (x) \in C^\infty(\BR_+)$ be such that  $\txw (x) \equiv 1$ for $x$ near $0$ and $\txw (x) \equiv  0$ for $x$ large. Define 
\begin{align}
	\boldsymbol{M}_{\nu} (z) = \boldJ_{ \nu } (z) - \txw (|2 \pi z|^2 )  \boldsymbol {R}_{ \nu } (z). 
\end{align} 
Note that if $\Re (\nu) = 0$ and $\nu \neq 0$, then 
\begin{align}\label{8eq: bound for M}
	 \boldsymbol{M}_{\nu} (z) = O (|z|^{4}), 
\end{align}
for $|z| \Lt 1$. 
 
According to 
\begin{align}
	\boldJ_{ 2ir  } (\hskip -1.5pt \sqrt{z}) =   \txw ( (2\pi)^2 |z|)  \boldsymbol {R}_{ 2ir } (\hskip -1.5pt \sqrt{z}) + \boldsymbol{M}_{2ir} (\hskip -1.5pt \sqrt{z})  , 
\end{align}
we split $ \ScrO_{\mu}^{s} ( m; h)  $ into
\begin{align}
	 \ScrO_{\mu}^{s} ( m; h) =  \SR_{\mu}^{s} ( m; h) +  \SM_{\mu}^{s} ( m; h) , 
\end{align}
with 
\begin{equation}\label{8eq: R (m;h) }
	\SR_{\mu}^{s} ( m; h) = \sum_{ (c) \subset \CaloO } 	\SR^s_{  \mu } (    m; c; h)  , 
\end{equation} 
\begin{equation}\label{8eq: R (m;c)}
	\SR^s_{  \mu } (    m; c; h)  \hskip -1pt   =  \hskip -4pt \sum_{n \shskip  \in \CaloO \smallsetminus \{0\} }  \hskip -4pt \frac { S_{\RF} (   m ,   n  ; c  ) \vtau_{\mu} (n) } {|c|^2 |n|^{2 s}} \txw \hskip -1pt \left(   \left|\frac { \pi^2 m n } {    c^2     } \right| \right) \hskip -1pt \cdot \frac { 1 } {2 \pi  } \hskip -1pt  \int_{-\infty}^{\infty}  \hskip -1pt  \boldsymbol{R}_{2ir} \bigg( \hskip -1.5pt \frac { {\sqrt{  m n}} } {  2 c     }  \bigg) h (r) r^2 d r ,
\end{equation}
and
\begin{equation}\label{8eq: M (m;h) }
	\SM_{\mu}^{s} ( m; h) = \sum_{ (c) \subset \CaloO } \frac { 1 } {2 \pi  } \int_{-\infty}^{\infty}  	\SM^s_{ir, \shskip \mu } (    m; c) h (r) r^2 d r, 
\end{equation} 
\begin{equation}\label{8eq: M (m;c)}
		\SM^s_{\nu, \shskip \mu}  ( m; c) =  \sum_{n \shskip  \in \CaloO \smallsetminus \{0\} } \frac {  S_{\RF} (   m ,   n  ; c  ) \vtau_{\mu} (n)} {|c|^2 |n|^{2 s}} \boldsymbol{M}_{2\nu} \bigg( \frac { {\sqrt{  m n}} } {  2 c     }  \bigg) . 
\end{equation}

Next, we need to treat  $\SR^s_{  \mu } (    m; c; h) $   and $\SM^s_{\nu, \shskip \mu}  ( m; c)$ as in   \eqref{8eq: R (m;c)} and   \eqref{8eq: M (m;c)}. For the former, we use  the standard Mellin technique and the functional equation for Estermann function as in Lemma \ref{lem: Estermann}. For the latter, we apply   Vorono\"i--Oppenheim as in Lemma \ref{lem: Voronoi} and Corollary \ref{cor: V-O}. By \eqref{0def: H mu m (z)}--\eqref{1eq: derivatives of E} and \eqref{8eq: bound for M},    this is legitimate since $ |z|^{-2s}  \boldsymbol{M}_{2 i r}  ( \hskip -1.5pt  { {\sqrt{  m z}} } / {  2       }   ) $ is in the space $ \mathscr{P}^{\infty}_{\valpha, \shskip \beta}  (\BC^{\times}; \hskip -1.5pt  { {\sqrt{  m  }} } / {  2       }  ) $ as long as   
\begin{align}\label{8eq: alpha, beta}
	  \frac 3 2 - 2 \Re(s) < \beta  < 0 < \valpha < 2-2\Re(s) .  
\end{align}
 Note that the inequalities above are guaranteed by \eqref{8eq: Re(s), 3}. At the end, the $(c)$-sum will become separated and can be explicitly evaluated by the Ramanujan identity \eqref{7eq: sum of (c)}--\eqref{7eq: sigma(0)}.

\subsection{Application of the functional equation for Estermann functions}

On opening the Kloosterman sum $S_{\RF} (   m ,   n   ;  \allowbreak c  )$ as in \eqref{2eq: defn Kloosterman KS} and applying the Mellin inversion (on $\BR_+$)
\begin{align*}
	\txw (x) = \frac 1 {2\pi i} \int_{\shskip \sigma -i\infty}^{\sigma + i \infty}  \widetilde{\txw} (\rho) x^{-\rho} d \rho ,  \qquad \text{($\sigma > 0$)},
\end{align*}
with 
\begin{align*}
	 \widetilde{\txw} (\rho) = \int_{ \BR_+} \txw (x) x^{\shskip \rho-1} d x, 
\end{align*} 
we may transform $ \SR^s_{  \mu } (    m; c; h) $ (see \eqref{8eq: R (m;c)}) into an expression that involves the Estermann functions $E (s   , \mu, 0;  {a}/c)$ and  $E  (s   , \mu, \pm 1/2;  {a}/c)$ as defined by \eqref{7eq: defn of Estermann}. 
For simplicity, we consider   only the contribution $ \SP^s_{  \mu } (    m; c; h) $ from $\boldsymbol{P}_{ 2ir  } (z)$,  defined as in \eqref{0def: P mu m (z)} and \eqref{0eq: defn of P} by 
\begin{align}\label{7eq: defn of P}
	 	\boldsymbol{P}_{ \nu  } (z) =  \frac {1} {\sin (\pi \nu)} \lp D_{-\nu } (4 \pi   z) - D_{\nu } (4 \pi   z)  \rp , \qquad D_{\nu } (z) = \frac {(|z|/2)^{2\nu} } {\Gamma (\nu+1)^2}; 
\end{align}   that from the two terms of higher order (see \eqref{8def: R (z)}) can be calculated in the same way. 
More explicitly,  for $\sigma/2 > 1 - \Re (s)$, we have 
\begin{equation}
  \begin{split}
\SP^s_{  \mu } (    m; c; h) =   &	\sum_{a \, \in (\CaloO /c)^{\times } }    \frac {e   (   \Re  (   {\widebar{a} m     }/ {c}  )   )} {|c|^2}   \frac { 1 } {  \pi^2  }     \int_{-\infty}^{\infty} \frac { \Gamma (2ir) } {   \Gamma (1-2ir) }  h (r) r^2   \\
 \cdot  &       \frac 1 {2\pi i} \int_{\shskip \sigma -i\infty}^{\sigma + i \infty}  E (s + i r + \rho/2    , \mu;  {a}/c) 
     \cdot  \widetilde{\txw}  (\rho)     \big| c^2 / \pi^2 m   \big|^{  2 ir + \rho  }  d \rho \,  d r    . 
  \end{split}
\end{equation}  
Note that we have used the expression of $\boldsymbol{P}_{ 2ir  } (z)$ as in \eqref{7eq: defn of P} and the fact that $h (r)$ is an even function. 
Now we shift the  contour of the $\rho$-integral to the left and 
collect the residues at $ \rho/2   = 1   -s - ir \pm \mu $ as in Lemma \ref{lem: Estermann}, obtaining
\begin{align}\label{8eq: residue}
	 \frac {2 S (m, 0; c) } {\pi |c|^{4s}} \sum_{\pm}   & \frac { \zeta_{\RF} (1 \pm 2 \mu) } {|\pi^2m|^{2-2s \pm 2\mu}}  \int_{-\infty}^{\infty}   \widetilde{\txw} (2(1-s - ir \pm \mu))  \frac { \Gamma (2ir) } {   \Gamma (1-2ir) }  h (r) r^2 d r  . 
\end{align}
Next, we shift the   contour of the $r$-integral up to $\Im (r) =   \vlambda$, with 
  \begin{align*}
	\Re (s) + \frac {\sigma} 2 < \vlambda < 1, 
\end{align*}  and then apply the functional equation \eqref{7eq: FE Estermann} in Lemma   \ref{lem: Estermann}. Note that it is safe to pass through $r = i/2$ by the condition $h (i/2) = 0$ as in \eqref{0eq: h (i/2)=0}.  For the resulting expression with $E (1-s - ir - \rho/2 , \mu; \widebar{a}/c)$, 
we replace it by the absolutely convergent Dirichlet series as in \eqref{7eq: defn of Estermann}. In this way, we arrive at    
\begin{equation}\label{8eq: integral}
	\begin{split}
		    	\sum_{ n \shskip  \in    \CaloO \smallsetminus \{0\} } \frac { S_{\RF} (m-n, 0; c) \vtau_{\mu} (n) } {|c|^{4s} |\pi^2 n|^{2- 2s}}       &     \int_{\, i\vlambda -\infty}^{i\vlambda + \infty} \frac { \Gamma (2ir) } {   \Gamma (1-2ir) }  h (r) r^2   \\
		\cdot        \frac 1 {2\pi i} & \int_{\shskip \sigma -i\infty}^{\sigma + i \infty}  \Gamma ( s + ir + \rho/2, \mu)  
		\cdot  \widetilde{\txw}  (\rho)      |  n/ m  |^{   2 ir + \rho  }  d \rho \,  d r    . 
	\end{split}
\end{equation}  
Finally, we insert   \eqref{8eq: residue} and \eqref{8eq: integral} into \eqref{8eq: R (m;h) } (denote this by $\SP_{\mu}^{s} ( m; h) $), evaluate the $(c)$-sum by the  the Ramanujan identity \eqref{7eq: sum of (c)}--\eqref{7eq: sigma(0)}, and multiply the $ \zeta_{\RF} (2s) $ as in \eqref{8eq: after Kuznetsov}. In conclusion, we have turned $ \zeta_{\RF} (2s) \SP_{\mu}^{s} ( m; h) $ into the sum of  the residual term
\begin{equation}\label{8eq: Mellin, Z}
\breve{\SZ}{}_{\mu}^{s} ( m; h)	=  
\frac {2 \sigma_{1-2s} (m) } {\pi  } \sum_{\pm}     \frac { \zeta_{\RF} (1 \pm 2 \mu) } {|\pi^2m|^{2-2s \pm 2\mu}}  \int_{-\infty}^{\infty}   \widetilde{\txw} (2(1-s - ir \pm \mu))  \frac { \Gamma (2ir) } {   \Gamma (1-2ir) }  h (r) r^2 d r  , 
\end{equation}
and the dual  sum 
\begin{equation}\label{8eq: Mellin, P}
	\begin{split}
		\widetilde{\hskip -1pt \SP}{}_{\mu}^{s} ( m; h)	= 
	\sum_{ n \shskip  \in    \CaloO \smallsetminus \{0\} }  &  \frac {\vtau_{\mu} (n) \sigma_{1-2s} (m-n)  } {  |\pi^2 n|^{2- 2s}}          \int_{\, i\vlambda -\infty}^{i\vlambda + \infty} \frac { \Gamma (2ir) } {   \Gamma (1-2ir) }  h (r) r^2   \\
 & \quad 	\cdot        \frac 1 {2\pi i}     \int_{\shskip \sigma -i\infty}^{\sigma + i \infty}  \Gamma ( s + ir + \rho/2, \mu)  
	\cdot  \widetilde{\txw}  (\rho)      |  n/ m  |^{   2 ir + \rho  }  d \rho \,  d r; 
	\end{split}
\end{equation}
the higher order terms in $	\boldsymbol{R}_{ 2ir   } (z)$ contribute to $   \zeta_{\RF} (2s) \SR_{\mu}^{s} ( m; h) $ a similar dual sum (no residual term!).  Let $ \, \widetilde{\hskip -1.5pt \SR}{}_{\mu}^{s} ( m; h)  $ denote the whole dual sum, so that 
\begin{align}\label{8eq: R = Z + R}
	  \zeta_{\RF} (2s) \SR_{\mu}^{s} ( m; h) = \breve{\SZ}{}_{\mu}^{s} ( m; h) + \, \widetilde{\hskip -1.5pt \SR}{}_{\mu}^{s} ( m; h) . 
\end{align} 

\begin{rem}
	Note that the diagonal term with $n = m$ requires an extra argument by analytic continuation, as the $(c)$-sum is convergent only for $\Re (s) > 1$ {\rm(}see {\rm\eqref{7eq: sum of c, 2}}{\rm)}. 
\end{rem}


\subsection{Application of Vorono\"i--Oppenheim} 

Recall that  $\SM^s_{\nu, \shskip \mu}  ( m; c) $ is defined as in \eqref{8eq: M (m;c)}.  It follows from \eqref{2eq: defn Kloosterman KS} that
\begin{align*}
	\SM^s_{\nu, \shskip \mu}  ( m; c) =  \sum_{a \, \in (\CaloO /c)^{\times } } e  \bigg(   \Re \bigg( \frac {\widebar{a} m     } {c} \bigg)  \bigg)      \sum_{n \shskip  \in \CaloO \smallsetminus \{0\} } e  \bigg(   \Re \bigg( \frac {  {a} n  } {c} \bigg)  \bigg)  \frac {\vtau_{\mu} (n)  } {|n|^{2 s}} \boldsymbol{M}_{2\nu} \bigg( \frac { {\sqrt{  m n}} } {  2 c     }  \bigg) , 
\end{align*} 
As discussed in \S \ref{sec: regularization}, it is legitimate to apply the Vorono\"i--Oppenheim summation formula (see Lemma \ref{lem: Voronoi} and Corollary \ref{cor: V-O}) to the $n$-sum.  Consequently, we have 
\begin{align}\label{8eq: O = Z + D + S}
\SM^s_{\nu, \shskip \mu}  ( m; c) =  	\check{\SZ}{}^s_{\nu, \shskip \mu} (m; c) 
+  \, \widetilde{\hskip -2pt \SM}{}^s_{\nu, \shskip \mu} (m; c) , 
\end{align}
where $   \check{\SZ}{}^s_{\nu, \shskip \mu} (m; c) $ is the zero frequency
\begin{align}\label{8eq: defn Z}
\check{\SZ}{}^s_{\nu, \shskip \mu} (m; c) = \frac {S_{\RF} (m, 0; c) } {   | c|^{4 s} } \sum_{\pm}  \frac { \zeta_{\RF} (1 \pm 2 \mu) } {2^{4s   \mp 4 \mu - 3} }   \viint  \boldsymbol{M}_{2\nu}  (  \hskip -1.5pt { {\sqrt{  m z}} }  )   \frac { i d      z \vwedge d      \widebar{z}} {|z|^{2s \mp 2\mu} },  
\end{align}
and $ \, \widetilde{\hskip -2pt \SM}{}^s_{\nu, \shskip \mu} (m; c) $  is the dual sum 
\begin{align}\label{8eq: defn S}
    \widetilde{\hskip -2pt \SM}{}^s_{\nu, \shskip \mu} (m; c)  =	\hskip -2pt  
\sum_{n \shskip  \in \CaloO \smallsetminus \{0   \} } \hskip -2pt \frac  {4 S_{\RF} (m-n, 0; c) \vtau_{\mu} (n  ) } { |2c|^{4s}}     \cdot 2\pi^2 \hskip -1pt \viint   \boldsymbol{M}_{2\nu}   (  \hskip -1.5pt { {\sqrt{  m z}} }  ) \boldJ_{ 2 \mu }  (  \hskip -1.5pt { {\sqrt{  n z}} }  ) \frac  {i d      z \vwedge d      \widebar{z}} {|z|^{2s}}. 
\end{align}
In view of Lemma \ref{lem: bound for H, 2} and Remark \ref{rem: implied const} (see also \eqref{8eq: alpha, beta}), after inserting  \eqref{8eq: defn S} into \eqref{8eq: M (m;h) }, we are free to rearrange the order of summation and integration. Now we move the $n$-sum  out so that the $(c)$-sum may be evaluated by the Ramanujan identity \eqref{7eq: sum of (c)}--\eqref{7eq: sum of c, 2}. 

\subsection{Cancellation} 
Recall that 
\begin{align*}
	\boldsymbol{M}_{\nu} (z) = \boldJ_{ \nu } (z) - \txw (|2 \pi z|^2 )  \boldsymbol {R}_{ \nu } (z). 
\end{align*} 
By definition or by the Mellin integral formula  in Lemma \ref{lem: Mellin} (applied to \eqref{8eq: defn Z} or \eqref{8eq: defn S}), one may easily prove that the contribution to $ \zeta_{\RF} (2s) \SM_{\mu}^{s} ( m; h) $ from $ \txw (|2 \pi z|^2 )  \boldsymbol {R}_{ 2ir  } (z)  $ is   canceled exactly with $ \zeta_{\RF} (2s) \SR_{\mu}^{s} ( m; h) $ (see \eqref{8eq: Mellin, Z}, \eqref{8eq: Mellin, P}, and \eqref{8eq: R = Z + R}). This is expected as in general the Vorono\"i summation is an incarnation of the functional equations (see \cite{Miller-Schmid-2004-1}).

After the cancellation, we arrive at 
\begin{align}\label{8eq: O = Z + D + O}
\zeta_{\RF} (2s)	 \ScrO_{\mu}^{s} ( m; h) =  \SZ_{\mu}^{s} ( m; h) + \widetilde{\SD}{}_{\mu}^{s} ( m; h) + \widetilde{\ScrO}{}_{\mu}^{s} ( m; h),   
\end{align}
where 
\begin{align}\label{8eq: defn of Z, 2}
	\SZ_{\mu}^{s} ( m; h) = \frac {   \sigma_{1-2s} (m) } {  \pi  }  \sum_{\pm} \frac {  \zeta_{\RF} (1 \pm 2 \mu) }   {   2^{4 s \mp 4  \mu - 2} } \int_{-\infty}^{\infty}       \viint  \boldJ_{2 i r}  (  \hskip -1.5pt { {\sqrt{  m z}} }  )   \frac { i d      z \vwedge d      \widebar{z}} {|z|^{2s \mp 2\mu} }  	\cdot   h (r) r^2 d r , 
\end{align}
\begin{align}\label{8eq: defn of D, 2}
	\widetilde{\SD}{}_{\mu}^{s} ( m; h) = \frac { \pi \zeta_{\RF} (2s-1) \vtau_{\mu} (m) } {2^{4s-2}} \int_{-\infty}^{\infty} \viint  \boldJ_{2 ir}   (  \hskip -1.5pt { {\sqrt{  m z}} }  ) \boldJ_{ 2 \mu }  (  \hskip -1.5pt { {\sqrt{  m z}} }  ) \frac  {i d      z \vwedge d      \widebar{z}} {|z|^{2s}} \cdot h (r) r^2 d r , 
\end{align}
\begin{equation}\label{8eq: defn of O, 2}
	\begin{split}
		 \widetilde{\ScrO}{}_{\mu}^{s} ( m; h) = \hskip -2pt  \sum_{n \shskip  \in \CaloO \smallsetminus \{0, \shskip m  \} } \hskip -2pt \frac { \pi \sigma_{1-2s} (n-m) \vtau_{\mu} (n) } {2^{4s-2}} \hskip -1pt \int_{-\infty}^{\infty} \hskip -1pt \viint  \boldJ_{2 ir}   (  \hskip -1.5pt { {\sqrt{  m z}} }  ) \boldJ_{ 2 \mu }  (  \hskip -1.5pt { {\sqrt{  n z}} }  ) \frac  {i d      z \vwedge d      \widebar{z}} {|z|^{2s}} \cdot h (r) r^2 d r . 
	\end{split}
\end{equation}
Note that we have extracted the diagonal term $ \widetilde{\SD}{}_{\mu}^{s} ( m; h) $ to pair with $\zeta_{\RF} (2s)  \SD_{\mu}^{s} ( m; h)$---the diagonal term from Kuznetsov (see \eqref{8eq: D(m;h)}).  

Our final step is to evaluate the integrals in \eqref{8eq: defn of Z, 2}--\eqref{8eq: defn of O, 2} by Corollaries \ref{cor: Hankel}, \ref{cor: Gauss}, and \ref{cor: Mellin, spherical}; 
the validity of these integral formulae is again guaranteed by \eqref{8eq: Re(s), 3}.

For simplicity, we introduce 
 \begin{equation}\label{8eq: Gamma}
 	\begin{split}
 		\Gamma ({1-\gamma},{\nu})  =   \frac { 	 (    \cos (2\pi\nu) -	  \cos (2\pi \gamma)    )    \Gamma \lp   {\gamma+\nu}    \rp^2   \Gamma \lp   {\gamma-\nu}    \rp^2 } {2 \pi^2}   , 
 	\end{split}
 \end{equation}
 as occurred in Corollaries \ref{cor: Gauss} and \ref{cor: Mellin, spherical}. By the Euler reflection formula, 
 \begin{equation}\label{8eq: Gamma, 2}
 	\begin{split}
 		\Gamma ({\gamma}, {\nu})  =  \frac {  \Gamma (1-\gamma+\nu) \Gamma (1-\gamma-\nu) } {\Gamma (\gamma-\nu) \Gamma (\gamma+\nu)} , 
 	\end{split}
 \end{equation}
 as the $ \Gamma ({\gamma}, {\nu}, 0) $ defined in Lemma \ref{lem: Estermann}.  

\subsection{The zero frequency} By applying Corollary \ref{cor: Mellin, spherical} to the integral in  \eqref{8eq: defn of Z, 2}, we have  
\begin{equation}\label{8eq: Z, final}
\SZ_{\mu}^{s} ( m; h) =	\frac { \sigma_{1-2s} (m)} {2\pi  |\pi^2 m|^{2-2s}} \sum_{\pm}  \frac {\zeta_{\RF} (1 \pm 2 \mu) } {  |\pi^2 m|^{ \pm 2\mu} }  \int_{-\infty}^{\infty}      \Gamma ({s \mp \mu}, ir)  h (r) r^2 d r              . 
\end{equation}

\delete{
\begin{align*}
	\SZ^s_{\nu, \shskip \mu} (m; c) = 	\frac 1 {|c|^{4 s}} \bigg(	\frac {\zeta_{\RF} (1 + 2 \mu) } {  |\pi^2 m|^{2-2s+2\mu} }        \Gamma ( {s-\mu}, {\nu} )    + \frac {\zeta_{\RF} (1 - 2 \mu) } {  |\pi^2 m|^{2-2s-2\mu}   }   \Gamma ( {s+\mu}, {\nu} )   \bigg) . 
\end{align*}   
The $(c)$-sum inside  $\SZ_{\mu}^{s} ( m; h)$ is  separated as 
\begin{align*}
	\sum_{(c)   \subset   \CaloO  }  \frac { S (m , 0; c)  } { |c  |^{4 s} } = \frac { \sigma_{1-2s} (m) } {  \zeta_{\RF} (2s) },  
\end{align*}
by \eqref{7eq: sum of (c)}. Therefore $\zeta_{\RF} (2s)	\SZ_{\mu}^{s} ( m; h)$ is equal to }


\subsection{The diagonal terms} \label{sec: diag terms}

By applying Corollary \ref{cor: Gauss} to the integral in  \eqref{8eq: defn of D, 2}, we have
\begin{align*}
\widetilde{\SD}{}_{\mu}^{s} ( m; h) \hskip -1pt  =	-  \frac {     \sin (2\pi s) \Gamma (2s-1)^2 \zeta_{\RF} (2s-1)  } {  2   \pi^{5-4s}     }  \frac {  \vtau_{\mu} (m)} {| m|^{2-2s}   }   \hskip -2pt    \int_{-\infty}^{\infty} \hskip -2pt   \Gamma (s+\mu, ir ) \Gamma (s - \mu, ir)   h (r)    r^2 d r . 
\end{align*}
Next we use the Euler reflection formula for $\Gamma (s)$ and the functional equation for $\zeta_{\RF} (s)$, 
\begin{align*}
	\Gamma (s) \zeta_{\RF} (s) = \pi^{2s-1} \Gamma (1-s) \zeta_{\RF} (1-s), 
\end{align*}
to simplify this into  
\begin{align}\label{8eq: D(m;h), 2}
	\widetilde{\SD}{}_{\mu}^{s} ( m; h) =         \frac {      \zeta_{\RF} (2-2s)  } {  2   \pi^{7-8s}    \zeta_{\RF} (2s)   }  \frac {  \vtau_{\mu} (m)} {| m|^{2-2s}   }       \int_{-\infty}^{\infty}   \Gamma (s+\mu, ir ) \Gamma (s - \mu, ir)  h (r)    r^2 d r.  
\end{align}
By combining \eqref{8eq: D(m;h)} and \eqref{8eq: D(m;h), 2}, we infer that $\zeta_{\RF}   (2s)    \SD_{\mu}^{s} ( m; h)   + \widetilde{\SD}{}_{\mu}^{s} ( m; h)     $ is equal to
\begin{equation}\label{8eq: D, final}
	\begin{split}
		\frac {  \vtau_{\mu} (m)} {2\pi^3 | m|   }   \int_{-\infty}^{\infty} \hskip -2pt \bigg( \frac {\zeta_{\RF}(2s)} {|m|^{2s-1}}  + \frac {\zeta_{\RF} (2-2s)} { |\pi^4 m|^{1-2s} }    \Gamma (s+\mu, ir ) \Gamma (s - \mu, ir) \bigg) h (r) r^2 d r     . 
	\end{split}
\end{equation}

\delete{ 
\begin{align*}
	\SD^s_{\nu, \shskip \mu} (m; c) = -  \frac {     \sin (2\pi s) \Gamma (2s-1)^2   } {     \pi^{4-4s}   }   \frac {  \vtau_{\mu} (m)} {| m|^{2-2s}  }  \frac 1 {|c|^{4s}}    \Gamma (s+\nu, \mu) \Gamma (s-\nu, \mu) . 
\end{align*}
Thus the $(c)$-sum inside  $\widetilde{\SD}{}_{\mu}^{s} ( m; h)$ is again separated as 
\begin{align*}
	\sum_{ (c) \subset \CaloO } \frac {\varphi (c)} {|c|^{4s}} = \frac {\zeta_{\RF} (2s-1)} {\zeta_{\RF} (2s)}, 
\end{align*}
by \eqref{7eq: sum of c, 2}.   Therefore }


\subsection{The dual sum} \label{sec: dual sum}

By applying  Corollary \ref{cor: Hankel}  to the integral in  \eqref{8eq: defn of O, 2}, we have 
\delete{\begin{align*}
\ScrS^s_{\nu, \shskip \mu} (m; c) = \sum_{n \shskip  \in \CaloO \smallsetminus \{0, \shskip m  \} }    \frac {S (m-n, 0; c)  } {|c|^{4s} }  	\frac {   \vtau_{\mu} (n  )   } { 2    | \pi^2 n|^{2-2s} } {_{\phantom{\nu}}^{2-2s} \hskip -1pt \boldF^{2\mu }_{2\nu }}  \Big(  \frac {m} {n} \Big). 
\end{align*}
Consequently, }
\begin{align}\label{8eq: O, final}
\widetilde{\ScrO}{}_{\mu}^{s} ( m; h) = \sum_{n \shskip  \in \CaloO \smallsetminus \{0, \shskip m  \} }   \frac {  \sigma_{1-2s} (n-m) \vtau_{\mu} (n  )   } {    4 \pi   | \pi^2 n|^{2-2s} }   \int_{-\infty}^{\infty}   	 {_{\phantom{\nu}}^{2-2s} \hskip -1pt \boldF^{2\mu }_{2 i r }}  \Big(  \frac {m} {n} \Big)  h (r)  r^2 d r . 
\end{align}

\subsection{Conclusion} By \eqref{8eq: defn C}, \eqref{8eq: defn E, 2}, \eqref{8eq: after Kuznetsov}, \eqref{8eq: O = Z + D + O}, and \eqref{8eq: Z, final}--\eqref{8eq: O, final}, we have established the analogue over $\BQ (i)$ of Kuznetsov's formulae in \cite[Theorems 3.1, 3.3]{Kuznetsov-Motohashi-formula}.\footnote{Kuznetsov missed the Eisenstein residual term (see \cite[(2.30)]{Motohashi-JNT-Mean}). }

Finally, by the passage to the limit as $ \mu \ra 0 $ and $s \ra   1 / 2$, it follows that 
\begin{equation}\label{8eq: C, final, 2}
	\SC_{0}^{\frac 1 2} ( m; h)  = \sum_{f \in \Pi_{c}^0}   \frac { L \big(\frac 1 2, f \big)^2       } {L(1, \mathrm{Sym^2} f )} \vlambda_f ( m )    h  ( r_f ), 
\end{equation} 
\begin{equation}\label{8eq: E, final, 2}
	\begin{split}  \SE_{0}^{\frac 1 2 } ( m; h) =   \frac 1 {\pi}    \int_{-\infty}^{\infty} \hskip -2pt   
		\frac {\left|\zeta_\RF\big(\frac 1 2 +ir\big)\right|^4  } {|\zeta_\RF(1+2i r)|^2} \vtau_{i r} ( m ) h ( r )  \shskip   d  r  - \frac {i \pi       } {4 \zeta_{\RF} (2 ) } \vtau_{-\frac 1 2} (m) h' (-i/2), 
	\end{split}
\end{equation} 
\begin{equation}\label{8eq: Z, final, 2}
	\begin{split}
		\SZ_{0}^{\frac 1 2 } ( m; h) & = \lim_{s \ra \frac 1 2} \big( \zeta_{\RF}   (2s)    \SD_{0}^{s} ( m; h)   + \widetilde{\SD}{}_{0}^{s} ( m; h)   \big)  \\ 
		& =	\frac {\vtau  (m)} { \pi^3  | m| }   \int_{-\infty}^{\infty}   \lp \gamma_{\RF} -  \frac {\pi} 4 \log |\pi^2m| + \frac {\pi} 2 \frac{\Gamma'} {\Gamma}  \lp \frac 1 2 + ir \rp \rp  h (r) r^2 d r      ,   
	\end{split}    
\end{equation} 
\begin{align}\label{8eq: O, final, 2}
	\widetilde{\ScrO}{}_{0}^{\frac 1 2 } ( m; h) = \sum_{n \shskip  \in \CaloO \smallsetminus \{0, \shskip m  \} }   \frac {  \vtau (n-m) \vtau  (n  )   } {    4 \pi^3   |  n|  }   \int_{-\infty}^{\infty}   	 {_{\phantom{\nu}}^{1} \hskip -1pt \boldF^{ 0 }_{2 i r }}  \Big(  \frac {m} {n} \Big)  h (r)  r^2 d r ,
\end{align}
where for \eqref{8eq: E, final, 2}   we have used 
\begin{align*}
  \mathop{\mathrm{Res}}_{s=1}   \zeta_{\RF} (s) = \frac {\pi} 4, \qquad \zeta_{\RF} (0) = -\frac 1 4. 
\end{align*}
Now the proof of Theorem \ref{thm: Kuz-Moto} is completed by rearrangement.

	
	\def\cprime{$'$}

\end{document}